\documentclass[11pt]{amsart}
\usepackage[all]{xy}
\usepackage{amsmath,amsfonts,amssymb,amsthm,epsfig,amscd,psfrag,framed,comment,color}
\usepackage{hyperref}

\allowdisplaybreaks
\usepackage{xargs}
\usepackage{environ}
\usepackage{graphicx}

\usepackage{tikz}
\usetikzlibrary{shapes}
\usetikzlibrary{backgrounds}
\usetikzlibrary{decorations,decorations.pathreplacing,decorations.markings}
\usetikzlibrary{fit,calc,through}
\usetikzlibrary{external}

\tikzstyle{mid>}=[decoration={markings, mark=at position 0.5 with {\arrow{>}}}, postaction={decorate}]
\tikzstyle{mid<}=[decoration={markings, mark=at position 0.5 with {\arrow{<}}}, postaction={decorate}]
\tikzstyle{upper>}=[decoration={markings, mark=at position 0.8 with {\arrow{>}}}, postaction={decorate}]
\tikzstyle{upper<}=[decoration={markings, mark=at position 0.8 with {\arrow{<}}}, postaction={decorate}]
\tikzstyle{lower>}=[decoration={markings, mark=at position 0.2 with {\arrow{>}}}, postaction={decorate}]
\tikzstyle{lower<}=[decoration={markings, mark=at position 0.2 with {\arrow{<}}}, postaction={decorate}]
\newcommand{\incircle}{\draw [very thick] (0,0) circle [radius=1]}

\newcommand{\outcircle}{\draw [very thick] (0,0) circle [radius=4]}

\renewcommand{\moveleft}[1]{
\draw[mid>] (45+15*#1:1) -- (45+15*#1:2);
\draw (45 + 15*#1:2) to [out=45 + 15*#1,in=240+15*#1] (60+15*#1:3);
\draw (60+15*#1:3) to (60 + 15*#1 :4);
}

\newcommandx{\NewEnvironx}[5][2,3]{%
  \expandafter\newcommandx\csname start#1\endcsname[#2][#3]{#4}%
  \NewEnviron{#1}{\csname start#1\expandafter\endcsname\BODY #5}}

\newcommand{\ladderX}{1.5}
\newcommand{\ladderY}{1.5}
\newcommand{\ladderR}{0.6}
\newcommand{\laddercoordinates}[2]{
\foreach \x in {0,...,#1} {
	\foreach \y in {0,...,#2} {
		\coordinate (l\x\y) at (\x * \ladderX, \y * \ladderY);
		\coordinate (u\x\y) at ($(l\x\y)+\ladderR*(0,\ladderY)$);
		\coordinate (d\x\y) at ($(l\x\y)+(0,\ladderY)-\ladderR*(0,\ladderY)$);
	}
}
}
\newcommand{\ladderEn}[5]{
\draw[mid>] (l#1#2) -- (d#1#2);
\draw[mid>] (d#1#2) -- ($(l#1#2)+(0,\ladderY)$) node[left] {#3};
\draw[mid>] ($(l#1#2)+(\ladderX,0)$) -- ($(u#1#2)+(\ladderX,0)$);
\draw[mid>] ($(u#1#2)+(\ladderX,0)$) -- ($(l#1#2)+(\ladderX,\ladderY)$) node[right] {#4};
\draw[mid>] (d#1#2) --node[above]{#5} ($(u#1#2)+(\ladderX,0)$);
}

\newcommand{\ladderFn}[5]{
\draw[mid>] (l#1#2) -- (u#1#2);
\draw[mid>] (u#1#2) -- ($(l#1#2)+(0,\ladderY)$) node[left] {#3};
\draw[mid>] ($(l#1#2)+(\ladderX,0)$) -- ($(d#1#2)+(\ladderX,0)$);
\draw[mid>] ($(d#1#2)+(\ladderX,0)$) -- ($(l#1#2)+(\ladderX,\ladderY)$) node[right] {#4};
\draw[mid>] ($(d#1#2)+(\ladderX,0)$) --node[above]{#5} (u#1#2);
}

\NewEnvironx{ladder}[2]{%
  \begin{tikzpicture}[baseline=13*\ladderY*#2]\laddercoordinates{#1}{#2}}
{\end{tikzpicture}}

\newcommand{\fuse}[3]{\tikz[baseline=0.5cm]{
\coordinate (z1) at (0,0);
\coordinate (z2) at (1,0);
\coordinate (c) at (0.5,0.5);
\coordinate (e) at (0.5,1);
\draw[mid>] (z1) node[below] {$#1$} -- (c);
\draw[mid>] (z2) node[below] {$#2$} -- (c);
\draw[mid>] (c) -- (e) node[above] {$#3$};
}}
\newcommand{\fork}[3]{\tikz[baseline=0.5cm]{
\coordinate (z1) at (0,1);
\coordinate (z2) at (1,1);
\coordinate (c) at (0.5,0.5);
\coordinate (e) at (0.5,0);
\draw[mid<] (z1) node[above] {$#1$} -- (c);
\draw[mid<] (z2) node[above] {$#2$} -- (c);
\draw[mid<] (c) -- (e) node[below] {$#3$};
}}

\addtolength{\hoffset}{-0.6cm}
\addtolength{\textwidth}{1.2cm}

\newcommand{\scs}{\scriptstyle}
\newcommand{\qbin}[2]{
\left[
 \begin{array}{c}
 #1 \\
 #2 \\
 \end{array}
 \right]
}
\newcommand{\qbins}[2]{
\left[ \;
\vcenter{\xy
(0,3)*{\scs #1}; (0,-1)*{\scs #2};
\endxy} \;
 \right]
}

\def\coker{{\rm coker}}
\def\supp{{\rm supp}}
\def\Spec{{\rm Spec}}
\def\Proj{{\rm Proj}}

\def\ochi{{\overline{\chi}}}
\def\oE{{\overline{E}}}
\def\oR{{\overline{R}}}
\def\oZ{{\overline{Z}}}
\def\sC{{\mathcal{C}}}

\def\sF{{\mathcal{F}}}

\def\ux{{\underline{x}}}
\def\sN{{\mathcal N}}
\def\rk{{\rm rk}}
\def\G{{\mathbb{G}}}
\def\bhH{\widehat{{\mathbb{H}}}}
\def\B{\mathcal{B}}
\def\Sym{{\rm Sym}}
\def\sP{{\mathcal{P}}}
\def\sF{{\mathcal{F}}}
\def\sL{{\mathcal{L}}}
\def\la{\langle}
\def\ra{\rangle}
\def\l{\lambda}
\def\A{{SL_n \times \Cx}}
\def\P{{\mathbb{P}}}

\def\AB{{\mathcal{AB}}}
\def\B{{\mathcal{B}}}
\def\Beta{\beta}
\def\APsi{A\Psi}

\newcommand{\g}{\mathfrak{g}}
\newcommand{\dU}{\dot{U}}
\renewcommand{\sl}{\mathfrak{sl}}
\newcommand{\gl}{\mathfrak{gl}}
\newcommand{\C}{\mathbb{C}}
\newcommand{\cC}{\mathcal{C}}
\newcommand{\Cx}{\mathbb{C}^\times}
\newcommand{\K}{\mathcal{K}}
\newcommand{\N}{\mathbb{N}}
\newcommand{\ulam}{{\underline{\lambda}}}
\newcommand{\umu}{{\underline{\mu}}}
\newcommand{\dUL}{\dot{U}_q L \gl_m}
\newcommand{\dULq}{\dot{U}_{[q^\pm]} L \gl_m}
\newcommand{\dsU}{\dot{{\mathcal U}}_q \gl_m}
\newcommand{\dsUL}{\dot{{\mathcal U}}_q L \gl_m}
\newcommand{\Z}{\mathbb Z}

\newcommand{\pt}{{\rm{pt}}}

\newcommand{\RqS}{\mathcal{R}ep_q(SL_n)}

\def\RqSm{\mathcal{R}ep^{{min}}_{q}(SL_n)}
\def\R[q]Sm{\mathcal{R}ep^{{min}}_{[q^\pm]}(SL_n)}

\newcommand{\OgS}{\cO(\frgS)\mod}
\newcommand{\OSS}{\cO(\frSS)\mod}
\newcommand{\OqSS}{\cO_q(\frSS)\mod}

\def\OqSSm{\cO_{q}^{{min}}(\frSS)\mod}
\def\cO{\mathcal{O}}

\newcommand{\OS}{\cO(SL_n)}
\newcommand{\OqS}{\cO_q(SL_n)}

\newcommand{\RG}{\mathcal{R}ep(G)}
\newcommand{\RqG}{\mathcal{R}ep_q(G)}

\newcommand{\RqGm}{\mathcal{R}ep^{{min}}_q(G)}

\newcommand{\AqSp}{\mathcal{AS}p_n(q)}
\newcommand{\ASp}{\mathcal{AS}p_n}
\newcommand{\Sp}{\mathcal{S}p_n}
\newcommand{\qSp}{\mathcal{S}p_n(q)}

\newcommand{\OgG}{\cO(\frgG)\mod}
\newcommand{\OGG}{\cO(\frGG)\mod}
\newcommand{\OqGG}{\cO_q(\frGG)\mod}

\newcommand{\OGGm}{\cO^{min}(\frGG)\mod}
\newcommand{\OqGGm}{\cO_q^{min}(\frGG)\mod}

\newcommand{\frGG}{\tfrac{G}{G}}
\newcommand{\frgG}{\tfrac{\fg}{G}}
\newcommand{\frSS}{\tfrac{SL_n}{SL_n}}
\newcommand{\frgS}{\tfrac{{\mathfrak gl}_n}{GL_n}}

\newcommand{\Uq}{U_q \g}
\newcommand{\Oq}{\cO_q(G)}

\newcommand{\Altq}[1]{{\textstyle\bigwedge_q^{#1}}}

\newcommand{\Alt}[1]{{\textstyle \bigwedge^{#1}}}

\renewcommand{\mod}{\operatorname{-mod}}

\newcommand{\Gv}{{G^\vee}}
\newcommand{\Gc}{{\widetilde{\Gv}}}
\newcommand{\fg}{\mathfrak g}
\newcommand{\ft}{\mathfrak t}

\newcommand{\Uqn}{U_q \sl_n}

\newcommand{\uk}{{\underline{k}}}
\newcommand{\ul}{\underline{l}}

\newcommand{\ur}{\underline{r}}

\def\1{{1}}
\def\hH{\widehat{\mathcal{H}}}

\def\H{{\mathcal{H}}}

\DeclareMathOperator{\Hom}{Hom}
\DeclareMathOperator{\Ext}{Ext}
\DeclareMathOperator{\End}{End}
\DeclareMathOperator{\Maps}{Maps}

\newtheorem{Theorem}{Theorem}[section]
\newtheorem{Proposition}[Theorem]{Proposition}
\newtheorem{Lemma}[Theorem]{Lemma}

\newtheorem{Corollary}[Theorem]{Corollary}
\newtheorem{Conjecture}[Theorem]{Conjecture}
\newtheorem{Remark}[Theorem]{Remark}

\begin{document}
\setcounter{tocdepth}{1}

\title{Quantum K-theoretic geometric Satake: $ SL_n$ case}


\author{Sabin Cautis}
\email{cautis@math.ubc.ca}
\address{Department of Mathematics\\ University of British Columbia \\ Vancouver BC, Canada}

\author{Joel Kamnitzer}
\email{jkamnitz@math.utoronto.ca}
\address{Department of Mathematics \\ University of Toronto \\ Toronto ON, Canada}

\begin{abstract}
 The geometric Satake correspondence gives an equivalence of categories between the representations of a semisimple group $ G $ and the spherical perverse sheaves on the affine Grassmannian $Gr$ of its Langlands dual group.  Bezrukavnikov-Finkelberg developed a derived version of this equivalence which relates the derived category of $ G^\vee$-equivariant constructible sheaves on $ Gr $ with the category of $G$-equivariant ${\mathcal O}(\mathfrak g)$-modules.

In this paper, we develop a K-theoretic version of the derived geometric Satake which involves the quantum group $ U_q \mathfrak g $.  We define a convolution category $ KConv(Gr) $ whose morphism spaces are given by the $ G^\vee \times \mathbb C^\times $-equivariant algebraic K-theory of certain fibre products. We conjecture that $KConv(Gr)$ is equivalent to a full subcategory of the category of $ U_q \mathfrak g $-equivariant $ \mathcal O_q(G) $-modules.

We prove this conjecture when $G = SL_n$. A key tool in our proof is the $SL_n$ spider, which is a combinatorial description of the category of $U_q \mathfrak{sl}_n$ representations.  By applying horizontal trace, we show that the annular $SL_n$ spider describes the category of  $ U_q \mathfrak{sl}_n $-equivariant $ \mathcal O_q(SL_n) $-modules.  Then we use quantum loop algebras to relate the annular $SL_n $ spider to  $ KConv(Gr) $.  This gives a combinatorial/diagrammatic description of both categories and proves our conjecture.
\end{abstract}

\maketitle

\tableofcontents

\section{Introduction}

\subsection{The Geometric Satake correspondence}
Let $ G $ be a semisimple complex group.  Let $G^\vee $ be its Langlands dual group and let $ Gr = G^\vee((t))/G^\vee[[t]] $ be the affine Grassmannian.  Let $ P(Gr) $ denote the abelian category of perverse sheaves on $ Gr $ which are constructible with respect to the stratification by $ G^\vee[[t]] $-orbits.  This category is naturally equipped with a monoidal structure using convolution.  The geometric Satake correspondence of Mirkovic-Vybornov \cite{MV} gives the following result.

\begin{Theorem}
There is an equivalence of monoidal categories  $$ P(Gr) \cong \RG.$$
\end{Theorem}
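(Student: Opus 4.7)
The plan is to invoke Tannakian reconstruction. The argument has three main stages: equip $P(Gr)$ with a symmetric monoidal structure, construct a fiber functor to vector spaces, and identify the reconstructed group with $G$.

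First, I would define the convolution tensor product. For $\mathcal{F}_1, \mathcal{F}_2 \in P(Gr)$, form the twisted external product $\mathcal{F}_1 \tilde\boxtimes \mathcal{F}_2$ on the convolution space $G^\vee((t)) \times^{G^\vee[[t]]} Gr$ and push forward along the multiplication map $m$ to land back on $Gr$, setting $\mathcal{F}_1 \star \mathcal{F}_2 := m_*(\mathcal{F}_1 \tilde\boxtimes \mathcal{F}_2)$. Perversity of the output reduces to the fact that $m$ is stratified semi-small with respect to the $G^\vee[[t]]$-orbit stratification. A coherent associativity and commutativity constraint is most naturally extracted from the fusion interpretation using the Beilinson--Drinfeld Grassmannian over a curve, where degeneration of several points to one recovers convolution while the $S_2$-action on the two-point fiber provides the symmetry.

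Second, I would construct a fiber functor $F \colon P(Gr) \to \mathrm{Vect}$ by hyperbolic localization along a generic cocharacter $\nu$ of $T^\vee$. Explicitly, set
\[
F(\mathcal{F}) = \bigoplus_\mu F_\mu(\mathcal{F}), \qquad F_\mu(\mathcal{F}) := H^{\langle 2\rho,\mu\rangle}_c(S_\mu, \mathcal{F}),
\]
where $S_\mu$ is the semi-infinite orbit associated to $\mu$. The crucial geometric input is the dimension estimate for MV cycles: the irreducible components of $\overline{Gr^\lambda}\cap S_\mu$ all have dimension exactly $\langle \rho, \lambda+\mu\rangle$. This forces $F_\mu$ to land in a single cohomological degree, which yields exactness of $F$, and compatibility of semi-infinite orbits with convolution upgrades $F$ to a faithful symmetric monoidal functor.

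Tannakian reconstruction then produces an affine group scheme $H$ together with an equivalence $P(Gr) \cong \mathrm{Rep}(H)$. The final step is identifying $H$ with $G$: the weight decomposition of $F$ pins down a maximal torus of $H$ with character lattice $X_*(T^\vee) = X^*(T)$; semisimplicity of $P(Gr)$ (via decomposition theorem and parity) gives reductivity of $H$; and the simple roots and coroots are read off by restricting to rank-one subgroups of $G^\vee$ and matching the resulting $SL_2$ or $PGL_2$ representation theory. This pins down the root datum of $H$ as the dual root datum of $G^\vee$, hence $H \cong G$. The main obstacle is establishing the dimension bound on MV cycles and, with it, the monoidality of the fiber functor; all remaining steps are essentially formal once this geometric input is in place.
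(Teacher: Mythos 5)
The paper does not give a proof of this theorem; it states it as a known result attributed to Mirkovi\'c--Vilonen \cite{MV} and then builds on it. Your sketch is a faithful outline of the argument in that cited reference: convolution via $m_*$ of the twisted external product on $G^\vee(\K)\times^{G^\vee(\cO)}Gr$ made perverse by semismallness, associativity and commutativity supplied by fusion over the Beilinson--Drinfeld Grassmannian, the weight functors $F_\mu = H^{\langle 2\rho,\mu\rangle}_c(S_\mu,-)$ assembled into an exact faithful monoidal fiber functor using the MV-cycle dimension bound $\dim(\overline{Gr^\lambda}\cap S_\mu)\le \langle\rho,\lambda+\mu\rangle$, and finally Tannakian reconstruction together with the torus/root-datum identification. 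Two small points worth flagging in a full write-up: the commutativity constraint from fusion must be corrected by a sign (the ``super'' modification) to match the representation-theoretic one, and the degree concentration of $F_\mu$ uses both the dimension \emph{upper} bound (for vanishing above degree $\langle 2\rho,\mu\rangle$) and perversity/duality (for vanishing below), rather than only the equality of dimensions. Neither affects the soundness of your plan; it agrees with the source the paper relies on.
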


 Let $ D_{G^\vee}(Gr) $ denote the $ G^\vee$-equivariant derived category of constructible sheaves on $ Gr$ and $ D_{G^\vee \times \Cx}(Gr) $ the $G^\vee \times \Cx $-equivariant derived category.  Here $ G^\vee $ acts naturally on $ Gr $ by left multiplication and $ \Cx $ acts by loop rotation.

The derived geometric Satake correspondence describes these categories of sheaves in terms of representation theory/geometry of the group $ G $. Let $ Coh^{G \times \Cx}(\fg) = \OgG$ denote the category of $G$-equivariant $\Z$-graded $ \cO(\fg)$-modules and likewise let $ U_\hbar(\tfrac{\fg}{G})\mod $ denote the category of $G$-equivariant graded $ U_\hbar(\fg)$-modules. Here $ U_\hbar(\fg) $ denotes the Rees algebra of the universal enveloping algebra $ U \fg $ with respect to the PBW filtration ($U_\hbar(\fg) $ is sometimes called the asymptotic universal enveloping algebra -- it should not be confused with the quantum group $U_q \g$). The $\Cx$ action on $\fg$ is by scaling, which means that in the induced $\Z$-grading on $ \cO(\fg) $ and $ U_\hbar(\fg)$, elements of $ \fg $ as well as $\hbar$ have degree $1$.  As usual, we will write $ \{1\} $ for a shift in the $\Z$-grading. Typical objects of $ \OgG $ and $  U_\hbar(\tfrac{\fg}{G})\mod $ are $ \cO(\fg) \otimes V $ and $ U_\hbar(\fg) \otimes V $ where $ V $ is a finite-dimensional representation of $ G $.

The following result is due to Bezrukavnikov-Finkelberg \cite[Corollary 1]{BF}.

\begin{Theorem} \label{th:DerivedSatake}
There are monoidal functors
\begin{align*}
  F : D(\OgG) \rightarrow D_{G^\vee}(Gr) \\
F: D(U_\hbar(\tfrac{\fg}{G})\mod) \rightarrow D_{G^\vee \times \Cx}(Gr)
\end{align*}
such that for any two objects $ A, B \in \OgG$, $ F$ gives an isomorphism
$$
\bigoplus_{n,m} \Ext^m(A, B\{n\}) \cong \bigoplus_k \Ext^k(F(A),F(B))
$$
(and similarly for $A, B \in U_\hbar(\tfrac{\fg}{G})\mod$).
\end{Theorem}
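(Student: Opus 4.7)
The plan is to construct $F$ in three stages: pair generators via the Mirkovic-Vilonen equivalence $P(Gr)\cong\RG$, compute and match graded Ext-algebras on the two sides, and then promote the identification to a chain-level equivalence via a formality theorem. Monoidality will be inherited from MV-Satake. I focus on the nonequivariant version; the loop-rotation version runs in parallel once $\cO(\fg)$ is replaced by its Rees algebra $U_\hbar(\fg)$.

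Since $D(\OgG)$ is generated by the free objects $\cO(\fg)\otimes V$ for $V\in\RG$, it suffices to define $F$ on these generators and on morphism complexes between them. I set $F(\cO(\fg)\otimes V)$ to be the $G^\vee$-equivariant lift of the IC sheaf on $Gr$ corresponding to $V$ under MV-Satake. To verify the Ext-comparison, note first that on the algebraic side
$$
\bigoplus_{n,m}\Ext^m(\cO(\fg)\otimes V,\,\cO(\fg)\otimes W\{n\})\;\cong\;\Hom_G(V,\,\cO(\fg)\otimes W),
$$
since $\cO(\fg)\otimes V$ is projective (so only $m=0$ contributes) and the $n$-grading records the polynomial degree on $\cO(\fg)$. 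On the geometric side, equivariant formality of $Gr$, the identification $H^*_{G^\vee}(\pt)\cong\cO(\fg)^G$, and the Mirkovic-Vilonen cycle description of weight spaces should yield
$$
\bigoplus_k\Ext^k_{D_{G^\vee}(Gr)}(\mathcal{IC}_V,\mathcal{IC}_W)\;\cong\;\Hom_G(V,\,\cO(\fg)\otimes W),
$$
with the two gradings on the algebraic side collapsing to the single cohomological grading $k$ because loop rotation has not been turned on. Matching these two computations gives the required graded isomorphism.

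The main obstacle is to lift this identification of cohomology algebras to a chain-level equivalence, i.e.\ to prove formality of the dg-endomorphism algebra $R\End(\bigoplus_V\mathcal{IC}_V)$ in $D_{G^\vee}(Gr)$. The standard route is via weight-purity in the mixed/Frobenius framework: the $G^\vee$-equivariant IC sheaves are pure, so the dg-algebra inherits a weight filtration (equivalently, a Frobenius action whose eigenvalues are determined by cohomological degree) that forces a quasi-isomorphism to its cohomology. Given formality, $F$ is determined up to $A_\infty$-equivalence by its values on generators, and the stated Ext-isomorphism follows tautologically.

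Monoidality of $F$ then combines monoidality of MV-Satake on the abelian level with the observation that the $\cO(\fg)$-module structure on $V\otimes W$ arises from the coproduct $\cO(\fg)\to\cO(\fg)\otimes\cO(\fg)$ (dually: addition $\fg\oplus\fg\to\fg$); compatibility with convolution on $D_{G^\vee}(Gr)$ follows because the $\fg$-action on Ext-groups comes from the universal $G^\vee$-equivariant characteristic class. The $\Cx$-equivariant version replaces the polynomial grading on $\cO(\fg)$ by the Rees grading on $U_\hbar(\fg)$, which now matches a genuine loop-rotation weight rather than collapsing into cohomological degree. The only serious difficulty throughout is the formality step; once it is in place, the rest is bookkeeping.
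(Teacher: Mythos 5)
The paper itself does not prove this statement: it is quoted as \cite[Corollary~1]{BF} (Bezrukavnikov--Finkelberg), so there is no ``paper's own proof'' to match your attempt against; what follows is therefore a comparison with BF's actual argument.

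Your outline has the right overall shape (define $F$ on generators via MV--Satake, compute Ext, use formality to go from cohomology-level to chain-level), and the algebraic-side Ext computation $\bigoplus_{n,m}\Ext^m(\cO(\fg)\otimes V,\cO(\fg)\otimes W\{n\})\cong\Hom_G(V,\cO(\fg)\otimes W)$ is correct. But there are two genuine gaps. First, the geometric-side identity $\bigoplus_k\Ext^k_{D_{G^\vee}(Gr)}(\mathcal{IC}_V,\mathcal{IC}_W)\cong\Hom_G(V,\cO(\fg)\otimes W)$ is precisely the content of BF's Theorem~1 and is not a consequence one can just read off from ``equivariant formality plus MV cycles''; BF derive it by passing to global sections (Ginzburg's faithfulness of $D_{G^\vee}(Gr)\to H^*_{G^\vee}(Gr)\mod$, the paper's Lemma~13 reference) and then identifying the target explicitly. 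Second, and more seriously, you dismiss monoidality as ``bookkeeping,'' but showing that the Ext-isomorphism intertwines convolution on the sheaf side with the $\cO(\fg)$-linear tensor product on the module side is the central difficulty in BF and cannot be read off from the coproduct $\cO(\fg)\to\cO(\fg)\otimes\cO(\fg)$. Their mechanism is the Kostant--Whittaker reduction functor (the ``Kostant slice functor'' of section~2.6 of the present paper), which replaces $\OgG$ by modules over the regular centralizer group scheme $Z_\fg$ and gives a convolution-compatible fibre functor; this is where the real work happens.

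Concerning formality: you correctly identify the weight/purity argument as the standard route, but the assertion that purity of the IC sheaves ``forces a quasi-isomorphism to its cohomology'' is not automatic. One needs either a mixed/$\ell$-adic model with a Frobenius whose eigenvalues cleanly separate cohomological degrees (and compatibility of the $G^\vee$-equivariant structure with this Frobenius), or a mixed Hodge module formulation, together with the purity of the equivariant Ext groups themselves (not merely of the objects). BF do establish this, but it requires an argument, and your sketch treats it as a black box. Relatedly, defining $F$ only on the generators $\cO(\fg)\otimes V$ does not by itself determine a monoidal dg-functor; even granting formality of both endomorphism algebras, one must exhibit an actual algebra map (not merely an isomorphism of Ext-groups object by object) and then check it is compatible with convolution. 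That construction is where BF's regular-centralizer machinery enters and is not reducible to a formality statement plus MV--Satake.
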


Using a standard result of Ginzburg \cite[Theorem 8.6.7]{CG} (see section 3.4 below), we can translate this into a statement about homology of certain fibre products.   Suppose that $ \ulam = (\lambda_1, \dots, \lambda_m) $ and $ \umu = ( \mu_1, \dots, \mu_{m'}) $ are two sequences of minuscule dominant weights for $ G $.  Then the geometric Satake equivalence and its derived version (Theorem \ref{th:DerivedSatake}) imply the following isomorphisms.

\begin{Theorem} \label{th:DerivedSatake2}
There are canonical isomorphisms
\begin{align*}
H_{top}(Z(\ulam, \umu)) &\cong \Hom_G(V(\ulam), V(\umu)) \\
H^{G^\vee}_*(Z(\ulam, \umu)) &\cong \Hom_{\OgG}(\cO(\fg) \otimes V(\ulam), \cO(\fg) \otimes V(\umu)) \\
H^{G^\vee \times \Cx}_*(Z(\ulam, \umu)) &\cong \Hom_{ U_\hbar(\tfrac{\fg}{G})\mod}(U_\hbar(\fg) \otimes V(\ulam), U_\hbar(\fg) \otimes V(\umu)).
\end{align*}
\end{Theorem}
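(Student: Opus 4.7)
The plan is to rewrite both sides of each isomorphism as (equivariant) morphism spaces in the constructible derived category of $Gr$ and then apply the (derived) geometric Satake equivalences already stated.

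First, for a sequence $\ulam = (\lambda_1, \dots, \lambda_m)$ of minuscule dominant coweights, consider the convolution variety $Gr(\ulam) = Gr^{\lambda_1} \ttt \cdots \ttt Gr^{\lambda_m}$ with its proper convolution morphism $m_\ulam : Gr(\ulam) \to Gr$. Since each $Gr^{\lambda_i}$ is smooth (by minusculity) and $m_\ulam$ is semismall, the pushforward $\sF_\ulam := m_{\ulam,*} \mathbb{C}_{Gr(\ulam)}[\dim Gr(\ulam)]$ is a semisimple perverse sheaf on $Gr$. By monoidality of the Satake equivalence, $\sF_\ulam$ corresponds to $V(\ulam) = V(\lambda_1) \otimes \cdots \otimes V(\lambda_m)$; by monoidality of the derived Satake functor $F$ of Theorem \ref{th:DerivedSatake}, applied iteratively to the minuscule generators, $\sF_\ulam$ is identified with $F(\cO(\fg) \otimes V(\ulam))$, respectively $F(U_\hbar(\fg) \otimes V(\ulam))$.

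Next, apply Ginzburg's theorem (CG Theorem 8.6.7) in its equivariant form: for a proper map $\pi: X \to Y$ from a smooth variety $X$ equivariant for the action of an algebraic group $H$, one has
\[
H^H_*(X \times_Y X) \cong \bigoplus_k \Ext^k_{D_H(Y)}\bigl(\pi_* \mathbb{C}_X[\dim X], \pi_* \mathbb{C}_X[\dim X]\bigr),
\]
with the non-equivariant top-degree version recovering $\Hom$ between perverse sheaves when $\pi$ is semismall. Applied to $m_\ulam \sqcup m_\umu : Gr(\ulam) \sqcup Gr(\umu) \to Gr$ with $H = \{e\}, G^\vee, G^\vee \times \Cx$ in turn, this gives
\begin{align*}
H_{top}(Z(\ulam,\umu)) &\cong \Hom_{P(Gr)}(\sF_\ulam, \sF_\umu), \\
H^{G^\vee}_*(Z(\ulam,\umu)) &\cong \bigoplus_k \Ext^k_{D_{G^\vee}(Gr)}(\sF_\ulam, \sF_\umu), \\
H^{G^\vee \times \Cx}_*(Z(\ulam,\umu)) &\cong \bigoplus_k \Ext^k_{D_{G^\vee \times \Cx}(Gr)}(\sF_\ulam, \sF_\umu).
\end{align*}
Combining the first with ordinary Satake yields $\Hom_G(V(\ulam), V(\umu))$. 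Combining the other two with Theorem \ref{th:DerivedSatake} and step one turns the right-hand sides into the total Ext between $\cO(\fg) \otimes V(\ulam)$ and $\cO(\fg) \otimes V(\umu)$ (resp.\ the $U_\hbar$ analogue), summed over internal and cohomological grading. Since the $V$'s are finite dimensional and $\cO(\fg) \otimes V(\ulam)$ is free over $\cO(\fg)$ with $G$ reductive, all higher Ext groups vanish, leaving only Hom in each internal degree.

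The main obstacle, and the only substantive input beyond Theorem \ref{th:DerivedSatake}, is step one: identifying $F(\cO(\fg) \otimes V(\ulam))$ with $\sF_\ulam$. This requires a careful use of the monoidal compatibility of $F$, namely that convolution on the geometric side matches tensor product over $\cO(\fg)$ on the algebraic side, and that in the minuscule case the iterated convolution of the generators $F(\cO(\fg) \otimes V(\lambda_i))$ is indeed the pushforward $\sF_\ulam$ from the convolution variety. Once this identification is in hand, together with a careful statement of the equivariant Ginzburg theorem (to be developed in section 3.4), the theorem is a direct consequence.
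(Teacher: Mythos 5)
Your proof is correct and takes essentially the approach the paper has in mind: one translates $H_*(Z(\ulam,\umu))$ (in each equivariance) into total Ext in $D_{G^\vee}(Gr)$ via Chriss--Ginzburg's Theorem 8.6.7 (set up in the paper as Theorem \ref{th:hConv} and its equivariant variant), then applies abelian and derived geometric Satake, using that $F$ sends $\cO(\fg)\otimes V(\lambda_i)$ to $\C_{Gr^{\lambda_i}}[2\rho^\vee(\lambda_i)]$ and is monoidal, and finally observes that freeness of $\cO(\fg)\otimes V(\ulam)$ over $\cO(\fg)$ together with reductivity of $G$ kills the higher Ext. The paper leaves these steps implicit, stating the theorem as an immediate consequence of Theorem \ref{th:DerivedSatake} and Ginzburg's result.
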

Here $ Z(\ulam, \umu) $ is a certain fibre product defined using $ Gr $ (see section \ref{sec:homconv}), and $ V(\ulam) = V(\lambda_1) \otimes \cdots \otimes V(\lambda_m) $ is a tensor product of irreducible representations.

\subsection{A conjectural K-theoretic version}

One goal in this paper is to extend the geometric Satake correspondence and the isomorphisms from Theorem \ref{th:DerivedSatake2} to the quantum group $ U_q \g $. We will introduce the quantum parameter $q$ by replacing homology with $\mathbb C^\times $-equivariant $K$-theory\footnote{It is clear from Theorem \ref{th:DerivedSatake2} that working with $ \Cx$-equivariant homology does not introduce the quantum group.}.  We begin with the ansatz that passing from homology to K-theory on the left hand sides on Theorem \ref{th:DerivedSatake2} corresponds to passing from $ \cO(\fg) $ to $ \cO(G) $ on the right hand sides.  Let $ \cO_q(G) $ be the braided quantum function algebra, which is a quantization of $ \cO(G) $.  This leads us to the following conjecture.

\begin{Conjecture}\label{conj:main}
Let $ \ulam $ and $ \umu $ be two sequences of minuscule dominant weights. Up to appropriate localizations, there are canonical isomorphisms
\begin{align*}
K^{G^\vee}(Z(\ulam, \umu)) \cong \Hom_{\OGG}(\cO(G)  \otimes V(\ulam), \cO(G) \otimes V(\umu)) \\
K^{G^\vee \times \Cx}(Z(\ulam, \umu)) \cong \Hom_{\OqGG}(\Oq \otimes V(\ulam), \Oq \otimes V(\umu))
\end{align*}
where $\OGG$ is the category of $G$-equivariant $\cO(G)$-modules and $ \OqGG $ the category of $\Uq$-equivariant $\Oq$-modules.
\end{Conjecture}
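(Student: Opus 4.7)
The plan is to prove the $G = SL_n$ case (the general $G$ case appears out of reach with current tools) by exhibiting both sides of the conjecture as incarnations of a single diagrammatic category: the \emph{annular} $SL_n$ spider. The strategy has three stages.

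First, I would match the right-hand side with the annular $SL_n$ spider. The planar $SL_n$ spider of Cautis-Kamnitzer-Morrison gives a generators-and-relations presentation of the full subcategory of $\RqSm$ on tensor products $V(\ulam)$ of minuscule representations. Applying the horizontal trace (categorical cocenter) construction turns the planar spider into an annular one, whose morphisms are webs drawn on an annulus. On the representation-theoretic side, I expect horizontal trace of $\RqSm$ to compute the subcategory of $\OqSSm$ generated by the free modules $\Oq \otimes V(\ulam)$; the key input is the coend identification $\cO_q(SL_n) = \int^V V \otimes V^*$ together with the adjunction
$$\Hom_{\OqSSm}(\Oq \otimes V(\ulam), \Oq \otimes V(\umu)) \cong \Hom_{U_q \sl_n}(V(\ulam), \Oq \otimes V(\umu)).$$

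Second, I would match the left-hand side with the annular spider via quantum loop algebras. The geometric Satake categorification equips the convolution category with a $U_q\sl_n$-action; the plan is to enhance this to an action of the quantum loop algebra $\dU_q L\sl_n$ on $KConv(Gr)$, with the loop parameter coming from $\Cx$-equivariance. Because the annular spider can be described as the spider of evaluation representations of $\dU_q L\sl_n$, this action should induce a functor from the annular spider to $KConv(Gr)$. Establishing that it is an equivalence reduces the skein and loop relations to computations on small fibre products (minuscule grassmannians and two-step flag varieties). The classical isomorphism (first line of the conjecture) then follows by dropping the $\Cx$-equivariance, or equivalently specializing $q \to 1$.

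The main obstacle will be the annular relations in the second stage. Planar skein relations are local and should yield to direct K-theoretic Euler-class computations in the spirit of existing $U_q\sl_n$-categorifications. But the relations genuinely specific to the \emph{annular} setting encode cyclic trace symmetry and loop-rotation braiding simultaneously, and on the geometric side these couple global behavior of $Gr$ with $\Cx$-equivariance. Controlling this compatibility -- showing that the $\dU_q L\sl_n$-action on $KConv(Gr)$ matches, on the nose, the loop-algebra structure built into the annular spider -- is where quantum loop algebras are essential, and is the delicate heart of the argument.
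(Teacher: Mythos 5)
Your three-stage architecture does match the paper's: use the annular $SL_n$ spider $\ASp(q)$ as the common hub, identify $\Hom_{\OqSSm}(\Oq\otimes V(\ulam),\Oq\otimes V(\umu))$ with its morphism spaces via horizontal trace (and the adjunction you wrote), and build a functor from the annular spider into $KConv^\A(Gr)$ through quantum loop algebras. However, there are two concrete gaps in stage two that would derail the execution as you describe it.

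First, the loop algebra is wrong. Under skew Howe duality, $\sl_n$ lives on the web-label/colour side while it is $\gl_m$ that indexes the sequences of tensor factors, and it is the $\gl_m$ direction that gets looped when you pass to the annulus. The relevant algebra is therefore the idempotented quantum loop algebra $(\dU_q L\gl_m)^n$ (for varying $m$), acting on $K$-theory by correspondences in the style of categorical geometric skew Howe duality, with the extra generators $E_0, F_0$ (or equivalently the rotation $R$) supplying the annular wrap-around; the annular spider is the limit $\varinjlim_m (\dU_q L\gl_m)^n$. A putative $\dU_q L\sl_n$ action on $KConv^\A(Gr)$ is not what produces the functor; there is no natural such action in this picture, and $\sl_n$ appears only through $\Gc = SL_n$ acting on $\C^n$ and hence on $K$-theory coefficients.

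Second, the difficulty does not sit where you anticipate. The paper does not establish full-faithfulness of $\Phi:\AqSp\to KConv^\A(Gr)\otimes\C(q)$ by verifying the annular skein relations on fibre products via Euler-class computations. Instead one reduces, by attaching forks (the clasp/antisymmetrizer trick) at the top and bottom, to the single object $1^m=(1,\dots,1)$, and then proves the isomorphism $\End_{\ASp(q)}(1^m)\cong K^\A(Z(1^m,1^m))\otimes\C(q)$ by sandwiching: there is a surjection from $E(q)[AB_m]$ (a cyclotomic-affine-Hecke-type quotient) onto the algebraic side, and an injection of the geometric side into $\End_{\Sym(\ux)}(R^m_{n,(q)})$ where $R^m_{n,(q)}$ is an explicit polynomial model identified with $K^\A(Y(1^m))$, and one checks the two maps agree on generators. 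Moreover the $q\leftrightarrow 1$ direction is reversed from your plan: the $q=1$ case is proved first, using the Steinberg/Kostant slice for $\OSS$ and a Kraft--Procesi surjectivity result, and the quantum case is then bootstrapped from it by graded-freeness over $\C[q^\pm]$ and an upper-semicontinuity-of-cokernel argument, together with freeness of both $\End_{\ASp[q^\pm]}(1^m)$ (via $(\dU_{[q^\pm]}L\gl_m)^n$ and Beck--Nakajima) and of $K^\A(Z(1^m,1^m))$ (via the $\alpha$-partition and property $T_A$). It is not a matter of specializing $q\to 1$ at the end, but of using $q=1$ as the anchor.
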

Notice that we do not make any statement concerning $ \Hom_{\Uq}(V(\ulam), V(\umu)) $.  This is because we do not have an analog of ``top homology'' within the world of $ \Cx$-equivariant K-theory.

In section \ref{sec:kconv}, we define a category $ KConv^{G^\vee \times \Cx}(Gr) $ whose objects are sequences of minuscule dominant weights and whose morphisms are given by $K^{G^\vee \times \Cx}(Z(\ulam, \umu)) $ (and similarly without the $\Cx $). Then Conjecture \ref{conj:main} can be reformulated as the existence of full embeddings
\begin{align*}
KConv^{G^\vee}(Gr) &\rightarrow \OGG \\
KConv^{G^\vee \times \Cx}(Gr) &\rightarrow \OqGG
\end{align*}
(see Conjecture \ref{co:main}).

There are two explanations for the appearances of the categories $\OGG $ and $ \OqGG $.  On the one hand, they are the natural ``multiplicative'' analogs of the categories  $ \OgG$ and $ U_\hbar(\tfrac{\fg}{G})\mod$ from the work of Bezrukavnikov-Finkelberg \cite{BF}.  On the other hand, as we will recall in section \ref{se:hortrace}, they are the horizontal traces of the monoidal categories $ \RG $ and $ \RqG$.  Both perspectives will play a role in this paper.

The algebras $ \cO(\g), U_\hbar(\fg), \cO(G), \cO_q(G) $ fit into a natural ``diamond'', which we learnt from Jordan \cite{J}\footnote{We emphasize here that $ U_\hbar(\fg) $ is not the quantum group, just the asymptotic enveloping algebra, and the parameters $ \hbar $ and $ q $ are unrelated.}.
\begin{equation*}
\xymatrix{
& \cO_q(G) \ar@{-}[dr] & \\
U_\hbar(\fg) \ar@{-}[ru] & & \cO(G) \\
& \cO(\g) \ar@{-}[lu] \ar@{-}[ru] &
}
\end{equation*}
This leads to a diagram of categories.
\begin{equation*}\label{diamond1}
\xymatrix{
& \OqGG \ar@{-}[ld] \ar@{-}[rd] & \\
U_\hbar(\tfrac{\fg}{G})\mod & & \OGG \\
& \OgG \ar@{-}[ru] \ar@{-}[lu]&
}
\end{equation*}
Conjecture \ref{conj:main} can then be formulated as saying that this diagram of ``algebraic'' categories is related to the following diagram of ``topological'' categories.
\begin{equation*}\label{diamond2}
\xymatrix{
& KConv^{G^\vee \times \Cx}(Gr) \ar@{-}[ld] \ar@{-}[rd] & \\
D_{G^\vee \times \Cx}(Gr) & & KConv^{G^\vee}(Gr) \\
& D_{G^\vee}(Gr) \ar@{-}[lu] \ar@{-}[ru] &
}
\end{equation*}

\subsection{Proof in type A}
Most of this paper is devoted to proving the above conjecture for $G = SL_n$.

Let $ \OqSSm $ be the subcategory of $\Uqn$-equivariant $\OqS$-modules of the form $ \OqS \otimes V $, where $ V $ is a tensor product of fundamental representations.  Our strategy is to give a combinatorial/diagrammatic presentation of both categories $ \OqSSm $ and $ KConv^{\A}(Gr) $ using the annular $ SL_n$ spider.  We believe that these presentations are of their own independent interest.

Our strategy involves a series of functors and categories summarized in the commutative diagram (\ref{fig:summary}). We now briefly discuss these categories and functors.

\begin{equation}\label{fig:summary}
\xymatrix{
(\dUL)^n \ar[r]^{\APsi_m} \ar[dr]^{\Phi_m} & \ASp(q) \ar[r]^{A\Gamma} \ar[d]^\Phi & \OqSSm \\
 & KConv^{\A}(Gr) &
}
\end{equation}

\subsubsection{Annular spiders}
The $SL_n$ spider category $\Sp$ is a combinatorial/diagrammatic presentation of the representation category of $SL_n$ due to the authors and Morrison \cite{CKM}\footnote{In \cite{CKM}, we worked over $\C(q)$, as we (mostly) do in this paper.  Later, Elias \cite{E2} extended the equivalence $\Sp(q) \cong \RqSm$ to $ \Z[q,q^{-1}] $.  Thus it is possible that the results of the current paper hold over $ \Z[q,q^{-1}] $.  But we have not explored this in depth.}.   In this paper, we use the annular version $\ASp$ of this category, where morphisms are linear combination of webs (with boundary) in an annulus modulo the same local relations as in $\Sp$. Using the machinery of horizontal trace and the main result in \cite{CKM}, we obtain the following.

\begin{Theorem}
There is an equivalence of categories
$$ A\Gamma : \AqSp \xrightarrow{\sim} \OqSSm.$$
\end{Theorem}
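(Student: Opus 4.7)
The plan is to realise $A\Gamma$ as the horizontal trace of the pivotal equivalence $\Gamma : \Sp(q) \xrightarrow{\sim} \RqSm$ established in \cite{CKM}. Horizontal trace is a 2-functor on $\C(q)$-linear pivotal categories, so $\Gamma$ induces an equivalence of the associated horizontal traces, and it therefore suffices to identify the two traces with $\AqSp$ on the one hand and $\OqSSm$ on the other.

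The identification on the spider side is essentially tautological. A morphism $\ulam \to \umu$ in the horizontal trace of $\Sp(q)$ is a class of planar webs whose boundary, read cyclically, is $\ulam \cdot \underline{\nu} \cdot \umu^* \cdot \underline{\nu}^*$ for some auxiliary sequence $\underline{\nu}$, taken modulo the partial trace relations that identify $f \otimes \mathrm{id}_W$ with its cyclic rotations. Bending the $\underline{\nu}$ strands around glues the top and bottom of the strip and produces exactly a web drawn on an annulus with outer boundary $\umu$ and inner boundary $\ulam$, modulo the same local spider relations as in $\Sp(q)$. This matches the definition of $\AqSp$.

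For the representation side, one appeals to the identification from Section \ref{se:hortrace}, which states that the horizontal trace of $\RqG$ is $\OqGG$. The essential point is that the coend $\int^{V \in \RqG} V^* \otimes V$ in the braided category $\RqG$ is represented by the braided quantum function algebra $\OqS$ with its canonical $\Uqn$-equivariant structure; closing off a morphism in $\RqSm$ against the identity on an auxiliary fundamental object and passing to the colimit produces exactly an element of $\Hom_{\OqSSm}(\OqS \otimes V(\ulam), \OqS \otimes V(\umu))$. Restricting from all of $\RqS$ to tensor products of fundamental representations on both sides gives precisely the minuscule versions $\AqSp$ and $\OqSSm$, and the composition of the two identifications with $\mathrm{hTr}(\Gamma)$ is the desired functor $A\Gamma$.

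The main obstacle is the second identification: one must verify that $\OqS$ genuinely represents the horizontal trace/coend in this braided pivotal setting, that the CKM pivotal structure on $\RqSm$ is compatible with the $\OqS$-comodule structure on the target, and that partial-trace cyclicity in the annulus is matched on the algebraic side by the $\OqS$-bimodule balancing built into $\OqSSm$. Once this compatibility is checked, full faithfulness and essential surjectivity of $A\Gamma$ follow from the corresponding properties of $\Gamma$ together with the fact that both horizontal-trace constructions are defined by the same universal (co)end recipe.
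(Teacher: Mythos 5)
Your strategy coincides with the paper's: apply the CKM equivalence $\Gamma$, take horizontal trace, and identify $\mathrm{hTr}(\Sp(q))$ with $\AqSp$ and $\mathrm{hTr}(\RqSm)$ with $\OqSSm$. The spider-side identification you describe by closing up the auxiliary $\underline{\nu}$-strands is the same content as the paper's cut-and-unfold argument (Proposition~\ref{th:Annular}), and the representation-side identification is the paper's Proposition~\ref{pr:OCmod}.

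However, there is a gap in how you pass from the full semisimple category to the minuscule subcategory. The coend $\int^{V} V^* \otimes V$ being represented by $\OqS$, and the resulting equivalence $\mathrm{hTr}(\RqS) \cong \OqSS$, both concern the idempotent-complete category $\RqS$; the auxiliary objects $Z$ appearing in morphisms of the trace range over \emph{all} finite-dimensional representations, and semisimplicity (or the coend description) is genuinely used. When you then write that ``restricting \dots gives precisely the minuscule versions,'' you are implicitly asserting that the natural functor $\mathrm{hTr}(\RqSm) \to \mathrm{hTr}(\RqS)$ is fully faithful — equivalently, that a morphism $[Z,\phi]$ with $Z$ an arbitrary representation can always be replaced by an equivalent $[Z',\phi']$ with $Z'$ a tensor product of fundamentals. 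This is not automatic: $\RqSm$ is not semisimple, so you cannot run the direct-sum/coend argument inside it. The paper handles this with Proposition~\ref{pr:Idemp}, which shows $\cC(S^1) \to \overline{\cC}(S^1)$ is fully faithful whenever $\overline{\cC}$ is the idempotent completion of $\cC$ (using that every object of $\overline{\cC}$ is a retract of an object of $\cC$, so one can enlarge $Z$ to an object of $\cC$ and extend $\phi$ by zero). Your list of ``main obstacles'' flags the coend representability and pivotal-compatibility issues but misses this idempotent-completion step, which is the one piece of the argument that does not follow formally from the universal-property viewpoint you sketch.
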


\subsubsection{Quantum loop algebras}

The main tool in \cite{CKM} was the realization that via skew-Howe duality, $ \RqS $ (and thus $ \qSp $) is the limit as $m \rightarrow \infty$ of truncations $ (\dU_q \gl_m)^n $ of the idempotented version of quantum groups.  In this paper, we will use that $\AqSp$ is the limit of truncations $(\dUL)^n$ of idempotented quantum loop algebras.  This is achieved by defining an annular ``ladder-formation'' functor $ \APsi_m : (\dUL)^n \rightarrow \AqSp $.

On the other hand, Nakajima \cite{Na1} (following Ginzburg and Vasserot) constructed actions of quantum loop algebras on the K-theory of quiver varieties.  In type A, the varieties $ Z(\ulam, \umu) $ are closely related to Steinberg varieties of quiver varieties.  Motivated by this connection and by our earlier work \cite{CKL,Ca,CK3}, we construct functors
$$ \Phi_m : (\dUL)^n \rightarrow KConv^{\A}(Gr) $$
for each $m$. These functors are compatible with the inclusion of $(\dUL)^n$ into $(\dot{U}_q L \gl_{m+1})^n$ and thus induce a functor
$$ \Phi : \AqSp \rightarrow KConv^{\A}(Gr).$$

\subsubsection{The proof}

Since we have an equivalence $ A\Gamma : \AqSp \xrightarrow{\sim} \OqSSm $ it suffices to show that $\Phi : \AqSp \rightarrow KConv^{\A}(Gr)$ is an equivalence. Using a simple trick, this can be reduced to showing that the map $ \Phi : \End_{\AqSp}(1^m) \rightarrow K^{\A}(Z(1^m, 1^m)) $ is an isomorphism (where $ 1^m = (1,\dots, 1) $). To do this we consider the composition
$$\End_{\AqSp}(1^m) \rightarrow K^{\A}(Z(1^m, 1^m)) \rightarrow \End(K^{\A}(Y(1^m)))$$
and compute explicitly the images of everything inside the right hand term.  

Here $ Y(1^m) $ is an iterated fibre product of $ m $ copies of $ \P^{n-1} $ which is defined using the affine Grassmannian. To do this computation, we study a certain ring $R_{n,[q^\pm]}^m$ (see section \ref{sec:qDefR} for the definition) which we identify with the equivariant K-theory of $Y(1^m)$ (Lemma \ref{lem:ktheory}).  We also study certain endomorphisms of this ring which give us an algebra closely related to cyclotomic affine Hecke algebras (we discuss this in section \ref{se:cyclotomic}). All these rings are quite explicit and may be of independent interest. 

\subsection{Relationship to other work}

\subsubsection{Betti geometric Langlands}

The work of Kapustin-Witten \cite{KW}, and more recent work of Ben-Zvi and Nadler (in progress), describes a Betti version of the geometric Langlands program. In this setup one has a (partially defined) 4-dimensional topological field theory. To a surface this theory associates a category. These categories were studied recently in \cite{BBJ}. In the case when the surface is an annulus the theory gives $\OqGG$. This category along with the category associated to a pair of pants form the building blocks of this TFT.

The results in this paper relate $\OqGG$ with the $K$-theoretic convolution category of the affine Grassmannian for the Langlands dual group. It would be interesting to obtain a similar result for the category associated to a pair of pants.  We hope that our results will be an important step towards the Betti geometric Langlands program.

\subsubsection{Gaitsgory's work}

There is already a quantum version of the geometric Satake correspondence due to Gaitsgory \cite{Ga}.  In his work, the category of representations of the quantum group is realized as the category of twisted Whittaker sheaves on the affine Grassmannian. The relationship between Gaitsgory's result and the present work is not at all clear.  In particular, it follows that the horizontal trace of the category of twisted Whittaker sheaves is equivalent to our category $ KConv^\A(Gr)$ --- we do not have a geometric explanation for this fact.

\subsubsection{Elias' work}
Our work can also be compared with a recent paper by Elias \cite{E}.  Analogous to the category $KConv^{G^\vee}(Gr)$, one may also define $ hConv^{G^\vee}(Gr) $ (see section \ref{sec:homconv}), which is equivalent to a full subcategory of $ D_{G^\vee}(Gr) $.  Via the work of Soergel and others (see section 6 of \cite{E}), the category $ hConv^{G^\vee}(Gr) $ is equivalent to the category of maximally singular Bott-Samelson bimodules $ mSBSBim $ for the corresponding affine Weyl group.

Assume now $ G = SL_n$.  By combinatorial/algebraic methods, Elias has constructed an equivalence between $ \Sp $ and the degree 0 part of $ mSBSBim $ (in \cite{E} this is done for $ n = 2, 3$ and the general case will appear in a future work).  This reproves the original abelian geometric Satake equivalence in this case.  This should be compared with our equivalence $ \ASp \cong KConv^{G^\vee}(Gr) $.  (Our work gives an equivalence between $ \Sp$ and a subcategory of $ KConv^{G^\vee}(Gr) $, but it is not clear whether this reproves the original abelian geometric Satake equivalence.)

Moreover, Elias gives a $q$-deformation $ mSBSBim_q $ which he proves is degree 0 equivalent to $ \qSp$.  We do not know how this $q$-deformation is related to our equivalence $ \AqSp \cong KConv^{\A}(Gr) $.

\subsubsection{Categorification}
In our previous work \cite{CKM}, we suggested that the equivalence $ \qSp \cong \RqS $ could be used to define a categorification of $\RqS$, called $Foam_n$, as the a limit of categorifications of $(\dU_q \gl_m)^n$
$$\lim_{m \rightarrow \infty} (\dsU)^n \cong Foam_n.$$
This program was carried out by Queffelec-Rose \cite{QR}.  The foam category $Foam_n$ has objects sequences from $\{1, \dots, n-1\}$, 1-morphisms given by webs, and 2-morphisms given by certain bordisms with seams, called foams.

In our current paper, we have functors
$$ (\dUL)^n \rightarrow \ASp \rightarrow KConv^{\A}(Gr) $$
and it is natural to look for categorifications of each of these categories. One expects that $ (\dUL)^n $ can be categorified by some 2-category $ (\dsUL)^n $ analogous to $(\dsU)^n$ (though this has not been done as far as we know).  $ \ASp $ can be categorified by  $AFoam_n$, the category whose objects are the same as $ Foam_n$, whose 1-morphisms are webs in the annulus (with boundary) and whose 2-morphisms are annular foams. In a recent paper \cite{QR2}, Queffelec-Rose study the endomorphisms of the trivial object in this category.  Finally, $ KConv^{\A}(Gr) $ has a natural categorification $ CohConv^{\A}(Gr) $, where the morphism categories are given by derived categories of coherent sheaves on the fibre products $ Z(\ulam, \umu) $.

Thus it is natural to expect that we should have 2-functors
$$ \lim_{m \rightarrow \infty} (\dsUL)^n \rightarrow AFoam_n \rightarrow CohConv^{\A}(Gr).$$
The composition of these 2-functors is understood to some extent, based on this paper and earlier work by the authors. We expect the first 2-functor to be an equivalence while the second 2-functor is more mysterious.

\subsubsection{Knot invariants}
The spider category $\qSp$ can be used to define Reshetikhin-Turaev (RT) invariants of type A for links in the ball. Similarly, the categorification $Foam_n$ can be used to define homological link invariants (this was done in \cite{QR} following the original approach due to Khovanov \cite{Ksl3}).

The results of this paper show that $ \qSp $ embeds into $KConv^\A(Gr) $ and thus $KConv^\A(Gr) $ can be used to define Reshtikhin-Turaev invariants. This suggests that the 2-category $CohConv^{\A}(Gr)$ can be used to define homological knot invariants. This is essentially the approach used in our previous papers \cite{CK1,CK2,Ca}. Our current work gives a little more perspective to these papers.

Moreover, the results of this paper show that $ KConv^\A(Gr) $ can be used to define RT-invariants for links in the annulus. This suggests that $CohConv^{\A}(Gr)$ can be used to obtain homological invariants of links in the annulus. The case $G = SL_2$ should correspond to sutured Khovanov homology. Using this approach, we immediately obtain an action of $SL_n$ on the $SL_n$-homology of links in the annulus. This is because the resulting invariant lies in the $SL_n \times \C^\times$-equivariant derived category of a point (in the case $n=2$, such an action was studied by Grigsby-Licata-Wehrli \cite{GLW}). This construction is compatible with the recent work of Queffelec-Rose \cite{QR2}. More details will appear in a future paper.

\subsection*{Acknowledgements}
We would like to thank Pierre Baumann, David Ben-Zvi, Alexander Braverman, Adrien Brochier, Roman Bezrukavnikov, Chris Dodd, Ben Elias, Pavel Etingof, Michael Finkelberg, Dennis Gaitsgory, David Jordan, Chia-Cheng Liu, Peter McNamara, Scott Morrison, Hoel Queffelec, David Rose, Peter Samuelson, Xinwen Zhu for helpful discussions.  We particularly thank Michael Finkelberg for helping us to formulate the main conjecture.  We also thank the anonymous referee for many helpful suggestions. The first author was supported by NSERC and thanks MSRI for their support and hospitality. The second author was supported by NSERC, Sloan Foundation, Simons Foundation, and SwissMAP, and he thanks the geometry group at EPFL for their hospitality.

\section{Representation categories}

\subsection{Notation}
Let $G$ be a simply-connected semisimple group.  Let $ T $ be a maximal torus of $ G $, let $ \Lambda $ denote its weight lattice and let $ W $ denote the Weyl group.  Let $ \Lambda_+ $ denote the set of dominant weights and $ \rho^\vee $ half the sum of the fundamental coroots.  For $ \lambda \in \Lambda_+ $, let $ V(\lambda) $ denote the irreducible representation of highest weight $ \lambda $.

Recall that we have an isomorphism $ \cO(G)^G \cong \cO(T)^W $ where $ G $ acts on $ G $ by the adjoint action.  Let $ E = \cO(T)^W \cong \C[\Lambda]^W $.  Recall that $ E $ is a polynomial ring in $ r $ variables, where $ r $ is the rank of $ G $.  We also have an isomorphism $ R(G) \cong E $, where $ R(G) $ denotes the (complexified) representation ring of $ G $.

Let $ G^\vee $ denote the Langlands dual group to $ G$ and let $ \Gc $ be its simply-connected cover.  Let $ \Lambda^\vee $ be the weight lattice of $ \Gc $ and let $ E^\vee = R(\Gc) = \C[\Lambda^\vee]^W $.

Let $(,)$ be the $ W$-invariant bilinear form on $ \ft^* $ (the dual of the Lie algebra of $T$), such that $(\alpha, \alpha) = 2 $ for all short roots $ \alpha $.  This bilinear form gives an isomorphism $ \iota : \ft^* \rightarrow \ft $.  From the $ W$-invariance, it follows that $ \iota(\Lambda) \subset \Lambda^\vee $ (with equality if $ G $ is simply-laced).  Thus $ \iota $ gives us an inclusion $ E \hookrightarrow E^\vee $ (which is an isomorphism when $ G $ is simply laced).

\subsection{Classical representation categories}
Let $\RG$ denote the usual category of finite-dimensional representations of $ G$.  We will be interested in various enhancements/modifications of $ \RG $.

We have the adjoint actions of $ G $ on $ \fg $ and $ G $.  This makes $ \cO(\fg) $ and $ \cO(G) $ into (infinite-dimensional) $ G $ representations.  Let $ \OgG$ denote the full-subcategory of $ G $-equivariant $ \cO(\fg) $-modules which are of the form $ \cO(\fg) \otimes V$, for $ V $ in $ \RG $.  Here $ G $ acts diagonally on $ \cO(\fg) \otimes V$ and $ \cO(\fg) $ acts on the left tensor factor.    Similarly we define $ \OGG$ to be the full-subcategory of $ G$-equivariant $ \cO(G) $-modules which are of the form $ \cO(G) \otimes V$.  Note that we can think of $ \OgG $ as the full subcategory of $ G$-equivariant coherent sheaves of $ \fg $ consisting of trivial vector bundles with $G$-action (and similarly for $ \OGG$).

\subsection{Quantum representation categories} \label{sec:quantumgroups}

\subsubsection{Quantum group and quantum function algebra}
We will denote by $[n]$ the quantum integer $q^{n-1} + q^{n-3} + \dots + q^{-n+3} + q^{-n+1}$. More generally, we have quantum binomial coefficients
$$\qbin{n}{k} := \frac{[n] \dots [1]}{([n-k] \dots [1])([k] \dots [1])}.$$

Let $ \Uq $ denote the quantum group over $ \C(q) $. Let $ \RqG $ denote the category of finite-dimensional representations of $ \Uq $.  For each $ \lambda \in \Lambda_+ $, we have an irreducible representation $ V(\lambda) $.  Recall that the representation ring $R_q(G) $ of $ \Uq $ coincides with the representation ring of $ G $, so $ R_q(G) \cong E $. Also recall that for any two representations $ V,W $ of $ \Uq $, we have a braiding map $ \beta_{V,W} : V \otimes W \rightarrow W \otimes V $.

We consider the quantum function algebra $ \Oq $.  There are (at least) two versions of $ \Oq $ in the literature.  We will need the version introduced by Majid, which sometimes goes under the name ``reflection equation algebra''. We define $ \Oq $ to be the subspace of $ (\Uq)^* $ spanned by matrix coefficients $ V \otimes V^* $ of finite-dimensional representations $ V $.  From this we see that as a vector space $ \Oq = \oplus_{\lambda} V(\lambda) \otimes V(\lambda)^* $ and we have a map $ r_V : V \otimes V^* \rightarrow \Oq $.

There is an action of $ \Uq $ on $ \Oq $ such that for all $ V$, $ r_V : V \otimes V^* \rightarrow \Oq $ is $\Uq$-equivariant.

The multiplication on $ \Oq $ is defined so that for any $ V, W $ the diagram
$$ \xymatrix{
(V \otimes V^*) \otimes (W \otimes W^*) \ar[rr]_>>>>>>>>{I_V \otimes \beta^{-1}_{W \otimes W^*, V^*}} \ar[d]^{r_V \otimes r_W} & &V \otimes W \otimes W^* \otimes V^* = (V \otimes W) \otimes (V \otimes W)^* \ar[d]^{r_{V \otimes W}} \\
\Oq \otimes \Oq \ar[rr] & & \Oq }
$$
commutes.

Since the upper horizontal line in this diagram is a map of $ \Uq $ modules, we see that multiplication in $\Oq $ is equivariant for this action.

Let $ \OqGG$ denote the full subcategory of $ \Uq $-equivariant $\Oq $-modules which are of the form $ \Oq \otimes V $, where $ V $ is a representation $ \Uq $.  Here $ \Uq $ acts on $ \Oq \otimes V $ by the diagonal action and $ \Oq $ acts by left multiplication.

\subsubsection{Central elements and enrichment} \label{se:enrich}
For any representation $ V$, let $ t_V \in \Oq $ denote the image of the canonical element of $ V \otimes V^* $ under the map $ r_V $ (so interpreted as an element of $ (\Uq)^*$, $ t_V $ is the trace along the representation $ V $).
\begin{Proposition} \label{pr:OqUq}
The elements $ t_V $ are central and the map $ [V] \mapsto t_V $ defines an algebra isomorphism between $ E(q) \cong R_q(G) $ and $ (\Oq)^{\Uq} $.
\end{Proposition}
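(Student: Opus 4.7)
The plan is to establish three claims and combine them: (a) each $t_V$ lies in $(\Oq)^{\Uq}$; (b) each $t_V$ is central in $\Oq$; and (c) the assignment $[V] \mapsto t_V$ gives an additive bijection $R_q(G) \to (\Oq)^{\Uq}$ that is multiplicative.

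For (a), observe that the canonical element $c_V \in V\otimes V^*$ is the image of $1$ under the $\Uq$-equivariant coevaluation $\C \to V \otimes V^*$, hence is $\Uq$-invariant. Since $r_V$ is $\Uq$-equivariant, $t_V = r_V(c_V)$ lies in $(\Oq)^{\Uq}$.

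For (c), I would use the Peter-Weyl decomposition $\Oq = \bigoplus_{\lambda} V(\lambda)\otimes V(\lambda)^*$ as $\Uq$-modules (stated in the excerpt). By Schur's lemma, $(V(\lambda)\otimes V(\lambda)^*)^{\Uq}$ is one-dimensional and spanned by $c_{V(\lambda)}$, so $(\Oq)^{\Uq}$ has basis $\{t_{V(\lambda)}\}_{\lambda\in\Lambda_+}$; this matches the natural basis of $R_q(G)\cong E(q)$, giving an additive isomorphism. Multiplicativity $t_V\cdot t_W = t_{V\otimes W}$ follows from applying the product formula to $c_V\otimes c_W$: since $c_W$ is invariant, naturality of $\beta$ applied to the equivariant coevaluation $\C\to W\otimes W^*$ gives $\beta^{-1}_{W\otimes W^*, V^*}(v^*\otimes c_W) = c_W\otimes v^*$, and summing the resulting expression reassembles the canonical element $c_{V\otimes W}$ inside $(V\otimes W)\otimes (V\otimes W)^* = V\otimes W\otimes W^*\otimes V^*$.

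For (b), the delicate step, write $f = r_W(y)$ with $y = w\otimes w^*$. The product $f\cdot t_V$ is easy because the relevant braiding $\beta^{-1}_{V\otimes V^*, W^*}$ acts on the invariant factor $c_V$: naturality gives $\beta^{-1}_{V\otimes V^*, W^*}(w^*\otimes c_V) = c_V\otimes w^*$, so $f\cdot t_V = r_{W\otimes V}(w\otimes c_V\otimes w^*)$. The product $t_V\cdot f$ is more subtle because now $\beta^{-1}_{W\otimes W^*, V^*}$ acts on the non-invariant factor $V^*$. My plan is to apply the hexagon identity to decompose this braiding into pieces involving $\beta^{-1}_{W, V^*}$ and $\beta^{-1}_{W^*, V^*}$, and then use the equivariance of $r$ under the natural isomorphism $\beta_{V,W}: V\otimes W \xrightarrow{\sim} W\otimes V$ to rewrite the resulting matrix coefficient in $(V\otimes W)\otimes (V\otimes W)^*$ as one in $(W\otimes V)\otimes (W\otimes V)^*$; a Yang-Baxter simplification then matches it with the expression for $f\cdot t_V$. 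This braiding computation is the main obstacle, but it is a standard manipulation in the theory of braided Hopf algebras (and the reflection equation algebra in particular).
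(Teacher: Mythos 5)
Your proposal is correct and follows essentially the same route as the paper: invariance of $t_V$ from invariance of the canonical coevaluation element, bijectivity from the Peter--Weyl decomposition of $\cO_q(G)$, multiplicativity from naturality of the braiding past the coevaluation, and centrality by comparing $t_V\cdot f$ with $f\cdot t_V$ using the hexagon identity and the compatibility between $\beta_{W,V}$ and $\beta_{W^*,V^*}$. The paper's own treatment of the centrality computation is also brief at the corresponding final step (it carries out the hexagon split and then asserts the two expressions agree ``by the same formula''), so your level of detail matches the source.
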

\begin{proof}
To prove that $ t_V $ is central, let us write the canonical element of $ V \otimes V^* $ as $ \sum_i x_i \otimes x^i $ where $ x_i $ is a basis for $ V $ and $ x^i $ is the dual basis.  Consider some element $ r_W(w \otimes w') \in \Oq $ for some $w \in W, w' \in W^*$.  Then we have
\begin{align*}
t_V r_W(w \otimes w') &= \sum_i r_{V \otimes W}\bigl( x_i \otimes \beta_{W \otimes W^*,V^*}^{-1}(x^i \otimes (w \otimes w')) \bigr) \\
&= \sum_i r_{V \otimes W}\bigl( ( 1 \otimes 1 \otimes \beta_{W^*,V^*}^{-1})(x_i \otimes \beta_{W,V^*}^{-1}(x^i \otimes w) \otimes w') \bigr) \\
&= \sum_i r_{V \otimes W}\bigl((\beta_{W,V} \otimes \beta_{W^*,V^*}^{-1})( w \otimes x_i \otimes x^i \otimes w')\bigr) \\
&= \sum_i r_{V \otimes W}\bigl( x_i \otimes w \otimes w' \otimes x^i\bigr)
\end{align*}
where the last line follows from the fact that $ \beta_{W,V} $ and $ \beta_{W^*, V^*} $ are dual to each other.  A similar (but simpler) calculation shows that $  r_W(w \otimes w') t_V $ is given by the same formula.  Thus $ t_V $ is central.

Next note that $ t_V $ is clearly $ \Uq$-invariant. Since $ t_{V\otimes W} = t_V t_W $ we see that the map is an algebra morphism.  It is clearly injective.  Finally, to show that it is surjective, we just use the $ \Uq $-equivariant isomorphism $ \Oq \cong \oplus_\lambda V(\lambda) \otimes V(\lambda)^* $.
\end{proof}

\begin{Remark}
We believe that these $ t_V $ generate the centre of $\Oq$ and thus the centre is isomorphic to $E(q)$.  We were not able to find a reference for this fact.
\end{Remark}

Using these elements $ t_V $, we can see that the category $ \OqGG $ is enriched over $ E(q)$ (the hom spaces are $E(q)$-modules).  Equivalently, we have a map from $ E(q) $ to endomorphisms of the identity functor in $ \OqGG$ given by sending an element of $ E(q) \cong (\Oq)^{\Uq} $ to its action on $ \Oq \otimes V $ given by multiplication on the $\Oq$ factor.

\subsubsection{Coaction and canonical automorphism} \label{sec:coact}
For any representation $ V $ of $ \Uq $, we have a coaction map $ C_V: V \rightarrow \Oq \otimes V $ defined by
$$ V \rightarrow V \otimes V^* \otimes V \xrightarrow{r_V \otimes I} \Oq \otimes V , $$
where the first map is the canonical coevaluation (creating the second two tensor factors).  This coaction map is a $ \Uq $-module map.

Using the coaction, the category $ \OqGG $  comes with an automorphism $ X $ of the identity functor.  For $ V \in \RqG $, we define $X_V $ as the composition
\begin{equation} \label{eq:phidefq}
 \Oq \otimes V  \xrightarrow{ I \otimes C_V}  \Oq \otimes \Oq \otimes V  \xrightarrow{m \otimes \theta_V}  \Oq \otimes V
\end{equation}
where $ m : \Oq \otimes \Oq \rightarrow \Oq $ is the multiplication, and where $ \theta_V $ is the balancing isomorphism (which acts by a power of $ q $ on each irreducible representation, see section 2.2 of \cite{BK}).  Since each arrow is a map of $ \Uq$-modules, so is the composite.

\begin{Remark}
If we work with $ \OGG $, this definition also makes sense.  In this case, $ X_V$  is the automorphism of the trivial vector bundle over $ G $ with fibre $ V $, given by acting by $ g $ in the fibre over $ g $.
\end{Remark}

\subsection{Horizontal trace and $ \OqGG $} \label{se:hortrace}
One reason why these categories $ \OqGG $ are important for us is because they arise as horizontal traces.  We now explain this construction.  The results in this section seem to be known to experts, but we were not able to find an adequate reference.

\subsubsection{Definition of horizontal trace}
Given a skeletally small monoidal category $ \cC $, we define its horizontal trace $ \cC(S^1) $ to have the some objects as $\cC $ but with morphisms defined as follows.   Fix two objects $A, B $ in $ \cC$.  We consider pairs
 $(Z, \phi) $ where $ Z $ is an object of $ \cC $ and $\phi $ is a morphism
$$ \phi: A \otimes Z \rightarrow Z \otimes B $$
in $ \cC$.

We define an equivalence relation on these pairs as follows.  Suppose that $ Z, Z' $ are objects of $ \cC $, $ \alpha : Z \rightarrow Z' $ is a morphism in $ \cC$, and $ \psi : A \otimes Z' \rightarrow Z \otimes B $ is a morphism in $ \cC$, then we can form two pairs $ (Z, \psi \circ 1 \otimes \alpha) $ and $(Z', \alpha \otimes 1 \circ \psi) $.  Under this circumstance, we say $  (Z, \psi \circ 1 \otimes \alpha) $ and $(Z', \alpha \otimes 1 \circ \psi) $ are related and we consider the equivalence relation generated by these relations.  Then we define the set $ \Hom_{\cC(S^1)}(A, B) $ to be the set of equivalence classes of these pairs.  Composition of morphisms is defined in the natural way.

\begin{Remark}
We learned of the definition of horizontal trace from Scott Morrison.  This definition was introduced by Walker \cite{Wal} under the name ``annularization''.  The name ``horizontal trace'' appears in the paper \cite{BHLZ}. Finally, in the paper \cite{BBJ}, the same concept is defined under the name ``zeroth Hochschild homology''.
 \end{Remark}

\subsubsection{Monoidal structure} \label{sec:monstructure}
In general the category $ \cC(S^1) $ has no monoidal structure. However, if $ \cC$ is a braided monoidal category with braiding $ \beta $, then $\cC(S^1) $ has a natural monoidal structure, where tensor product of objects is the same as in $ \cC $ and tensor product of morphism is given by
$$ [Z_1, \phi_1] \otimes [Z_2, \phi_2] = [Z_1 \otimes Z_2, I \otimes \beta_{B_1, Z_2} \otimes I \circ \phi_1 \otimes \phi_2 \circ I \otimes \beta_{A_2, Z_1} \otimes I ] $$
where $ [Z_1, \phi_1] \in \Hom_{\cC(S^1)}(A_1, B_1) $ and $(Z_2, \phi_2) \in \Hom_{\cC(S^1)}(A_2, B_2)$.

\subsubsection{Semisimple case}
If $ \cC $ is semisimple, then we can get a simpler description of morphisms in $ \cC(S^1)$.

\begin{Lemma}
Suppose that $ \cC $ is a semisimple abelian category, with simple objects $ \{V_i \}_{i \in I} $.  Then we have an isomorphism
$$
\Hom_{\cC(S^1)}(A, B) \cong \oplus_{i \in I} \Hom_{\cC}(A \otimes V_i, V_i \otimes B).
$$
\end{Lemma}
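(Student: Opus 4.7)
The cleanest formulation is that the equivalence relation defining $\Hom_{\cC(S^1)}(A,B)$ is precisely the coequalizer relation for the coend $\int^{Z\in\cC}\Hom_\cC(A\otimes Z,Z\otimes B)$, so the lemma is an instance of the general fact that a coend of a linear bifunctor against a semisimple category reduces to a direct sum over representatives of the simple isomorphism classes. The plan is nevertheless to make the isomorphism explicit and verify it by hand.

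In one direction, define
$$\Psi: \bigoplus_{i\in I}\Hom_\cC(A\otimes V_i,V_i\otimes B)\;\longrightarrow\;\Hom_{\cC(S^1)}(A,B),\qquad (\phi_i)\mapsto \Bigl[\bigoplus_i V_i,\;\bigoplus_i\phi_i\Bigr].$$
In the other direction, given $(Z,\phi)$, use semisimplicity to decompose $Z\cong\bigoplus_i V_i^{\oplus n_i}$, so that $\phi$ breaks into block-matrix components $\phi_{ij}^{kl}\in\Hom_\cC(A\otimes V_j,V_i\otimes B)$ indexed by $(i,j)\in I\times I$ and $(k,l)\in\{1,\dots,n_i\}\times\{1,\dots,n_j\}$, and define
$$\Xi[Z,\phi] := \Bigl(\sum_k\phi_{ii}^{kk}\Bigr)_{i\in I},$$
the matrix trace of each diagonal isotypic block.

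The only step requiring real work is well-definedness of $\Xi$. Given $\alpha:Z\to Z'$ and $\psi:A\otimes Z'\to Z\otimes B$, Schur's lemma forces $\alpha$ to split as $\bigoplus_i\alpha_i$ with each $\alpha_i:V_i^{n_i}\to V_i^{n_i'}$ a rectangular matrix of scalars, and the required invariance reduces, isotype by isotype, to the cyclic identity $\sum_{k,m}\psi_{ii}^{km}\alpha_i^{mk}=\sum_{m,k}\alpha_i^{mk}\psi_{ii}^{km}$, automatic because the $\alpha_i^{mk}$ are central scalars. Once this is in place, $\Xi\circ\Psi=\mathrm{id}$ is immediate and $\Psi$ is injective. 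For surjectivity, biadditivity $[Z,\phi+\phi']=[Z,\phi]+[Z,\phi']$ (automatic from the coend structure, or proved directly from the zero-padding equivalence $[Z,\phi]=[Z\oplus Z',\phi\oplus 0]$ together with the definition $[Z_1,\phi_1]+[Z_2,\phi_2]:=[Z_1\oplus Z_2,\phi_1\oplus\phi_2]$) lets us analyze each matrix entry of $\phi$ separately; a single application of the cyclic relation with $\alpha$ a projection $Z\twoheadrightarrow V_j$ then shows that the entry $\phi_{ij}^{kl}$ contributes $[V_i,\phi_{ii}^{kk}]$ if $i=j$ and $k=l$, and zero otherwise, and summing yields $[Z,\phi]=\Psi(\Xi[Z,\phi])$.
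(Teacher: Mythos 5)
Your argument is correct and is at heart the same as the paper's: both produce the isomorphism from the isotypic decomposition of $Z$ by identifying $[Z,\phi]$ with the sum of the diagonal isotypic blocks of $\phi$, relying on biadditivity $[Z,\phi_1+\phi_2]=[Z,\phi_1]+[Z,\phi_2]$ to reduce to rank-one blocks. The modest difference is one of packaging: you define the inverse explicitly as the isotypic-trace map $\Xi$ and verify its invariance under the generating relations directly (Schur's lemma plus commutativity of scalars), whereas the paper constructs the inverse by recursively peeling off direct summands via the inclusion $Z_1\hookrightarrow Z_1\oplus Z_2$ and leaves the well-definedness of that recursion implicit. Your coend framing also makes the biadditivity step, which both proofs use essentially without comment, visibly automatic.
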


\begin{proof}
Fix $ A, B $ objects of $ \cC $. First, we define an abelian group structure on $ \Hom_{\cC(S^1)}(A, B) $ as follows.

Given $ [Z_1, \phi_1]$ and $ [Z_2, \phi_2] $ in $  \Hom_{\cC(S^1)}(A, B) $, we define
$$ [Z_1, \phi_1] +  [Z_2, \phi_2] = [Z_1 \oplus Z_2, \left[ \begin{smallmatrix} \phi_1 & 0 \\ 0 & \phi_2 \end{smallmatrix} \right] ] $$
where the matrix represents a morphism $ A \otimes (Z_1 \oplus Z_2) = (A \otimes Z_1) \oplus (A \otimes Z_2) \rightarrow (Z_1 \otimes B) \oplus (Z_2 \otimes B) = (Z_1 \oplus Z_2) \otimes B $.  This extends the usual addition of morphisms; if $ Z_1 = Z_2 = Z $ then the diagonal map $ Z \rightarrow Z \oplus Z $ gives an equivalence between $ (Z, \phi_1) + (Z, \phi_2) $ and $ (Z, \phi_1 + \phi_2)$.

Together with the usual scaling of morphisms, this results in a $ \C$-vector space structure on $ \Hom_{\cC(S^1)}(A, B) $.

This gives us a map
\begin{equation}\label{eq:RHStoLHS}
\oplus_{i \in I} \Hom_{\cC}(A \otimes V_i, V_i \otimes B) \rightarrow \Hom_{\cC(S^1)}(A, B).
\end{equation}

On the other hand, suppose that $ Z = Z_1 \oplus Z_2 $ and $ [Z, \phi] \in \Hom_{\cC(S^1)}(A, B)$.  Then we can write
$$ \phi = \left[ \begin{smallmatrix} \phi_{11} & \phi_{12} \\ \phi_{21} & \phi_{22} \end{smallmatrix} \right]  =    \left[ \begin{smallmatrix} \phi_{11} & \phi_{12} \\ 0 & 0 \end{smallmatrix} \right] + \left[ \begin{smallmatrix} 0 & 0 \\ \phi_{21} & \phi_{22} \end{smallmatrix} \right].
$$
Consider the inclusion map $ \alpha : Z_1 \rightarrow Z_1 \oplus Z_2$.  Using this map, we can see that $ [Z, \left[ \begin{smallmatrix} \phi_{11} & \phi_{12} \\ 0 & 0 \end{smallmatrix} \right] ] = [Z_1, \phi_{11}] $.  Similarly, $ [Z, \left[ \begin{smallmatrix} 0 & 0 \\ \phi_{21} & \phi_{22} \end{smallmatrix} \right]] = [Z_2, \phi_{22}] $.  Thus $ [Z, \phi] = [Z_1, \phi_{11}] + [Z_2, \phi_{22}]$. Applying this repeatedly leads to an inverse to (\ref{eq:RHStoLHS}) .
\end{proof}

When $ \cC = \RqG $, then $ \cC(S^1)$ is quite familiar.  (This result also appears as \cite[Theorem 4.16]{BBJ}.)
\begin{Proposition} \label{pr:OCmod}
We have an equivalence of categories $ \RqG(S^1) \cong \OqGG $.
\end{Proposition}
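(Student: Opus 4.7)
The plan is to construct an equivalence $F : \RqG(S^1) \to \OqGG$. Essential surjectivity is immediate from the definition of $\OqGG$, so the main content is to define $F$ on morphisms and show that it is fully faithful.

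On objects, set $F(V) := \Oq \otimes V$. Given a representative $(Z, \phi : A \otimes Z \to Z \otimes B)$ of a morphism in $\RqG(S^1)$, define $F([Z,\phi]) : \Oq \otimes A \to \Oq \otimes B$ to be the unique $\Oq$-linear map extending the $\Uq$-equivariant composition
\begin{equation*}
A \xrightarrow{1 \otimes \mathrm{coev}_Z} A \otimes Z \otimes Z^* \xrightarrow{\phi \otimes 1} Z \otimes B \otimes Z^* \xrightarrow{1 \otimes \beta^{\pm 1}_{B,Z^*}} Z \otimes Z^* \otimes B \xrightarrow{r_Z \otimes 1} \Oq \otimes B,
\end{equation*}
where the choice of braid versus inverse braid is dictated by the braid factor appearing in the definition of multiplication in $\Oq$. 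Each arrow is $\Uq$-equivariant, so the composite is as well. The first verification is well-definedness on equivalence classes: given $\alpha : Z \to Z'$ and $\psi : A \otimes Z' \to Z \otimes B$, the two associated maps coincide by naturality of coevaluation and the braiding, together with the identity relating $r_Z$ and $r_{Z'}$ that follows from $\Uq$-equivariance of $r_{(-)}$.

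Next I would verify that $F$ respects composition. The horizontal-trace composition of $[Z_1,\phi_1] : A \to B$ with $[Z_2,\phi_2] : B \to C$ is represented by $(Z_1 \otimes Z_2, (1_{Z_1} \otimes \phi_2) \circ (\phi_1 \otimes 1_{Z_2}))$. On the $\OqGG$ side, the composition of the corresponding $\Oq$-module maps requires evaluating the product $r_{Z_1} \cdot r_{Z_2}$ inside $\Oq$, and the braid factor $\beta^{-1}_{W \otimes W^*, V^*}$ from the defining diagram for multiplication in $\Oq$ is exactly what one needs to match the braid factors in $F([Z_1,\phi_1])$ composed with $F([Z_2,\phi_2])$. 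This compatibility is the main technical obstacle; it unwinds to a diagram chase using naturality of $\beta$ together with the rigidity axiom relating $\mathrm{coev}$ and dualization. Once done, $F$ is a well-defined functor.

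For fully faithfulness I would use the preceding semisimple-case lemma. On one side,
\begin{equation*}
\Hom_{\RqG(S^1)}(A,B) \cong \bigoplus_\lambda \Hom_{\Uq}\bigl(A \otimes V(\lambda),\, V(\lambda) \otimes B\bigr).
\end{equation*}
On the other side, since an $\Oq$-module map $\Oq \otimes A \to \Oq \otimes B$ is determined by its restriction to $A \to \Oq \otimes B$, and since $\Oq \cong \bigoplus_\lambda V(\lambda) \otimes V(\lambda)^*$ as a $\Uq$-module,
\begin{equation*}
\Hom_{\OqGG}(\Oq \otimes A, \Oq \otimes B) \cong \Hom_{\Uq}(A,\Oq \otimes B) \cong \bigoplus_\lambda \Hom_{\Uq}\bigl(A,\, V(\lambda) \otimes V(\lambda)^* \otimes B\bigr),
\end{equation*}
which by $(V(\lambda), V(\lambda)^*)$-adjunction is again $\bigoplus_\lambda \Hom_{\Uq}(A \otimes V(\lambda), V(\lambda) \otimes B)$. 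Unwinding the definitions, the map $F$ sends the summand indexed by $\lambda$ in the first display to the summand indexed by $\lambda$ in the second: the coevaluation $\mathrm{coev}_{V(\lambda)}$ implements the adjunction, the braiding arranges the tensor factors, and $r_{V(\lambda)}$ embeds $V(\lambda) \otimes V(\lambda)^*$ into $\Oq$ as the $\lambda$-isotypic component. Thus $F$ induces the identity on each summand and is fully faithful, completing the equivalence.
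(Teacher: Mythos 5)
Your proof is correct and follows essentially the same route as the paper's: compute $\Hom_{\RqG(S^1)}(A,B)$ via the semisimple-case lemma, then apply duality/adjunction, the braiding, the decomposition $\Oq \cong \oplus_\lambda V(\lambda) \otimes V(\lambda)^*$, and freeness of $\Oq \otimes A$ as an $\Oq$-module. You fill in more detail on constructing the functor explicitly and verifying well-definedness and compatibility with composition, which the paper only asserts in one line; this is the same argument, more carefully written out.
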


\begin{proof}
Let $ V, W \in \RqG $.  Then,
\begin{align*}
\Hom_{\RqG(S^1)}(A, B) &\cong \oplus_{\lambda \in \Lambda_+} \Hom_{\RqG}(A \otimes V(\lambda), V(\lambda) \otimes B) \\
&\cong \oplus_{\lambda \in \Lambda_+} \Hom_{\RqG}(A,  V(\lambda) \otimes B \otimes V(\lambda)^*) \\
&\cong \oplus_{\lambda \in \Lambda_+} \Hom_{\RqG}(A,  V(\lambda) \otimes V(\lambda)^* \otimes B) \\
&\cong \Hom_{\RqG}(A, \Oq \otimes B) \\
&\cong \Hom_{\OqGG}(\Oq \otimes A, \Oq \otimes B).
\end{align*}
Here in the third line we use the inverse braiding $\beta^{-1}_{V(\lambda)^*, B}$ and in the last line, we use that $ \Oq \otimes A $ is free as an $ \Oq $-module.

Moreover, these isomorphisms are compatible with composition on both sides.
\end{proof}

\subsubsection{Idempotent completion}
Finally, we will need to examine the behaviour of horizontal trace under idempotent completion.

\begin{Proposition} \label{pr:Idemp}
Let $ \cC $ be a monoidal category and let $ \overline{\cC} $ be its idempotent completion.  Then the functor $ \cC(S^1) \rightarrow \overline{\cC}(S^1) $ is fully-faithful.
\end{Proposition}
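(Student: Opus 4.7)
The plan is to construct, for each pair of objects $A, B \in \cC$, an explicit inverse on Hom-sets to the canonical map $\iota_\ast : \Hom_{\cC(S^1)}(A,B) \to \Hom_{\overline{\cC}(S^1)}(A,B)$. Recall the standard model for $\overline{\cC}$: objects are pairs $(Z,e)$ with $Z \in \cC$ and $e \in \End_\cC(Z)$ idempotent, and a morphism $(Z_1,e_1) \to (Z_2,e_2)$ is a map $f : Z_1 \to Z_2$ in $\cC$ with $e_2 f = f = f e_1$. In particular, a morphism $\phi: A \otimes (Z,e) \to (Z,e) \otimes B$ in $\overline{\cC}$ is the same datum as a morphism $\phi: A \otimes Z \to Z \otimes B$ in $\cC$ satisfying $(e \otimes 1_B)\, \phi = \phi = \phi\, (1_A \otimes e)$.

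I would define $F : \Hom_{\overline{\cC}(S^1)}(A,B) \to \Hom_{\cC(S^1)}(A,B)$ by $F([(Z,e), \phi]) := [Z, \phi]$, interpreting $\phi$ on the right as a morphism in $\cC$. The first step is to check that $F$ descends to equivalence classes. Every elementary relation in $\overline{\cC}(S^1)$ arises from data $\alpha: (Z_1,e_1) \to (Z_2,e_2)$ and $\psi: A \otimes (Z_2,e_2) \to (Z_1,e_1) \otimes B$ in $\overline{\cC}$; unwinding the definition of $\overline{\cC}$ exhibits $\alpha$ and $\psi$ as morphisms in $\cC$ satisfying appropriate idempotent conditions, so the same relation between $(Z_1, \psi \circ (1 \otimes \alpha))$ and $(Z_2, (\alpha \otimes 1) \circ \psi)$ already holds in $\cC(S^1)$.

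It then remains to verify that $F$ is a two-sided inverse of $\iota_\ast$. The identity $F \circ \iota_\ast = \mathrm{id}$ is immediate from the formula $\iota_\ast[Z,\phi] = [(Z, 1_Z), \phi]$. For $\iota_\ast \circ F = \mathrm{id}$, the task is to exhibit an equivalence $[(Z,e), \phi] = [(Z, 1_Z), \phi]$ in $\overline{\cC}(S^1)$. I would produce this by applying the elementary equivalence with $\alpha := e$, viewed as a morphism $(Z,e) \to (Z, 1_Z)$ in $\overline{\cC}$, and $\psi := \phi$, viewed as a morphism $A \otimes (Z, 1_Z) \to (Z,e) \otimes B$ in $\overline{\cC}$; this second interpretation is legal precisely because $(e \otimes 1_B) \phi = \phi$. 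The two sides of the elementary relation evaluate to $\phi\, (1_A \otimes e) = \phi$ and $(e \otimes 1_B)\, \phi = \phi$, giving the desired identification.

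There is no genuinely hard step here; the argument is essentially an unwinding of definitions. The main thing to keep straight is the dictionary between morphisms in $\overline{\cC}$ and morphisms of $\cC$ carrying idempotent factorization conditions, and the observation that the elementary relations defining the horizontal trace translate verbatim between $\cC(S^1)$ and $\overline{\cC}(S^1)$ once this dictionary is invoked.
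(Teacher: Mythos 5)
Your proof is correct. It differs from the paper's in a useful way: you construct an explicit two-sided inverse $F$ to $\iota_\ast$ using the $(Z,e)$-model of the idempotent completion, whereas the paper proves surjectivity directly (writing $Y \cong Z \oplus Z'$ and using a block-matrix morphism) and then says injectivity is similar. Your construction handles both directions at once, and — a genuine gain — it avoids any appeal to direct sums or preadditivity, so it works for an arbitrary monoidal category rather than just a $\C$-linear one. The conceptual content is the same in both cases (every object of $\overline{\cC}$ is a retract of one in $\cC$, and the horizontal-trace relation lets you slide the retraction idempotent across), but your version makes the mechanism explicit: the key step is the elementary equivalence $[(Z,e),\phi] = [(Z,1_Z),\phi]$ obtained by taking $\alpha = e : (Z,e) \to (Z,1_Z)$ and $\psi = \phi$, which is legitimate precisely because $(e \otimes 1_B)\phi = \phi = \phi(1_A \otimes e)$. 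One small remark: you might note explicitly that $e:(Z,e)\to(Z,1_Z)$ is a morphism of $\overline{\cC}$ because $1_Z \cdot e = e = e \cdot e$, since that is the one place where idempotency of $e$ is actually invoked. Otherwise the argument is complete as written.
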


\begin{proof}
By general results, we know that the functor $ \cC \rightarrow \overline{\cC} $ is fully-faithful.

Let $ A, B $ be objects of $ \cC $.  We would like to show that the map
$$
\Hom_{\cC(S^1)}(A, B) \rightarrow \Hom_{\overline{\cC}(S^1)}(A, B)
$$
is an isomorphism.

To show surjectivity, let $ [Z, \phi] $ be a morphism in $ \overline{\cC}(S^1) $.  Then there is an object $ Y $ of $ \cC$ and an isomorphism $ Y = Z \oplus Z' $ in $ \overline{\cC} $.  Then consider  $ \psi : A \otimes (Z \oplus Z') \rightarrow (Z \oplus Z') \otimes B $ defined by the matrix $ \left[ \begin{smallmatrix}  \phi & 0 \\ 0 & 0 \end{smallmatrix} \right]$ --- it is a morphism in $ \cC $.  Then $[Y, \psi] $ is a morphism in $ \cC(S^1) $ and the image of $ [Y, \psi] $ in $\Hom_{\overline{\cC}(S^1)}(A,B) $ will be $ [Z, \phi] $.

The proof of injectivity is similar.
\end{proof}

\subsection{Action of annular braids} \label{sec:AnnularBraids}

As mentioned above, the procedure of horizontal trace is closely related to the circle or annulus.  In particular, if $ \mathcal C $ is braided monoidal, then the annular braid group acts on tensor products in $\mathcal C(S^1)$. Let us formulate this fact in a precise fashion in the case of $ \RqG$.

\subsubsection{The annular braid group}
Consider the annular braid group $AB_m$ (sometimes it is also called the extended affine braid group). It has generators $ X_1, \dots, X_m $ and $ T_1, \dots, T_{m-1} $ subject to the following relations:
\begin{enumerate}
\item $T_i T_j = T_j T_i$ if $|i-j| > 1$ and $T_i T_j T_i = T_j T_i T_j$ if $|i-j|=1$,
\item $T_i X_j = X_j T_i$ if $j \ne i,i+1$,
\item $T_i X_i T_i = X_{i+1}$ for $i=1, \dots, m-1$,
\item $X_i X_j = X_j X_i$.
\end{enumerate}
These can be viewed as braids with $m$ strands lying on an annulus.  More precisely $ AB_m = \pi_1(A^m \smallsetminus \Delta / S_m) $, where $ A $ is the annulus.  The $T_i$ corresponds to strand $ i $ crossing over strand $ i+1 $, while $X_i$ is the braid where the $i$th strand curls itself once counterclockwise around the annulus, passing under strands $ 1, \dots, i-1$ and over strands $ i+1, \dots, m $ (from this description it is clear why $X_iX_j=X_jX_i$).

Another common description of $AB_m$ is using generators $T_0, T_1, \dots, T_{m-1}$ and $R$ where the $T_i$ satisfy the same relations as above while $R$ satisfies $R T_i = T_{i-1} R$ (modulo $m$). Here $R$ is the braid where each strand moves to the counterclockwise one position.  From this description it is clear that $R^m = X_1 \dots X_m$ while an easy exercise shows that $R = T_{m-1} \cdots T_1 X_1 $.

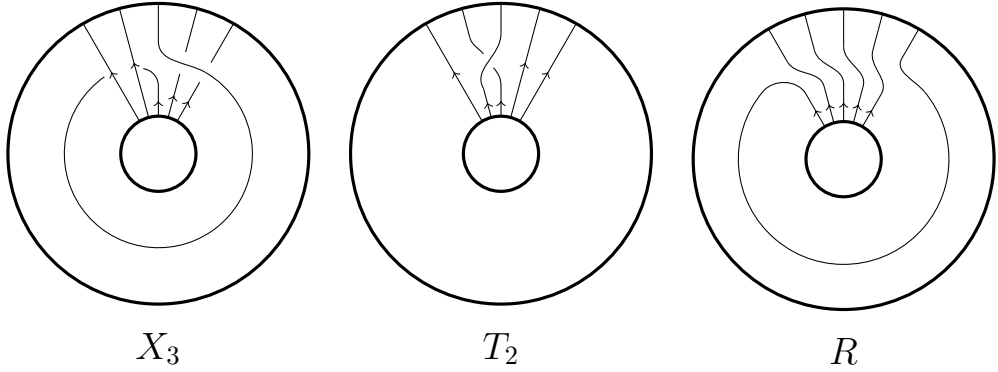
\begin{figure}
\centering
\begin{tikzpicture}[scale=0.5]
\incircle;
\outcircle;
\draw[mid>] (105:1) -- (105:4);
\draw[mid>] (120:1) -- (120:4);
\draw[mid>] (60:1) -- (60:2.2);
\draw (60:2.8) -- (60:4);
\draw[mid>] (75:1) -- (75:2.2);
\draw (75:2.8) -- (75:4);
\draw[mid>] (90:1) to (90:1.8);
\draw (90:1.8) to [out=90, in=350] (100:2.3);
\draw (125:2.5) arc [radius=2.5, start angle=125, end angle=420];
\draw (60:2.5) to [out=150, in=270] (90:3);
\draw (90:3) -- (90:4);
\node [below] at (270:4.5) {\Large $ X_3$};
\end{tikzpicture} \quad
\begin{tikzpicture}[scale=0.5]
\incircle;
\outcircle;
\draw[mid>] (105:1) -- (105:1.8);
\draw[mid>] (120:1) -- (120:4);
\draw[mid>] (60:1) -- (60:4);
\draw[mid>] (75:1) -- (75:4);
\draw[mid>] (90:1) to (90:1.8);
\draw (90:1.8) to [out=90, in=310] (95:2.3);
\draw (100:2.7) to [out=130, in=285] (105:3.2);
\draw (105:3.2) to (105:4);

\draw (105:1.8) to [out=120, in=270] (90:3.2);
\draw (90:3.2) to (90:4);

\node [below] at (270:4.5) {\Large $ T_2$};
\end{tikzpicture} \quad
\begin{tikzpicture}[scale=0.5]
\incircle;
\outcircle;
\draw[mid>] (120:1) -- (120:2);
\draw (120:2) to [out=120, in=50]  (140:2.8);
\draw (140:2.8) arc [radius=2.8, start angle=140, end angle=400];
\draw (40:2.8) to [out=130, in=240] (60:3.1);
\draw (60:3.1) -- (60:4);

\moveleft{1}
\moveleft{2}
\moveleft{3}
\moveleft{4}

\node [below] at (270:4.5) {\Large $ R$};
\end{tikzpicture}
\caption{Some generators of $AB_5$. }
\end{figure}

Given any set $S$, we can form the groupoid of annular braids labelled by $ S$, $\AB_m(S) $.  This is a groupoid whose set of objects is $ S^m $ with generators
$$ T_i : (s_1, \dots, s_m) \rightarrow (s_1, \dots, s_{i+1}, s_i, \dots, s_m), \quad X_i : (s_1,\dots, s_m) \rightarrow (s_1, \dots, s_m)$$
for all $ s_1, \dots, s_m \in S $.  These generators satisfy the same relations as in $AB_m $.  In other words, $ \AB_m(S) $ is the fundamental groupoid of the space $(A \times S)^m \setminus \Delta / S_m $ (where $ \Delta $ is the pullback of the usual fat diagonal in $ A^m $).

We can also consider the usual braid group $B_m$ (the subgroup generated by the $ T_i$'s) and the groupoid $\B_m(S) $.

\subsubsection{Action on $\OqGG$}

Since $ \RqG $ is a braided monoidal category, then we have a functor $ \B_m(\RqG) \rightarrow \RqG $ where the $ T_i$'s act by the usual braiding of adjacent tensor factors.  This can be extended to the annular braid groupoid as follows.

Now, suppose that we have representations $V_1, \dots, V_m $ of $ \Uq $.  Then we can form $ \Oq \otimes V_1 \otimes \cdots \otimes V_m  $.  We can braid the representations $ V_1, \dots, V_m $ using the braiding in $ \RqG $ which gives us isomorphisms $ T_1, \dots, T_{m-1} $ satisfying the braid relations.

We define the automorphism $X_j $ by placing the automorphism $ X $ (from section \ref{sec:coact}) in the $j$th position using the monoidal structure defined in section \ref{sec:monstructure}. In other words, $ X_j $ is given by the composition
\begin{gather*}
\Oq \otimes V_1 \otimes \cdots \otimes V_m  \xrightarrow{I \otimes \beta_{V_1 \otimes \cdots \otimes V_{j-1}, V_j} \otimes I} \Oq \otimes V_j \otimes  V_1 \cdots V_m \\
\xrightarrow{X \otimes I} \Oq \otimes V_j \otimes  V_1 \cdots V_m \xrightarrow{I \otimes \beta_{V_j, V_1 \otimes \cdots \otimes V_{j-1}} \otimes I} \Oq \otimes V_1 \otimes \cdots \otimes V_m
\end{gather*}
Because they come from the monoidal structure, $X_1, \dots, X_m $ commute. The following result is due to Lyubashenko-Majid \cite{LM}.  The proof is straightforward.
\begin{Proposition} \label{pr:ABaction}
The above $ T_i, X_j $ satisfy the annular braid group relations and thus define a functor $ AB(\RqG) \rightarrow \OqGG $.
\end{Proposition}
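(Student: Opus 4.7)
The plan is to verify each defining relation of $AB_m$ for the operators $T_i$ and $X_j$ in turn.

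The ordinary braid relations among the $T_i$ (relation (1)) are immediate: each $T_i$ is a tensor product of identity morphisms with a single instance of the braiding $\beta_{V_i, V_{i+1}}$, so the hexagon axioms and naturality of $\beta$ in $\RqG$ give the required braid and far-commutation relations.  This is the standard construction of a $B_m$-action on a tensor product in a braided monoidal category.

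For relation (2), $T_i X_j = X_j T_i$ with $j \ne i, i+1$, I would expand $X_j$ into its three constituents --- a braiding that moves $V_j$ adjacent to $\Oq$, the local operator $X_{V_j} \otimes I$ on $\Oq \otimes V_j$, and a second braiding returning $V_j$ to position $j$.  Since $\{i, i+1\}$ is disjoint from the positions involved in any of these steps, naturality of $\beta$ lets me commute $T_i$ past each in turn.  For $X_i X_j = X_j X_i$ (relation (4)), the cleanest argument is to invoke Proposition \ref{pr:OCmod}: the assignment $V \mapsto X_V$ is an endomorphism of the identity functor of $\RqG(S^1)$, and $X_i, X_j$ are its images in distinct positions under the braided monoidal structure of section \ref{sec:monstructure}.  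The interchange law for monoidal categories then forces these two morphisms to commute.

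The substantive relation is (3), $T_i X_i T_i = X_{i+1}$.  Here the plan is a direct diagram chase.  Setting $B := V_1 \otimes \cdots \otimes V_{i-1}$, the braiding $\beta_{B \otimes V_i, V_{i+1}}$ appearing in the definition of $X_{i+1}$ factors, by the hexagon axiom, as $(\beta_{B, V_{i+1}} \otimes I_{V_i}) \circ (I_B \otimes \beta_{V_i, V_{i+1}})$.  The rightmost factor is precisely $T_i$, and the leftmost factor is exactly the braiding $X_i$ would use to shuttle $V_{i+1}$ past $B$ once $T_i$ has swapped $V_i$ and $V_{i+1}$.  A parallel hexagon factorization applies to the second braiding in $X_{i+1}$'s definition and accounts for the final $T_i$.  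Since the local morphism $X \otimes I$ entering $X_i$ (applied to whatever sits adjacent to $\Oq$ after the swap, which is now $V_{i+1}$) is the same as the one entering $X_{i+1}$, the two compositions collapse to the same morphism.  The main, and quite minor, obstacle is the bookkeeping: carefully tracking which strand sits in which position at each stage and invoking the hexagon in both its forms for the two braidings in $X_{i+1}$'s definition.
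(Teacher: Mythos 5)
Your proof is correct, and since the paper leaves this as a citation to Lyubashenko--Majid with the remark that the proof is straightforward, your argument is precisely the ``straightforward'' diagram chase being waved at. The verification of $T_i X_i T_i = X_{i+1}$ via the two hexagon decompositions of $\beta_{B \otimes V_i, V_{i+1}}$ and $\beta_{V_{i+1}, B \otimes V_i}$ (with $B = V_1 \otimes \cdots \otimes V_{i-1}$) is exactly the right computation, and your observation that after $T_i$ the local operator $X$ acts on $\Oq \otimes V_{i+1}$ in both $T_i X_i T_i$ and $X_{i+1}$ closes the argument. One small imprecision: for relation (2) when $j > i+1$, the positions $i, i+1$ are in fact crossed by the braiding $\beta_{V_1 \otimes \cdots \otimes V_{j-1}, V_j}$, so they are \emph{not} disjoint from the strands that braiding involves; what actually lets $T_i$ slide past is naturality of $\beta$ in its first (respectively second) tensor-factor argument applied to the morphism $T_i$ viewed as an endomorphism of $V_1 \otimes \cdots \otimes V_{j-1}$. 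You do invoke naturality as the tool, which is the correct mechanism; only the justifying phrase is off.
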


An object of $ \OqGG $ of the form $  \Oq \otimes V_1 \otimes \cdots \otimes V_m  $ will be called an object of length $ m $.  As a special case of Proposition \ref{pr:ABaction}, we see that $ \Z^m $ acts on objects of length $ m$ in $ \OqGG $.

\subsection{Kostant slice functor} \label{sec:Kostant}
In \cite{BF}, an important role is played by the Kostant slice functor.  In their setting of $ \OgG$, they considered the Kostant slice $ \mathfrak{t}/W \hookrightarrow \fg $.  Pulling back along the Kostant slice gives rise to a fully faithful  functor
$ S : \OgG \rightarrow Z_{\fg}\mod $ where  $ Z_\fg $ is the group scheme of regular centralizers regarded as a group scheme over $ \mathfrak{t}/ W$.

In our situation, we have a Steinberg slice $ T/W \hookrightarrow G $.  As in the Lie algebra case, the image of this Steinberg slice meets every regular $ G$-orbit once.  Moreover, the set $G^{reg} $ of regular elements of $ G$ is dense and its complement has codimension 2.  Thus we see that the resulting functor $ S : \OGG \rightarrow Z_G\mod $ given by $ M \mapsto M \otimes_{\cO(G)} \cO(T/W) $ is an equivalence, where $ Z_G $ is the group scheme of group regular centralizers regarded as a group scheme over $ T/W $.  This functor (as in the work of \cite{BF}) can and will be used to give a more algebraic description of the category $ \OGG $.

Unfortunately, in the quantum group situation we do not seem to have an analogous map $ \Oq \rightarrow E(q) $ (nor an analog of the group scheme of regular centralizers).  However, it should be possible to construct a version of the Kostant slice functor by relating $ \OqGG $ to a category of Harish-Chandra modules and using the work of Sevostyanov \cite{Sev}.

\subsection{Minuscule versions}
A non-trivial irreducible representation $ V $ of $ G $ is called minuscule, if all weights of $ V $ lie in single Weyl orbit.  Equivalently $ V(\lambda) $ is minuscule if and only if $ \lambda $ is a non-zero minimal element of $ \Lambda_+ $ (with respect to the usual partial order on $ \Lambda_+$).

We consider the full subcategory $ \RqGm$ of $ \RqG $ consisting of tensor products of minuscule representations.  Similarly, we consider $ \OqGGm $ to be the subcategory of $ \OqGG $ consisting of objects of the form $ \Oq \otimes V $ for $ V \in \RqGm$.

When $ G = SL_n $, the minuscule representations coincide with the fundamental representations $ \bigwedge^k \C^n $ for $ k = 1, \dots, n-1 $.  In this case, the idempotent completion of $ \RqSm $ is $ \RqS $.   From Propositions \ref{pr:OCmod} and \ref{pr:Idemp}, we deduce the following.

\begin{Corollary} \label{cor:OCmod}
There is an equivalence $ \RqSm(S^1) \cong \OqSSm $.
\end{Corollary}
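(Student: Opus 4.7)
The plan is to combine Propositions \ref{pr:OCmod} and \ref{pr:Idemp} directly, using the specific feature of type $A$ that every object of $\RqS$ is a summand of some tensor product of fundamental representations.

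First, I would verify that the idempotent completion of $\RqSm$ is indeed $\RqS$. This is the key type-$A$ input: the fundamental representations $\bigwedge_q^k \C^n$ for $k = 1, \dots, n-1$ tensor-generate $\RqS$ in the sense that every irreducible $V(\lambda)$ appears as a direct summand of some tensor product of them. Since $\RqSm$ is by definition the full subcategory of $\RqS$ spanned by such tensor products, and $\RqS$ is semisimple (hence idempotent complete), we indeed have $\overline{\RqSm} = \RqS$.

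Next, I would invoke Proposition \ref{pr:Idemp} with $\cC = \RqSm$ to obtain a fully faithful monoidal functor
\begin{equation*}
\RqSm(S^1) \longrightarrow \RqS(S^1).
\end{equation*}
Composing with the equivalence $\RqS(S^1) \xrightarrow{\sim} \OqSS$ of Proposition \ref{pr:OCmod} (applied in the case $G = SL_n$) yields a fully faithful functor $\RqSm(S^1) \to \OqSS$.

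Finally, I would identify the essential image. The objects of $\RqSm(S^1)$ are by definition those of $\RqSm$, namely tensor products $V_1 \otimes \cdots \otimes V_m$ of fundamental representations. Tracing through the construction of Proposition \ref{pr:OCmod}, such an object is sent to $\Oq \otimes V_1 \otimes \cdots \otimes V_m$, which is precisely the defining form of an object of $\OqSSm$. Since the functor is fully faithful and its essential image equals the objects of $\OqSSm$, it is an equivalence onto $\OqSSm$, proving the corollary.

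There is no real obstacle here; the result is a straightforward packaging of the two propositions together with the type-$A$ fact that fundamental representations tensor-generate $\RqS$. The only point requiring minor care is checking that Proposition \ref{pr:OCmod}'s equivalence matches up the objects of the two subcategories as claimed, which is immediate from the explicit form of the equivalence $V \mapsto \Oq \otimes V$.
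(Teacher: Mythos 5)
Your proof is correct and follows exactly the route the paper intends: it identifies the idempotent completion of $\RqSm$ as $\RqS$ (the type-$A$ input), applies Proposition \ref{pr:Idemp} to get a fully faithful functor $\RqSm(S^1) \to \RqS(S^1)$, composes with the equivalence of Proposition \ref{pr:OCmod}, and identifies the essential image as $\OqSSm$. The only minor overstatement is calling the functor from Proposition \ref{pr:Idemp} ``monoidal'' (the proposition only asserts fully-faithful), but this plays no role in your argument.
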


\section{Geometry of affine Grassmannians}

\subsection{The affine Grassmannian}
Let $ \K = \C((t)), \cO = \C[[t]] $ and let $ Gr = \Gv(\K)/\Gv(\cO) $ denote the affine Grassmannian of the Langlands dual group.  See Zhu \cite{Z} for a thorough discussion about affine Grassmannians.

For every weight $ \lambda \in \Lambda $, there is a point $ t^\lambda \in Gr $.  For each $ \lambda \in \Lambda_+ $, let $ Gr^\lambda = \Gv(\cO)t^\lambda $.  We have $ \dim Gr^\lambda = 2\rho^\vee(\lambda) $.  We have the stratification $ Gr = \cup_{\lambda \in \Lambda_+} Gr^\lambda $.  Moreover, given any two points $ L_1, L_2 \in Gr$, there exists $ g \in \Gv(\K) $ such that $ (g L_1, gL_2) = (t^0, t^\lambda) $ for a unique dominant weight $ \lambda \in \Lambda_+ $.  In this case, we say that $ L_2 $ is distance $ \lambda $ from $ L_1$ and we write $ d(L_1, L_2) = \lambda $.  With this definition, $ Gr^\lambda $ is the set of points of distance $ \lambda $ from $ t^0$.

The following well known result explains the geometric significance of minuscule weights.
\begin{Lemma}
The following are equivalent.
\begin{enumerate}
\item $Gr^\lambda $ is projective.
\item $ \overline{Gr^\lambda} $ is smooth.
\item $ \lambda $ is minuscule.
\end{enumerate}
\end{Lemma}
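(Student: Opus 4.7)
The plan is to split the three-way equivalence into $(1) \Leftrightarrow (3)$ and $(2) \Leftrightarrow (3)$, handling the former via the orbit stratification of the closure and the latter via local geometry at boundary strata. I expect $(2) \Rightarrow (3)$ to be the main obstacle.

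For $(1) \Leftrightarrow (3)$, I would invoke the standard closure relation $\overline{Gr^\lambda} = \bigsqcup_{\mu \in \Lambda_+,\ \mu \leq \lambda} Gr^\mu$, where $\mu \leq \lambda$ means $\lambda - \mu$ lies in the non-negative integer span of the simple roots of $G$. Since $\overline{Gr^\lambda}$ is a finite-dimensional closed projective subvariety of the ind-variety $Gr$ and $Gr^\lambda$ is its open dense stratum, $Gr^\lambda$ is projective iff it is closed in $\overline{Gr^\lambda}$, iff $Gr^\lambda = \overline{Gr^\lambda}$, iff the boundary is empty, iff no $\mu \in \Lambda_+$ satisfies $\mu < \lambda$. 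By the characterization of minuscule weights as non-zero minimal elements of $\Lambda_+$ recalled just before the statement, this is precisely the condition that $\lambda$ be minuscule.

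Next, $(3) \Rightarrow (2)$ is then immediate: if $\lambda$ is minuscule, the previous paragraph gives $\overline{Gr^\lambda} = Gr^\lambda$, which is a single $\Gv(\cO)$-orbit and therefore smooth.

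The hard direction is $(2) \Rightarrow (3)$, which I would prove contrapositively. Assume $\lambda$ is not minuscule and pick $\mu \in \Lambda_+$ with $\mu < \lambda$ and $\mu \neq \lambda$, so that $t^\mu \in Gr^\mu \subset \overline{Gr^\lambda} \setminus Gr^\lambda$; the goal is to exhibit singularity of $\overline{Gr^\lambda}$ at $t^\mu$. One clean route is via a transverse slice $S$ to $Gr^\mu$ inside $\overline{Gr^\lambda}$ at $t^\mu$, which is known to be an affine variety of strictly positive dimension $2\rho^\vee(\lambda - \mu)$ with a contracting $\Cx$-action fixing $t^\mu$; since such an affine cone with contracting action that is smooth at its fixed point is forced to be a linear space of the correct dimension, one can rule this out by dimension-counting, and the singularity propagates to $\overline{Gr^\lambda}$. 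Alternatively, and perhaps more elegantly, the Mirkovi\'c--Vilonen form of abelian Satake identifies the total stalk dimension of $IC_\lambda$ at $t^\mu$ with the multiplicity of the weight $\mu$ in $V(\lambda)$; smoothness of $\overline{Gr^\lambda}$ would force $IC_\lambda$ to be a shift of the constant sheaf and hence every stalk to be one-dimensional, forcing every weight of $V(\lambda)$ to have multiplicity one and hence all weights to lie in a single Weyl orbit, i.e.\ $\lambda$ minuscule.
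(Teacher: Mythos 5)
The paper states this lemma without proof, citing it as well known, so there is no internal argument to compare against. Your handling of $(1) \Leftrightarrow (3)$ via the closure stratification and of $(3) \Rightarrow (2)$ via orbit homogeneity is correct, but both of your proposed routes for $(2) \Rightarrow (3)$ have genuine gaps. The $IC$-stalk route breaks at its final step: ``every weight of $V(\lambda)$ has multiplicity one'' does \emph{not} imply ``all weights lie in a single Weyl orbit.'' For $G = SL_2$ and $\lambda = 2\omega$ the weights $\{2,0,-2\}$ all have multiplicity one but fall into two Weyl orbits, so $\lambda$ is not minuscule; and indeed $\overline{Gr^{2\omega}} \subset Gr_{PGL_2}$ has an $A_1$ surface singularity at $t^0$, which is \emph{rationally} smooth (so $IC_{2\omega} \cong \C[2]$ with all stalks one-dimensional) but not smooth. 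More structurally, any argument through $IC$-sheaf stalks can at best detect rational smoothness, which is strictly weaker than smoothness, and the quasi-minuscule case slips through the sieve.

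The transverse-slice route is also incomplete as written, because ``rule this out by dimension-counting'' is unsubstantiated: a smooth affine cone with a contracting $\Cx$-action is a vector space, and there is no a priori contradiction in the slice being a vector space of the expected dimension $2\rho^\vee(\lambda - \mu)$. What one actually needs is that the Zariski tangent space of $\overline{Gr^\lambda}$ (equivalently, of the slice) at $t^\mu$ strictly exceeds $\dim Gr^\lambda = 2\rho^\vee(\lambda)$, and establishing this is the real content of the hard direction. The cleanest citation is the theorem of Evens--Mirkovi\'c that the singular locus of $\overline{Gr^\lambda}$ is exactly $\overline{Gr^\lambda}\setminus Gr^\lambda$ (see also the explicit minimal-degeneration slice computations of Malkin--Ostrik--Vybornov); some such input, or an explicit tangent-space calculation, is needed to close the argument.
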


Given a sequence $ \ulam = (\lambda_1, \dots, \lambda_m) $ of minuscule dominant weights, we can consider the variety
$$ Gr^{\ulam} := \{ (t^0 = L_0, L_1, \dots, L_m) \in Gr^m : d(L_{i-1}, L_i) = \lambda_i \} $$
which comes with a morphism $ p_{\ulam} : Gr^{\ulam} \rightarrow Gr $ given by $ (L_1, \dots, L_m) \mapsto L_m$.  Note that $ Gr^{\ulam} $ is an iterated bundle of $ Gr^{\lambda_i}$; there is a obvious map $ Gr^{\ulam} \rightarrow Gr^{(\lambda_1, \dots, \lambda_{m-1})} $ which is a $Gr^{\lambda_m} $-bundle.

There is an action of $ \Gv$ on $ Gr $ by left multiplication.  Also there is an action of $ \C^\times $ on $ Gr $ by ``loop rotation''; this comes from the action of $ \Cx $ on $ \cO $ given by $ s \cdot p(t) = p(st) $.

\subsection{Line bundles on the affine Grassmannian} \label{sec:linebundles}

Recall that the connected components of $ Gr $ are labelled by the finite set $ \Lambda / \Z \Delta $, where $ \Delta $ is the set of roots of $ G $.   Each connected component has Picard group $ \Z $ and we define a line bundle $ \sL $ on $ Gr $ to be the positive generator of the Picard group on each connected component (see for example section 2.4 of \cite{Z}).

Unfortunately, the line bundle $ \sL $ is not always $ \Gv$-equivariant.  It is however $ \Gc $-equivariant (since each connected component is isomorphic to the quotient of $ \widehat{\Gc} $ (the affine Kac-Moody group) by a hyperspecial parahoric subgroup).  In fact $ \sL $ carries a $ \Gc \times \C^\times $-equivariant structure where the action of $ \widetilde{T^\vee}$ on the fibre over $ t^\mu $ is given by the weight $ \iota(\mu) $ and the action of $\C^\times $ on the same fibre over $ t^\mu $ is given by the pairing $ (\mu, \mu)  $.

\subsection{The equivariant Satake category}
Consider the Satake category $P(Gr)$ of perverse sheaves on $ Gr $ constructible with respect to the stratification by $ \Gv(\cO) $ orbits.

We will need an equivariant version of this category. Note that we have an action of $ \Gv $ on $ Gr $ which preserves the $\Gv(\cO) $-orbits.  Using the $W$-invariant bilinear form, we may identify $H_\Gv(\pt) = \cO(\ft^\vee)^W \cong \cO(\ft)^W $.

Let $ D_{\Gv}(Gr) $ denote the equivariant derived category of $Gr$ for this action.   We consider $D_{\Gv}(Gr) $ with morphisms given by total $\mathrm{Ext} $.  Thus the category $ D_{\Gv}(Gr) $ is enriched over graded $ \cO(\ft)^W$-modules.

We have a global section functor
$$ D_{\Gv}(Gr) \rightarrow H^*_\Gv(Gr)\mod.$$
By a theorem of Ginzburg (see Lemma 13 from \cite{BF}), this functor is fully faithful.

Since $ Gr^\ulam $ is smooth and $ p_\ulam $ is semismall, we can consider the perverse sheaf $ {p_\ulam}_* \C_{Gr^\ulam} [2\rho^\vee(\sum_k \lambda_k)] \in P(Gr) $.  It is the tensor product of the objects $ \C_{Gr^{\lambda_k}}[2\rho^\vee(\lambda_k)] $ with respect to the usual monoidal structure on $ P(Gr) $.  We let $ D_{\Gv}^{min}(Gr)$ denote the full subcategory of $ D_{\Gv}(Gr)$ consisting of these objects.

\subsection{Homology convolution categories}\label{sec:homconv}

Recall convolution in homology following Chriss-Ginzburg \cite{CG}. Let $ I $ be an index set and let $ \{Y_i\}_{i \in I} $ be a collection of smooth varieties along with proper maps $ p_i : Y_i \rightarrow Y $ to some fixed variety $ Y $.  Form the fibre products $ Z_{ij} = Y_i \times_Y Y_j $.

We can use this data to define a category $ hConv(Y) $ (this category depends on more than just $ Y $, but we suppress the rest of the data in the notation). The objects in $hConv(Y)$ consist of the elements of the set $ I $. The morphisms are defined as
$$ \Hom_{hConv(Y)}(i,j) = H_*(Z_{ij})$$
where $ H_*(Z_{ij}) $ denotes the total Borel-Moore homology of $ Z_{ij} $ (with coefficients in $ \C $). The composition operation in $hConv(Y)$ is defined by the convolution product
$$ H_*(Z_{ij}) \otimes H_*(Z_{jk}) \rightarrow H_*(Z_{ik})$$
which is given by the formula
$$c_1 * c_2 = (\pi_{13})_* (\pi_{12}^*(c_1) \cap \pi_{23}^*(c_2)),$$
where ``$\cap$'' denotes the intersection product (with support), relative to the ambient manifold $Y_i \times Y_j \times Y_k$ and $ \pi_{12} : Y_i \times Y_j \times Y_k \rightarrow Y_i \times Y_j$, etc.  For more details about this construction see \cite[Sec. 2.7]{CG}.

The following result of Chriss-Ginzburg shows the importance of these categories. Consider the derived category of constructible sheaves on $Y$, $D_c(Y)$, which we regard as a category enriched over graded vector spaces by taking total $Ext$.

\begin{Theorem}\label{th:hConv}\cite[Theorem 8.6.7]{CG}
The category $ hConv(Y) $ is equivalent to the full subcategory of $ D_c(Y) $ whose objects are $ {p_i}_* \C_{Y_i} $.
\end{Theorem}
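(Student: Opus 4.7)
The plan is to follow the argument of Chriss--Ginzburg \cite{CG}: first identify the morphism space $\Hom^*_{D_c(Y)}({p_i}_*\C_{Y_i}, {p_j}_*\C_{Y_j})$ with the Borel--Moore homology $H_*(Z_{ij})$ by means of standard adjunction and base change for constructible sheaves, and then verify that Yoneda composition in $D_c(Y)$ corresponds to the convolution product defined in section~\ref{sec:homconv}.

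For the identification of morphism spaces, I would first use that $p_i$ is proper, so that the adjunction $(p_{i*}, p_i^!)$ applies and yields
\[ \Hom_{D_c(Y)}({p_i}_*\C_{Y_i}, {p_j}_*\C_{Y_j}) \cong \Hom_{D_c(Y_i)}(\C_{Y_i}, p_i^! {p_j}_*\C_{Y_j}). \]
Next, applying proper base change to the Cartesian square with corners $Z_{ij}, Y_j, Y_i, Y$ (using that $p_j$ is proper, so $p_{j!}=p_{j*}$) rewrites $p_i^! {p_j}_* \cong q_{i*} q_j^!$, where $q_i : Z_{ij} \to Y_i$ and $q_j : Z_{ij} \to Y_j$ are the two projections. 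A further use of the adjunction $(q_i^*, q_{i*})$ then reduces the problem to computing
\[ \Hom_{D_c(Z_{ij})}(\C_{Z_{ij}}, q_j^!\C_{Y_j}). \]
Since $Y_j$ is smooth of pure dimension $d_j$, the constant sheaf $\C_{Y_j}$ agrees with the dualizing complex $\omega_{Y_j}$ up to a shift by $2d_j$, so $q_j^!\C_{Y_j} \simeq \omega_{Z_{ij}}[-2d_j]$. The total Ext group is thereby identified with $H^*(Z_{ij}, \omega_{Z_{ij}})[-2d_j] = H_*(Z_{ij})$ after renumbering.

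The main obstacle is the second half: verifying that under this chain of isomorphisms the Yoneda composition in $D_c(Y)$ becomes the convolution product $c_1 * c_2 = (\pi_{13})_*(\pi_{12}^*(c_1) \cap \pi_{23}^*(c_2))$. I would handle this by unwinding the triple composition using the triple fibre product $Y_i \times_Y Y_j \times_Y Y_k$, embedded in the smooth ambient manifold $Y_i \times Y_j \times Y_k$. The intersection product appearing in the convolution formula is the geometric realization, via the identification $q_j^!\C \simeq \omega[-2d_j]$, of the natural diagonal map for the smooth ambient product; the pushforward along $\pi_{13}$ corresponds to proper pushforward in the base-change diagram relating the three fibre products $Z_{ij}, Z_{jk}, Z_{ik}$. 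Matching these pieces amounts to careful bookkeeping in the six-functor formalism -- every identity used is formal, but assembling the correct compatibilities is where the real work lies. The full details are carried out in \cite[Section~8.6]{CG}.
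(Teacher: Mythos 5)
The paper does not prove this statement itself but simply cites \cite[Theorem 8.6.7]{CG}, and your sketch is a faithful outline of the argument given there: the identification of $\Hom$ spaces via the $(p_{i!},p_i^!)$ adjunction, proper base change, and $q_j^!\C_{Y_j}\simeq\omega_{Z_{ij}}[-2d_j]$ is exactly the Chriss--Ginzburg reduction, and your account of how Yoneda composition matches the convolution product via the triple fibre product is the right outline of the remaining bookkeeping. Since you explicitly defer the full composition check to \cite[Section 8.6]{CG}, this is an accurate and appropriately honest sketch of the cited proof rather than an independent one.
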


If the varietes $ Y_i, Y $ all carry compatible actions of some group $H$, we may consider the corresponding equivariant category $hConv^H(Y)$ where the morphisms are defined by equivariant homology.

We now apply this framework to obtain the category $ hConv(Gr) $ where the base variety is $ Gr $ and the mapping varieties are given by $ p_\ulam : Gr^\ulam \rightarrow Gr $, where $\ulam $ ranges over all sequence of minuscule dominant weights. The category $hConv(Gr)$ was first considered by the second author with Fontaine and Kuperberg in \cite{FKK}. The fibre products $ Z(\ulam, \umu) $ appearing in $hConv(Gr)$ are the ``Steinberg-like'' varieties
$$ Z(\ulam, \umu) = \{ (L_1, \dots, L_m), (L'_1, \dots, L'_{m'}) \in Gr^\ulam \times Gr^\umu : L_m = L'_{m'} \}. $$
Using the action of $ \Gv $ we may also define $ hConv^{\Gv}(Gr) $.  Applying Theorem \ref{th:hConv} we deduce the following.
\begin{Theorem} \label{th:BFConv}
We have an equivalence
$$ D^{min}_{\Gv}(Gr) \cong hConv^{\Gv}(Gr).$$
\end{Theorem}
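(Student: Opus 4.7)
The plan is to deduce this statement as an equivariant upgrade of the Chriss--Ginzburg equivalence (Theorem \ref{th:hConv}). First I would verify that the geometric hypotheses of the Chriss--Ginzburg framework are satisfied in the present setup: for each sequence $\ulam = (\lambda_1,\dots,\lambda_m)$ of minuscule dominant weights, the variety $Gr^\ulam$ is smooth because it is an iterated bundle of the spaces $Gr^{\lambda_k} = \overline{Gr^{\lambda_k}}$, which are smooth by the preceding lemma characterizing the minuscule case; and the map $p_\ulam : Gr^\ulam \to Gr$ is projective, hence proper. All of this data is $\Gv$-equivariant.

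Next I would invoke the equivariant version of Theorem \ref{th:hConv}. The statement in \cite{CG} is for ordinary constructible sheaves, but the argument goes through verbatim in the $\Gv$-equivariant derived category: the key inputs are proper base change, the projection formula, Verdier duality (which on the smooth spaces $Gr^\ulam$ just gives a shift by $2\dim Gr^\ulam$), and the identification of equivariant $\Ext$ with equivariant Borel--Moore homology via
\[
\Ext^*_{D_{\Gv}(Gr)}\bigl((p_\ulam)_* \C_{Gr^\ulam}, (p_\umu)_* \C_{Gr^\umu}\bigr) \;\cong\; H^{\Gv}_*(Z(\ulam,\umu))
\]
up to cohomological shift, where the isomorphism is given by the standard $(p_\ulam)_* \dashv p_\ulam^!$ adjunction together with the smoothness of $Gr^\umu$. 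Under this identification, the Yoneda composition of $\Ext$ classes corresponds to the convolution product defined in section \ref{sec:homconv}, essentially because both operations are defined by the same base-change diagram on $Gr^\ulam \times_{Gr} Gr^\umu \times_{Gr} Gr^{\underline{\nu}}$.

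Finally, by construction $D^{min}_{\Gv}(Gr)$ is the full subcategory of $D_{\Gv}(Gr)$ whose objects are the $(p_\ulam)_* \C_{Gr^\ulam}[2\rho^\vee(\sum_k \lambda_k)]$ as $\ulam$ runs over sequences of minuscule dominant weights, and $hConv^{\Gv}(Gr)$ has the same indexing set of objects with morphisms $H^{\Gv}_*(Z(\ulam,\umu))$. The equivariant Chriss--Ginzburg isomorphism therefore assembles into the desired equivalence of categories.

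The main obstacle I anticipate is bookkeeping rather than conceptual: one must keep careful track of the cohomological shifts coming from the dimensions of $Gr^\ulam$ and from Verdier duality, so that the composition of $\Ext$ classes matches convolution of Borel--Moore cycles on the nose. Once the non-equivariant matching is established (as in \cite[Ch.~8]{CG}), the extension to $\Gv$-equivariant sheaves and equivariant Borel--Moore homology is formal, since both sides are computed by the same Cartesian diagrams of $\Gv$-varieties and the relevant six-functor formalism is available equivariantly.
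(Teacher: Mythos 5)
Your argument follows essentially the same route as the paper: the paper simply records that the equivalence follows by applying (an equivariant version of) the Chriss--Ginzburg theorem (Theorem \ref{th:hConv}) to the collection $p_\ulam : Gr^\ulam \to Gr$, and your proposal fleshes out exactly the details one would need — smoothness of $Gr^\ulam$ as an iterated minuscule-Grassmannian bundle, properness of $p_\ulam$, and the fact that the $\Ext$-to-Borel--Moore identification and its compatibility with Yoneda composition persist in the equivariant setting. The verification is correct, including your use of the lemma characterizing minuscule $\lambda$ to get $Gr^{\lambda_k}=\overline{Gr^{\lambda_k}}$ smooth.
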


In other words, we can model $ D_{\Gv}^{min}(Gr) $ using this homology convolution category.  This motivates us to introduce a K-theory convolution category.

\subsection{The category $KConv(Gr)$}\label{sec:kconv}

The convolution formalism in homology from section \ref{sec:homconv} can be repeated in K-theory. More precisely, given a collection $ Y_i \rightarrow Y $ as above, the objects in $KConv(Y)$ are still indexed by the set $I$ but the morphisms are now defined as
$$Hom_{KConv(Y)}(i,j) = K(Z_{ij})$$
where $K(Z_{ij}) $ denotes the complexified Grothendieck group of coherent sheaves on $Z_{ij}$.  The composition operation in $ KConv(Y) $ is defined by a similar formula, namely
\begin{align*}
K(Z_{ij}) \otimes K(Z_{jk}) &\rightarrow K(Z_{ik}) \\
[\sF_1] \otimes [\sF_2] &\mapsto [\pi_{13*} (\pi_{12}^*(\sF_1) \otimes \pi_{23}^*(\sF_2))]
\end{align*}
For more details about this construction see \cite[Sec. 5.2]{CG}. As before, given a compatible action of a group $H$ we can work with equivariant K-theory $K^H(\cdot)$ to define the category $ KConv^H(Y)$ .

Applying this construction to $Gr^{\ulam} \rightarrow Gr$ gives us the categories $ KConv^{\Gc}(Gr) $ and $ KConv^{\Gc \times \Cx}(Gr) $ where $\Gc$ acts as before and $\Cx$ acts by loop rotation.  (Here we are using $ \Gc $ rather than $ \Gv$, since our line bundles are only $ \Gc$-equivariant.)  In these categories, the objects are sequences $ (\lambda_1, \dots, \lambda_m) $ (we will say that this object has length $ m $).

On the variety $Gr^\ulam$ we have $ m$ line bundles $ \sL_1, \dots, \sL_m $, where $\sL_k $ is defined as $ p_k^* \sL \otimes p_{k-1}^* \sL^\vee $, where $ p_k : Gr^\ulam \rightarrow Gr $ is given by $ p_k(L_1, \dots, L_m) = L_k $. For $i=1, \dots, m$ we define
$$X_i \in \End_{KConv^{\Gc\times \Cx}}(\ulam) = K^{\Gc \times \Cx}(Z(\ulam, \ulam))$$
as $ X_i := \Delta_* [\sL_i] $ where $ \Delta : Gr^\ulam \rightarrow Z(\ulam, \ulam) $ is the diagonal inclusion. These $ X_i $ are invertible and so they define a map $ \Z^m \rightarrow \End_{KConv^{\Gc \times \Cx}}(\ulam)$.  Thus, we see that $ \Z^m $ acts on objects of length $ m $ in $ KConv^{\Gc}(Gr) $ and $ KConv^{\Gc \times \Cx}(Gr) $.

\subsection{Main conjecture}
 Motivated by Theorem \ref{th:BFConv} and the discussion in the introduction, we now formulate the following main conjecture.

\begin{Conjecture} \label{co:main}
We have equivalences
\begin{align}
\OGGm \otimes_E E^\vee &\cong KConv^{\Gc}(Gr)  \\
\OqGGm \otimes_E E^\vee &\cong KConv^{\Gc \times \Cx}(Gr) \otimes_{\C[q^\pm]} \C(q)
\end{align}
of $E^\vee\mod$ and $E^\vee(q)\mod$ enriched categories respectively.

Moreover, these equivalences should be compatible with the actions of $ \Z^m $ on objects of length $ m $ on each side of the equivalence.
\end{Conjecture}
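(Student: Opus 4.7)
The plan is to extend the strategy the authors execute for $G=SL_n$ to a general simply-connected semisimple $G$, replacing the combinatorial spider input by general horizontal-trace and Satake machinery. First, observe that Propositions \ref{pr:OCmod} and \ref{pr:Idemp} are formulated for any braided monoidal category, so for general $G$ one obtains an equivalence $\overline{\RqGm}(S^1) \cong \OqGGm$ after idempotent completion. This reduces the problem to constructing a suitable braided monoidal ``$K$-theoretic Satake'' functor from $\overline{\RqGm}$ into a convolution monoidal lift of $KConv^{\Gc \times \Cx}(Gr)$ whose horizontal trace is $KConv^{\Gc \times \Cx}(Gr) \otimes_{\C[q^\pm]} \C(q)$, pinned down by matching the classical geometric Satake correspondence on objects (sending $V(\lambda_k)$ to the class of the constant sheaf on $Gr^{\lambda_k}$) and by requiring the induced braiding to coincide with the nearby-cycles braiding from the Beilinson-Drinfeld Grassmannian.

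Second, to prove fully-faithfulness one would apply a $K$-theoretic enhancement of the Bezrukavnikov-Finkelberg derived Satake theorem. This should identify $K^{\Gc \times \Cx}(Gr)$ with a quantization of the coordinate ring $\cO(G)^G$ and extend the identification to the convolution varieties $Z(\ulam,\umu)$ via the Ginzburg-style convolution theorem (Theorem \ref{th:hConv}). The base change by $E^\vee$ on the algebraic side corresponds geometrically to $K^{\Gc}(\pt) \cong E^\vee$ and accounts for the inclusion $\iota : E \hookrightarrow E^\vee$ recalled in section 2.1. Taking horizontal trace of the Satake functor then yields
\begin{equation*}
F : \OqGGm \otimes_E E^\vee \xrightarrow{\sim} KConv^{\Gc \times \Cx}(Gr) \otimes_{\C[q^\pm]} \C(q),
\end{equation*}
and the non-quantum equivalence $\OGGm \otimes_E E^\vee \cong KConv^{\Gc}(Gr)$ follows by specializing $q \to 1$, using that $\Oq$ and $\RqG$ flat-deform $\cO(G)$ and $\RG$ compatibly with horizontal trace.

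Third, for compatibility with the $\Z^m$-action, recall that on the algebraic side the generator $X_j$ of section \ref{sec:AnnularBraids} is built from the coaction $C_{V_j}$ and the balancing $\theta_{V_j}$ via (\ref{eq:phidefq}), while on the geometric side $X_j = \Delta_*[\sL_j]$ uses the determinant line bundle. Both structures originate in the ribbon structure of the Satake category: the coaction $C_V$ corresponds under the Satake functor to threading a copy of $V$ once around the annulus, which in the $K$-theoretic convolution category is realised by tensoring the relevant factor with $\sL$, and the balancing $\theta_V$ matches the $\Cx$-weight $(\mu,\mu)$ of the fibre of $\sL$ at $t^\mu$ recorded in section \ref{sec:linebundles}. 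Thus compatibility reduces to the single-strand identity $F(X) = \Delta_*[\sL]$ for a single minuscule weight, which follows directly from the construction of the Satake functor together with the definition of the ribbon structure.

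Fourth, the principal obstacle is the construction and fully-faithfulness of this $K$-theoretic Satake functor. For general $G$ we have no combinatorial presentation of $\overline{\RqGm}$, no skew-Howe duality, and no reduction to quantum loop algebras available. A $K$-theoretic derived Satake in the spirit of \cite{BF} is required but has not been established; in particular one must identify $K^{\Gc \times \Cx}(Gr)$ as the natural multiplicative analog of $H^*_{\Gv}(Gr) \cong \cO(\fg)^G$, and must verify that the nearby-cycles braiding on the Beilinson-Drinfeld Grassmannian matches the quantum group braiding on $\RqG$ under this identification, an agreement whose proof in the constructible setting already requires substantial work (compare Gaitsgory \cite{Ga}). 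These two inputs constitute the core technical content that must be developed to establish the conjecture in full generality.
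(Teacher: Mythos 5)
You are addressing Conjecture \ref{co:main}, which the paper leaves open for general $G$ and establishes only for $G = SL_n$; your proposal does not close that gap. The core of your argument --- a braided monoidal ``$K$-theoretic Satake'' functor out of $\overline{\RqGm}$ whose horizontal trace is a fully faithful (indeed essentially surjective) functor to $KConv^{\Gc \times \Cx}(Gr)$, i.e.\ a $K$-theoretic analogue of Bezrukavnikov--Finkelberg --- is postulated rather than constructed, and you concede in your final paragraph that both its existence and the comparison of the geometric and quantum-group braidings are unproven. That is not a technical loose end: it is the entire content of the conjecture, so what you have is a research program, not a proof. The paper itself obtains the result only for $SL_n$, by an entirely different route: the annular spider presentation of $\OqSSm$ (Corollary \ref{cor:GamEquiv}), the functors $\APsi_m$ and $\Phi_m$ from idempotented quantum loop algebras, reduction to the object $1^m$, an explicit identification of $K^{\A}(Y(1^m))$ with the ring $R^m_{n,[q^\pm]}$, and Kraft--Procesi surjectivity at $q=1$ promoted to generic $q$ by a grading and freeness argument --- none of which has a known substitute outside type A.

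Two of your intermediate steps also fail as stated, even granting the missing Satake functor. First, the reduction $\overline{\RqGm}(S^1) \cong \OqGGm$ is false for general $G$: Proposition \ref{pr:OCmod} computes the trace of all of $\RqG$, and the passage to the minuscule subcategory (Corollary \ref{cor:OCmod}) uses that for $SL_n$ the idempotent completion of $\RqSm$ is all of $\RqS$. Outside type A the minuscule representations do not tensor-generate $\RqG$ (for $E_8$, $F_4$, $G_2$ there are none at all), so the trace of $\overline{\RqGm}$ only sees those simples $V(\lambda)$ occurring in the minuscule-generated subcategory, whereas $\Hom_{\OqGGm}(\Oq \otimes A, \Oq \otimes B) \cong \oplus_{\lambda \in \Lambda_+}\Hom_{\RqG}(A \otimes V(\lambda), V(\lambda) \otimes B)$ runs over all dominant weights. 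Second, you deduce the $q$-undeformed equivalence by ``specializing $q \to 1$'' from an equivalence defined over $\C(q)$; this is not automatic (one needs integral forms over $\C[q^\pm]$ and freeness of the Hom spaces), and the paper in fact argues in the opposite direction: it proves the $q=1$ statement first (Theorem \ref{thm:Isoq1}) and then uses freeness together with a grading argument to pass to generic $q$ (Theorem \ref{thm:2}).
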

By an equivalence of $E^\vee\mod$ enriched categories, we mean that the map on Hom spaces should be an $ E^\vee $-module morphism map.  On the left hand side the $ E^\vee$-module structure comes from tensoring (and the $ E $-module structure was explained in section \ref{sec:Kostant}), while on the right hand side it comes from $ K^\Gc(pt) = E^\vee $.

The compatibility between the actions of $ \Z^m $ should be viewed as a partial substitute for the commutative diagram appearing in \cite{BF} which involves the Kostant slice functor.  The action of $ \Z^m $ on the left hand side is defined in section \ref{sec:AnnularBraids} and the action of $ \Z^m $ on the right hand side is defined in section \ref{sec:kconv}.

\begin{Remark}
The base change from $ E $ to $ E^\vee $ on the left hand side seems to be necessary to get the endomorphisms of the trivial objects to match.  Note that in the simply-laced case, we have an isomorphism $ E \cong E^\vee $, so the base change is unneccesary.
\end{Remark}

\section{Spiders and annular spiders}\label{sec:spider-Oq}

\subsection{Some $ SL_n $ notation}

The remainder of the paper will be devoted to the proof of Conjecture \ref{co:main} for the case $ G = SL_n $.

The weight lattice of $ SL_n$ is $ \Lambda = \Z^n / \Z (1, \dots, 1) $ and $ \Lambda_+ $ is the subset consisting of those $ \lambda = (a_1, \dots, a_n) $ such that $ a_1 \ge \dots \ge a_n $.  The minuscule dominant weights are $ \omega_k = (1, \dots, 1, 0, \dots, 0) $, for $ k = 1, \dots, n-1 $.  The corresponding representation of $U_q \sl_n$ is $ V(\omega_k) = \Altq{k} \C_q^n $.

A sequence of minuscule dominant weights $ \ulam = (\l_1, \dots, \l_m) $ will be encoded by a sequence $ \uk = (k_1, \dots, k_m)$, where $ \lambda_j = \omega_{k_j}$.  Note that $ \Gv = PGL_n $ and $ \Gc = SL_n $.  We write
$$ E = ( \C[t_1^{\pm}, \dots, t_n^{\pm}]/(t_1 \dots t_n - 1))^{S_n} = \C[e_1, \dots, e_{n-1}], $$
where $e_k$ is the usual $k$th elementary symmetric function.

\subsection{The spider} \label{se:spider}
Our main tool for proving the conjecture in the $ SL_n$ case will be a combinatorially defined category called the annular spider.

We begin by recalling the definition of the spider category $\Sp[q^\pm]$ from \cite{CKM}. $\Sp[q^\pm]$ has as objects sequences $\uk$ in $\{1^\pm,\ldots,(n-1)^\pm\}$, and as morphisms $\C[q^\pm]$-linear combinations of oriented planar graphs by the following four types of vertices:
\begin{align*}
\fuse{k}{l}{k+l}
\qquad
\fork{k}{l}{k+l}
\qquad
\tikz[baseline=0.7cm]{
\foreach \n in {0,1,2} {
	\coordinate (a\n) at (0.4*\n, 0.8*\n);
}
\draw[mid>] (a0) -- node[right] {$k$} (a1);
\draw[mid<] (a1) -- node[right] {$n-k$} (a2);
\draw (a1) -- +(-0.2,0.1);
}
\qquad
\tikz[baseline=0.7cm]{
\foreach \n in {0,1,2} {
	\coordinate (a\n) at (0.4*\n, 0.8*\n);
}
\draw[mid<] (a0) -- node[right] {$k$} (a1);
\draw[mid>] (a1) -- node[right] {$n-k$} (a2);
\draw (a1) -- +(-0.2,0.1);
}
\end{align*}
with all labels drawn from the set $\{1,\ldots,n-1\}$. The third and fourth graphs depict bivalent vertices, called `tags', which are not rotationally symmetric, meaning that the tag provides a distinguished side. The bottom boundary of any planar graph in $\Hom(\uk, \ul)$ is $\uk$ with the strand oriented up for each positive entry, and the strand oriented down for each negative entry. Similarly, the top boundary is determined by $\ul$ in the same way.

On these diagrams, we impose the following relations, together with the mirror reflections and the arrow reversals of these.

\begin{align}
\tikz[baseline=0.4cm]{
\foreach \n in {0,1,2} {
	\coordinate (a\n) at (0.4*\n, 0.8*\n);
}
\draw[mid>] (a0) -- node[right] {$k$} (a1);
\draw[mid<] (a1) -- node[right] {$n-k$} (a2);
\draw (a1) -- +(-0.2,0.1);
}
& = (-1)^{k(n-k)}
\tikz[baseline=0.4cm]{
\foreach \n in {0,1,2} {
	\coordinate (a\n) at (0.4*\n, 0.8*\n);
}
\draw[mid>] (a0) -- node[right] {$k$} (a1);
\draw[mid<] (a1) -- node[right] {$n-k$} (a2);
\draw (a1) -- +(0.2,-0.1);
}
\displaybreak[1]
\label{eq:switch}
\\
\begin{tikzpicture}[baseline=20]
\foreach \n in {0,...,3} {
	\coordinate (z\n) at (0.4*\n, 0.8*\n);
}
\draw[mid>] (z0) -- node[right] {\small $k+l$} (z1);
\draw[mid>] (z2) -- node[right] {\small $k+l$} (z3);
\draw[mid>] (z1) to[out=150,in=-190] node[left] {\small $k$} (z2);
\draw[mid>] (z1) to[out=-30,in=0] node[right] {\small $l$} (z2);
\end{tikzpicture}
& = \qbin{k+l}{l}
\tikz[baseline=20]{\draw[mid>] (0,0) -- node[right] {\small $k+l$} (1,2);}
\label{eq:bigon1}
\displaybreak[1] \\
\begin{tikzpicture}[baseline=20]
\foreach \n in {0,...,3} {
	\coordinate (z\n) at (0.4*\n, 0.8*\n);
}
\draw[mid>] (z0) -- node[right] {\small $k$} (z1);
\draw[mid>] (z2) -- node[right] {\small $k$} (z3);
\draw[mid<] (z1) to[out=150,in=-190] node[left] {\small $l$} (z2);
\draw[mid>] (z1) to[out=-30,in=0] node[right] {\small $k+l$} (z2);
\end{tikzpicture}
& = \qbin{n-k}{l}
\tikz[baseline=20]{\draw[mid>] (0,0) -- node[right] {\small$k$} (1,2);}
\label{eq:bigon2}
\displaybreak[1] \\
\begin{tikzpicture}[baseline]
\foreach \x/\y in {0/0,1/0,2/0,0/1,1/1,0/2} {
	\coordinate(z\x\y) at (\x+\y/2,\y/1.5);
}
\coordinate (z03) at (1,2);
\draw[mid>] (z00) node[below] {\small $k$} --  (z01);
\draw[mid>] (z01) -- node[left] {\small $k+l$} (z02);
\draw[mid>] (z10) node[below] {\small $l$} -- (z01);
\draw[mid>] (z20) node[below] {\small $m$} -- (z02);
\draw[mid>](z02) -- node[left] {\small $k+l+m$} (z03);
\end{tikzpicture}
& =
\begin{tikzpicture}[baseline]
\foreach \x/\y in {0/0,1/0,2/0,0/1,1/1,0/2} {
	\coordinate(z\x\y) at (\x+\y/2,\y/1.5);
}
\coordinate (z03) at (1,2);
\draw[mid>] (z00) node[below] {\small $k$} --  (z02);
\draw[mid>] (z10) node[below] {\small $l$} -- (z11);
\draw[mid>] (z20) node[below] {\small $m$} -- (z11);
\draw[mid>] (z11) -- node[right] {\small $l+m$} (z02);
\draw[mid>](z02) -- node[left] {\small $k+l+m$} (z03);
\end{tikzpicture}
\label{eq:IH}
\displaybreak[1] \\
\label{eq:tag-migration}
\begin{tikzpicture}[baseline=20]
\coordinate (z1) at (0,0);
\coordinate (z2) at (1,0);
\coordinate (c) at (0.5,0.5);
\coordinate (ce) at (0.5,1);
\coordinate (e) at (0.5,1.45);
\draw[mid>] (z1) node[below] {\small $k$} -- (c);
\draw[mid>] (z2) node[below] {\small $l$} -- (c);
\draw[mid>] (c) -- node[right] {\small $k + l$} (ce);
\draw[mid<] (ce) -- (e) node[above] {\small $n-k-l$};
\draw (ce) -- +(0.2,0);
\end{tikzpicture}
&=
\begin{tikzpicture}[baseline=20]
\coordinate (z1) at (0,0);
\coordinate (z2) at (1.5,0);
\coordinate (cze) at (1.25,0.333);
\coordinate (c) at (0.75,1);
\coordinate (e) at (0.75,1.45);
\draw[mid>] (z1) node[below] {\small $k$} -- (c);
\draw[mid>] (z2) node[below] {\small $l$} -- (cze);
\draw[mid<] (cze) -- node[right] {\small $n-l$} (c);
\draw (cze) -- + (0.15,0.15);
\draw[mid<] (c) -- (e) node[above] {\small $n-k-l$};
\end{tikzpicture} \\
\label{eq:tag-migration2}
\begin{tikzpicture}[baseline=20]
\coordinate (z1) at (0,0);
\coordinate (z2) at (1,0);
\coordinate (c) at (0.5,0.5);
\coordinate (ce) at (0.5,1);
\coordinate (e) at (0.5,1.45);
\draw[mid>] (z1) node[below] {\small $k+l$} -- (c);
\draw[mid<] (z2) node[below] {\small $k$} -- (c);
\draw[mid>] (c) -- node[right] {\small $l$} (ce);
\draw[mid<] (ce) -- (e) node[above] {\small $n-l$};
\draw (ce) -- +(-0.2,0);
\end{tikzpicture}
&=
\begin{tikzpicture}[baseline=20,x=-1cm]
\coordinate (z1) at (0,0);
\coordinate (z2) at (1.5,0);
\coordinate (cze) at (1.25,0.333);
\coordinate (c) at (0.75,1);
\coordinate (e) at (0.75,1.45);
\draw[mid<] (z1) node[below] {\small $k$} -- (c);
\draw[mid>] (z2) node[below] {\small $k+l$} -- (cze);
\draw[mid<] (cze) -- node[left] {\small $n-k-l$} (c);
\draw (cze) -- + (0.15,0.15);
\draw[mid<] (c) -- (e) node[above] {\small $n-l$};
\end{tikzpicture} \\
\label{eq:id1b}
\tikz[baseline=40]{
\laddercoordinates{1}{2}
\ladderEn{0}{0}{$k-s$}{$l+s$}{$s$}
\ladderEn{0}{1}{$k-s-r$}{$l+s+r$}{$r$}
\node[left] at (l00) {$k$};
\node[right] at (l10) {$l$};
}
&=
\qbin{r+s}{r}
\tikz[baseline=20]{
\laddercoordinates{1}{1}
\ladderEn{0}{0}{$k-s-r$}{$l+s+r$}{$r+s$}
\node[left] at (l00) {$k$};
\node[right] at (l10) {$l$};
}
\displaybreak[1]
\\
\label{eq:commutation}
\begin{tikzpicture}[baseline=40]
\laddercoordinates{1}{2}
\node[left] at (l00) {$k$};
\node[right] at (l10) {$l$};
\ladderEn{0}{0}{$k{-}s$}{$l{+}s$}{$s$}
\ladderFn{0}{1}{$k{-}s{+}r$}{$l{+}s{-}r$}{$r$}
\end{tikzpicture}
&= \sum_t \qbin{k-l+r-s}{t}
\begin{tikzpicture}[baseline=40]
\laddercoordinates{1}{2}
\node[left] at (l00) {$k$};
\node[right] at (l10) {$l$};
\ladderFn{0}{0}{$k{+}r{-}t$}{$l{-}r{+}t$}{$r{-}t$}
\ladderEn{0}{1}{$k{-}s{+}r$}{$l{+}s{-}r$}{$s{-}t$}
\end{tikzpicture}
\end{align}
\begin{Remark}
In the relations above we allow strands to be labelled by $0$ and $n$. This means that $0$-strands should be deleted and $n$-strands replaced by tags.  On the other hand, any diagram containing a strand whose label is less than $0$ or greater than $ n $ should be interpreted as the 0 morphism.
\end{Remark}

The category $\Sp[q^\pm] $ is a monoidal category with tensor product given on objects by concatenation of sequences and on morphisms by concatenation in the vertical direction.  It also has a natural pivotal structure by the usual graphical calculus. We will denote by $\Sp(q)$ the same category but over $\C(q)$ rather than $\C[q^\pm]$. We also denote by $\Sp$ the category specialized to $q=1$.

It will be useful for us to add crossings to the spider.  As in \cite{CKM}, we define a crossing of a strand labelled $k$ with a strand
labelled $ l $ to be a sum of webs.
\begin{equation}\label{eq:braiding1}
\begin{tikzpicture}[baseline=40]
\draw [shift={+(0.8,0)}][->](-9,0.0) -- (-7,2.9);
\draw [shift={+(0.8,0)}][->](-8.2,1.8) -- (-9,2.9);
\draw [shift={+(0.8,0)}][-](-7,0.0) -- (-7.8,1.2);
\draw [shift={+(-8.7,0)}](0.3,0) node {\small $k$};
\draw [shift={+(-8.3,0)}](2.3,0) node {\small $l$};
\node[left] at (-1,1.2) {$= (-1)^{kl} (-q)^{k} \sum\limits_{\substack{a,b\geq 0 \\ b-a = k-l}} (-q)^{-b}$};
\laddercoordinates{1}{2};
\node[left] at (l00) {\small $k$};
\node[right] at (l10) {\small $l$};
\ladderEn{0}{0}{\small $k-b$}{\small $l+b$}{\small $b$}
\ladderFn{0}{1}{\small $l$}{\small $k$}{\small $a$}
\end{tikzpicture}
\end{equation}
This defines a functor $\B_m(\{1, \dots, n\}) \rightarrow \Sp$ which we can regard as giving a braiding on the category $\Sp$.

\subsection{The annular spider}
We now define the annular spider $\ASp[q^\pm]$. The objects are the same as in $\Sp[q^\pm]$.  For the morphisms, we consider webs with the same kinds of vertices, except that the webs are now embedded in an annulus $ A = S^1 \times [0,1] $.  We consider the same local generating relations as in $ \Sp[q^\pm]$.  The inner boundary of the annulus is regarded as the source of the morphism and the outer boundary is the target.  Let $ * \in S^1 $ denote the bottom of the the circle.  We do not allow strands to start or end at $ *$.  Given a web $ w $, its source (resp. target) is found by reading the inner (resp. outer) circle clockwise starting at the $ * $.

\begin{figure}
\centering
\begin{tikzpicture}[scale=0.5]
\incircle;
\outcircle;
\draw[mid>] (135:1) -- (135:2.2);
\node [above] at (135:1.5) {\small $5$};
\draw[mid>] (135:2.2)  -- (120:4);
\node [above right] at (128:3) {\small $3$};
\draw[mid>] (135:2.2) -- (150:4);
\node [below left] at (142:3) {\small $2$};
\draw[mid<] (45:1)  -- (45:4);
\node [above] at (45:2.5) {\small $4$};

\draw[mid>] (200:1) -- (200:2);
\node[above] at (200:1.5) {\small $2$};
\draw[mid>] (200:2) to  [out=280,in=110] (280:4);
\node at (240:2.6) {\small $1$};
\node [left] at (190:3) {\small $1$};
\draw[mid>] (200:2) to (180:4);

\draw[mid>] (250:1) to [out=250,in=120] (300:4);
\node[right] at (295:2.9) {\small $3$};

\node at (270:1) {\Large $*$};
\node at (270:4) {\Large $*$};
\end{tikzpicture}
\caption{An example of an annular web with source $(3,2,5,-4)$ and target $ (1,2,3,-4,3,1)$.}
\end{figure}
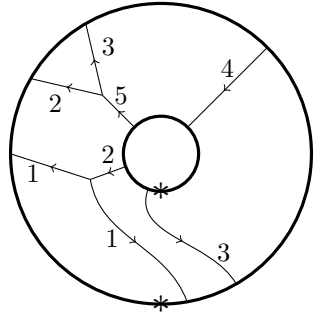

As before, we denote by $\ASp(q)$ the same category but with the space of morphisms defined over $\C(q)$ rather than $\C[q^\pm]$. We can also consider the $q = 1$ version of $\ASp[q^\pm]$, which we denote $\ASp$.

\subsection{The functor $\Gamma$}

We begin by defining a functor
$$ \Gamma : \Sp(q) \rightarrow \RqSm.$$
following \cite{CKM} (this functor was called $\Gamma_n$ in \cite{CKM}). At the level of objects we take
$$(k_1^{\varepsilon_1}, \dots, k_m^{\varepsilon_m}) \mapsto (\Altq{k_1} \C_q^n)^{\varepsilon_1} \otimes \dots \otimes (\Altq{k_m} \C_q^n)^{\varepsilon_m}$$
(where $ \varepsilon_i \in \{+, - \} $).

The morphisms are mapped as follows
\begin{equation}\label{eq:M}
\fuse{k}{l}{k+l} \mapsto M_{k,l} \ \ \text{ and } \ \ \fork{k}{l}{k+l} \mapsto M'_{k,l}
\end{equation}
where $M_{k,l} : \Altq{k} (\C^n_q) \otimes \Altq{l}(\C^n_q) \rightarrow \Altq{k+l}(\C^n_q)$ is defined by
\begin{equation*}
M_{k,l}(x_S \otimes x_T) = x_S \wedge_q x_T = \begin{cases} (-q)^{\ell(S, T)} x_{S \cup T} & \text{ if } S \cap T = \emptyset \\
 0 & \text{ otherwise }
 \end{cases}
\end{equation*}
while $M'_{k,l} :\Altq{k+l}(\C^n_q) \rightarrow \Altq{k} (\C^n_q) \otimes \Altq{l}(\C^n_q)$ is defined by
\begin{align*}
M'_{k,l}(x_S) = (-1)^{kl} \sum_{T \subset S} (-q)^{-\ell(S \smallsetminus T, T)} x_T \otimes x_{S \smallsetminus T}.
\end{align*}
Here, if $ S =\{k_1, \dots, k_a\} \subset \{1, \dots, n\} $, with $ k_1 > \dots > k_a $, we write $ x_S = x_{k_1} \wedge_q \cdots \wedge_q x_{k_a} \in \Altq{a}(\C_q^n)$ and likewise for $x_T$. Moreover, if $S, T$ are disjoint subsets of $ \{1, \dots, n\} $ we define
$$ \ell(S, T) = |\{ (i,j) : i \in S, j \in T \text{ and } i < j \}|.$$
The maps in (\ref{eq:M}) then force upon us the definition for tags. The main result from \cite{CKM} can be stated as follows.

\begin{Theorem} \label{th:CKM}
The functor $ \Gamma: \Sp(q) \rightarrow \RqSm$ is an equivalence of braided monoidal categories.
\end{Theorem}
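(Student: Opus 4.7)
The plan is to prove Theorem \ref{th:CKM} via quantum skew-Howe duality. Recall that there are commuting actions of $\Uqn$ and $\dot{U}_q \gl_m$ on $\Altq{N}(\C_q^n \otimes \C_q^m)$; under the decomposition $\Altq{N}(\C_q^n\otimes\C_q^m)\cong \bigoplus_{\uk} \Altq{k_1}\C_q^n\otimes\cdots\otimes\Altq{k_m}\C_q^n$ ranging over sequences $\uk = (k_1,\dots,k_m)$ with $\sum k_i = N$ and $0\le k_i\le n$, the $\uk$-summand is exactly the $\uk$-weight space for the $\gl_m$-action. The quantum double-centralizer theorem then asserts that the induced map $\dot{U}_q\gl_m \twoheadrightarrow \End_{\Uqn}\bigl(\bigoplus_\uk \Gamma(\uk)\bigr)$ is surjective; this is the sole representation-theoretic input.

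Well-definedness of $\Gamma$ reduces to verifying each local relation of $\Sp(q)$ on explicit tensors via the formulas for $M_{k,l}$ and $M'_{k,l}$: the tag relations record the identification $V(\omega_k)^*\cong V(\omega_{n-k})$ (up to the sign $(-1)^{k(n-k)}$ appearing in (\ref{eq:switch})), the bigons (\ref{eq:bigon1})--(\ref{eq:bigon2}) and I=H relation (\ref{eq:IH}) are routine $q$-combinatorial identities on the basis $\{x_S\}$, and the ladder identities (\ref{eq:id1b})--(\ref{eq:commutation}) follow from $q$-Pascal. Essential surjectivity is immediate since $\RqSm$ is by definition generated under tensor product by the fundamental representations $\Altq{k}\C_q^n$. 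For fullness, I would construct a ladder functor $\Psi_m : \dot{U}_q\gl_m\to\Sp(q)$ that sends $1_\uk$ to the object $\uk$ and the divided-power generators $E_i^{(r)}, F_i^{(r)}$ to the H-shaped ladder webs appearing in (\ref{eq:id1b})--(\ref{eq:commutation}); the spider relations are designed precisely so that the Serre and commutator relations defining $\dot{U}_q\gl_m$ are consequences, so $\Psi_m$ is well defined. Since by construction $\Gamma\circ\Psi_m$ recovers the skew-Howe representation, the double-centralizer surjection shows that after padding $\uk,\ul$ with zero entries to a common length $m$, every $\Uqn$-equivariant map $\Gamma(\uk)\to\Gamma(\ul)$ is already in the image of $\Gamma$.

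The main obstacle is faithfulness, which I would attack by establishing a web normal-form theorem: every morphism in $\Hom_{\Sp(q)}(\uk,\ul)$ can be rewritten, using only the local relations, as a $\C(q)$-linear combination of ladder webs with boundary $(\uk,\ul)$. The key steps are to apply tag migration (\ref{eq:tag-migration})--(\ref{eq:tag-migration2}) to push all tags to the boundary, invoke I=H (\ref{eq:IH}) repeatedly to stratify the trivalent graph into horizontal layers of merges and splits, and use the bigon rules (\ref{eq:bigon1})--(\ref{eq:bigon2}) together with (\ref{eq:id1b}) to absorb resulting faces. Once this normal form is in place, $\Hom_{\Sp(q)}(\uk,\ul)$ is spanned by $\Psi_m(1_\ul \dot{U}_q\gl_m 1_\uk)$ for $m$ large enough, and comparing with the double-centralizer identification shows that $\Gamma$ cannot have any nontrivial kernel on Hom spaces. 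The braided structure is handled last: the crossing (\ref{eq:braiding1}) is built to match the Lusztig braiding $T_i$ on $\dot{U}_q\gl_m$ under $\Psi_m$, and skew-Howe duality intertwines this with the $R$-matrix braiding on $\RqSm$, so $\Gamma$ is an equivalence of braided monoidal categories.
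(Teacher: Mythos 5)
Your proposal matches the strategy of the cited paper \cite{CKM}: quantum skew-Howe duality, a ladder functor $\Psi_m : \dU_q\gl_m \to \Sp(q)$ sending divided powers to H-shaped ladders, and a spanning theorem reducing arbitrary webs to linear combinations of ladders. However, there is a genuine gap in your faithfulness argument. You assert that surjectivity of the skew-Howe map $\dU_q\gl_m \twoheadrightarrow \End_{\Uqn}\bigl(\bigoplus_\uk \Gamma(\uk)\bigr)$ is ``the sole representation-theoretic input,'' but surjectivity alone cannot yield faithfulness of $\Gamma$. Given that $\Psi_m$ is surjective onto $\Hom_{\Sp(q)}(\uk,\ul)$ and $\Gamma\circ\Psi_m$ is surjective onto $\Hom_{\Uqn}(\Gamma(\uk),\Gamma(\ul))$, to conclude that $\Gamma$ is injective one needs the containment $\ker(\Gamma\circ\Psi_m)\subseteq\ker\Psi_m$; otherwise both compositions could be surjective while $\dim\Hom_{\Sp(q)}(\uk,\ul) > \dim\Hom_{\Uqn}(\Gamma(\uk),\Gamma(\ul))$.

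Closing this gap requires identifying the kernel of the skew-Howe representation of $\dU_q\gl_m$, for instance by proving that the truncation $(\dU_q\gl_m)^n$ (which manifestly surjects through $\Psi_m$) is precisely the generalized $q$-Schur algebra acting faithfully on $\Altq{N}(\C_q^n\otimes\C_q^m)$, or equivalently by a dimension count showing the ladder spanning set has cardinality at most $\dim\Hom_{\Uqn}(\Gamma(\uk),\Gamma(\ul))$. This is a second, independent input in \cite{CKM}, not a consequence of double-centralizer surjectivity. Your later phrase ``the double-centralizer identification'' hints at the needed isomorphism but contradicts your earlier claim, and the argument as written would not carry through without making this step explicit. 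The web normal-form theorem you sketch (tag migration, stratifying into layers via I=H, absorbing faces with bigons and \eqref{eq:id1b}) is also the combinatorial heart of the proof; your description captures the right moves but elides the nontrivial work of showing the rewriting process terminates and that the resulting spanning set is indexed by the expected combinatorial data.
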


\begin{Remark}
The argument in \cite{CKM} implies that the functor $\Gamma$ is also an equivalence when $q=1$.
\end{Remark}

\subsection{Horizontal trace and the annular spider category} \label{sec:TraceSpider}

We will now identify the annular spider category as the horizontal trace of the usual spider category. We define a functor $ \qSp(S^1) \rightarrow \AqSp $ as follows.  On objects it is the identity.

For morphisms, let $ \uk, \ul $ be two objects in $ \qSp $.  Suppose that we have a morphism $[\ur, \phi] $ in $ \qSp(S^1)$, where $\phi $ is a web with bottom boundary  $\uk \sqcup \ur $ and top boundary $\ur \sqcup \ul $.  From $ (\ur, \phi)$ we can get a morphism $ \overline{(r, \phi)}$ in $ \AqSp$ by sending $ \ur $ around the circle as follows
\begin{equation*}
\overline{(r,\phi)} = \begin{tikzpicture}[baseline=15,scale=0.5]
\incircle;
\outcircle;
\draw[mid>] (90:1) -- (90:2.1);
\node [left] at (90:1.5) {\scriptsize $\underline{k}$};

\draw[mid>] (2,0) to [out=90,in=270] (0.7,2.1);
\node [right] at (2,0) {\scriptsize $\underline{r}$};

\draw (0.9,2.1) rectangle (-0.9,2.9);
\node at (0, 2.5) {\scriptsize $\phi$};

\draw (2, 0) arc [radius=2, start angle = 360, end angle = 180];

\draw (-2,0) -- (-2,1.8);
\draw[mid<] (-2,1.8) to [out=90,in=90] (-0.7,3.1);
\draw (-0.7, 2.9) to (-0.7, 3.1);
\node [left] at (-2,1) {\scriptsize $\underline{r}$};

\draw[mid>] (90:2.9) -- (90:4);
\node [right] at (90:3.5) {\scriptsize $\underline{l}$};
\end{tikzpicture}
\end{equation*}

\begin{Proposition} \label{th:Annular}
For any two objects $ \uk, \ul $, this gives an isomorphism
$$ \Hom_{\qSp(S^1)}(\uk, \ul) \xrightarrow{\sim} \Hom_{\AqSp}(\uk, \ul). $$
This implies $ \AqSp \cong \qSp(S^1) $.
\end{Proposition}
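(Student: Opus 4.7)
The plan is to exhibit an explicit two-sided inverse $G : \AqSp \to \qSp(S^1)$ to the proposed functor $F : (\ur, \phi) \mapsto \overline{(\ur, \phi)}$, obtained by \emph{cutting} the annulus. Given an annular web $W \in \Hom_{\AqSp}(\uk, \ul)$, choose a radial line $\ell$ from the inner basepoint $*$ to the outer basepoint $*$ that is transverse to $W$. The intersection $W \cap \ell$ is a finite set of labelled points giving a sequence $\ur$, and cutting the annulus along $\ell$ turns $W$ into a disk web $\phi_W : \uk \otimes \ur \to \ur \otimes \ul$ in $\qSp(q)$. Setting $G(W) = [\ur, \phi_W]$ and extending linearly will be the candidate inverse.

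First I would check that $F$ respects the trace equivalence. Suppose $\alpha : \ur \to \ur'$ and $\psi : \uk \otimes \ur' \to \ur \otimes \ul$ are webs; then the annular webs associated to $(\ur,\, \psi \circ (1_\uk \otimes \alpha))$ and $(\ur',\, (\alpha \otimes 1_\ul) \circ \psi)$ differ only by an isotopy sliding $\alpha$ once around the annulus past $*$. Hence both pairs wrap to the same element of $\AqSp$, and linearity together with the fact that the defining relations of $\AqSp$ are exactly the local spider relations shows that $F$ descends to the hom-sets.

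For well-definedness of $G$, the key point is independence of the cut. Given two transverse radial lines $\ell, \ell'$ bounding a small wedge, the restriction of $W$ to the wedge is a disk web $\alpha : \ur \to \ur'$, and the restriction to the complementary disk is a web $\psi : \uk \otimes \ur' \to \ur \otimes \ul$. Regluing the wedge on the $\ell$ side recovers the disk web $\phi_W = \psi \circ (1_\uk \otimes \alpha)$ obtained from the $\ell$-cut, while regluing on the $\ell'$ side recovers $\phi_W' = (\alpha \otimes 1_\ul) \circ \psi$ obtained from the $\ell'$-cut. These are precisely the pairs identified by the trace equivalence, so $G(W)$ does not depend on $\ell$. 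Any two transverse cuts are connected by a sequence of such small-wedge moves, giving full cut-independence; this also handles isotopy invariance of $W$ since after isotopy one can always select a compatible $\ell$. Invariance under local spider relations is then automatic: choose $\ell$ disjoint from the region where the relation is applied, so the two sides cut to disk webs that are already equal in $\qSp(q)$.

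Finally, $F \circ G = \mathrm{id}$ because cutting $W$ along $\ell$ and then regluing along $\ell$ returns $W$, while $G \circ F = \mathrm{id}$ because wrapping $\phi$ around and then cutting along the identification line of the wrapping returns $(\ur, \phi)$ verbatim. The main obstacle I anticipate is keeping the orientations and left/right gluing conventions in the cut-independence argument consistent with the composition order in the trace equivalence relation; once that accounting is performed carefully, the identification is essentially tautological, and the monoidal/pivotal compatibilities then follow from the corresponding features of $\qSp(q)$.
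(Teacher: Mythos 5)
Your proof is correct and takes essentially the same route as the paper's: both construct the inverse by cutting the annulus along a radial arc from $*$ to $*$ and unfolding, and both verify that the wrap-around map $F$ respects the trace equivalence by the isotopy sliding $\alpha$ once around the annulus. The only difference is that you spell out the cut-independence of $G$ in detail (small-wedge moves producing exactly the pairs $(\ur, \psi \circ (1\otimes\alpha))$ and $(\ur', (\alpha\otimes 1)\circ\psi)$ identified by the trace relation), whereas the paper treats this step as immediate.
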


\begin{proof}
First we check that the map is well-defined.   Suppose that we have a morphism $ \alpha : \ur \rightarrow \ur' $ and a morphism $ \psi : \uk \sqcup \ur' \rightarrow \ur \sqcup \ul $.  Then we need to know that $ (\ur, \psi \circ I \sqcup \alpha) $ and $ (\ur', \alpha \sqcup I \circ \psi) $ give rise to the same annular web.  This follows since the following two annular webs are isotopic:

\begin{center}
\begin{tikzpicture}[scale=0.7]
\incircle;
\outcircle;
\draw[mid>] (90:1) -- (90:2.7);
\node [left] at (90:1.5) {\scriptsize $\underline{k}$};

\draw[mid>] (0.7,2) to (0.7,2.7);
\node [right] at (0.7,2.4) {\scriptsize $\underline{r'}$};

\draw (0.5,1.6) rectangle (0.9,2.1);
\node at (0.7,1.85) {\scriptsize $\alpha$};

\draw (0.8,2.7) rectangle (-0.8,3.1);
\node at (0, 2.9) {\scriptsize $\psi$};

\draw[mid>] (2,0) to [out=90,in=270] (0.7,1.6);

\draw (2, 0) arc [radius=2, start angle = 360, end angle = 180];
\node [right] at (2,0) {\scriptsize $\underline{r}$};

\draw (-2,0) -- (-2,2);
\draw[mid<] (-2,2) to [out=90,in=90] (-0.7,3.3);
\draw (-0.7, 3.1) to (-0.7, 3.3);
\node [left] at (-2,1) {\scriptsize $\underline{r}$};

\draw[mid>] (90:3.1) -- (90:4);
\node [right] at (90:3.5) {\scriptsize $\underline{l}$};
\end{tikzpicture}
\quad
\begin{tikzpicture}[scale=0.7]
\incircle;
\outcircle;
\draw[mid>] (90:1) -- (90:1.9);
\node [left] at (90:1.5) {\scriptsize $\underline{k}$};

\draw (0.8,1.9) rectangle (-0.8,2.3);
\node at (0, 2.1) {\scriptsize $\psi$};

\draw[mid>] (2,0) to [out=90,in=270] (0.7,1.9);

\draw (2, 0) arc [radius=2, start angle = 360, end angle = 180];
\node [right] at (2,0) {\scriptsize $\underline{r'}$};

\draw (-2,0) -- (-2,2);
\draw[mid<] (-2,2) to [out=90,in=90] (-0.7,3.5);
\node [left] at (-2,1) {\scriptsize $\underline{r'}$};

\draw (-0.7,3.3) to (-0.7,3.5);
\draw (-0.9,2.9) rectangle (-0.4,3.3);
\node at (-0.65,3.1) {\scriptsize $\alpha$};

\draw[mid>] (-0.7,2.3) to (-0.7,2.9);
\node [right] at (-0.7,2.55) {\scriptsize $\underline{r}$};

\draw[mid>] (90:2.3) -- (90:4);
\node [right] at (90:3.3) {\scriptsize $\underline{l}$};
\end{tikzpicture}
\end{center}

Let $ \uk, \ul $ be objects of $ \AqSp $.  Suppose that we have an annular web $\psi $ which gives a morphism between $ \uk $ and $ \ul $ in $ \AqSp$.  Draw a cut line from the inner boundary to the outer boundary (connecting the bottoms) and let $ \ur $ be the labels on the strands cutting this line.  Then unfold $ \psi $ to give a web $ \psi' $ with bottom boundary $ \uk \sqcup \ur $ and top boundary $ \ur \sqcup \ul $ (see Figure \ref{fig:unfold}).  It is easy to see that this map $ \psi \mapsto \psi' $ provides an inverse to the above construction and thus the map
$$ \Hom_{\qSp(S^1)}(\uk, \ul) \rightarrow \Hom_{\AqSp}(\uk, \ul) $$
is an isomorphism.
\end{proof}

\begin{figure}
\centering
\begin{tikzpicture}[scale=0.5]
\incircle;
\outcircle;
\draw[mid>] (135:1) -- (135:2.2);
\node [above] at (135:1.5) {\small $5$};
\draw[mid>] (135:2.2)  -- (120:4);
\node [above right] at (128:3) {\small $3$};
\draw[mid>] (135:2.2) -- (150:4);
\node [below left] at (142:3) { \small$2$};
\draw[mid<] (45:1)  -- (45:4);
\node [above] at (45:2.5) {\small $4$};

\draw[mid>] (200:1) -- (200:2);
\node[above] at (200:1.5) {\small $2$};
\draw[mid>] (200:2) to  [out=280,in=110] (280:4);
\node at (240:2.6) {\small $1$};
\node [left] at (190:3) {\small $1$};
\draw[mid>] (200:2) to (180:4);

\draw[mid>] (250:1) to [out=250,in=120] (300:4);
\node[right] at (295:2.9) {\small $3$};
\node at (270:1) {\Large $*$};
\node at (270:4) {\Large $*$};

\draw [dashed] (270:1) -- (270:4);
\end{tikzpicture} \quad
\begin{tikzpicture}[scale=0.8]
\draw[thick] (-2,0) to (6,0);
\draw[thick] (-2,4) to (6,4);
\draw[thick] (0,3.8) to (0,4.2);
\draw[thick] (4,-0.2) to (4,0.2);

\draw[mid>] (-1,0) to (-1,4);
\node[left] at (-1,1) {\small $3$};

\draw[mid>] (0,0) to (0,2);
\node [left] at (0,1) {\small $2$};
\draw[mid>] (0,2) to (-0.5, 4);
\node [left] at (-0.4,3.6) {\small $1$};
\draw[mid>] (0,2) to (0.5, 4);
\node [left] at (0.4,3.6) {\small $1$};

\draw[mid>] (2,0) to (2,2);
\node [left] at (2,1) {\small $5$};
\draw[mid>] (2,2) to (1.5, 4);
\node [left] at (1.6,3.6) {\small $2$};
\draw[mid>] (2,2) to (2.5, 4);
\node [left] at (2.4,3.6) {\small $3$};

\draw[mid<] (3.5,0) to (3.5,4);
\node[left] at (3.5,1) {\small $4$};

\draw[mid>] (4.5,0) to (4.5,4);
\node[left] at (4.5,1) {\small $3$};

\draw[mid>] (5.5,0) to (5.5, 4);
\node[left] at (5.5, 1) {\small $1$};
\end{tikzpicture}
\caption{An annular web $ \psi $ and its unfolding $ \psi'$.  In this example $ \uk = (3,2,5,4^-), \ul = (1,2,3,4^-,3,1) $ and $ \ur = (3,1) $.}\label{fig:unfold}
\end{figure}

Because $ \qSp $ is a braided monoidal category, $ \qSp(S^1) $ carries a monoidal  structure and thus $ \AqSp $  acquires a monoidal structure.  Diagrammatically, the tensor product of two morphisms $ \phi_1 $ and $ \phi_2 $ is given by placing the source and target of $ \phi_1 $ to the left of those of $ \phi_2 $ and having the strands of $\phi_1 $ cross over those of $ \phi_2 $.

\begin{Corollary}\label{cor:GamEquiv}
The functor $\Gamma$ induces an equivalence
$$A\Gamma: \ASp(q) \rightarrow \OqSSm.$$
\end{Corollary}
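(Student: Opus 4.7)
The plan is to deduce the corollary by transporting the equivalence of Theorem~\ref{th:CKM} through the horizontal trace construction and then combining with the two identifications already established.

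The first step is to show that horizontal trace is functorial on braided monoidal equivalences. Given a braided monoidal functor $F:\mathcal{C}\to\mathcal{D}$, one defines $F(S^1):\mathcal{C}(S^1)\to\mathcal{D}(S^1)$ on objects by $A\mapsto F(A)$ and on morphism classes by $[Z,\phi]\mapsto [F(Z),F(\phi)]$, where $F(\phi)$ is interpreted as a morphism $F(A)\otimes F(Z)\to F(Z)\otimes F(B)$ via the monoidal coherence isomorphisms of $F$. A routine check shows that $F(S^1)$ is well-defined on equivalence classes (using compatibility of $F$ with composition and tensor product) and that it respects composition and the monoidal structure on horizontal traces defined in Section~\ref{sec:monstructure} (this last uses the braided structure of $F$). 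Moreover, if $F$ is an equivalence, then $F(S^1)$ is an equivalence: essential surjectivity is immediate since both $\mathcal{C}(S^1)$ and $\mathcal{D}(S^1)$ have the same objects as their underlying categories, while fullness and faithfulness follow by lifting morphisms $\psi: F(A)\otimes W\to W\otimes F(B)$ and chains of equivalences along a quasi-inverse of $F$, using that $F$ is fully faithful on the underlying hom-sets.

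The second step is to apply this principle to the equivalence $\Gamma:\Sp(q)\to\RqSm$ of Theorem~\ref{th:CKM}, yielding a braided monoidal equivalence
$$\Gamma(S^1):\Sp(q)(S^1)\xrightarrow{\sim}\RqSm(S^1).$$

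Finally, I would define $A\Gamma$ to be the composition
$$\ASp(q)\xrightarrow{\sim}\Sp(q)(S^1)\xrightarrow{\Gamma(S^1)}\RqSm(S^1)\xrightarrow{\sim}\OqSSm,$$
where the first equivalence is Proposition~\ref{th:Annular} and the last is Corollary~\ref{cor:OCmod}. Each factor is an equivalence, so the composite is as well.

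The main obstacle is the first step: verifying functoriality of horizontal trace on braided monoidal equivalences. Nothing is deep here, but there is some bookkeeping — one must check that $F(S^1)$ is well-defined on the equivalence classes generating $\Hom_{\mathcal{C}(S^1)}$, that the formula $[Z_1,\phi_1]\otimes [Z_2,\phi_2]$ of Section~\ref{sec:monstructure} is preserved (this is where the braided structure of $F$ enters), and that quasi-inverses can be promoted to quasi-inverses on horizontal traces. Everything else is essentially formal.
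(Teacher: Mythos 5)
Your proposal is correct and follows essentially the same route as the paper: identify $\AqSp$ with $\qSp(S^1)$ via Proposition~\ref{th:Annular}, transport the equivalence $\Gamma:\Sp(q)\to\RqSm$ of Theorem~\ref{th:CKM} through horizontal trace to get $\qSp(S^1)\cong\RqSm(S^1)$, and finish with Corollary~\ref{cor:OCmod}. The only difference is presentational: the paper treats the middle step (that a braided monoidal equivalence induces an equivalence on horizontal traces) as immediate, whereas you isolate it, state it, and indicate the verification; this is a reasonable expansion of a step the paper leaves implicit.
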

\begin{proof}
By Proposition \ref{th:Annular} and Theorem \ref{th:CKM}, we have
$$ \AqSp \cong \qSp(S^1) \cong \RqSm(S^1) $$
On the other hand, Corollary \ref{cor:OCmod} gives an equivalence between $ \RqSm(S^1) $ and $ \OqSSm$.
\end{proof}
\begin{Remark}
Just as Theorem \ref{th:CKM} holds at $q=1$, so does Corollary \ref{cor:GamEquiv}.
\end{Remark}

\subsection{Action of annular braids and loops}\label{sec:actionZ}
Suppose we have an annular braid with strands labelled from $ 1, \dots, n-1$. Since we have crossings in $ \AqSp $, we can interpret this annular braid as a morphism in $ \AqSp$. Thus we get a functor
$$\AB_m(\{\1, \dots, n-1 \}) \rightarrow \AqSp.$$

\begin{Proposition} \label{pr:ABfactor}
The functor $ \AB_m(\{1, \dots, n-1 \}) \rightarrow \OqSSm $ defined in section \ref{sec:AnnularBraids} factors as
$$ \AB_m(\{1, \dots, n-1\}) \rightarrow \AqSp \xrightarrow{A\Gamma} \OqSSm $$
up to a factor of $q$ as in Remark \ref{rem:factor}.
\end{Proposition}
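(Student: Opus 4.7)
The plan is to verify that the two functors $AB_m(\{1,\dots,n-1\}) \to \OqSSm$ -- the direct one from Proposition \ref{pr:ABaction} and the composition $AB_m \to \AqSp \xrightarrow{A\Gamma} \OqSSm$ -- agree on the generators $T_1,\dots,T_{m-1}$ and $X_1,\dots,X_m$ of the annular braid group, up to the indicated power of $q$. Since both are monoidal functors and the defining relations of $AB_m$ are automatic on either side, checking agreement on generators suffices.

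For the braid generators $T_i$, the strategy is straightforward. Under the equivalence $\AqSp \cong \qSp(S^1)$ from Proposition \ref{th:Annular}, each $T_i$ is sent to a crossing of two adjacent strands, unfolded trivially (no strand goes around the annulus). Theorem \ref{th:CKM} guarantees that $\Gamma$ intertwines these crossings with the $\RqSm$ braiding. Because the monoidal structure on $\qSp(S^1)$ was defined in Section \ref{sec:monstructure} using precisely this braiding, and because the same braiding is used by Lyubashenko--Majid to define the action of $T_i$ in Proposition \ref{pr:ABaction}, the two images of $T_i$ coincide exactly (no $q$-correction needed).

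For the loop generators $X_i$, I would first reduce to $m=1$ and $i=1$: both functors are compatible with the monoidal structure (on the algebraic side by construction in Section \ref{sec:AnnularBraids}; on the diagrammatic side because $X_i$ in $\AqSp$ is obtained by placing a loop in the $i$th position, with the strand passing over/under its neighbors exactly as prescribed by the braided monoidal structure on $\qSp(S^1)$). So we are reduced to checking that for a single minuscule $V = \Altq{k}\C_q^n$, the annular loop in $\AqSp$ maps under $A\Gamma$ to $X_V$ up to the scalar by which the balancing $\theta_V$ acts on $V$. Unfolding the loop via Proposition \ref{th:Annular} produces a pair $(V, \mathrm{id}_{V\otimes V})$ in $\qSp(S^1)$. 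Tracing through the chain of isomorphisms in the proof of Proposition \ref{pr:OCmod} (coevaluation, inverse braiding $\beta^{-1}_{V^*,V}$, and identification with $\Oq$), this pair is sent to the morphism $\Oq \otimes V \to \Oq \otimes V$ obtained by applying the coaction $C_V$ and then multiplying in $\Oq$. Comparing with the definition \eqref{eq:phidefq} of $X_V$, the only discrepancy is the balancing $\theta_V$, which acts on the irreducible $V(\omega_k)$ by a specific power of $q$; this is precisely the ``factor of $q$'' to which Remark \ref{rem:factor} refers.

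The main obstacle will be the bookkeeping in the last step: one must verify that the explicit unfolding of the annular loop matches the coevaluation-and-braiding recipe used in Proposition \ref{pr:OCmod}, and then pin down the balancing factor. The braiding directions and the orientation conventions of the tag/cap that arise when the loop is cut open need to be checked carefully against the $\beta^{-1}_{V(\lambda)^*,B}$ appearing in the third line of the proof of Proposition \ref{pr:OCmod}; any mismatch of convention shows up as an extra power of $q$ coming from writhe, and this is exactly what gets absorbed into $\theta_V$. Once this diagrammatic-to-algebraic dictionary is established for one strand, the general statement follows from the monoidal reductions above.
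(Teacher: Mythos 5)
Your overall plan — reduce to the generators, handle the $T_i$'s via Theorem \ref{th:CKM}, and handle the loop generator $X_1$ by unfolding it with Proposition \ref{th:Annular} and tracing through the chain of isomorphisms in Proposition \ref{pr:OCmod} — is the same as the paper's proof, and the individual pieces you identify (coevaluation, inverse braiding $\beta^{-1}_{V^*,V}$, comparison with \eqref{eq:phidefq}) are the right ones. However, you have the source of the $q$-discrepancy exactly reversed, and this is a genuine error, not a matter of convention.

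For the crossings $T_i$: you assert that ``the two images of $T_i$ coincide exactly (no $q$-correction needed).'' But the $q$-factor the Proposition allows for lives precisely here. Remark \ref{rem:factor} is explicit that the two $T_i$'s, acting on $\wedge^k(\C^n) \otimes \wedge^l(\C^n)$, differ by $q^{kl/n}$, a consequence of the normalization conventions for the braiding in $\RqG$ relative to the diagrammatic formula \eqref{eq:braiding1}; the paper cites \cite[Cor. 6.2.3]{CKM} with exactly this caveat. Appealing to the ``braided monoidal equivalence'' of Theorem \ref{th:CKM} does not erase this, because the braiding for which $\Gamma$ is braided-monoidal is offset from the standard $\Uq$ $R$-matrix braiding by precisely this fractional power of $q$ (which is why fixing it would force one to work over $\C(q^{1/n})$).

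For the loop $X_1$: you conclude that the composition produces ``the coaction $C_V$ and then multiplying in $\Oq$,'' with a residual discrepancy of $\theta_V$, and you identify this with the ``factor of $q$.'' But $\theta_V$ is already built into the definition of $X_V$ in \eqref{eq:phidefq}; it is not a leftover. The paper's key step — which your argument omits — is the identity that the coevaluation $\C(q) \to V \otimes V^*$ followed by $\beta^{-1}_{V^*,V}$ equals the coevaluation $\C(q) \to V^* \otimes V$ followed by $I \otimes \theta_V$. It is this identity that converts the inverse braidings produced by unfolding into the $\theta_V$ appearing in \eqref{eq:phidefq}, giving exact agreement with no residual power of $q$. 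Concretely, the balancing on $\Altq{k}\C_q^n$ is a fixed scalar depending only on $k$, not the multiplicative $q^{kl/n}$ of Remark \ref{rem:factor}, so quantitatively these cannot be the same thing. To repair the proof, keep the structure but (i) attribute the $q^{kl/n}$ factor to the $T_i$ via \cite[Cor. 6.2.3]{CKM}, and (ii) supply the balancing identity above to show the $X_1$ images agree exactly.
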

\begin{proof}
Since annular braids are generated by the $ T_i $ and $ X_1 $ it suffices to check the result on these generators. For $T_i$ the result is proved in \cite[Cor. 6.2.3]{CKM}.

\begin{Remark}\label{rem:factor}
Unfortunately this is not entirely correct as the two images of $T_i$ actually differ by a factor of $q$. More precisely, the two $T_i$ acting on $\wedge^k(\C^n) \otimes \wedge^l(\C^n)$ differ by a factor of $q^{kl/n}$. This is because of the usual conventions for defining the braiding in $\RqG$. We chose to overlook this small discrepancy rather than adding this factor to the definition in (\ref{eq:braiding1}) which would also mean having to work over $\C(q^{1/n})$ throughout.
\end{Remark}

For $ X_1$, we consider some object $ \uk = (k_1, \dots, k_m) $ and then we use the construction from the proof of Proposition \ref{th:Annular} to unfold $ X_1 $ and produce a morphism $ \uk \sqcup \{ k_1 \} \rightarrow \{k_1 \} \sqcup \uk $.
\begin{figure}
\centering
\begin{tikzpicture}[scale=0.7]
\incircle;
\outcircle;
\draw[mid>] (60:1) -- (60:2.2);
\draw (60:2.8) -- (60:4);
\draw[mid>] (75:1) -- (75:2.2);
\draw (75:2.8) -- (75:4);

\draw[mid>] (90:1) -- (90:2.2);
\draw (90:2.8) -- (90:4);
\draw[mid>] (105:1) -- (105:2.2);
\draw (105:2.8) -- (105:4);

\draw[mid>] (120:1) to (120:1.8);
\draw[mid>] (120:1.8) to [out=120, in=410] (140:2.5);
\draw (140:2.5) arc [radius=2.5, start angle=140, end angle=470];
\draw (110:2.5) to [out=200, in=300] (120:3);
\draw (120:3) -- (120:4);

\node[right] at (120:3.5) {\small $k_1$};
\node[right] at (105:3.5) {\small $k_2$};
\node[right] at (90:3.5) {\small $k_3$};
\node[right] at (75:3.5) {\small $k_4$};
\node[right] at (60:3.5) {\small $k_5$};
\end{tikzpicture} \quad
\begin{tikzpicture}[scale=0.9]
\draw[thick] (0,0) to (6,0);
\draw[thick] (0,4) to (6,4);
\draw[thick] (1,3.8) to (1,4.2);
\draw[thick] (5,-0.2) to (5,0.2);

\draw[mid>] (0.5,0) to (0.5,4);
\node [right] at (0.5,3) {\small $k_1$};

\draw[mid>] (1.5,0) to (2.1,2.4);
\draw[mid>] (2.5,0) to (2.95, 1.8);
\draw[mid>] (3.5,0) to (3.85,1.4);
\draw[mid>] (4.5,0) to  (4.75,1);

\draw (2.3, 3.2) to (2.5, 4);
\draw (3.15, 2.6) to (3.5, 4);
\draw (4.05,2.2) to (4.5, 4);
\draw (4.95, 2) to (5.5, 4);

\draw[mid>] (5.5, 0) to [out=90,in=270] (1.5, 4);

\node[left] at (1.58,0.5)  {\small$k_2$};
\node[left] at (2.58,0.5)  {\small $k_3$};
\node[left] at (3.58,0.5)  {\small $k_4$};
\node[left] at (4.58,0.5)  {\small $k_5$};

\node[right] at (5.45, 0.5) {\small $k_1$};
\end{tikzpicture}
\caption{$X_1 $ and its unfolded version.}
\end{figure}
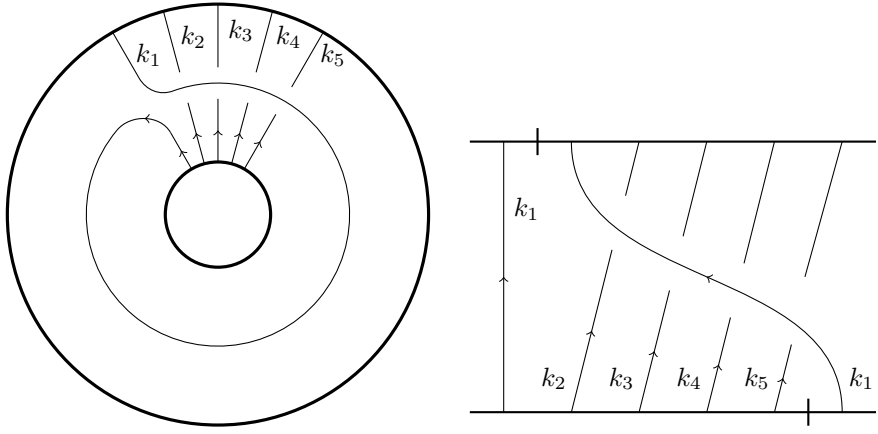

Applying $ \Gamma $ we get the map
$$ V \otimes W \otimes V \xrightarrow{I \otimes \beta^{-1}_{W, V}} V \otimes V \otimes W $$
where $ V =  \Altq{k_1} (\C^n_q) $ and $ W = \Altq{k_2} (\C^n_q) \otimes \cdots \otimes \Altq{k_m} (\C^n_q)$.

If we apply the identification between $ \RqS $ and $ \OqSSm $ given by Proposition \ref{pr:OCmod}, we see that this map corresponds to the map in $ \OqSSm $ given by
\begin{align*}
\OqS \otimes V \otimes W &\rightarrow \OqS \otimes V \otimes W \otimes V \otimes V^* \\
&\xrightarrow{\beta_{V, W}^{-1}} \OqS \otimes V \otimes V \otimes W \otimes V^* \\
&\xrightarrow{\beta^{-1}_{V^*, V \otimes W}} \OqS \otimes V \otimes V^* \otimes V \otimes W \rightarrow  \OqS \otimes V \otimes W
\end{align*}
which is the same as the map $ X_1 $ defined in section \ref{sec:AnnularBraids}, because the composition
$$ \C(q) \rightarrow V \otimes V^* \xrightarrow{\beta^{-1}_{V^*, V}} V^* \otimes V $$
agrees with the composition
$$ \C(q) \rightarrow V^* \otimes V \xrightarrow{I \otimes \theta_V} V^* \otimes V. $$

\end{proof}

\subsection{The action of $E$}\label{sec:actionE}

One can define a map $E[q^\pm] \rightarrow \End_{\ASp[q^\pm]}(\uk)$ by taking $e_k$ to a counterclockwise loop labelled $ k $ passing over all the strands:
\begin{equation*}
\begin{tikzpicture}[scale=0.7]
\incircle;
\outcircle;
\draw[mid>] (60:1) -- (60:2.2);
\draw (60:2.8) -- (60:4);
\draw[mid>] (75:1) -- (75:2.2);
\draw (75:2.8) -- (75:4);

\draw[mid>] (90:1) -- (90:2.2);
\draw (90:2.8) -- (90:4);
\draw[mid>] (105:1) -- (105:2.2);
\draw (105:2.8) -- (105:4);

\draw[mid>] (120:1) -- (120:2.2);
\draw (120:2.8) -- (120:4);

\draw[mid>] (0:2.5) arc [radius=2.5, start angle=0, end angle=360];
\node[right] at (120:3.5) {\small $k_1$};
\node[right] at (105:3.5) {\small $k_2$};
\node[right] at (90:3.5) {\small $k_3$};
\node[right] at (75:3.5) {\small $k_4$};
\node[right] at (60:3.5) {\small $k_5$};

\node[right] at (0:2.5) {\small $k$};
\end{tikzpicture}
\end{equation*}

Such maps commute with all morphisms in $\ASp[q^\pm]$ and hence give a map $ E[q^\pm] \rightarrow \End_{\ASp[q^\pm]}(id) $ to the endomorphisms of the identity functor.

On the other hand, by the construction in section \ref{se:enrich}, we have a map $ E(q) \rightarrow \End_{\OqSSm}(id) $.  By a similar argument as in the proof of Proposition \ref{pr:ABfactor}, we deduce the following.

\begin{Proposition}
The composition
$$E(q) \rightarrow \End_{\ASp(q)}(id) \xrightarrow{A \Gamma} \End_{\OqSSm}(id) $$
equals the map defined in section \ref{se:enrich}. In other words if we have a counterclockwise loop labelled $ k $, then we get the morphism $ \OqS \otimes V \rightarrow \OqS \otimes V $ given by left multiplication by $ e_k $.
\end{Proposition}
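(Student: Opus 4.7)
The plan is to reduce to the empty (identity) object and then directly trace through the horizontal trace identification of Proposition~\ref{th:Annular} and the equivalence $\RqSm(S^1) \cong \OqSSm$ of Proposition~\ref{pr:OCmod}. Both $E(q) \to \End_{\AqSp}(\mathrm{id})$ and $E(q) \to \End_{\OqSSm}(\mathrm{id})$ are maps into endomorphisms of the identity functor, and $A\Gamma$ preserves natural transformations, so it suffices to verify their agreement after evaluating at a single object. I would evaluate at the empty sequence: on the $\AqSp$ side the image of $e_k$ is the counterclockwise $k$-loop enclosing nothing, while on the $\OqSSm$ side the object becomes $\OqS$, and $\End_{\OqSSm}(\OqS)$ equals the $\Uqn$-invariants $(\OqS)^{\Uqn}$.

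Next I would apply the unfolding construction of Proposition~\ref{th:Annular} to the counterclockwise $k$-loop. Cutting the annulus radially at $*$ turns the loop into a single vertical strand labelled $k$ going from the bottom to the top of the rectangle, with no other strands present to introduce any crossing. Thus in $\qSp(S^1)$ the loop is represented by the pair $(V(\omega_k), \mathrm{id}_{V(\omega_k)})$. I would then trace this pair through the chain of isomorphisms in the proof of Proposition~\ref{pr:OCmod}: first the adjunction turns it into the coevaluation $\C \to V(\omega_k) \otimes V(\omega_k)^*$, $1 \mapsto \sum_i x_i \otimes x^i$; second, the braiding $\beta^{-1}_{V(\omega_k)^*, \C}$ required in the next step is the identity because $\C$ is trivial; third, applying $r_{V(\omega_k)}$ yields $r_{V(\omega_k)}(\sum_i x_i \otimes x^i) = t_{V(\omega_k)} \in \OqS$; and finally, freeness of $\OqS$ over itself converts this element into multiplication by $t_{V(\omega_k)}$ on $\OqS$, which is both left and right multiplication because $t_{V(\omega_k)}$ is central by Proposition~\ref{pr:OqUq}.

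By Proposition~\ref{pr:OqUq}, the isomorphism $R_q(SL_n) \cong (\OqS)^{\Uqn}$ carries $[V(\omega_k)] = e_k$ precisely to $t_{V(\omega_k)}$, so the two maps out of $E(q)$ agree on the empty object, hence on the entire identity functor. The ``in other words'' statement for a general $\uk$ is then automatic: both sides record the value at $\uk$ of a single natural transformation of the identity functor, and by the definition in section~\ref{se:enrich} this value on the $\OqSSm$ side is left multiplication by $t_{V(\omega_k)} = e_k$ on the $\OqS$ factor of $\OqS \otimes V(\uk)$. The only real obstacle is bookkeeping the orientation and braiding conventions during the unfolding step; but because the loop encloses nothing and the outgoing object is trivial, no braiding survives the computation, and it reduces directly to the defining formulas for $r_V$ and $t_V$.
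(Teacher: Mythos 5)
Your plan---reduce to the empty object, unfold the loop, and trace it through Proposition~\ref{pr:OCmod}---is a reasonable route, and your local computation at the empty object is correct: the counterclockwise $k$-loop around nothing unfolds to the pair $(V(\omega_k),\mathrm{id})$, which maps to $t_{V(\omega_k)}\in(\OqS)^{\Uqn}$, and Proposition~\ref{pr:OqUq} identifies this with $e_k$. However, the reduction step has a real gap. The sentence ``both maps land in $\End(\mathrm{id})$ and $A\Gamma$ preserves natural transformations, so it suffices to check at one object'' is a non-sequitur: a natural transformation of the identity functor is \emph{not} determined by its value at a single object. For instance the balancing $\theta$ and the identity are distinct natural transformations of $\mathrm{id}_{\RqG}$ that agree on the unit object. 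So agreement of your two maps on $\OqS$ does not, by itself, force agreement on $\OqS\otimes V(\uk)$, which is exactly what the ``in other words'' clause of the statement asserts.

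To repair the reduction you need the further observation that both natural transformations lie in the image of the whiskering map $\End(\mathbf{1})\to\End(\mathrm{id})$, $f\mapsto(f\otimes 1_X)_X$, which is injective since evaluation at $\mathbf{1}$ retracts it. On the spider side this means checking that the $k$-loop around all strands of $\uk$ really is, in the horizontal-trace monoidal structure of section~\ref{sec:monstructure}, the tensor product of the empty-object loop with $1_{\uk}$; this is what the ``passes over all strands'' prescription encodes, but it must be stated. On the $\OqSSm$ side you need the companion fact that left multiplication by $t_V$ on $\OqS\otimes W$ is the whiskering of $m_{t_V}\colon\OqS\to\OqS$, which holds---by centrality of $t_V$ together with an isotopy of the braidings in Proposition~\ref{pr:OCmod}---but is not automatic. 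The paper sidesteps all of this by imitating Proposition~\ref{pr:ABfactor}: unfold the $k$-loop at a general $\uk$ directly, obtaining the pair $\bigl(V(\omega_k),\beta_{V(\uk),V(\omega_k)}\bigr)$, and push it through Proposition~\ref{pr:OCmod} as in the $X_1$ computation there. Your single-object approach is an equally valid alternative once the whiskering facts are supplied, but as written the jump from the empty-object check to ``hence on the entire identity functor'' is unjustified.
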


\section{Quantum loop algebras and annular spiders}

It turns out that $\ASp[q^\pm]$ is closely related to quantum loop algebras of $ \gl_m $ for varying $ m$ (a similar relationship between $ \Sp[q^\pm] $ and usual quantum $ \gl_m $ was studied in \cite{CKM}).

\subsection{The quantum loop algebra}

We first recall the integral, idempotent form $\dULq$ of the quantum loop algebra of $\gl_m$. This is a category whose objects are indexed by $\uk = (k_1, \dots, k_m) \in \Z^m $. For $i=1, \dots, m-1$ we denote $\alpha_i = (0,\dots,1,-1,\dots,0)$ where the $1$ is in position $i$. Moreover we denote $\alpha_0 = (-1,0,\dots,0,1)$.

As usual, we will write $\1_\uk$ for the identity morphism of $\uk$. The morphisms in $\dULq$ are further generated over $\C[q^{\pm}]$ by $ E_i^{(s)}, F_i^{(s)} $, for $i = 0, \dots, m-1$, $s \in \N$ and by $R$. These satisfy the following relations.
\begin{enumerate}
\item $\1_{\uk+s\alpha_i} E_i^{(s)} = \1_{\uk+s\alpha_i} E_i^{(s)} \1_\uk = E_i^{(s)} \1_\uk$ and $\1_\uk F_i^{(s)} = \1_\uk F_i^{(s)} \1_{\uk+s\alpha_i} = F_i^{(s)} \1_{\uk+s\alpha_i}$,
\item $R \1_{\uk} = 1_{r \cdot \uk} R$ where $r \cdot (k_1,\dots,k_m) = (k_m,k_1,\dots,k_{m-1})$ is ``rotation'' of the weights,
\item $[E_i^{(s)}, F_i^{(t)}] \1_\uk = \sum_{j \ge 0} \qbins{\la \uk, \alpha_i \ra - t + s}{j} F_i^{(t-j)} E_i^{(s-j)} \1_\uk$ where $\la \cdot, \cdot \ra$ is the standard inner product (here by convention $ F^{(r)} = E^{(r)} = 0 $ if $ r < 0 $, so this is actually a finite sum),
\item $[E_i^{(s)}, F_j^{(t)}] = 0$ if $i \ne j$,
\item if $|i-j|=1$ (modulo $m$) then $E_iE_jE_i = E_i^{(2)} E_j + E_j E_i^{(2)}$ and similarly for $F$'s instead of $E$'s,
\item if $|i-j|>1$ (modulo $m$) then $[E_i^{(r)},E_j^{(s)}]=0=[F_i^{(r)},F_j^{(s)}]$.
\item $R E_i^{(s)}  = E_{i+1}^{(s)} R$ (modulo $m$) and similarly $R F_i^{(s)}  = F_{i+1}^{(s)} R$ (modulo $m$).
\end{enumerate}

\begin{Remark}
The above definition of $ \dULq $ is the integral, idempotent form of Definition 3.1.1 from \cite{Gr}.  Sometimes a slightly smaller algebra is defined without using the $ R $ generator. For a comparison of the two definitions, see section 1.4 of \cite{DG}.
\end{Remark}

We will denote by $(\dULq)^n$ the quotient of $\dULq$ obtained by killing any object not of the form $ (k_1, \dots, k_m) $ with $ 0 \le k_i \le n$. We will also denote by $\dU_{[q^\pm]} \gl_m$ the subcategory generated by $E_i$'s and $F_i$'s for $i=1,\dots,m-1$ and by $(\dU_{[q^\pm]} \gl_m)^n$ the corresponding quotient.

The category $(\dULq)^n$ (or more precisely the direct sum of all Hom spaces in this category) is a special case of an affine generalized $ q$-Schur algebra (or BLN algebra), as defined in section 6.1 of \cite{Mc}.  As explained by McGerty (Prop 6.4 in \cite{Mc}), the following result is a consequence of the work of Beck-Nakajima \cite{BN}.

\begin{Lemma}\label{lem:free1}
All Hom sets in $ (\dULq)^n $ are free over $ \C[q^\pm] $.
\end{Lemma}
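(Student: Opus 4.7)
The plan is to deduce the freeness of Hom-spaces by invoking a suitable integral basis on the ambient affine quantum group and verifying that this basis is compatible with the truncation defining $(\dULq)^n$.

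First I would recall the work of Beck--Nakajima \cite{BN}, which produces a canonical (equivalently, an integral PBW) basis of the idempotented form $\dULq$ over $\C[q^{\pm}]$. The key features for us are that (a) this basis is compatible with the weight decomposition, so that for any fixed pair of objects $\uk,\ul$ the Hom-space $\1_{\ul}(\dULq)\1_{\uk}$ is spanned by the subset of basis elements whose source idempotent is $\1_{\uk}$ and target idempotent is $\1_{\ul}$, and (b) these basis elements are $\C[q^{\pm}]$-linearly independent.

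Next I would apply McGerty's identification (Proposition 6.4 of \cite{Mc}) of $(\dULq)^n$ with an affine generalized $q$-Schur (BLN) algebra. The truncation kills every object $(k_1,\dots,k_m)$ outside the range $0\le k_i\le n$; equivalently, one quotients $\dULq$ by the two-sided ideal generated by the idempotents $\1_{\uk}$ for $\uk$ not in the allowed set. The essential statement of \cite{Mc} is that the Beck--Nakajima canonical basis is compatible with this truncation: the defining ideal is spanned over $\C[q^{\pm}]$ by those canonical basis elements that factor through a forbidden weight. Consequently, the image of the canonical basis elements whose source and target are both in the allowed range furnishes a $\C[q^{\pm}]$-basis of $(\dULq)^n$, refining to a basis of each Hom-space $\1_{\ul}(\dULq)^n\1_{\uk}$.

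Freeness then follows immediately: each Hom-space has an explicit $\C[q^{\pm}]$-basis. The main obstacle is the compatibility of the canonical basis with the truncation ideal, i.e.\ that the ideal is a based subspace in the sense of Lusztig. This is the technical heart of McGerty's argument and relies on the based-module theory for affine quantum groups developed from \cite{BN}; my contribution in writing this up would be to cite the precise statements and check that the integral form used there matches the one used here (idempotented, divided powers $E_i^{(s)},F_i^{(s)}$, and rotation generator $R$).
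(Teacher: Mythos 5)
Your argument takes the same route as the paper, which simply identifies $(\dULq)^n$ as an affine generalized $q$-Schur (BLN) algebra and cites McGerty (Prop.\ 6.4 of \cite{Mc}) together with Beck--Nakajima \cite{BN}. Your elaboration of the mechanism --- that the truncation ideal is a based subspace so the quotient inherits a canonical $\C[q^{\pm}]$-basis --- is exactly the content of McGerty's argument, and your caveat about the rotation generator $R$ is apt but harmless, since $R$ acts invertibly and compatibly with the weight-graded basis (cf.\ the comparison in \cite{DG}).
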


\subsection{The braiding functor}\label{sec:braid}

Like the braiding structure (\ref{eq:braiding1}) in $\Sp$ we can define a functor $ \B_m(\{0, \dots, n\}) \rightarrow (\dU_{[q^\pm]} \gl_m)^n$ as follows (following \cite[5.2.1]{Lus}). On objects it takes $\uk \mapsto \uk$. On morphisms it takes
\begin{equation}\label{eq:Ti}
T_i \1_{\uk} \mapsto (-1)^{k_ik_{i+1}} (-q)^{k_i} \sum\limits_{\substack{a,b\geq 0 \\ b-a = k_i-k_{i+1}}} (-q)^{-b}E_i^{(a)} F_i^{(b)} \1_{\uk}.
\end{equation}

\begin{Remark}\label{rem:T}
If $k_i = k_{i+1} = 1$, the formula for the braiding simplifies to give
$$ T_i \1_{\uk} \mapsto  q \1_{\uk} - E_i F_i \1_{\uk}. $$
\end{Remark}

\begin{Lemma}\label{lem:affine}
For $i,j \in \{1, \dots, m-1\}$ we have, inside $(\dU_{[q^\pm]} \gl_m)^n$, $T_j E_i^{(s)} = E_i^{(s)} T_j$ if $|i-j| > 1$ and $T_i T_j E_i^{(s)} = E_j^{(s)} T_i T_j$ if $|i-j|=1$, where the $T$'s are given by (\ref{eq:Ti}).
\end{Lemma}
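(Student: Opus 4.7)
The plan is to reduce this to the well-known properties of Lusztig's braid group action on the modified quantum group. Since the lemma only concerns the generators $E_i^{(s)}, F_j^{(s)}$ with $i,j \in \{1,\dots,m-1\}$, the affine generators $E_0, F_0, R$ play no role, and the identity lives inside the finite-type subalgebra $(\dot{U}_{[q^\pm]} \gl_m)^n$, where Lusztig's theory applies directly.

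The first step is to recognize that the formula (\ref{eq:Ti}) for $T_i \1_{\uk}$ is precisely the expansion of Lusztig's braid automorphism $T''_{i,+1}$ evaluated on the idempotent $\1_{\uk}$ (cf.\ \cite[\S5.2]{Lus}). Once this identification is in hand, the case $|i-j|>1$ follows immediately from Lusztig's relation $T''_j(E_i) = E_i$, together with the observation that multiplying by $E_i^{(s)}$ does not change the coordinates $k_j, k_{j+1}$ (so the prefactor $(-1)^{k_jk_{j+1}}(-q)^{k_j}$ and the summation range in (\ref{eq:Ti}) are preserved when we slide $E_i^{(s)}$ past $T_j$). The case $|i-j|=1$ follows from the braid relation $T''_i T''_j T''_i = T''_j T''_i T''_j$ combined with Lusztig's explicit formula for $T''_i(E_j)$: these together imply $T''_i T''_j (E_i) = E_j$ at the level of algebra automorphisms, which unpacks to $T_i T_j E_i = E_j T_i T_j$ in the idempotented form.

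Extending from $E_i$ to the divided powers $E_i^{(s)}$ is routine: over $\C(q)$ one has $E_i^{(s)} = E_i^s/[s]!$, so the identities propagate to all $s$, and one then descends to $\C[q^\pm]$ using the flatness recorded in Lemma \ref{lem:free1}.

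The main obstacle is the convention matching: Lusztig has several different braid operators ($T'_i$, $T''_i$ with signs $\pm 1$), so one must carefully align (\ref{eq:Ti}) with the correct one, keeping track of the prefactor $(-1)^{k_ik_{i+1}}(-q)^{k_i}$. If one prefers a more self-contained argument, the $|i-j|>1$ case admits a direct verification purely from relations (4) and (6) (which just say the relevant generators commute), while the $|i-j|=1$ case reduces to a somewhat delicate manipulation using the Serre relation (5); either route amounts to the same underlying identity in Lusztig's braid group action.
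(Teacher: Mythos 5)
Your proposal is correct and takes essentially the same route as the paper, which proves the lemma in a single line by citing \S 39.2.4 of \cite{Lus} (and Lemma~6.1.3 of \cite{CKM}). Your discussion --- identifying (\ref{eq:Ti}) with Lusztig's $T''_{i,+1}$ on $\1_\uk$, using $T''_j(E_i) = E_i$ for $|i-j|>1$ and $T''_i T''_j(E_i) = E_j$ for $|i-j|=1$, noting that $E_i^{(s)}$ preserves $(k_j, k_{j+1})$ so the idempotented prefactors match, and extending to divided powers --- is precisely what unpacking that citation requires.
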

\begin{proof}
This follows from section 39.2.4 of \cite{Lus} (see also Lemma 6.1.3 of \cite{CKM}).
\end{proof}

\subsection{Representations of $(\dULq)^n$}

The description above gives $\dULq$ in its Kac-Moody form rather than its more common loop form. In the latter form, one also considers generators $E_i \otimes t^r, F_i \otimes t^r$, for $r \in \Z$, but no $E_0$ or $F_0$.

The advantage of the Kac-Moody presentation of $\dULq$ is that it is more closely related to the annular spider category $\ASp$.   However, we will need to define representations of $(\dULq)^n $ which are more naturally defined in the loop presentation. The following technical lemma tells us how to deal with this issue. It shows that it suffices to define the functor on $(\dU_{[q^\pm]} \gl_m)^n$ together with a compatible functor from the annular braid groupoid.

\begin{Lemma}\label{lem:extend}
Let $ \sC $ be any $ \C$-linear category.

Suppose you have a $\C$-linear functor $\phi: (\dU_{[q^\pm]} \gl_m)^n \rightarrow \sC$ and a functor $ \Beta : \AB_m(\{0,\dots,n\}) \rightarrow \sC$
satisfying:
\begin{enumerate}
\item $\Beta(T_i) = \phi(T_i) $ for all $ i = 1, \dots, m-1 $ where in the case of $\phi(T_i)$ we regard $T_i$ as a morphism in $(\dU_{[q^\pm]} \gl_m)^n$ via (\ref{eq:Ti}),
\item $ \Beta(X_1)$ commutes with $\phi(E_i^{(s)})$ and $\phi(F_i^{(s)})$ for $i=2, \dots, m-1$ and
\item $\Beta(X_1 \dots X_m)$ commutes with everything in the image of $\phi$.
\end{enumerate}
Then $\phi$ and $\Beta$ extend to a $\C$-linear functor $\Phi: (\dULq)^n \rightarrow \sC$.
\end{Lemma}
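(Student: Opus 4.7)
The strategy is to define $\Phi$ on the generators of $(\dULq)^n$ and then verify each defining relation. On objects, $\Phi$ agrees with $\phi$ and $\Beta$; on the generators $E_i^{(s)}, F_i^{(s)}$ for $i = 1, \ldots, m-1$ we take $\Phi := \phi$; on the rotation element we set $\Phi(R) := \Beta(R)$; and we define the remaining generators by conjugation:
\[
\Phi(E_0^{(s)}) := \Beta(R)\,\phi(E_{m-1}^{(s)})\,\Beta(R)^{-1}, \qquad \Phi(F_0^{(s)}) := \Beta(R)\,\phi(F_{m-1}^{(s)})\,\Beta(R)^{-1},
\]
a choice forced by relation (7) applied at $i = m-1$ (so that $E_m = E_0$). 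The defining relations of $(\dULq)^n$ restricted to indices $i \geq 1$ then hold because $\phi$ is a functor, and the relation $R \1_{\uk} = \1_{r \cdot \uk} R$ holds because $\Beta$ is a functor on $\AB_m$, inside which $R$ rotates objects.

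The crux of the argument is to verify the conjugation relation $\Beta(R)\,\phi(E_i^{(s)})\,\Beta(R)^{-1} = \phi(E_{i+1}^{(s)})$, and analogously for $F_i^{(s)}$, for the remaining range $i = 1, \ldots, m-2$. Using the identity $R = T_{m-1} \cdots T_1 X_1$ in $AB_m$ together with hypothesis (1), we may rewrite $\Beta(R) = \phi(T_{m-1} \cdots T_1)\,\Beta(X_1)$. For $i \geq 2$, hypothesis (2) allows us to slide $\Beta(X_1)$ past $\phi(E_i^{(s)})$, reducing the claim to an identity inside $(\dU_{[q^\pm]} \gl_m)^n$ between conjugates of the $E_j$'s by products of the $T_k$'s; this identity is then established by iterated application of Lemma \ref{lem:affine} (using $T_k$ commutes with $E_j$ for $|k-j|>1$ and $T_i T_j E_i = E_j T_i T_j$ for $|i-j|=1$). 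The case $i = 1$ is more delicate since hypothesis (2) is unavailable there; here we invoke hypothesis (3), the centrality of $\Beta(R^m) = \Beta(X_1 \cdots X_m)$. Combined with the shifts already established for $i = 2, \ldots, m-1$ (the last of which is by definition of $\Phi(E_0)$), this periodicity constraint, together with the $AB_m$-identity $X_{k+1} = T_k X_k T_k$ (whose images in $\sC$ are controlled by hypothesis (1)), pins down the commutator of $\Beta(X_1)$ with $\phi(E_1^{(s)})$ and delivers the required shift.

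Once the full rotation identity (relation (7)) is established for all $i$ modulo $m$, the remaining defining relations of $(\dULq)^n$ -- the commutator $[E_0^{(s)}, F_0^{(t)}]$, the Serre relations between $E_0$ and its Dynkin-diagram neighbours $E_1, E_{m-1}$, and the analogues for $F_0$ -- all follow by $\Beta(R)$-conjugation of the corresponding relations among the $E_j, F_j$ for $j = 1, \ldots, m-1$, which hold by $\phi$. The main obstacle in the proof is the case $i = 1$ of the shift identity: there we cannot commute $\Beta(X_1)$ through $\phi(E_1^{(s)})$ directly, and the simultaneous use of hypotheses (2) and (3) together with the braid-group identities expressing $X_{k+1}$ in terms of $X_k$ and the $T_j$'s is essential to close the argument.
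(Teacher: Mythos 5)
Your overall architecture---define $\Phi(R)$ through $\Beta$, set $\Phi(E_0^{(s)}) := \Phi(R)\phi(E_{m-1}^{(s)})\Phi(R)^{-1}$, verify relation (7) case by case, and close the leftover case via the centrality in condition (3)---is indeed the paper's strategy. But the specific choice $\Phi(R) := \Beta(R)$, with $R = T_{m-1}\cdots T_1 X_1 \in AB_m$, does not work, and the step you call ``an identity established by iterated application of Lemma \ref{lem:affine}'' is in fact false. Conjugation by $T_{m-1}\cdots T_1$ inside $(\dU_{[q^\pm]}\gl_m)^n$ \emph{lowers} the index: Lemma \ref{lem:affine} gives $T_iT_{i-1}E_i^{(s)}=E_{i-1}^{(s)}T_iT_{i-1}$, so for $2\le i\le m-1$ one has $T_{m-1}\cdots T_1\,E_i^{(s)} = E_{i-1}^{(s)}\,T_{m-1}\cdots T_1$. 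Hence for $i \ge 2$ your reduction yields $\Phi(R)\phi(E_i^{(s)})\Phi(R)^{-1} = \phi(E_{i-1}^{(s)})$, the opposite of relation (7). This is structural, not a cosmetic sign slip: $R\in AB_m$ satisfies $RT_iR^{-1}=T_{i-1}$ (shift down), while $R\in (\dULq)^n$ satisfies $RE_i^{(s)}R^{-1}=E_{i+1}^{(s)}$ (shift up), so one should not expect $\Phi(R)=\Beta(R)$. The paper instead sets $\Phi(R) := \Beta(X_1\,T_1\cdots T_{m-1})$, a \emph{different} element of $AB_m$ with the $T_k$'s in the opposite order and $X_1$ on the left, precisely because $T_1\cdots T_{m-1}\,E_i^{(s)} = E_{i+1}^{(s)}\,T_1\cdots T_{m-1}$.

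The placement of $X_1$ also damages your case analysis and your treatment of $i=1$ cannot be salvaged. With $\Phi(R) = \Beta(X_1)\,\phi(T_1\cdots T_{m-1})$, one must commute $\Beta(X_1)$ past $\phi(E_{i+1}^{(s)})$ \emph{after} the $T_k$'s have raised the index; hypothesis (2) supplies this for all $i = 1,\dots,m-2$, leaving only $i=0$, which then follows by iterating the known cases and using centrality of $\Phi(R)^m$. With your factorization $\Phi(R)=\phi(T_{m-1}\cdots T_1)\Beta(X_1)$ one needs $\Beta(X_1)$ to commute with $\phi(E_i^{(s)})$ itself, so hypothesis (2) only covers $i\ge 2$, leaving \emph{two} open cases $i=0,1$. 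Centrality of $\Beta(X_1\cdots X_m)$ together with the $m-2$ cases you do have cannot determine both: in fact $i=0$ and $i=1$ are equivalent modulo the other cases and centrality, so you need genuine new input. Nothing in conditions (1)--(3), nor the $AB_m$-identity $X_{k+1}=T_kX_kT_k$, constrains how $\Beta(X_1)$ interacts with $\phi(E_1^{(s)})$ (hypothesis (2) deliberately excludes $i=1$), so the ``pins down the commutator'' step is a gap. Replacing $\Phi(R)$ by $\Beta(X_1 T_1\cdots T_{m-1})$, as the paper does, repairs both the direction of the shift and the case count.
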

\begin{proof}
 Since $\phi$ already tells us where to map $E_i^{(s)}, F_i^{(s)}$, for $i=1, \dots, m-1$ and all $s \in \N$, we only need to say where $E_0^{(s)}, F_0^{(s)}$ and $R$ are mapped. We define
$$\Phi(R \1_{\uk}) := \Beta(X_1 T_1 \cdots T_{m-1} \1_{\uk})$$
and then $\Phi(E_0^{(s)}) := \Phi(R) \Phi(E_{m-1}^{(s)}) \Phi(R^{-1})$ and similarly for $F_0^{(s)}$.

Now we need to show that $\Phi(R) \Phi(E_i^{(s)}) =  \Phi(E_{i+1}^{(s)}) \Phi(R)$ holds (the analogous results for $F$'s follow similarly). For $i=1, \dots, m-2$, we have
\begin{equation} \label{eq:case1m-2}
\begin{aligned}
\Phi(R) \Phi(E_i^{(s)})
&= \Beta(X_1) \Phi(T_1 \cdots T_{m-1} E_i^{(s)}) \\
&= \Beta(X_1) \Phi(E_{i+1}^{(s)} T_1 \dots T_{m-1}) \\
&= \Phi(E_{i+1}^{(s)}) \Beta(X_1) \Beta(T_1 \dots T_{m-1}) \\
&= \Phi(E_{i+1}^{(s)}) \Phi(R)
\end{aligned}
\end{equation}
where the second equality follows using Lemma \ref{lem:affine} repeatedly, while the third follows by condition (1).

Now, we check the cases $ m = 0, m-1 $.  The case $i=m-1$ holds by definition.  For the case $ i = 0 $, we must show that $ \Phi(R) \Phi(E_0^{(s)}) =  \Phi(E_1^{(s)}) \Phi( R)$.  This follows by writing $\Phi(E_0^{(s)}) = \Phi(R) \Phi(E_{m-1}^{(s)}) \Phi(R^{-1})$ and then using \eqref{eq:case1m-2} repeatedly, together with the fact that $\Phi(R)^m = \Phi(R^m) = \Beta(X_1 \dots X_m)$ commutes with everything (condition 3). This completes the proof.
\end{proof}

\subsection{Embeddings}\label{sec:embed}

Observe that we have $2(m+1)$ faithful functors $ (\dULq)^n \rightarrow (\dU_{[q^\pm]} L \gl_{m+1})^n $, given by adding either a $0$ or an $n$ in some slot. For concreteness let us describe the functor of adding a $0$ at the end. On objects it is given by $(k_1, \dots, k_m) \mapsto (k_1, \dots, k_m, 0)$.  On morphisms it is given by
\begin{align*}
& R \mapsto R, \\
& E_i^{(s)} \mapsto E_i^{(s)} \text{ and } F_i^{(s)} \mapsto F_i^{(s)} \text{ if } i \ne 0, \\
& E_0^{(s)} \mapsto E_m^{(s)} E_0^{(s)} \text{ and } F_0^{(s)} \mapsto F_0^{(s)} F_m^{(s)}.
\end{align*}

\begin{Proposition}\label{prop:embed}
The functor $ (\dU_{[q^\pm]} L \gl_m)^n \rightarrow (\dU_{[q^\pm]} L \gl_{m+1})^n $ is faithful.
\end{Proposition}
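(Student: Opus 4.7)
The strategy is to first reduce to the generic case and then establish faithfulness via a PBW-type basis argument. By Lemma \ref{lem:free1}, all Hom spaces in both categories are free over $\C[q^\pm]$, so injectivity of the embedding reduces to injectivity after base change to $\C(q)$, which we henceforth assume.

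Over $\C(q)$, I would employ the Beck--Nakajima PBW-type basis for $\dULq$: each Hom space $\Hom_{(\dULq)^n}(\uk, \ul)$ has a basis consisting of ordered monomials in divided powers of real root vectors, together with imaginary root vectors and an integer power of the rotation element $R$. The embedding corresponds to the inclusion of affine root systems $\tilde A_{m-1} \hookrightarrow \tilde A_m$ which preserves the simple roots $\alpha_i$ for $i = 1, \dots, m-1$ and identifies the smaller affine simple root $\alpha_0$ with the sum $\alpha_0 + \alpha_m$ in the larger root system; this identification is reflected in the formula $E_0^{(s)} \mapsto E_m^{(s)} E_0^{(s)}$ (and its $F$-analog). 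The plan is to show that the embedding sends the PBW basis of the smaller Hom space into linearly independent elements of the larger Hom space, by matching them with a portion of the corresponding PBW basis after rewriting using the Serre relations and the commutation between $E_0$ and $E_m$ (note that in $\tilde A_m$, the nodes $0$ and $m$ are adjacent only through node $1$ and $m-1$, so the rewriting is controlled).

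A complementary approach, which may be cleaner, is via faithful representations. For generic spectral parameters $z_1, \dots, z_m \in \C(q)^\times$, the tensor product $\Altq{k_1}\C_q^N(z_1) \otimes \cdots \otimes \Altq{k_m}\C_q^N(z_m)$ of fundamental evaluation modules (for any $N \geq n$) defines a representation of $\dULq$, and the collection of such representations over choices of $\uk$ and generic parameters separates morphisms in $(\dULq)^n$. Padding with a trivial module in the $(m+1)$-st slot realizes the same tensor product as a representation of $\dU_{[q^\pm]}L\gl_{m+1}$ whose pullback along the embedding recovers the original action; faithfulness then descends.

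The main technical obstacle, in either approach, is verifying compatibility of the rotation element $R$ and the affine generators $E_0^{(s)}, F_0^{(s)}$ with the embedding. Under $R \mapsto R$, the relation $R^m = X_1 \cdots X_m$ in the smaller algebra must be consistent with $R^{m+1} = X_1 \cdots X_{m+1}$ in the larger algebra (forcing $R^m$ to equal $R^{-1} X_1 \cdots X_{m+1}$), and one must check that the image formulas $E_0^{(s)} \mapsto E_m^{(s)} E_0^{(s)}$ and $F_0^{(s)} \mapsto F_0^{(s)} F_m^{(s)}$ satisfy all Serre relations of $\tilde A_{m-1}$ inside the larger algebra. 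This bookkeeping -- already implicit in the well-definedness of the functor -- is where the delicate verification of faithfulness on basis elements (or equivalently on the generating representations) takes place.
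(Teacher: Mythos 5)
Your second sketch is in the same spirit as the paper's proof---use a faithful family of representations to detect injectivity---but both of your approaches stop well short of a proof, and the first is not really viable in the form stated.

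The paper's argument is short because it leans on two specific results. First, McGerty's Proposition 6.4 in \cite{Mc} identifies $(U_{[q^\pm]} L \gl_m)^n$ as the image of $U_{[q^\pm]} L\gl_m$ inside $\bigoplus_{\lambda \in P_{n,m}} \End V(\lambda)$, where the $V(\lambda)$ are \emph{extremal weight modules} indexed by partitions in an $n\times m$ box. Second, a lemma (proved using Beck--Nakajima's embedding of $V(\lambda,0)$ into a tensor product of fundamental extremal weight modules, together with the explicit identification $V(\omega_i,0)\cong \Altq{i}\C^{m+1}\otimes\C[t,t^{-1}]$) shows that the restriction of $V(\lambda,0)$ along the embedding contains a copy of $V(\lambda)$ generated by the extremal vector $v_{\lambda,0}$. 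Composing the embedding with the action on $\bigoplus_{\lambda}\End V(\lambda,0)$ is therefore injective, so the embedding is. Everything is integral over $\C[q^\pm]$, so the reduction to $\C(q)$ via Lemma \ref{lem:free1}, while logically valid, is not needed.

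Your representation-theoretic sketch has two genuine gaps. The assertion that tensor products of evaluation modules ``separate morphisms in $(\dULq)^n$'' is precisely the theorem that must be proved: $(\dULq)^n$ is defined by killing certain weight spaces, and without a result like McGerty's one does not know there is no further collapse when mapping to endomorphism algebras. Relatedly, your choice of modules seems garbled: the quantum loop algebra in play is $U_qL\gl_m$, whose fundamental evaluation modules are $\Altq{i}\C_q^m$ (not $\Altq{k}\C_q^N$ ``for any $N\ge n$''), and the paper needs these for $\gl_{m+1}$ before restricting. Finally, the compatibility you identify as ``the main technical obstacle''---that the formula $E_0^{(s)}\mapsto E_m^{(s)}E_0^{(s)}$ behaves correctly on the chosen representations---is exactly what the paper's accompanying lemma establishes, and in your write-up it remains unverified. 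The first (PBW) sketch is even further from a proof: claiming that the rewriting under the root-system map $\alpha_0 \mapsto \alpha_0 + \alpha_m$, involving real root vectors, imaginary root vectors, and the rotation element $R$, is ``controlled'' is an assertion, not an argument, and there is no candidate statement of the form ``PBW monomials map to a portion of a PBW basis'' that one could hope to verify directly.
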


\begin{proof}
For the purposes of this proof, we will use $ (U_{[q^\pm]} L \gl_m)^n $ to denote the ``algebra version'' of the category $(\dU_{[q^\pm]} L \gl_m)^n$ (i.e. the algebra obtained as the direct sum of all Hom spaces in the category). Then we must prove that the (non-unital) map of algebras $(U_{[q^\pm]} L \gl_m)^n \rightarrow (U_{[q^\pm]} L \gl_{m+1})^n $ is injective.

For any $ \lambda = (\lambda_1, \dots, \lambda_m) $ with $ \lambda_1 \ge \dots \ge \lambda_m $, let $ V(\lambda) $ denote the extremal weight module for $ U_{[q^\pm]} L\gl_m $.  Let $ P_{n,m} = \{ (\lambda_1, \dots, \lambda_m) : n \ge \lambda_1 \dots \ge \lambda_m \ge 0 \} $.   By Proposition 6.4 of \cite{Mc}, we know that $ (U_{[q^\pm]} L \gl_m)^n $ is the image of $ U_{[q^\pm]} L\gl_m $ inside $ \oplus_{\lambda \in P_{n,m}} \End V(\lambda) $.

If $ \lambda \in P_{n,m} $, then we can add a $0 $ to this sequence to produce $ (\lambda, 0) \in P_{n,m+1} $.  By the lemma below, the restriction of $ V(\lambda, 0) $ to $ U_{[q^\pm]} L\gl_m $ contains $ V(\lambda) $.  Thus the composite map
$$ (U_{[q^\pm]} L \gl_m)^n \rightarrow (U_{[q^\pm]} L \gl_{m+1})^n \rightarrow \oplus_{\lambda \in P_{n,m}} \End(V(\lambda, 0)) $$
is injective. Thus, $ (U_{[q^\pm]} L \gl_m)^n \rightarrow (U_{[q^\pm]} L \gl_{m+1})^n $ is injective as desired.
\end{proof}

\begin{Lemma}
Let  $\lambda \in P_{n,m} $ and let $ v_{\lambda, 0} $ be the generating vector of $ V(\lambda, 0) $.  Then $ v_{\lambda, 0} $ generates $ V(\lambda) $ as a $\dULq$-module.
\end{Lemma}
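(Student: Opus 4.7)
The plan is to apply Kashiwara's universal characterization of extremal weight modules and then verify by a character argument that the submodule in question is the whole of $V(\lambda)$.

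Recall that for a dominant $\mu \in P_{n,m}$, $V(\mu)$ is the universal integrable cyclic $\dULq$-module generated by a vector $v_\mu$ of weight $\mu$ satisfying Kashiwara's extremal relations at each node $i$ of the affine Dynkin diagram of $L\gl_m$: writing $n_i = \langle \mu, \alpha_i^\vee \rangle$, these are $E_i v_\mu = 0$ and $F_i^{(n_i+1)} v_\mu = 0$ if $n_i \ge 0$, and the dual relations if $n_i < 0$.

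First I would verify that $v_{\lambda, 0} \in V(\lambda, 0)$, viewed through the embedded action $\dULq \hookrightarrow \dot{U}_{[q^\pm]} L\gl_{m+1}$, is a vector of $L\gl_m$-weight $\lambda$ satisfying these extremal relations. For the finite simple roots $i = 1, \ldots, m-1$ this is immediate, since the embedding sends $E_i, F_i$ to the same-named generators and the relations are inherited from the ambient extremal weight module $V(\lambda, 0)$. For the affine node $i = 0$ of $L\gl_m$ the embedding sends $E_0 \mapsto E_m E_0$ and $F_0 \mapsto F_0 F_m$, and the relations follow from a short computation combining the extremal relations at nodes $0$ and $m$ of $L\gl_{m+1}$ on $v_{\lambda, 0}$, using crucially that $E_m v_{\lambda, 0} = 0$ (the last component of $(\lambda, 0)$ is zero, which is the minimum value). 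By universality this produces a surjection
$$\pi : V(\lambda) \twoheadrightarrow \dULq \cdot v_{\lambda, 0} \subseteq V(\lambda, 0).$$

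To finish I would show that $\pi$ is injective by comparing $L\gl_m$-characters, or equivalently by identifying the crystal basis of $V(\lambda)$ with the connected component of $v_{\lambda, 0}$ inside the crystal of $V(\lambda, 0)$ restricted to $U_q L\gl_m$. The relevant character of $V(\lambda)$ is well-known from Kashiwara's and Nakajima's work on extremal weight modules (via the global crystal basis, or via the geometric realization on quiver varieties used elsewhere in this paper). The main obstacle will be this last step: ensuring that $\dULq \cdot v_{\lambda, 0}$ is not strictly smaller than $V(\lambda)$. Depending on the framework, this reduces either to a general fact about restriction of extremal weight modules along the inclusion $L\gl_m \hookrightarrow L\gl_{m+1}$ obtained by adjoining a zero weight, or to a direct identification of the crystal connected component containing $v_{\lambda, 0}$ with $B(\lambda)$.
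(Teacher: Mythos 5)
Your approach is genuinely different from the paper's. The paper reduces to the fundamental case: by Beck--Nakajima's Corollary 4.15 there is an embedding $V(\lambda, 0) \hookrightarrow \bigotimes_i V(\omega_i, 0)^{\otimes w_i}$ (where $\lambda = \sum w_i \omega_i$) sending $v_{\lambda,0}$ to the tensor product of generating vectors; for each fundamental weight the explicit realization $V(\omega_i, 0) \cong \Altq{i}\C^{m+1} \otimes \C[t, t^{-1}]$ makes the restriction to $\dULq$ transparent (the generating vector visibly generates $\Altq{i}\C^{m} \otimes \C[t,t^{-1}] \cong V(\omega_i)$), and applying Beck--Nakajima again for $L\gl_m$ finishes the argument. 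None of this touches the universal property of extremal weight modules.

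Your proposal has a genuine gap, and it is the one you flag yourself. Granting the surjection $\pi \colon V(\lambda) \twoheadrightarrow \dULq \cdot v_{\lambda,0}$, you propose to prove injectivity ``by comparing $L\gl_m$-characters'' or identifying a crystal component, but the character (or crystal component) of $\dULq \cdot v_{\lambda,0}$ inside the restriction of $V(\lambda,0)$ is exactly the unknown the lemma is meant to control; it is not an off-the-shelf fact, and computing it would amount to redoing the whole proof. The Beck--Nakajima tensor-product embedding is precisely the tool that makes this step tractable, and you would need it (or a substitute) to close the argument. Separately, the surjection step is itself shakier than you indicate: for level-zero dominant $\mu$, $V(\mu)$ is not presented by the local relations at $v_\mu$ that you list (those would cut out a highest-weight-type quotient, whereas level-zero $V(\mu)$ is not highest weight); Kashiwara's extremality condition involves the whole Weyl-orbit family $\{S_w v_\mu\}$, and verifying it for $v_{\lambda,0}$ under the twisted embedding $E_0 \mapsto E_m E_0$, $F_0 \mapsto F_0 F_m$ is not a ``short computation,'' since the image of the $L\gl_m$ affine reflection is not a simple reflection of $L\gl_{m+1}$. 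As written, then, the proposal is an outline whose decisive step remains open.
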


\begin{proof}
Let us write $ \lambda = \sum_{i=1}^m w_i \omega_i $ where $ \omega_i = (1, \dots, 1, 0, \dots,0) $ has length $ m $.  By Corollary 4.15 of \cite{BN}, we know that there is an injective map $ V(\lambda, 0) \rightarrow \otimes_{i \in I} V(\omega_i,0)^{\otimes w_i} $ taking $ v_{\lambda, 0} $ to the tensor products $ \otimes v_{\omega_i, 0}^{\otimes w_i} $ of the generating vectors.

We also know that $ V(\omega_i,0) $ is isomorphic to $ \Altq{i} \C^{m+1} \otimes \C[t, t^{-1}] $.  Thus, upon restriction to $\dU_{[q^\pm]} L\gl_m $ it is easy to see that the highest weight vector of $ V(\omega_i,0) $ generates a copy of $ V(\omega_i) $.

Thus, we see that as a $\dULq$-module, $ v_{\lambda, 0} $ generates the same module as does $  \otimes v_{\omega_i}^{\otimes w_i} $ inside $ \otimes V(\omega_i)^{\otimes w_i} $ and hence the  Lemma follows.
\end{proof}

\subsection{From quantum loop algebras to annular spiders}

In \cite{CKM} we studied the spider category $\Sp[q^\pm]$ by defining a functor $\Psi_m: (\dU_{[q^\pm]} \gl_m)^n \rightarrow \Sp[q^\pm]$ for each $m$. In exactly the same way one can define a functor
$$A\Psi_m : (\dULq)^n \rightarrow \ASp[q^\pm].$$
More precisely, on objects it takes $\uk$ to $\overline{\uk} $ where $ \overline{\uk} $ is the sequence obtained from $\uk$ by deleting all $0,n$. On morphisms we define $\APsi_m$ by forming ``ladder-like'' webs, as in \cite{CKM}, the only difference being that $ E_0^{(s)}, F_0^{(s)} $ are sent to webs which ``wrap-around'' the annulus crossing the cut line, as illustrated in (\ref{eq:psi}) with $E_0^{(s)}$. Note that this functor was also defined in \cite[Section 2.3]{Q}.
\begin{equation}\label{eq:psi}
A\Psi_3(E_0^{(s)}) = \begin{tikzpicture}[baseline=15,scale=0.8]
\incircle;
\outcircle;
\draw[mid>] (60:1) -- (60:4);

\draw[mid>] (90:1) -- (90:4);

\draw[mid>] (120:1) -- (120:4);

\draw (120:2) to [out=150, in=410] (140:2.5);
\draw[mid>] (140:2.5) arc [radius=2.5, start angle=140, end angle=400];
\draw (40:2.5) to [out=130, in=270] (60:3);

\node[right] at (120:3.5) {\scriptsize $ k_1-s$};
\node[right] at (120:1.5) {\scriptsize $k_1$};
\node[right] at (90:3.5) {\scriptsize $k_2$};
\node[right] at (60:1.5) {\scriptsize $k_3$};
\node[right] at (60:3.2) {\scriptsize $k_3+s$};

\node[right] at (0:2.5) {\scriptsize $s$};
\end{tikzpicture}
\end{equation}

\begin{Theorem}\label{thm:1}
The functor $ \APsi_m : (\dULq)^n \rightarrow \ASp[q^\pm] $ is faithful.

Moreover, suppose that $ \uk, \uk' $ are two sequences drawn from $ \{1, \dots, n-1 \}$.  Given a morphism $ w \in \Hom_{\ASp[q^\pm]}(\uk, \uk') $, there exists some $m$ and a way to add $0$s and $n$s to $ \uk, \uk' $ to form $\ul, \ul'$ of length $m$ such that $w $ lies in the image of
$$ \APsi_m : \Hom_{(\dULq)^n}(\ul, \ul') \rightarrow \Hom_{\ASp[q^\pm]}(\uk,\uk').$$
\end{Theorem}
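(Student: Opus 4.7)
The plan is to prove the two statements essentially independently, with construction first, then fullness, then faithfulness. The construction reduces the problem to the non-annular $\Psi_m$ plus an annular braid action, the fullness reduces to the non-annular ladder decomposition, and the faithfulness will require a form of affine skew-Howe duality.

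\textbf{Step 1 (Construction via Lemma \ref{lem:extend}).} I will construct $\APsi_m$ using Lemma \ref{lem:extend}. The functor $\phi:(\dot U_{[q^\pm]}\gl_m)^n \to \ASp[q^\pm]$ will be the composition of the non-annular functor $\Psi_m$ of \cite{CKM} with the tautological inclusion $\Sp[q^\pm] \hookrightarrow \ASp[q^\pm]$. For the annular braid functor $\Beta:\AB_m(\{0,\dots,n\}) \to \ASp[q^\pm]$, I send $T_i$ to the usual web crossing of \eqref{eq:braiding1} and $X_j$ to the annular web in which only the $j$-th strand winds once counterclockwise around the annulus (passing under strands $1,\ldots,j-1$ and over $j+1,\ldots,m$). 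Condition (1) of Lemma \ref{lem:extend} is the content of Section 6.2 of \cite{CKM}. Conditions (2) and (3) follow from planar isotopy on the annulus: a single winding of strand $1$ is disjoint from any local ladder generator involving strands $i, i+1$ for $i \ge 2$, and the simultaneous winding of all $m$ strands is a global rigid rotation of the annulus, which commutes with every annular web by isotopy.

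\textbf{Step 2 (Fullness).} Given $w \in \Hom_{\ASp[q^\pm]}(\uk,\uk')$, I will adapt the non-annular ladder decomposition of \cite{CKM} to the annulus. After putting trivalent vertices at distinct radial heights and using the tag-migration and crossing relations, any annular web can be reduced, modulo the spider relations, to an annular ladder: a web consisting of $m$ vertical strands going once around the annulus together with horizontal rungs between adjacent columns, where rungs between column $m$ and column $1$ are permitted and cross the distinguished cut line. Columns with label $0$ or $n$ at the top or bottom correspond precisely to padding $\ul,\ul'$ by $0$s or $n$s; rungs not crossing the cut are produced by $E_i, F_i$ for $i=1,\ldots,m-1$; and rungs crossing the cut are produced by $E_0, F_0$ (combined with $R$ as needed). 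Equivalently, one may first unfold $w$ via Proposition \ref{th:Annular} to a planar web $\widetilde{w}\in \Sp[q^\pm]$, apply the non-annular fullness of \cite{CKM} to present $\widetilde{w}$ as a planar ladder, and then re-roll into the annulus, with the strands crossing the cut absorbed by $R$, $E_0$, and $F_0$.

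\textbf{Step 3 (Faithfulness, and the main obstacle).} For faithfulness I will compose with the equivalence $A\Gamma$ of Corollary \ref{cor:GamEquiv}: it suffices to show that $A\Gamma \circ \APsi_m : (\dULq)^n \to \OqSSm$ is faithful. This composite realizes a form of quantum affine skew-Howe duality, with $\dULq$ acting on $\Oq \otimes \Altq{k_1}\C_q^n \otimes \cdots \otimes \Altq{k_m}\C_q^n$ through evaluation representations together with the $\Oq$-action and the $R$-twist. By Lemma \ref{lem:free1}, all Hom-spaces in $(\dULq)^n$ are free over $\C[q^\pm]$, so faithfulness may be verified after specialization to $q=1$, where it reduces to a classical affine skew-Howe duality statement for the pair $(\widehat{\gl_m},\gl_n)$ acting on an appropriate level-$n$ truncated Fock space. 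This faithfulness step is the main obstacle: the precise affine Howe duality required — asserting that the image of $(\dULq)^n$ inside $\End\bigl(\Oq \otimes \bigotimes_i \Altq{k_i}\C_q^n\bigr)$ exhausts the commutant of $\Uqn$ — is not directly available in the form we need, and its verification will require matching a Beck--Nakajima-type basis of $(\dULq)^n$ against an explicit basis of annular webs (equivalently, of $\Oq$-module maps). Once this matching is established, an alternative route via Proposition \ref{prop:embed} and a dimension/compatibility argument over increasing $m$ would also finish the proof.
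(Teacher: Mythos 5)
Your Step 2 (fullness) is essentially the paper's argument: cut the annulus, decompose the web into pieces separated by the times it crosses the cut line, apply the planar ladder decomposition of \cite{CKM} to each piece, and express the crossings of the cut line using $R$, $E_0^{(s)}$, $F_0^{(s)}$. You should be aware of one technical wrinkle the paper handles explicitly and you gloss over: before applying \cite[Theorem 5.3.1]{CKM} to each piece, one must arrange that the strands at each intermediate time are all pointing outward (``up''), which is done by inserting pairs of cancelling tags. Without this, the CKM ladder decomposition does not directly apply. Step 1, while not wrong, is redundant for the theorem as stated --- the paper already constructs $\APsi_m$ in the preceding paragraph; the theorem is solely about faithfulness and the image.

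Step 3 is where the proposal fails. You do not actually prove faithfulness: you reduce it to an unproven affine skew-Howe statement and explicitly flag that ``this faithfulness step is the main obstacle,'' that ``the precise affine Howe duality required \ldots is not directly available,'' and that verification ``will require matching a Beck--Nakajima-type basis \ldots against an explicit basis of annular webs.'' That is an acknowledgement of a gap, not a proof. Your intermediate reduction to $q=1$ also has a hole: to conclude injectivity over $\C[q^\pm]$ from injectivity at $q=1$ you need the \emph{target} $\Hom_{\ASp[q^\pm]}(\uk,\uk')$ to be free (or at least $(q-1)$-torsion-free) as a $\C[q^\pm]$-module, not merely the source (cf.\ Lemma~\ref{le:FreeInjective}, which requires both sides free); freeness of the annular web Hom spaces is not available at this point.

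The paper proves faithfulness by a purely formal generators-and-relations argument that entirely sidesteps any Howe-duality input. Suppose $a \in \Hom_{(\dULq)^n}(\ul,\ul')$ has $\APsi_m(a)=0$. Lift $a$ to the free category on the generators of $(\dULq)^n$; then $\APsi_m(a)$ lies in the ideal of relations for $\ASp[q^\pm]$, so it is a linear combination of generating spider relations precomposed and postcomposed with arbitrary annular webs. Two observations finish the argument: (i) every local generating relation of $\ASp[q^\pm]$ is the image under some $\APsi_p$ of a relation in $(\dU_{[q^\pm]} L\gl_p)^n$, and (ii) by the surjectivity already established, the surrounding annular webs also lift through some $\APsi_M$ after adding $0$'s and $n$'s. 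Hence $a$ maps to zero in $\Hom_{(\dU_{[q^\pm]} L\gl_M)^n}(\underline{L},\underline{L}')$ for some larger $M$. Proposition~\ref{prop:embed} (faithfulness of the stabilization maps $(\dULq)^n \hookrightarrow (\dU_{[q^\pm]} L\gl_{m+1})^n$, proved via extremal weight modules and Beck--Nakajima) then gives $a=0$ already in $(\dULq)^n$. This is the input you are missing: Proposition~\ref{prop:embed} plays exactly the role you hoped affine Howe duality would, but it is a statement internal to the quantum loop algebras and does not require identifying the commutant of $\Uqn$ in $\Oq\otimes\bigotimes_i\Altq{k_i}\C_q^n$.
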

\begin{proof}
We begin with the surjectivity part of the statment. Note that is suffices to prove this for the case when $ w $ is an annular web (rather than a linear combination). If we have any annular web $ w $ between two objects $ \uk, \uk' $, then we can turn $ w $ into a ladder-like web as follows.

We will call the radial coordinate on the annulus, time.  Let $ t_1, \dots, t_{p-1} $ be the times at which $ w $ crosses the cut line.  Then we can write $ w $ as the composition $ w = w_p R^\pm \cdots R^\pm w_2 R^\pm w_1 $ where $ w_i $ denotes the part of $ w $ between times $ t_{i-1} $ and $ t_{i} $ (we write $ t_0 $ for the inside and $ t_p $ for the outside).  In this composition, we see $ R $ if the cut line is cut with a right-pointing strand and $ R^{-1} $ if the cut line is cut with a left-pointing strand.  Each web $ w_i $ is a web which doesn't cross the cut line and thus is a planar web.

In order to apply Theorem 5.3.1 from \cite{CKM} we need each web $w_i$ to have their starting and ending strands pointing outwards (up in the terminology of \cite{CKM}). To achieve this, we can insert pairs of cancelling tags everywhere we have downward pointing strands at some time $t_i$.  Abusing notation slightly, we continue to use $ w_1, \dots, w_p $ to denote the webs obtained by this procedure.

Now, for each $i$ we can apply \cite[Theorem 5.3.1]{CKM} to find $m_i$ and an element $ a_i $ such that $ \Psi_{m_i}(a_i) = w_i $.  Moreover, note that
$$\Hom((k_1, \dots, k_m), (k_2, \dots, k_m,k_1)) \ni R = \APsi_{m+1}(E_0^{(k_1)})$$
where $E_0^{(k_1)} \in \Hom((k_1, \dots, k_m, 0), (0,k_2, \dots, k_m,k_1))$ and similarly for $R^{-1}$. Combining all this together (by inserting as many $0$s and $n$s as necessary) we get the desired statement.

It remains to show that $\APsi_m$ is faithful. For this, let us note that both categories $ (\dULq)^n $ and $ \ASp[q^\pm] $ are defined by generators and relations.  Let $ \ul, \ul' $ be two objects in $ (\dULq)^n $ and let $ \uk, \uk' $ denote the result of removing all $ 0, n $ from them.  Let $ F_{(\dULq)^n}(\ul, \ul') $ denote the space of morphisms between these two objects in the free category with the same generators as $ (\dULq)^n $ and let $ R_{(\dULq)^n}(\ul, \ul') $ denote the relations, so that $ \Hom_{(\dULq)^n}(\ul, \ul') = F_{(\dULq)^n}(\ul, \ul') / R_{(\dULq)^n}(\ul, \ul') $.  Similarly, we can define $ F_{\ASp[q^\pm]}(\uk, \uk') $ and $ R_{\ASp[q^\pm]}(\uk, \uk') $ and we have $ \Hom_{\ASp[q^\pm]}(\uk, \uk') = F_{\ASp[q^\pm]}(\uk, \uk')/ R_{\ASp[q^\pm]}(\uk, \uk') $.

Now, suppose that $ a \in \Hom_{(\dULq)^n}(\ul, \ul') $ and $ \APsi_m(a) = 0 $ in $ \Hom_{\ASp[q^\pm]}(\uk, \uk') $.  This means that we have an element
$ a \in F_{(\dULq)^n}(\ul, \ul') $ and that $ \APsi_m(a) \in R_{\ASp[q^\pm]}(\uk, \uk')$.  This means that $ a $ can be written as a linear combination of compositions of a generating relation in $ \ASp[q^\pm] $ with elements of $ \ASp[q^\pm] $.  However, by inspection, we see that each generating relation (given in section \ref{se:spider}) in $ \ASp[q^\pm] $ comes from a relation in some $ \dU_{[q^\pm]} L \gl_p $.  This observation, combined with the surjectivity proven above, shows that there exists $ M $ and sequences $ \underline{L}, \underline{L'} $ obtained by  adding $0,n $ to $ \uk, \uk' $, and  $ b \in R_{(\dU_{[q^\pm]} L \gl_M)^n}(\underline{L}, \underline{L'}) $ such that $ \APsi_M(b) =\APsi_m(a) $.  This means that $ a $ becomes 0 in $ \Hom_{(\dU_{[q^\pm]} L \gl_M)^n}(\underline{L}, \underline{L'})$.  However, by Proposition \ref{prop:embed}, the functor $ (\dULq)^n \rightarrow (\dU_{[q^\pm]} L \gl_M)^n $ is faithful, which means that $ a = 0 $ in $ \Hom_{(\dULq)^n}(\ul, \ul')$ as desired.
\end{proof}

\begin{Remark}
Theorem \ref{thm:1} explains that one should think of the category $\ASp[q^\pm]$ as the directed limit as $m \rightarrow \infty$
$$(\dU_{[q^\pm]} L \gl_\infty)^n := \varinjlim (\dU_{[q^\pm]} L \gl_m)^n$$
via the functors discussed in section \ref{sec:embed}. One could make this precise but we do not actually require this stronger result.
\end{Remark}

\section{From the annular spider to KConv}\label{sec:loopaction}
In this section we define a functor
$$ \Phi_m : (\dULq)^n \rightarrow KConv^{\A}(Gr) $$
which follows our earlier constructions from \cite{CKL,Ca,CK3} but descended from derived categories to K-theory. This in turn induces a functor
$$\Phi : \ASp[q^\pm] \rightarrow KConv^{\A}(Gr)$$
which we will prove is an equivalence after tensoring with $ \C(q) $.

\subsection{The varieties} \label{sec:varieties}
We work with the lattice description of $ Gr = Gr_{PGL_n}$.  This means that we identify $ Gr$ with the space of $ \cO$-lattices in the vector space $ \K^n $ (where $ \cO = \C[[z]] $ and $ \K = \C((z))$) modulo the equivalence relation of homothety, i.e. $L_1 \sim L_2$ if and only if $ z^k L_1 = L_2 $ for some $ k \in \Z $.

Note that under this identification, if $ \lambda \in \Lambda_+ \subset \Z^n / \Z(1,\dots, 1) $, then we have
$$
Gr^\lambda = \{ L \in Gr : \cO^n= L_0 \subset L \text{ and } z|_{L/L_0} \text{ has Jordan type } \lambda \}
$$
where we regard $ z|_{L/L_0} $ as a nilpotent linear operator on a finite-dimensional vector space and consider its Jordan type to be the length of blocks in its Jordan form.  (Note that $ \lambda $ is only defined up to shift by a constant sequence; this matches the fact that $ L $ is only defined up to homothety.)

As before, we encode of sequence of minuscule dominant weights $ \ulam $ by a sequence $ \uk= (k_1, \dots, k_m)$ with $ 1 \le k_i \le n-1 $.  Let us introduce the notation $Y(\uk) := Gr^{\ulam}$. The varieties $ Y(\uk) $ can be described explicitly as sequences of lattices (not considered up to homothety).

\begin{Lemma}
We have
\begin{align*}
Y(\uk) = \{ (L_1, \dots, L_m) :& L_i \subset \K^n, \ \cO^n = L_0 \subset L_1 \subset L_2 \subset \dots \subset L_m, \\
& \dim(L_i/L_{i-1}) = k_i \text{ and } zL_i \subset L_{i-1} \}.
\end{align*}
Moreover the projection map $ Y(\uk) \rightarrow Gr $ is given by sending $(L_1, \dots, L_m) \mapsto [L_m] $.
\end{Lemma}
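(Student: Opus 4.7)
The plan is to reduce the description of $Y(\uk)$ to the description of a single $Gr^{\omega_k}$ already given in the text, iteratively along the chain $L_0 \subset L_1 \subset \cdots \subset L_m$, and then verify that fixing $L_0 = \cO^n$ kills the homothety ambiguity in a canonical way.

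First I would verify the basic case. From the identification $Gr = \{\text{lattices in } \K^n\}/\sim$ used in the excerpt, and the description $Gr^\lambda = \{L : \cO^n \subset L,\ z|_{L/\cO^n} \text{ has Jordan type } \lambda\}$, I would observe that for $\lambda = \omega_k = (1^k, 0^{n-k})$ the Jordan-type condition is equivalent to $\dim(L/\cO^n) = k$ together with $z \cdot (L/\cO^n) = 0$, i.e. $zL \subset \cO^n$. So
\[
Gr^{\omega_k} = \{L : \cO^n \subset L,\ \dim L/\cO^n = k,\ zL \subset \cO^n\}.
\]

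Next I would upgrade this to a relative statement: for any two lattices $L, L'$ in $\K^n$ (taken honestly, not up to homothety), one has $d([L],[L']) = \omega_k$ if and only if $L \subset L'$, $\dim(L'/L)=k$ and $zL' \subset L$. The ``if'' direction is immediate: pick $g \in GL_n(\K)$ with $gL = \cO^n$; the three conditions are $GL_n(\K)$-equivariant, so $gL'$ satisfies the defining conditions of $Gr^{\omega_k}$, giving $d([L],[L']) = \omega_k$. For the ``only if'' direction, $d([L],[L']) = \omega_k$ provides $g \in G^\vee(\K)$ sending $[L]$ to $[t^0]$ and $[L']$ to $[t^{\omega_k}]$; lifting $g$ to $GL_n(\K)$ and rescaling, we can arrange $gL = \cO^n$ and $gL' = t^{\omega_k}\cdot\cO^n$, a lattice satisfying the required three conditions, and these are $GL_n(\K)$-invariant so they descend back to $L,L'$.

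Applying this relative characterization to each consecutive pair $(L_{i-1}, L_i)$ in a point $(t^0 = L_0, L_1, \dots, L_m) \in Gr^{\ulam}$, I get that honestly-chosen lattice representatives $L_0 \subset L_1 \subset \cdots \subset L_m$ satisfy exactly the conditions listed on the right hand side of the lemma. Conversely, any tuple satisfying those conditions gives a point of $Gr^{\ulam}$ by the same argument, with image $[L_m]$ under the map to $Gr$.

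The only point that needs care, and which I expect to be the main (mild) obstacle, is the homothety ambiguity: a priori each $L_i$ is only defined up to $z^{k_i}$-scaling, so one must check that demanding the rigid representative $L_0 = \cO^n$ together with the chain of honest inclusions $L_{i-1} \subset L_i$ pins down each $L_i$ uniquely. This is immediate once fixed: given $L_{i-1}$ as an actual lattice and $d([L_{i-1}],[L_i]) = \omega_{k_i}$, the relative characterization above shows there is a \emph{unique} lattice representative of $[L_i]$ containing $L_{i-1}$ as a sublattice of colength $k_i$ with $zL_i \subset L_{i-1}$ (any other homothetic representative $z^j L_i$ with $j \ne 0$ fails one of the two inclusion conditions). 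So the rigidification is consistent, the forgetful map to $Gr$ sends $(L_1,\dots,L_m) \mapsto [L_m]$, and the lemma follows.
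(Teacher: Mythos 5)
The paper states this lemma without proof (it is presented as a standard fact about the lattice model of $Gr_{PGL_n}$), so there is no argument in the paper to compare against directly. Your overall strategy --- reduce to the single-step case, exploit $GL_n(\K)$-equivariance of the three conditions, iterate along the chain, and resolve the homothety ambiguity step by step --- is the right one, and the final paragraph correctly identifies the crux.

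However, your ``relative characterization'' is stated too strongly and its ``only if'' proof has a flaw. As an equivalence for \emph{arbitrary} specific lattice representatives $L, L'$, the ``only if'' direction is false: if $L' = z^5\tilde L'$ where $\tilde L'$ is the representative satisfying $L \subset \tilde L' \subset z^{-1}L$, then $d([L],[L']) = \omega_k$ still holds but none of your three conditions do. The slip is in the claim that after lifting $g$ to $GL_n(\K)$ and rescaling you can arrange \emph{both} $gL = \cO^n$ \emph{and} $gL' = t^{\omega_k}\cO^n$. The lift of $g \in PGL_n(\K)$ is only ambiguous up to $\K^\times$, and rescaling by $z^j$ shifts $gL$ and $gL'$ simultaneously; once $gL$ is normalized to $\cO^n$, the lattice $gL'$ is whichever representative of $[t^{\omega_k}]$ it happens to be, say $z^b\, t^{\omega_k}\cO^n$. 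What you actually want, and what the lemma needs, is an existence-and-uniqueness statement about representatives: given the actual lattice $L_{i-1}$ and a point $[L_i] \in Gr$ with $d([L_{i-1}],[L_i]) = \omega_{k_i}$, there is a \emph{unique} representative $L_i$ of $[L_i]$ with $L_{i-1} \subset L_i$, $\dim(L_i/L_{i-1})=k_i$, $zL_i \subset L_{i-1}$. Existence is your equivariance argument applied to the shifted representative $z^{-b}L_i$; uniqueness is exactly the argument of your final paragraph. Restating the relative step in this form (and invoking it for existence rather than the flawed ``only if'') makes the inductive rigidification correct, and then the ``if'' direction of your relative statement, which is fine as written, gives the converse inclusion. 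So the proof is sound once that step is rephrased.
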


For convenience of notation, we also allow sequences $ \uk $ containing $ 0, n $.  We have a canonical identification $ Y(\uk) = Y(\overline{\uk}) $ where $ \overline{\uk} $ is defined by removing all $ 0,n $.

Assume that $ \uk = (k_1, \dots, k_m) $ and $ \uk' = (k'_1, \dots, k'_{m'}) $ are two sequences with the same sum, $ k_1 + \dots + k_m = k'_1 + \cdots + k'_{m'} $.  From the lemma, we reach a simple description of the varieties
$$ Z(\uk, \uk') := \{ (L_1, \dots, L_m), (L'_1, \dots, L'_{m'}) \in Y(\uk) \times Y(\uk') : L_m = L'_{m'} \}.$$

We consider the action of $ \A $ on $  \K^n = \C^n \otimes \K $ where $ SL_n $ acts on $ \C^n $ and $ \Cx $ acts on $\K $ by $ s \cdot z^k = s^k z^k $.  This gives us an action of $ \A $ on $ Gr $ and on each $ Y(\uk) $.

\begin{Remark}\label{rem:q}
Given a sheaf $\sF$ we denote by $\sF \{1\}$ the same sheaf but shifted equivariantly with respect to the $\Cx$ action. At the level of K-theory we use the convention that $\{1\}$ is multiplication by $-q^{-1}$. This matches our previous conventions from \cite{CKL,Ca}.
\end{Remark}

\subsection{The action of $(\dU_{[q^\pm]} \gl_m)^n$}

We first define a functor
$$\phi_m: (\dU_{[q^\pm]} \gl_m)^n \rightarrow KConv^\A(Gr).$$
On objects $\phi_m$ takes $\uk$ to $ \overline{\uk} $ where $ \overline{\uk} $ is obtained from $\uk$ by removing all $0,n$. To define it on morphisms, we must define maps
$$\phi_m : \Hom_{(\dU_{[q^\pm]} \gl_m)^n}(\uk, \uk') \rightarrow K^\A(Z(\overline{\uk}, \overline{\uk'}))$$
for every $ \uk, \uk' $. Since $(\dU_{[q^\pm]} \gl_m)^n$ is defined by generators and relations we will define $ \phi_m $ on the generators $E_i^{(s)}$ and $F_i^{(s)}$ for $ i = 1, \dots, m-1 $. To do this we use the varieties
$$W_i^s(\uk) = \{(L_\bullet,L'_\bullet) \in Y(\uk) \times Y(\uk-s\alpha_i): L_j = L_j' \text{ if } j \ne i, L_i' \subset L_i \}.$$
Note that these varieties are actually supported on $Z(\uk,\uk-s\alpha_i)$. Then we define
\begin{align}
\label{eq:B} \phi_m(\1_\uk E_i^{(s)}) &= (-q)^{-sk_{i+1}} [\cO_{W_i^s(\uk)} \otimes \det(L_i'/L_i)^{k_{i+1}-k_i+s}] \\
\label{eq:A} \phi_m(F_i^{(s)} \1_\uk) &= (-q)^{-s(k_i-s)} [\cO_{W_i^s(\uk)} \otimes \det(L_{i+1}/L_i')^{-s} \otimes \det(L_i/L_{i-1})^s]
\end{align}
where the prime denotes the corresponding bundle pulled back from the second factor. These kernels live in $K^\A(Z(\uk-s\alpha_i,\uk))$ and $K^\A(Z(\uk,\uk-s\alpha_i))$ respectively). It follows from the calculations in \cite{CKL,Ca} that these satisfy the relations of $\dU_{[q^\pm]} \gl_m$ (see for instance \cite[Theorem 8.2]{Ca}).

\begin{Remark}
These definitions differ from those in \cite{CKL,Ca,CK3} in two ways. First we have reversed the roles of $ E_i, F_i $ and negated weights (this procedure actually defines an involution of $(\dU_{[q^\pm]} \gl_m)^n$). We do this in order to use the standard definition of $ \alpha_i = (0, \dots, 1,-1, \dots, 0)$ and thereby match the previous sections of this paper. Second, the choice of line bundles matches that in \cite{CKL} but not \cite{Ca,CK3}. The difference is just conjugation by a line bundle (denoted $\rho$ in \cite{CK3}). The reason for choosing the line bundles from \cite{CKL} rather than \cite{Ca,CK3} is that they make the diagrams in section \ref{sec:iso1m} commute on the nose without having to conjugate by line bundles.
\end{Remark}

\subsection{Extending to an action of $(\dULq)^n$}

To extend the action above we first define an action of the annular braid group on $KConv^\A(Gr)$ and then use Lemma \ref{lem:extend}.

\begin{Proposition}\label{prop:affine}
There exists a functor
$$\Beta: \AB_m(\{0, \dots, n\}) \rightarrow KConv^\A(Gr)$$
where $\Beta (T_i \1_\uk)$ is defined using equation (\ref{eq:Ti}) and $\Beta (X_i \1_\uk) := \Delta_* \det(L_i/L_{i-1})^\vee$ where $\Delta$ is the diagonal embedding $Y(\uk) \rightarrow Z(\uk,\uk)$.
\end{Proposition}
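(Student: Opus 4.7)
The claim amounts to verifying the defining relations of the annular braid groupoid in $KConv^\A(Gr)$: the finite braid relations among the $T_i$, commutativity $X_iX_j=X_jX_i$, the commutations $T_iX_j=X_jT_i$ for $j\ne i,i+1$, and the key conjugation identity $T_iX_iT_i=X_{i+1}$. The braid relations are immediate: each $T_i$ is defined as the image under $\phi_m$ of the braid element in $(\dU_{[q^\pm]}\gl_m)^n$ given by (\ref{eq:Ti}), and these elements satisfy the braid relations classically (Lusztig's braid action on the quantum group), so their images inherit the braid relations via the functor $\phi_m$.

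For $X_iX_j=X_jX_i$, each $X_j=\Delta_*\sL_j^\vee$ with $\sL_j:=\det(L_j/L_{j-1})$, and a direct convolution shows that the composition of two diagonally supported kernels equals $\Delta_*(\sL_i^\vee\otimes\sL_j^\vee)$, which is manifestly symmetric in $i,j$. For $T_iX_j=X_jT_i$ with $j\ne i,i+1$, each summand $\phi_m(E_i^{(a)}F_i^{(b)})$ of $\phi_m(T_i)$ is supported on the subvariety $\{(L_\bullet,L'_\bullet):L_k=L'_k\text{ for all }k\ne i\}$; hence $\sL_j$ pulls back identically from both factors of the correspondence and its action can be moved past $T_i$.

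The crux is $T_iX_iT_i=X_{i+1}$ as morphisms $\uk\to\uk$. The plan is to exploit locality: by the same support argument as above, this identity reduces to an assertion on a fibre of the forgetful map that remembers the lattices in positions other than $i$. That fibre is modelled by a pair of Grassmannian bundles parameterizing flags $L_{i-1}\subset L_i,L'_i\subset L_{i+1}$ of the appropriate dimensions. One then expands $T_iX_iT_i$ via (\ref{eq:Ti}) into a finite sum of triple convolutions $\phi_m(E_i^{(a_1)}F_i^{(b_1)})*\Delta_*\sL_i^\vee*\phi_m(E_i^{(a_2)}F_i^{(b_2)})$, evaluates each term using (\ref{eq:A})--(\ref{eq:B}) and standard identities for K-theory of Grassmannian bundles, and checks that the sum collapses to $\Delta_*\sL_{i+1}^\vee$. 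A cleaner alternative is to invoke the derived-categorical braid action on convolution varieties constructed in our earlier work \cite{CKL,Ca,CK3}, where the analogous intertwining identity has been established at the functor level, and decategorify.

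The main obstacle is the explicit K-theoretic computation underlying this last step: the expansion of $T_iX_iT_i$ produces many intermediate correspondences that must cancel down to the simple diagonal kernel $\Delta_*\sL_{i+1}^\vee$. The combinatorics is governed by the nil-affine Hecke relation for tautological line bundles on partial flag varieties, a computation that is essentially well known and already implicit in \cite{CKL,Ca,CK3}, so the verification, while notationally involved, is ultimately routine bookkeeping once the local model has been identified.
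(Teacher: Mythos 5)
Your proposal matches the paper's proof in structure and substance. The paper's proof is terse: it notes the braid relations among the $T_i$ are automatic (since they come from $\phi_m$ applied to Lusztig's braid elements), observes that $T_iX_j = X_jT_i$ (for $j\ne i,i+1$) and $X_iX_j=X_jX_i$ are easy, and then handles the one nontrivial relation $T_iX_iT_i=X_{i+1}$ by citing Corollary A.14 of \cite{CK3}. Your ``cleaner alternative'' — invoking the derived-categorical braid action from \cite{CKL,Ca,CK3} and decategorifying — is precisely what the paper does. The additional sketch you give of a direct K-theoretic verification via local Grassmannian-bundle models is a legitimate alternate route that the paper does not pursue; it would make the proposition self-contained at the cost of a genuinely involved (and easy-to-mangle) convolution computation, whereas the citation offloads that work to a place where it has already been carried out at the sharper categorified level. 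One small caution on the direct route: after localizing to the $i$-th slot you do not just get one Grassmannian bundle but a correspondence between two flags $L_i, L_i'$ inside $L_{i+1}/L_{i-1}$, and the bookkeeping for the triple convolution $\phi_m(E_i^{(a_1)}F_i^{(b_1)}) * \Delta_*\sL_i^\vee * \phi_m(E_i^{(a_2)}F_i^{(b_2)})$ must also track the $\Cx$-equivariant twists coming from the $z$-action; your sketch acknowledges this but does not carry it out, so as written it is a plan rather than a proof. Since the citation to \cite{CK3} suffices, this gap is harmless.
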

\begin{proof}
We need to show the last three relations of $AB_m$. The relations $T_iX_j = X_jT_i$ and $X_iX_j=X_jX_i$ are easy. The final relation $T_iX_iT_i = X_{i+1}$ follows from \cite[Cor. A.14]{CK3}.
\end{proof}

In order to apply Lemma \ref{lem:extend} we need to check the following two conditions.
\begin{enumerate}
\item $\Beta(X_1)$ commutes with $\phi_m(E_i^{(s)})$ and $\phi_m(F_i^{(s)})$ for $i=2, \dots, m-1$ and
\item $\Beta(X_1 \dots X_m)$ commutes with everything in the image of $\phi_m$.
\end{enumerate}
The first condition follows since $\Beta(X_1)$ only involves the line bundle $\det(L_1/L_0)$ while $\phi_m (E_i^{(s)})$ only involves changing the flag $L_i$. The second condition follows since $\Beta(X_1 \dots X_m)$ is given by $\Delta_* \det(L_m/L_0)^\vee$  which clearly commutes with the image of every $E_i^{(s)}$ and $F_i^{(s)}$. Thus, by Lemma \ref{lem:extend}, $\phi_m$ extends to a functor
$$\Phi_m : (\dULq)^n \rightarrow KConv^\A(Gr).$$

\begin{Remark}
It is easy to see that the functors $\Phi_m$ are compatible with the functors $(\dU_{[q^\pm]} L \gl_m)^n \rightarrow (\dU_{[q^\pm]} L \gl_{m+1})^n$ discussed in the proof of Theorem \ref{thm:1}.
\end{Remark}

\subsection{The functor $\Phi$}

\begin{Proposition}
There exists a unique functor $$ \Phi: \ASp[q^\pm] \rightarrow KConv^{\A}(Gr) $$  such that for each $ m $ we have $  \Phi_m = \Phi \circ \APsi_m $.
\end{Proposition}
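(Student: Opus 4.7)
The plan is to define $\Phi$ on objects and morphisms by pushing forward through $\APsi_m$, and then to verify that this is well-defined and functorial by combining the two halves of Theorem \ref{thm:1} with the compatibility of the $\Phi_m$ with the embeddings $(\dU_{[q^\pm]} L\gl_m)^n \hookrightarrow (\dU_{[q^\pm]} L\gl_{m+1})^n$.

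On objects, send a sequence $\uk$ with entries in $\{1,\ldots,n-1\}$ to itself, viewed as an object of $KConv^\A(Gr)$; on morphisms, given $w \in \Hom_{\ASp[q^\pm]}(\uk,\uk')$, use the surjectivity half of Theorem \ref{thm:1} to choose some $m$, some extensions $\ul, \ul'$ of $\uk,\uk'$ obtained by inserting $0$s and $n$s, and some $a \in \Hom_{(\dULq)^n}(\ul,\ul')$ with $\APsi_m(a) = w$, and set $\Phi(w) := \Phi_m(a)$. The required identity $\Phi_m = \Phi \circ \APsi_m$ then holds by construction, and uniqueness is immediate from the surjectivity, since any candidate functor is forced to take this value on every morphism in the image of some $\APsi_m$ --- which is all of them.

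The main content is showing $\Phi(w)$ is independent of the choices of $m$, $\ul, \ul'$ and $a$. Suppose $\APsi_m(a) = \APsi_{m'}(a') = w$, with $a \in \Hom_{(\dULq)^n}(\ul,\ul')$ and $a' \in \Hom_{(\dot{U}_{[q^\pm]} L\gl_{m'})^n}(\ul'',\ul''')$. Since $\bar\ul = \bar\ul'' = \uk$ and $\bar\ul' = \bar\ul''' = \uk'$, one may choose $M$ large and a chain of embeddings from section \ref{sec:embed} so that both $\ul$ and $\ul''$ become a common sequence $\underline{L}$ and both $\ul'$ and $\ul'''$ become a common sequence $\underline{L'}$ in $(\dot{U}_{[q^\pm]} L\gl_M)^n$. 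Let $\tilde a, \tilde a' \in \Hom_{(\dot{U}_{[q^\pm]} L\gl_M)^n}(\underline{L},\underline{L'})$ denote the images of $a$ and $a'$. The ladder-formation prescription for $\APsi_m$ is clearly compatible with these embeddings, so $\APsi_M(\tilde a) = \APsi_M(\tilde a') = w$; the faithfulness half of Theorem \ref{thm:1} then forces $\tilde a = \tilde a'$. By the Remark following section \ref{sec:embed} (compatibility of $\Phi_m$ with the embeddings), we obtain
\[
\Phi_m(a) \;=\; \Phi_M(\tilde a) \;=\; \Phi_M(\tilde a') \;=\; \Phi_{m'}(a'),
\]
as required.

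Functoriality is then a short check: to compose two morphisms $w_1$ and $w_2$ in $\ASp[q^\pm]$, write $w_i = \APsi_{m_i}(a_i)$, lift both into a common $(\dot{U}_{[q^\pm]} L\gl_M)^n$ where their target/source match up to common insertions of $0,n$, and use the fact that each $\Phi_M$ is already a functor so that $\Phi_M(\tilde a_2 \circ \tilde a_1) = \Phi_M(\tilde a_2)\circ \Phi_M(\tilde a_1)$; well-definedness then yields $\Phi(w_2 \circ w_1) = \Phi(w_2)\circ \Phi(w_1)$. The only delicate point in the whole argument is the well-definedness step, and that is precisely where one needs both halves of Theorem \ref{thm:1}: surjectivity to produce the preimages $a,a'$, faithfulness (via the embedding into a large enough $(\dot{U}_{[q^\pm]} L\gl_M)^n$) to conclude $\tilde a = \tilde a'$.
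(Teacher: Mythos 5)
Your proposal is correct and follows essentially the same approach as the paper: define $\Phi$ by choosing a preimage under some $\APsi_m$ (surjectivity), then prove independence of choices by embedding both preimages into a common $(\dU_{[q^\pm]} L \gl_M)^n$ and invoking faithfulness together with compatibility of the $\Phi_m$ with the embeddings. The only omission is a brief remark that it suffices to define $\Phi$ on the full subcategory of objects with entries in $\{1,\dots,n-1\}$ because tags make every object of $\ASp[q^\pm]$ equivalent to one in this subcategory.
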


\begin{proof}
Consider the full subcategory of $ \ASp[q^\pm] $ consisting of objects $ (k_1, \dots, k_m) $ with $ k_i \in \{1, \dots, n-1 \} $ (in other words, we are considering webs where the strands point away from the inner circle and into the outer circle).  Because of the tags, every object in $ \ASp[q^\pm] $ is equivalent to an object in this subcategory.  Thus, it suffices to define $ \Phi $ on this subcategory.

On objects,  we define $ \Phi $ to take $ \uk $ to $ \uk $. Now consider objects $ \uk, \uk' $ in $ \ASp[q^\pm] $ (with all $ k_i, k'_i > 0 $) and let $ a \in \Hom_{\ASp[q^\pm]}(\uk, \uk') $. We wish to define $ \Phi(a) $. By Theorem \ref{thm:1}, we know that there exists $ m $ and $ \ul, \ul' $ (such that $ \ul , \ul' $ are obtained by adding to $0$s and $n$s to $ \uk, \uk' $) and $ b \in \Hom_{\dUL}(\ul, \ul') $ with $ \APsi_m(b) = a $.  Then we define $ \Phi(a) := \Phi_m(b) $.

Finally, we must check that this is well-defined. Suppose we have two choices of $ m_1, \ul_1, \ul'_1, b_1$ and $m_2, \ul_2, \ul'_2, b_2 $ as above.  Since both $ \ul_1 $ (resp. $\ul'_1$) and $ \ul_2 $ (resp. $\ul'_2$) are obtained by adding $0,n$ to $ \uk $ (resp. $\uk'$), we can find a third $ \ul_3 $ (resp. $\ul'_3$) which is obtained by adding $0, n $ to both $ \ul_1$ and $ \ul_2 $ (resp. $ \ul'_1, \ul'_2 $).

Using the functors $ (\dULq)^n \rightarrow (\dU_{[q^\pm]} L \gl_{m+1})^n $ defined in the proof of Theorem \ref{thm:1}, we can regard $ b_1, b_2$ inside $\Hom_{(\dU_{[q^\pm]} L\gl_{m_3})^n}(\ul_3, \ul'_3) $.  We have $ \APsi_{m_3}(b_1) = \APsi_{m_3}(b_2) = a $ and thus $ b_1 = b_2 $ by Theorem \ref{thm:1}.  Since the functors $\Phi_m$ are compatible with the inclusion functors  $ (\dULq)^n \rightarrow (\dU_{[q^\pm]} L \gl_{m+1})^n $, this completes the proof of existence and uniqueness is clear by construction.
\end{proof}

The remainder of the paper will be devoted to proving the following result which combined with Corollary \ref{cor:GamEquiv} completes the proof of Conjecture \ref{co:main} in the case $ G = SL_n $.

\begin{Theorem}\label{thm:asp-gr}
The functor $\Phi$ induces equivalences
\begin{align*}
\ASp &\xrightarrow{\sim} KConv^{SL_n}(Gr) \\
\ASp(q) &\xrightarrow{\sim} KConv^{\A}(Gr) \otimes_{\C[q^\pm]} \C(q).
\end{align*}
\end{Theorem}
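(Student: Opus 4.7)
The plan is to show that $\Phi$ is essentially surjective and fully faithful, treating the $q$-generic statement first and deducing the $q=1$ case by specialization. Essential surjectivity is immediate from the tag-migration relations \eqref{eq:tag-migration}--\eqref{eq:tag-migration2}: every object of $\ASp[q^\pm]$ is equivalent to a sequence $\uk$ with entries in $\{1,\dots,n-1\}$, and on such objects $\Phi$ is the identity by construction. For fully faithful, the first reduction is that any such $\uk=(k_1,\dots,k_m)$ is a retract, compatibly in both categories, of $1^N := (1,\dots,1)$ with $N=k_1+\cdots+k_m$: on the spider side this comes from the classical fusion and splitting webs realizing $\Altq{k_i}\C_q^n$ as a summand of $(\C_q^n)^{\otimes k_i}$ with composition equal to $\prod_i [k_i]!$ times the identity (invertible both over $\C(q)$ and at $q=1$), and on the geometric side it comes from the proper fibre-bundle map $Y(1^N)\to Y(\uk)$ together with $K$-theoretic push-pull. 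This reduces the theorem to showing that
$$\Phi : \End_{\ASp[q^\pm]}(1^m) \longrightarrow K^{\A}(Z(1^m,1^m))$$
is an isomorphism for every $m$.

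For this key isomorphism, I would factor through the natural convolution action
$$\End_{\ASp[q^\pm]}(1^m) \xrightarrow{\Phi} K^{\A}(Z(1^m,1^m)) \xrightarrow{\rho} \End_{K^\A(\pt)}\bigl(K^{\A}(Y(1^m))\bigr).$$
Since $Y(1^m)$ is an iterated $\mathbb{P}^{n-1}$-bundle, Lemma \ref{lem:ktheory} identifies $K^{\A}(Y(1^m))$ with the explicit ring $R^m_{n,[q^\pm]}$ of Section \ref{sec:qDefR}. The plan is to show that both $\rho\circ\Phi$ and $\rho$ are injective with the same image inside $\End(R^m_{n,[q^\pm]})$, which then forces $\Phi$ to be an isomorphism. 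On the spider side, Proposition \ref{pr:ABfactor} and Section \ref{sec:actionE} identify the images of the braid generators $T_i$, the loop generators $X_i$, and the symmetric functions $e_k$ with concrete Demazure-type and multiplication operators on $R^m_{n,[q^\pm]}$. On the geometric side, formulas \eqref{eq:B}--\eqref{eq:A} for the $K$-theory classes of the correspondences $W_i^1$, together with the classes of the tautological line bundles $\sL_i$, give matching operators by direct computation. Both images then coincide with the cyclotomic-affine-Hecke-type subalgebra discussed in Section \ref{se:cyclotomic}.

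The main obstacle will be establishing faithfulness of the geometric action $\rho$ and explicitly pinning down its image inside $\End(R^m_{n,[q^\pm]})$. This is where the $\Cx$ loop-rotation plays its essential role: the $\A$-fixed locus in $Y(1^m)$ is a finite set of torus-fixed points (precisely because loop rotation has been introduced), and equivariant localization reduces faithfulness of $\rho$ to an explicit tractable computation at these fixed points, where each $W_i^1$ correspondence contributes a transparent matrix. Once $\rho$ is faithful and the two explicit image calculations match, $\Phi$ is forced to be an isomorphism generically in $q$, proving the $\C(q)$-version. Finally, the $q=1$ statement follows by a standard specialization argument, using that the relevant $\C[q^\pm]$-modules on both sides are flat in an appropriately compatible way: on the spider side via Lemma \ref{lem:free1} together with the faithful functor $\APsi_m$ of Theorem \ref{thm:1}, and on the geometric side via the $\A$-equivariant cellular structure of the iterated $\mathbb{P}^{n-1}$-bundle $Y(1^m)$, so that the isomorphism at generic $q$ descends to $q=1$.
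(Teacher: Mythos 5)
The high-level architecture of your proposal is close to the paper's: both use forks to reduce to $1^m$, both factor through the action of $K^\A(Z(1^m,1^m))$ on $K^\A(Y(1^m))\cong R^m_{n,[q^\pm]}$, and both use Thomason localization to get injectivity of that action (the paper's map $S$ in Corollary \ref{cor:ZYY} is your $\rho$). However, there are two substantive gaps.

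First, your logical direction is backwards. You propose to prove the $\C(q)$ statement first and then "descend to $q=1$ by specialization" using flatness. This does not work: a map of finite-rank free $\C[q^\pm]$-modules that is an isomorphism over $\C(q)$ need not be an isomorphism at $q=1$ (multiplication by $q-1$ on $\C[q^\pm]$ is the standard counterexample). The paper's argument runs the other way: the $q=1$ isomorphism (Theorem \ref{thm:Isoq1}) is established first using the Kraft--Procesi theorem (Theorem \ref{th:AllSurjective}), then surjectivity at generic $q$ is bootstrapped via the graded Lemma \ref{lem:technical}, and injectivity at generic $q$ is bootstrapped via freeness (Lemma \ref{le:FreeInjective}). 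So $q=1$ is not a corollary of the generic case — it is the engine driving it.

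Second, your plan to "explicitly pin down the image" as "the cyclotomic-affine-Hecke-type subalgebra discussed in Section \ref{se:cyclotomic}" leans on material that is only conjectural in the paper (Conjectures \ref{conj:2}, \ref{conj:1}, \ref{conj:4}). The paper never identifies the image of $\chi$ as an explicit Hecke quotient; instead it shows surjectivity of $\chi$ onto $\End_{\Sym(\ux)}(R^m_{n,(q)})$ abstractly. The crucial ingredient making this possible is the classical Kraft--Procesi result that $\overline{E}[AS_m^+]\to \Maps_{GL_n}(\gl_n,\End((\C^n)^{\otimes m}))$ is surjective — an input that lives at $q=1$ and does not appear in your outline. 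Without it, or some substitute, the "explicit image calculation" you invoke has no foundation, and the $q$-generic surjectivity cannot be obtained by localization alone (localization gives injectivity of $\rho$, not surjectivity of $\rho\circ\Phi$).
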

\begin{proof}
Since the objects on either side are the same it suffices to show that the functor $\Phi$ is fully-faithful. By Lemma \ref{lem:1} this reduces to showing fully-faithfulness for the weight $1^m = (1, \dots, 1)$. This in turn is proven in Theorem \ref{thm:2}.
\end{proof}

\subsection{Compatibility with $ E $-action}
 Recall that inside $\ASp[q^\pm]$ we have the elements $e_j$ corresponding to counterclockwise loops. This was used to define a $ E[q^\pm]$-module structure to the Hom spaces in $ \ASp[q^\pm]$.    The following result determines the image of $e_j$ under $\Phi$.

\begin{Lemma}\label{lem:imek}
For any $\uk$, the image of $e_j \in \End_{\ASp[q^\pm]}(\uk)$ inside $K^{\A}(Z(\uk,\uk))$ is $\Delta_* \wedge^{n-j}(\C^n)$ where $\Delta$ is the embedding of $Y(\uk)$ as the diagonal and $\wedge^{n-j}(\C^n)$ is the natural $SL_n$-equivariant vector bundle on $Y(\uk)$.
\end{Lemma}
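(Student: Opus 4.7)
The plan is to compute $\Phi(e_j)$ by realizing it as $\APsi$ of an explicit endomorphism in the quantum loop algebra, and then to reduce the calculation to a universal Grassmannian pushforward. First I would observe that the counterclockwise loop $e_j$ slides past any strand in the annulus via the bigon relations \eqref{eq:bigon1}, \eqref{eq:bigon2} and the crossing formula \eqref{eq:braiding1}, so $\{e_j \cdot \1_\uk\}_\uk$ is central and defines a natural endomorphism of the identity functor on $\ASp[q^\pm]$, whence $\Phi(e_j \cdot \1_\uk)$ is a natural endomorphism of the identity on the image of $\Phi$ in $KConv^{\A}(Gr)$. On the geometric side, the collection $\{\Delta_*(\cO_{Y(\uk)} \otimes \wedge^{n-j}\C^n)\}_\uk$ is evidently natural, being the image of $[\wedge^{n-j}\C^n] \in R(SL_n) = K^{SL_n}(\pt)$ under the canonical pullback-then-$\Delta_*$ map $R(SL_n)[q^\pm] \to \End_{KConv^{\A}}(\uk)$, and this map is natural by the projection formula.

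For the core computation I would take $\uk = \emptyset$ and $\ul = (n, 0) \in (\dot{U}_{[q^\pm]} L \gl_2)^n$, so that $\APsi_2(\ul) = \emptyset$. A direct inspection of the ladder-formation functor shows that the endomorphism $F_0^{(j)} E_0^{(j)} \in \End(\ul)$ maps under $\APsi_2$ to the closed $j$-loop around the annulus (the $n$- and $0$-strands collapse to cancelling tags), i.e.\ to $e_j \cdot \1_\emptyset$. Expanding $E_0^{(j)} = R E_1^{(j)} R^{-1}$ and $F_0^{(j)} = R F_1^{(j)} R^{-1}$ via Lemma \ref{lem:extend} yields
\[
\Phi_2(F_0^{(j)} E_0^{(j)}) = \Phi_2(R) \star \Phi_2(F_1^{(j)}) \star \Phi_2(E_1^{(j)}) \star \Phi_2(R^{-1}) \in K^{\A}(Z(\emptyset, \emptyset)) = R(SL_n)[q^\pm].
\]
The correspondence $W_1^j((j, n-j))$ is identified with $Gr(j, n)$, since once $L_2 = z^{-1}\cO^n$ is fixed, $L_1$ ranges over $j$-planes in $z^{-1}\cO^n/\cO^n \cong \C^n$. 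After $\Beta(R)$ absorbs the line-bundle twists from $\phi_2(E_1^{(j)})$ and $\phi_2(F_1^{(j)})$ coming from \eqref{eq:B}, \eqref{eq:A}, the convolution reduces to a pushforward of $\cO_{Gr(j,n)}$ twisted by an appropriate power of $\det S$ (where $S$ is the tautological subbundle) from the Grassmannian to a point, which by the Borel-Weil-Bott theorem (equivalently the Koszul resolution of $S$) equals the class $[\wedge^{n-j}\C^n] \in R(SL_n)$.

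Extension from $\uk = \emptyset$ to arbitrary $\uk$ proceeds by the analogous realization with $\ul = (\uk, n, 0) \in (\dot{U}_{[q^\pm]} L \gl_{m+2})^n$ (where $m$ is the length of $\uk$), using an endomorphism in which the wrapping $j$-strand crosses over the $\uk$-strands; the crossing factors $T_i$ contribute line-bundle twists that cancel under convolution by Lemma \ref{lem:affine} and its ladder analogue, so the convolution factors as (universal pushforward from $Gr(j,n)$ to $\pt$) $\otimes$ (identity on $Y(\uk)$), which by the projection formula gives $\Delta_*(\cO_{Y(\uk)} \otimes \wedge^{n-j}\C^n)$. The main obstacle is the careful tracking of the $q$-powers in \eqref{eq:B}, \eqref{eq:A} together with the $\Cx$-equivariant line-bundle shifts (using the convention $\{1\} \leftrightarrow -q^{-1}$ from Remark \ref{rem:q}) to verify that the final K-theory class is precisely the untwisted exterior power $\wedge^{n-j}\C^n$ and not some $q$-twisted or transposed variant.
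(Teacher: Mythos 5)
Your key claim — that $\APsi_2(F_0^{(j)} E_0^{(j)}) = e_j \cdot \1_\emptyset$ — is incorrect, and this is a fatal gap rather than a normalization discrepancy. Since $E_0^{(j)} = R E_1^{(j)} R^{-1}$ and $F_0^{(j)} = R F_1^{(j)} R^{-1}$, you have $F_0^{(j)} E_0^{(j)} = R \bigl(F_1^{(j)} E_1^{(j)} \1_{(0,n)}\bigr) R^{-1}$, and the middle factor is a scalar: applying the commutation relation with $\la (0,n), \alpha_1 \ra = -n$ and using $F_1^{(j)}\1_{(0,n)} = 0$, one finds $F_1^{(j)} E_1^{(j)} \1_{(0,n)} = \qbins{n}{j}\,\1_{(0,n)}$ (this is also what the paper computes geometrically as the quantity ``without the middle map $X_1$'', getting $(-1)^d\qbins{n}{j}$). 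Conjugating a scalar by $R$ does nothing, so $\APsi_2(F_0^{(j)} E_0^{(j)})$ is the contractible $j$-bubble, evaluating to $\qbins{n}{j}$ — not a loop winding the annulus. Topologically, the rungs of $E_0^{(j)}$ and $F_0^{(j)}$ cross the cut line in opposite angular directions, so the composed $j$-strand has winding number zero.

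The ingredient your proposal is missing is exactly the twist $X_1$ that the paper inserts in the middle. The paper's composition $F_1^{(j)}\circ X_1 \circ E_1^{(j)}$ (acting between $(0,n)$ and $(j,n-j)$) is what produces a genuine winding-1 loop, identified in $\ASp[q^\pm]$ with $(-q)^{j(n-j)} e_j$ after undoing a positive self-twist. Geometrically, $X_1$ tensors by $\det(L_1/L_0)^\vee = \cO_{\G(j,n)}(1)$, and it is precisely this twist that kills all but the $i=0$ term of the Koszul expansion $\sum_i (-1)^{d+i} q^{d-2i}\pi_*[\Omega^i]$, turning the answer from a scalar $\qbins{n}{j}$ into the pushforward of $\cO_{\G(j,n)}(1)$, which equals $[\wedge^{n-j}\C^n]$. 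Your later steps — the identification of the correspondence with $\G(j,n)$, the Koszul computation, the extension to general $\uk$ — are sound and mirror the paper's, but with $F_0^{(j)} E_0^{(j)}$ in place of the twisted composition the whole calculation lands on the wrong element of $\End_{\ASp}(\emptyset)$. You would need to replace your starting point with something like $R^{-1} F_1^{(j)} X_1 E_1^{(j)} R$ (or directly $F_1^{(j)} X_1 E_1^{(j)}$, as the paper does), and then carefully track the $(-q)^{j(n-j)}$ twist factor, for the argument to close.
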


\begin{proof}
To simplify notation we prove the case $\uk = \emptyset$ (the general case is no different). Consider the composition
\begin{equation}\label{eq:2A}
\emptyset \xrightarrow{cup} (j,n-j) \xrightarrow{X_1} (j,n-j) \xrightarrow{cap} \emptyset
\end{equation}
inside $\ASp[q^\pm]$ where the first map is a $j$-cup and the last map is a $j$-cap. This composition recovers $e_j$ up to a positive twist involving the $j$-strand. Since such a positive twist can be undone at the cost of a factor of $(-q)^{-j(n-j)}$ the composition in (\ref{eq:2A}) is equal to the image of $(-q)^{j(n-j)} e_j$.

On the other hand, the composition in (\ref{eq:2A}) is equivalent to the composition
\begin{equation}\label{eq:2B}
(0,n) \xrightarrow{E_1^{(j)}} (j,n-j) \xrightarrow{X_1} (j,n-j) \xrightarrow{F_1^{(j)}} (0,n).
\end{equation}
Let us first consider what happens to this composition without the middle map $X_1$. We will need the natural projection and inclusion maps
$$Y(0,n) \xleftarrow{\pi} X(j,n-j) \xrightarrow{i} Y(j,n-j)$$
where
$$X(j,n-j) := \{L_0 \subset L_1 \subset L_2 = z^{-1}(L_0): \dim(L_1/L_0) = j, \dim(L_2/L_1) = n-j\}$$
can be identified with the Grassmannian $\G(j,n)$. Then $F_1^{(j)} E_1^{(j)}$ is given by the composition
$$(-q)^{-j(n-j)} \pi_*(i^* i_* \pi^*(\cdot) \cdot [\det(L_1/L_0)]^{n-j} \cdot [\det(L_2/L_1)]^{-j}).$$
Notice that $\Omega_{X(j,n-j)} \cong (L_2/L_1)^\vee \otimes (L_1/L_0)$ which means that
$$\omega_{X(j,n-j)} \cong \det(L_1/L_0)^{n-j} \otimes \det(L_2/L_1)^{-j}.$$
So the composition above is just
\begin{equation}\label{eq:8}
(-q)^{-d} \pi_*(i^* i_* \pi^*(\cdot) \cdot [\omega_{X(j,n-j)}])
\end{equation}
where $d=j(n-j)= \dim X(j,n-j)$. Now, to compute $i^* i_*$ notice that $X(j,n-j) \subset Y(j,n-j)$ is carved out as the zero section of
\begin{equation}\label{eq:9}
z: L_2/L_1 \rightarrow L_1/L_0 \{2\}
\end{equation}
where the $\{2\}$ shift is because of the $\C^\times$ action on $z$. This means that we have a Koszul resolution
$$ \dots \rightarrow \Alt{2} V \rightarrow V \rightarrow \cO_{Y(j,n-j)} \rightarrow \cO_{X(j,n-j)}$$
where $V = (L_2/L_1) \otimes (L_1/L_0)^\vee \{-2\}$. But $V$ restricted to $X(j,n-j)$ is just $\Omega_{X(j,n-j)}^\vee \{-2\}$ and
$$\Alt{i}(\Omega_{X(j,n-j)}^\vee \{-2\}) \otimes \omega_{X(j,n-j)} \cong \Omega^{d-i}_{X(n-j)} \{-2i\}.$$
Hence the composition in (\ref{eq:8}) simplifies to give
$$(-q)^{-d} \sum_{i \ge 0} \pi_* ((\cdot) \cdot (-1)^i q^{2i}[\Omega_{X(j,n-j)}^{d-i}]) = \sum_{i \ge 0} (-1)^{d+i} q^{d-2i} \pi_* [\Omega_{X(j,n-j)}^i] \cdot (\cdot).$$
Since $X(j,n-j) \cong \G(j,n)$ it is a standard fact that this evaluates to multiplication by $(-1)^d \qbins{n}{j}$.

We now need to reintroduce $X_1$. Since $X_1$ is tensoring with $\det(L_1/L_0)^\vee$ the composition in (\ref{eq:2B}) simplifies to give
$$\sum_{i \ge 0} (-1)^{d+i} q^{d-2i} \pi_* \left( [\Omega^i_{X(j,n-j)}] \cdot [\det(L_1/L_0)^\vee] \right) \cdot (\cdot).$$
Since $\cO_{X(j,n-j)} \otimes \det(L_1/L_0)^\vee \cong \cO_{\G(j,n)}(1)$ all these terms vanish except the lowest degree term $i=0$ which gives
$$(-q)^d \pi_*([\cO_{X(j,n-j)}] \cdot [\det(L_1/L_0)^\vee]) \cdot (\cdot).$$
This is equivalent to multiplication by $(-q)^d [\wedge^{n-j}(\C^n)]$. It follows that $e_j \in \End_{\ASp[q^\pm]}(\emptyset)$ is mapped to multiplication by $[\wedge^{n-j}(\C^n)]$. The result follows.
\end{proof}

\begin{Corollary}
The functor $\Phi$ is a $E[q^\pm]$-linear functor where
\begin{itemize}
\item $e_j \in \ASp[q^\pm]$ acts as a counterclockwise loop labelled $j$,
\item $e_j \in KConv^{\A}(Gr)$ acts by tensoring with the vector bundle $\wedge^{n-j}(\C^n)$.
\end{itemize}
\end{Corollary}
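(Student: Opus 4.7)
The plan is to deduce this Corollary directly from Lemma \ref{lem:imek} via a short bookkeeping argument, since the real content is already contained in that lemma.

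First, I would make explicit the two $E[q^\pm]$-module structures at play. On the spider side, the Hom spaces of $\ASp[q^\pm]$ are $E[q^\pm]$-modules via pre- or post-composition with the counterclockwise loops defined in Section \ref{sec:actionE}. On the K-theory side, one defines an $E[q^\pm]$-action on each Hom space $K^{\A}(Z(\uk,\uk'))$ by declaring $e_j$ to act by tensoring with the equivariant vector bundle $\wedge^{n-j}(\C^n)$ pulled back from a point; since these bundles come from the base, their action commutes with convolution and makes $KConv^{\A}(Gr)$ into an $E[q^\pm]$-enriched category.

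Next I would verify the elementary compatibility that tensoring a class $[\sF] \in K^{\A}(Z(\uk,\uk'))$ with $\wedge^{n-j}(\C^n)$ coincides with convolving with the diagonal kernel $\Delta_*[\wedge^{n-j}(\C^n)]$. This is the standard base-change computation: the closed embedding of the diagonal collapses the triple fibre product, so the projection formula rewrites the convolution as a tensor product with the pullback of the bundle (compare \cite[Sec. 5.2]{CG}). Since the pullback is from a point, it may be placed on either of the two outer factors, so convolution on the left or on the right by $\Delta_*[\wedge^{n-j}(\C^n)]$ realizes the same $E[q^\pm]$-action.

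Finally, Lemma \ref{lem:imek} identifies $\Phi(e_j)$ with exactly this diagonal kernel $\Delta_*[\wedge^{n-j}(\C^n)]$ for every object $\uk$. Combining this with the previous step and the functoriality of $\Phi$ gives $\Phi(e_j \cdot \alpha) = [\wedge^{n-j}(\C^n)] \cdot \Phi(\alpha)$ for every morphism $\alpha$ of $\ASp[q^\pm]$, which is precisely the stated $E[q^\pm]$-linearity. There is no serious obstacle; the whole Corollary is a repackaging of Lemma \ref{lem:imek}.
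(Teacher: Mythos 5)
Your proposal is correct and follows the same route the paper implicitly takes: the Corollary is stated without proof as an immediate consequence of Lemma \ref{lem:imek}, and your argument simply spells out the routine bookkeeping (identifying the two $E[q^\pm]$-actions, using the projection formula to match convolution with $\Delta_*[\wedge^{n-j}(\C^n)]$ against tensoring, and invoking functoriality) that the authors suppressed. No gaps.
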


\subsection{Reduction to $1^m $}

\begin{Lemma}\label{lem:1}
To show that $\Phi$ is an equivalence it suffices to check that for each $m$, $\Phi$ induces an isomorphism
$$\End_{\ASp(q)}(1^m) \xrightarrow{\sim} \End_{KConv^{\A}(Gr)}(1^m) \otimes_{\C[q^\pm]} \C(q).$$
\end{Lemma}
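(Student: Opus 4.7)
The plan is to realize every object of $\ASp(q)$ as a retract of $1^M$ for a suitable $M$, thereby identifying every Hom space with a two-sided corner of $\End(1^M)$ cut out by idempotents. Since $\Phi$ is the identity on objects (both categories have the same objects, namely sequences in $\{1,\dots,n-1\}$), essential surjectivity is automatic, and the task reduces to showing that
\begin{equation*}
\Phi: \Hom_{\ASp(q)}(\uk,\uk') \to \Hom_{KConv^{\A}(Gr)}(\uk,\uk') \otimes_{\C[q^\pm]} \C(q)
\end{equation*}
is an isomorphism for every pair $\uk,\uk'$.

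The key input is the representation-theoretic fact that in $\RqS$ the fundamental representation $\Altq{k_i}\C^n_q$ is a direct summand of $(\C^n_q)^{\otimes k_i}$ via the quantum antisymmetrizer, and that the trivial representation is a summand of $(\C^n_q)^{\otimes n}$. Via the equivalence $A\Gamma: \ASp(q) \xrightarrow{\sim} \OqSSm$ of Corollary \ref{cor:GamEquiv}, these translate into explicit webs $i_\uk: \uk \to 1^M$ and $p_\uk: 1^M \to \uk$ in $\ASp(q)$, built as composites of forks and fuses, satisfying $p_\uk \circ i_\uk = \mathrm{id}_\uk$ for any $M \geq N := \sum_i k_i$ with $M \equiv N \pmod n$. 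The endomorphism $e_\uk := i_\uk \circ p_\uk \in \End_{\ASp(q)}(1^M)$ is then a splitting idempotent, and by functoriality $\Phi(e_\uk) = \Phi(i_\uk) \circ \Phi(p_\uk)$ remains a splitting idempotent with retract $\uk$ in $KConv^{\A}(Gr)$.

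Two cases remain. If the totals $N, N'$ of $\uk, \uk'$ differ mod $n$, both Hom spaces vanish: on the spider side by the horizontal trace description $\ASp(q) \cong \RqSm(S^1)$ (Proposition \ref{th:Annular} combined with Corollary \ref{cor:OCmod}) and the central character grading on $\RqS$; on the geometric side because $Y(\uk)$ and $Y(\uk')$ map to distinct connected components of $Gr$ (indexed by $\Lambda/\Z\Delta \cong \Z/n\Z$), so $Z(\uk,\uk') = \emptyset$. Otherwise pick $M \geq \max(N, N')$ with $M \equiv N \equiv N' \pmod n$; then both $\uk, \uk'$ are retracts of $1^M$, giving the identification
\begin{equation*}
\Hom_{\ASp(q)}(\uk, \uk') \xrightarrow{\sim} e_{\uk'} \End_{\ASp(q)}(1^M) e_\uk, \qquad f \mapsto i_{\uk'} \circ f \circ p_\uk,
\end{equation*}
with inverse $g \mapsto p_{\uk'} \circ g \circ i_\uk$, and analogously on the $KConv$ side using the idempotents $\Phi(e_\uk), \Phi(e_{\uk'})$. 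Since cutting by idempotents commutes with the base change from $\C[q^\pm]$ to $\C(q)$, the assumed isomorphism on $\End(1^M)$ restricts to the required isomorphism on $\Hom(\uk, \uk')$. The only non-formal ingredient is the explicit construction of the splitting maps $i_\uk, p_\uk$ via the web calculus of Section \ref{se:spider} and the verification $p_\uk \circ i_\uk = \mathrm{id}_\uk$; the remainder of the argument is formal categorical bookkeeping.
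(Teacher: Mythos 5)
Your proof is correct and takes essentially the same route as the paper's: both arguments use fork/cup webs to realize each $\uk$ inside $1^M$ and then transport the assumed isomorphism on $\End(1^M)$ to a general Hom space. The paper defines maps $u:\End_{\ASp(q)}(1^m)\to\Hom_{\ASp(q)}(\uk,\ul)$ and $v$ in the other direction (by pre/post-composing with forks) and uses the scalar relation $uv=\prod_i[k_i]![l_i]!\cdot\mathrm{id}$ together with a commutative diagram to deduce injectivity and surjectivity of $\Phi$; your version normalizes the fork composites into exact splitting idempotents $e_\uk\in\End(1^M)$ and identifies $\Hom(\uk,\uk')$ with the corner $e_{\uk'}\End(1^M)e_\uk$, which is the same computation packaged as an idempotent-completion statement. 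One place where your write-up is genuinely more careful: the paper's proof only treats the case $\sum k_i=\sum l_i=m$ and is silent on pairs $\uk,\ul$ with different totals. You correctly observe that when $\sum k_i\not\equiv\sum l_i\pmod n$ both Hom spaces vanish (by the central-character grading on $\RqSm(S^1)$ on one side, and because $Z(\uk,\ul)=\emptyset$ when $Y(\uk),Y(\ul)$ land in distinct components of $Gr$ on the other), and when the totals agree mod $n$ but not on the nose you pad with $\emptyset\to 1^n$ determinant webs to embed both objects as retracts of a common $1^M$. This fills in a small but real gap in the published proof. The only caveat is notational: as written, $p_\uk\circ i_\uk$ is a nonzero scalar (a product of quantum factorials) rather than $\mathrm{id}_\uk$, so you must rescale over $\C(q)$ to get honest idempotents, which you implicitly do; this is exactly where the hypothesis of working over $\C(q)$ rather than $\C[q^\pm]$ is used, matching the role of the same factor $\prod_i[k_i]![l_i]!$ in the paper's $uv=c\cdot\mathrm{id}$.
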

\begin{proof}
By a ``fork'' we mean a map $1^k \rightarrow k$ (or its mirror $k \rightarrow 1^k$) depicted in figure \ref{fig:fork} when $ k = 3$.
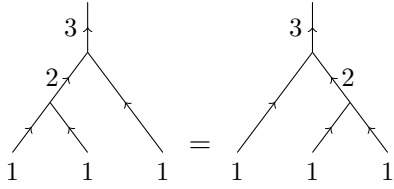
\begin{figure}
\centering
\begin{tikzpicture}[baseline]
\foreach \x/\y in {0/0,1/0,2/0,0/1,1/1,0/2} {
        \coordinate(z\x\y) at (\x+\y/2,\y/1.5);
}
\coordinate (z03) at (1,2);
\draw[mid>] (z00) node[below] {\small $1$} --  (z01);
\draw[mid>] (z01) -- node[left] {\small $2$} (z02);
\draw[mid>] (z10) node[below] {\small $1$} -- (z01);
\draw[mid>] (z20) node[below] {\small $1$} -- (z02);
\draw[mid>](z02) -- node[left] {\small $3$} (z03);
\end{tikzpicture}
 =
\begin{tikzpicture}[baseline]
\foreach \x/\y in {0/0,1/0,2/0,0/1,1/1,0/2} {
        \coordinate(z\x\y) at (\x+\y/2,\y/1.5);
}
\coordinate (z03) at (1,2);
\draw[mid>] (z00) node[below] {\small $1$} --  (z02);
\draw[mid>] (z10) node[below] {\small $1$} -- (z11);
\draw[mid>] (z20) node[below] {\small $1$} -- (z11);
\draw[mid>] (z11) -- node[right] {\small $2$} (z02);
\draw[mid>](z02) -- node[left] {\small $3$} (z03);
\end{tikzpicture}
\caption{The two equal forks $ (1,1,1) \rightarrow 3$}\label{fig:fork}
\end{figure}

By attaching forks to the bottom and top of a morphism we get maps
\begin{align*}
& u: \End_{\ASp(q)}(1^m) \rightarrow \Hom_{\ASp(q)}(\uk,\ul) \ \ {\rm  and } \\
&v: \Hom_{\ASp(q)}(\uk,\ul) \rightarrow \End_{\ASp(q)}(1^m)
\end{align*}
where $m=\sum_i k_i = \sum_i l_i$. Note that we have
\begin{equation}\label{eq:uv}
uv = \prod_i [k_i]! [l_i]! \cdot id
\end{equation}
since the composition (prong to prong) of two forks with $k$ prongs is $[k]!$ times the identity. We also get similar maps for $KConv$ instead of $\ASp$.

To simplify notation we denote
$$\Hom_{KConv(q)}(\uk,\ul) := \Hom_{KConv^{\A}(Gr)}(\uk,\ul) \otimes_{\C[q^\pm]} \C(q).$$
Now consider the following commutative diagram
\begin{equation}\label{eq:commutative}
\xymatrix{
\End_{\ASp(q)}(1^m) \ar[r]^{u} \ar[d]^{\Phi'} & \Hom_{\ASp(q)}(\uk,\ul) \ar[r]^{v} \ar[d]^{\Phi} & \End_{\ASp(q)}(1^m) \ar[d]^{\Phi'} \\
\End_{KConv(q)}(1^m) \ar[r]^{u} & \Hom_{KConv(q)}(\uk,\ul) \ar[r]^{v} & \End_{KConv(q)}(1^m)
}
\end{equation}
We assume that the left and right vertical maps $\Phi'$ (which are the same) are isomorphisms. Now consider $\alpha \in \Hom_{\ASp(q)}(\uk,\ul)$ and suppose $\Phi(\alpha) = 0$. Then $\Phi' \circ v (\alpha) = v \circ \Phi(\alpha) = 0$. Since $\Phi'$ is injective this means $v(\alpha)=0$. It follows from (\ref{eq:uv}) that $\alpha=0$. Thus the middle $\Phi$ is injective.  (Note that we are using here that we are over $ \C(q) $ so the product of the corresponding quantum integers is invertible.)

On the other hand, consider $\beta \in \Hom_{KConv(q)}(\uk,\ul)$. From (\ref{eq:uv}) we know that $u$ is surjective so choose $\beta'$ so that $u(\beta')=\beta$. Since $\Phi'$ is surjective take $\beta''$ so that $\Phi'(\beta'') = \beta'$. Then $\Phi \circ u(\beta'') = u \circ \Phi'(\beta'') = u(\beta') = \beta$ which means that $\Phi$ is also surjective. This completes the proof.
\end{proof}

\section{Algebraic study }\label{sec:technical}
In order to prove that the map
$$ \End_{\ASp(q)}(1^m) \rightarrow \End_{KConv^{\A}(Gr)}(1^m) \otimes_{\C[q^\pm]} \C(q) $$
is an isomorphism, we will need to undertake a more detailed study of each side.  We begin with $ \End_{\ASp(q)}(1^m)$ and in particular $ \End_{\ASp}(1^m)$ which we study using Corollary \ref{cor:GamEquiv} which tells us that
\begin{equation}\label{eq:C}
\End_{\ASp}(1^m) \cong \End_{\OSS}(\OS \otimes (\C^n)^{\otimes m}).
\end{equation}

\subsection{Endomorphisms in equivariant coherent sheaves}

Note that the right hand side of (\ref{eq:C}) is the space of $SL_n$-equivariant endomorphisms of the trivial bundle with fibre $ (\C^n)^{\otimes m}$. Thus we can identify
$$ \End_{\OSS}(\OS \otimes (\C^n)^{\otimes m}) = \Maps_{SL_n}(SL_n, \End((\C^n)^{\otimes m})) $$
where the right side is the space of $ SL_n$-equivariant maps of algebraic varieties from $ SL_n $ to $ \End(\C^n)^{\otimes m} $.

From Proposition \ref{pr:ABaction} at $ q = 1 $, we have an action of the affine symmetric group $ AS_m = \Z^m \rtimes S_m $ on $ \OS \otimes (\C^n)^{\otimes m} $ and thus a map $$ E[AS_m] \rightarrow \Maps_{SL_n}(SL_n, \End((\C^n)^{\otimes m})).$$

We will temporarily study a slight variant of this construction, involving
$$\Maps_{GL_n}(\gl_n, \End((\C^n)^{\otimes m})).$$

Let $\oE = \C[e_1,\dots,e_n] = \cO(\gl_n)^{GL_n} $, where $ e_k(A) $ is the $k$th elementary symmetric function in the eigenvalues of $ A $.  We have a surjection $ \oE \rightarrow E $ sending $e_n \mapsto 1$; this corresponds to the inclusion $ SL_n \rightarrow \gl_n $. Let $ AS_m^+$ be the monoid  $ \N^m \rtimes S_m $.  We write the generators of $ AS_m^+ $ as $ X_i, T_i $, where $ X_i = (0, \dots, 1, \dots 0) \in \N^m $ and $ T_i $ is the usual generator of $ S_m $.

We define a map
$$ \overline{E}[AS_m^+] \rightarrow \Maps_{GL_n}(\gl_n, \End((\C^n)^{\otimes m})) = \End_{\cO(\frgS)\mod}(\cO(\gl_n) \otimes (\C^n)^{\otimes m})) $$
by sending
\begin{itemize}
\item $e_k \in \oE$ to the map which takes $X \in \gl_n$ to multiplication by $e_k(X)$,
\item $X_i \in AS_m^+$ to the map which takes $A  $ to  $I \otimes \dots \otimes A \otimes \dots \otimes I$ (where $A$ occurs on the $i$th tensor factor),
\item $T_i$ to the constant map giving the endomorphism switching the $i$ and $i+1$ tensor factors.
\end{itemize}

This is very similar to the construction from Proposition \ref{pr:ABaction}, except that $ X_i $ are not acting invertibly, so the domain of this map involves the monoid $ AS_m^+$.

Because this construction matches the one from Proposition \ref{pr:ABaction}, the inclusion $ SL_n \rightarrow \gl_n$ gives rise to the commutative diagram
\begin{equation*}
\xymatrix{
\overline{E}[AS_m^+] \ar[r] \ar[d] & \Maps_{GL_n}(\gl_n, \End((\C^n)^{\otimes m})) \ar[d] \\
E[AS_m] \ar[r] & \Maps_{SL_n}(SL_n, \End((\C^n)^{\otimes m}))
}
\end{equation*}
 The following result (essentially due to Kraft-Procesi \cite{KP}) will be quite helpful for us.

\begin{Theorem} \label{th:AllSurjective}
In the above commutative diagram all the arrows are surjective.
\end{Theorem}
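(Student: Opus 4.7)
The plan is to reduce the theorem to the classical first fundamental theorem (FFT) of invariant theory for $GL_n$ matrix invariants, which will give the surjectivity of the top horizontal arrow. The right vertical will follow from reductivity of $GL_n$, and the bottom horizontal from a diagram chase; the left vertical is then essentially formal from the definitions of its two components (the quotient $\oE \twoheadrightarrow E$ killing $e_n - 1$, and the monoid inclusion $AS_m^+ \hookrightarrow AS_m$ whose failure of surjectivity is absorbed by Cayley--Hamilton in the target).

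First I would handle the top horizontal map. Identifying $\End((\C^n)^{\otimes m}) \cong \gl_n^{\otimes m}$ as $GL_n$-modules under simultaneous conjugation gives
\[
\Maps_{GL_n}(\gl_n, \End((\C^n)^{\otimes m})) \cong \big(\cO(\gl_n) \otimes \gl_n^{\otimes m}\big)^{GL_n}.
\]
By the FFT for matrix invariants (Procesi; cf.\ Kraft--Procesi \cite{KP}), this $\oE$-module is generated by the $GL_n$-equivariant polynomial maps of the form $A \mapsto \sigma \cdot (A^{a_1} \otimes \cdots \otimes A^{a_m})$ for $\sigma \in S_m$ and $a_i \in \N$. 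These are precisely the images of the monomials $\sigma \cdot X_1^{a_1}\cdots X_m^{a_m} \in \oE[AS_m^+]$ under our map (Cayley--Hamilton lets us assume $a_i < n$ if desired, but all monomials are available), so the top arrow is surjective.

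Next I would handle the right vertical map. The subvariety $SL_n \subset \gl_n$ is cut out by the ideal $(\det - 1)$, giving a $GL_n$-equivariant short exact sequence
\[
0 \to (\det-1) \to \cO(\gl_n) \to \cO(SL_n) \to 0.
\]
Tensoring with the $GL_n$-module $\End((\C^n)^{\otimes m})$ preserves exactness (we tensor over a field), and taking $GL_n$-invariants preserves exactness since $GL_n$ is reductive. The center $\Cx \subset GL_n$ acts trivially on both $\cO(SL_n)$ and $\End((\C^n)^{\otimes m})$ (conjugation kills scalars), so $GL_n$-invariants of their tensor product coincide with $SL_n$-invariants, yielding the desired surjection $\Maps_{GL_n}(\gl_n, \End((\C^n)^{\otimes m})) \twoheadrightarrow \Maps_{SL_n}(SL_n, \End((\C^n)^{\otimes m}))$.

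Finally, surjectivity of the bottom horizontal arrow follows by a diagram chase: any $f \in \Maps_{SL_n}(SL_n, \End((\C^n)^{\otimes m}))$ lifts via the right vertical to some $\tilde f \in \Maps_{GL_n}(\gl_n, \End((\C^n)^{\otimes m}))$, which in turn is the image of some $g \in \oE[AS_m^+]$ under the top horizontal; the image of $g$ under the left vertical then maps to $f$ by commutativity. The main obstacle is the invocation of the first fundamental theorem of matrix invariants (the only step not formal); the reductivity argument and diagram chase are routine.
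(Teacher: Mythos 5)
Your proof follows essentially the same strategy as the paper, and it correctly handles the arrows that actually carry content. For the top horizontal arrow the paper simply cites Kraft--Procesi Proposition 6.5, whereas you unpack it as an application of the first fundamental theorem for matrix invariants; for the right vertical the paper cites Kraft--Procesi Lemma 6.2, whereas you give the explicit argument (surject $\cO(\gl_n)\twoheadrightarrow\cO(SL_n)$, tensor with the finite-dimensional $GL_n$-module $\End((\C^n)^{\otimes m})$, take invariants and use linear reductivity together with the fact that both actions factor through $PGL_n$ so $GL_n$- and $SL_n$-invariants coincide). This is the content of the cited lemma, and it is a clean, self-contained version of the same argument. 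The bottom horizontal arrow then follows by the diagram chase, just as in the paper (the paper phrases it as ``the other three surjectivities imply'' the fourth, but all that is used is top $\circ$ right factors through the bottom).

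One remark about your opening paragraph. You assert that the failure of surjectivity of $AS_m^+\hookrightarrow AS_m$ ``is absorbed by Cayley--Hamilton in the target,'' suggesting the left vertical $\oE[AS_m^+]\to E[AS_m]$ is surjective. That is not correct: the group algebra $E[AS_m]$ has no Cayley--Hamilton relation --- the monomials $X_1^{a_1}\cdots X_m^{a_m}$ for $a\in\Z^m$ are $E[S_m]$-linearly independent --- so the image of $\oE[AS_m^+]$ is exactly the proper subalgebra $E[AS_m^+]$. Cayley--Hamilton only becomes a relation after applying the bottom horizontal map, i.e.\ in $\Maps_{SL_n}(SL_n,\End((\C^n)^{\otimes m}))$, which is why the composite $\oE[AS_m^+]\to\Maps_{SL_n}(\cdots)$ is still surjective even though the left vertical is not. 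In fairness, the paper's own proof is imprecise here as well (it only proves surjectivity of $\oE[AS_m^+]\to E[AS_m^+]$), and this arrow is never used through its claimed surjectivity in later arguments --- the corollary that follows is extracted from the top-horizontal-then-right-vertical composite. So this is a wart in the theorem statement rather than a gap in your argument; but you should not claim the left vertical is itself surjective.
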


\begin{proof}
The surjectivity $ \oE[AS_m^+] \rightarrow E[AS_m^+] $ is obvious.

The surjectivity of
$$ \Maps_{GL_n}(\gl_n, \End((\C^n)^{\otimes m})) \rightarrow  \Maps_{SL_n}(SL_n, \End((\C^n)^{\otimes m})) $$
is a special case of Lemma 6.2 of \cite{KP}.   (Note that the actions of $ GL_n, SL_n$ factor through $ PGL_n $, so on both sides we can consider $ GL_n$-equivariant maps.)

The surjectivity
$$ \overline{E}[AS_m^+] \rightarrow \Maps_{GL_n}(\gl_n, \End((\C^n)^{\otimes m})) $$
is Proposition 6.5 of \cite{KP}.

Finally, the other three surjectivities imply that
$$E[AS_m] \rightarrow  \Maps_{SL_n}(SL_n, \End((\C^n)^{\otimes m}))$$
is surjective.
\end{proof}

\subsection{An algebraic model}
Define the ring
$$\oR_n^m := \oE[x_1, \dots, x_m]/(x_i^n - e_1 x_i^{n-1} + \cdots \pm e_{n-1} x_i \mp e_n)$$
and let $R_n^m = \oR_n^m \otimes_{\oE} E$. In other words, $ R_n^m $ is obtained from $ \oR_n^m $ by setting $e_n = 1$.  In the next section, we will see that $ R_n^m \cong K_{SL_n}((\P^{n-1})^m) $, the equivariant cohomology of the product of $ m $ copies of $ \P^{n-1} $.

Let $ \Sym(\ux) \subset \End_\oE(\oR_n^m) $ denote the $\oE$-subalgebra generated by multiplication by symmetric functions in $ x_1, \dots, x_m $.

\begin{Theorem} \label{th:ApplyKostant}
We have an isomorphism
$$\Maps_{GL_n}(\End(\C^n), \End((\C^n)^{\otimes m})) \xrightarrow{\sim} \End_{\Sym(\ux)}(\oR_n^m). $$
\end{Theorem}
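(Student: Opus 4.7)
The plan is to construct the map explicitly on generators of the left hand side, check it lands in the appropriate subalgebra, and then prove it is an isomorphism by a rank comparison combined with a generic-fiber argument.

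By Theorem \ref{th:AllSurjective}, the left hand side is a quotient of the monoid algebra $\oE[AS_m^+]$, so it suffices to define a ring map $\Psi : \oE[AS_m^+] \to \End_\oE(\oR_n^m)$ on generators and then check that it descends. I would send $e_k \in \oE$ to multiplication by $e_k$, the generator $X_i$ of $\N^m$ to multiplication by $x_i \in \oR_n^m$, and the transposition $T_j$ of $S_m$ to the $\oE$-algebra automorphism of $\oR_n^m$ that swaps $x_j$ and $x_{j+1}$. The affine symmetric monoid relations of $AS_m^+$ are satisfied immediately (multiplications commute; swaps give the standard $S_m$ action; the semi-direct product relation $T_j X_j T_j = X_{j+1}$ is built in). The additional relations imposed by $GL_n$-equivariance on the LHS (most importantly the Cayley-Hamilton identity $p(X_i) = 0$) also hold, since $x_i$ satisfies $p(x_i) = 0$ in $\oR_n^m$ by construction, so $\Psi$ factors through the LHS. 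The image lies inside $\End_{\Sym(\ux)}(\oR_n^m)$ because any multiplication operator commutes with multiplication by symmetric polynomials, and each $T_j$ fixes $\Sym(\ux)$ pointwise and hence satisfies $T_j(f \cdot v) = f \cdot T_j(v)$ for $f \in \Sym(\ux)$.

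To show $\Psi$ is bijective I would match ranks and then verify bijectivity on the generic fiber. The LHS is a free $\oE$-module whose rank can be computed by Kostant's theorem: $(\cO(\gl_n) \otimes V)^{GL_n}$ is free over $\oE$ of rank equal to the dimension of the zero weight space of $V$ under a maximal torus $T \subset GL_n$. For $V = \End((\C^n)^{\otimes m})$, decomposing $(\C^n)^{\otimes m}$ into $T$-weight spaces shows that these are indexed by $S_m$-orbits $[f]$ of functions $f : \{1,\dots,m\} \to \{1,\dots,n\}$, with dimension $|[f]|$, so $\dim V^T = \sum_{[f]} |[f]|^2$. For the RHS, localize $\oE$ at the discriminant to reach the regular semisimple locus: then $\oR_n^m$ splits as a product $\prod_f \C$ indexed by the same functions $f$, the subalgebra $\Sym(\ux)$ becomes the algebra of functions constant on $S_m$-orbits, and a direct computation yields $\End_{\Sym(\ux)}(\oR_n^m) = \bigoplus_{[f]} M_{|[f]|}$, giving the same rank $\sum_{[f]}|[f]|^2$.

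For the generic-fiber check, choose a cyclic vector for the companion matrix $\kappa(p)$ on $\C^n$ to identify $(\C^n)^{\otimes m}$ with $\oR_n^m|_p$; under this identification, multiplication by $x_i$ corresponds to the action of $\kappa(p)$ on the $i$-th tensor factor (i.e.\ to $X_i$), and swap $T_j$ corresponds to the transposition of tensor factors. Decomposing both sides into orbit blocks $\bigoplus_{[f]} M_{|[f]|}$, it is then a direct matrix calculation that multiplication operators together with permutations generate the full endomorphism algebra of each block, so $\Psi \otimes_\oE \operatorname{Frac}(\oE)$ is surjective, hence an isomorphism of free $\operatorname{Frac}(\oE)$-modules of equal rank. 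Injectivity of $\Psi$ over $\oE$ then follows because the LHS is $\oE$-free (Kostant) and the map becomes injective after inverting the discriminant.

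The main obstacle is upgrading generic surjectivity to global surjectivity. I would argue this by showing that $\End_{\Sym(\ux)}(\oR_n^m)$ is itself an $\oE$-lattice of the same rank $\sum_{[f]}|[f]|^2$ (as a submodule of the free module $\End_\oE(\oR_n^m) \cong M_{n^m}(\oE)$ it is at least torsion-free), and that the image of $\Psi$ is $\oE$-saturated. The cleanest route is probably to reinterpret $\End_{\Sym(\ux)}(\oR_n^m)$ as the global sections of the coherent sheaf on $\Spec \oE$ whose fiber at $p$ is $\End((\C^n)^{\otimes m})^{Z_\kappa(p)}$, where $Z_\kappa$ is the universal centralizer of the Kostant slice, and to use the Kostant-slice isomorphism $(\cO(\gl_n) \otimes V)^{GL_n} \cong \Gamma(\Spec \oE, V^{Z_\kappa})$ (which holds not only on the regular semisimple locus but everywhere). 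Identifying the two sheaves over $\Spec \oE$ via the generators $X_i, T_j, e_k$ as above then promotes the generic isomorphism to a global one.
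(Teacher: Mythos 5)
Your proposal identifies the right structures, but two steps are genuinely incomplete, and both holes get plugged by the same device the paper uses, so in the end your argument collapses onto the paper's.

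\textbf{Well-definedness of $\Psi$.} You define $\Psi$ on the monoid algebra $\oE[AS_m^+]$ and say it ``descends'' to the quotient $\Maps_{GL_n}(\gl_n, \End((\C^n)^{\otimes m}))$ after checking the Cayley--Hamilton identity. But Theorem \ref{th:AllSurjective} gives only a surjection onto the left-hand side, not a presentation; without a complete description of the kernel you cannot verify descent generator by generator. (Indeed, determining that kernel is the content of the conjectures in Section \ref{se:cyclotomic}, and Cayley--Hamilton is certainly not the whole story once $m > 1$.) You could salvage this by observing that the two maps out of $\oE[AS_m^+]$ agree after inverting the discriminant, and that both targets embed into their generic fibers (the left since it is $\oE$-free by Kostant, the right since it sits inside $M_{n^m}(\oE)$), so the kernels agree --- but you don't say this, and it's an extra lemma you would need.

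\textbf{Global surjectivity.} This is the real obstruction, and you flag it, but the only route you offer that actually works is ``use the Kostant-slice isomorphism $\End_{\OgS}(\cO(\gl_n)\otimes V) \cong \End_\oZ(\oE \otimes V)$ over all of $\Spec \oE$.'' That \emph{is} the paper's proof: the paper restricts to the companion-matrix Kostant section, observes that the basis $I, s(e), \dots, s(e)^{n-1}$ of $\mathrm{Lie}(\oZ)$ acts as the power sums $\sum_i X_i^k$, identifies the pulled-back module with $\oR_n^m$ under $1\otimes v_{k_1}\otimes\cdots\otimes v_{k_m} \mapsto x_1^{k_1-1}\cdots x_m^{k_m-1}$, and checks that $X_i$ goes to multiplication by $x_i$. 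Once you invoke the Kostant-slice equivalence your rank count via zero-weight spaces and your regular-semisimple fiber analysis become unnecessary --- they recompute information the slice isomorphism already packages. Conversely, \emph{without} the slice isomorphism, ``same rank $+$ generically an isomorphism $+$ torsion-free target'' does not give surjectivity (a priori the image could be a proper full-rank $\oE$-sublattice), and your suggested alternative of showing the image is saturated is not carried out.

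So: your computations are correct and illuminating, but the argument as written has a gap at descent and a gap at global surjectivity, and your own proposed repair of the second reduces to the paper's direct Kostant-section argument, which proves the theorem in one pass and without the rank comparison.
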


\begin{proof}
Consider the inclusion $ s :Spec(\oE) \rightarrow \End(\C^n)$ given by
$$
s(e_1, \dots, e_n)  =  \begin{bmatrix}
0 & 0 & 0 & \dots & 0 & (-1)^{n-1} e_n \\
1 & 0 & 0 & \dots & 0 & (-1)^{n-2} e_{n-1} \\
0 & 1 & 0 & \dots & 0 & (-1)^{n-3} e_{n-2} \\
\dots & \dots & \dots & \dots & \dots & \dots \\
0 & 0 & 0 &\dots & 0 & -e_2 \\
0 & 0 & 0 &\dots & 1 & e_1
\end{bmatrix}.
$$
This is a variant of the Kostant section and gives us an isomorphism
$$ \End_{\OgS}(\cO(\gl_n) \otimes (\C^n)^{\otimes m} ) \rightarrow \End_\oZ( \oE \otimes_{\cO(\gl_n)} \cO(\gl_n) \otimes (\C^n)^{\otimes m})$$
where $ \oZ $ is the group scheme over $ Spec(\oE) $ whose fibre at a point $ e $ is the centralizer in $ GL_n $ of $ s(e) $.

If $ e \in Spec(\oE) $, then for any $k $, $ s(e)^k $ lies in the centralizer in $ \gl_n $ of $ s(e) $.  It is easy to see that $ I, s(e), \dots, s(e)^{n-1} $ gives a basis for this centralizer.  Thus for each $ k $, we define a map $ P_k : Spec(\oE) \rightarrow Lie(\oZ) $ taking $ e $ to $ s(e)^k$, where $ Lie(\oZ) $ denotes the total space bundle of Lie algebras over $ Spec(\oE) $ coming from the group scheme $ \oZ $.   Thus $ P_0, \dots, P_{n-1} $ give a basis for $ Lie(\oZ) $ as a Lie algebra over $ \oE $.  Hence considering endomorphisms of $\oE \otimes_{\cO(\gl_n)} \cO(\gl_n) \otimes (\C^n)^{\otimes m} $ over $ \oZ $ is the same as considering endomorphisms of this $ \oE$-module over the action of $ P_0, \dots, P_{n-1}$ .

For each $ k $, the action of  $ P_k $ on $ \oE \otimes_{\cO(\gl_n)} \cO(\gl_n) \otimes (\C^n)^{\otimes m}$ is given by $ \sum_{i=1}^{m} X_i^k $, where $ X_i $ is the endomorphism coming from $ X_i \in AS_m^+$ by the construction in the previous section.  Thus, $\End_\oZ( \oE \otimes_{\cO(\gl_n)} \cO(\gl_n) \otimes (\C^n)^{\otimes m})$ is the same thing as endomorphisms over the $ \oE $-algebra generated by symmetric functions in the $ X_1, \dots, X_m $.

We now identify $\oE \otimes_{\cO(\gl_n)} \cO(\gl_n) \otimes (\C^n)^{\otimes m}$ with $\oR_n^m$
 (both are free $\oE$-modules of rank $ n^m $) via the map
$$1 \otimes 1 \otimes  v_{k_1} \otimes \cdots \otimes v_{k_m} \mapsto x_1^{k_1-1} \cdots x_m^{k_m-1}
$$
where $ v_1, \dots, v_n $ denotes the standard basis for $ \C^n $.

We claim that under this identification, $ X_i $ corresponds to multiplication by $ x_i $.  To see this, let us assume for simplicity that $ m = 1$.  Then $ X $ as an element of $  \End_{\OgS}(\cO(\gl_n) \otimes \C^n ) $ is given by
$$ 1 \otimes v_k  \mapsto \phi_k $$
where $ \phi_k $ is the $\C^n $ valued function on $ \gl_n $ given by $ \phi_k(g) = g(v_k)$.
Now, if $ g = s(e_1, \dots, e_n)$, then we have
$$ g(v_k) =  \begin{cases} v_{k+1} \ & \text{ if } k \ne n  \\
(-1)^{n-1}e_n v_1 + (-1)^{n-2}e_{n-1} v_2 + \dots + e_1 v_n \  & \text{ if }  k = n
\end{cases} $$
Thus under the above identification $ \oE \otimes_{\cO(\gl_n)} \cO(\gl_n) \otimes \C^n \rightarrow \oR_n $, we see that the image of $ X $ is given by
$$ x^{k-1} \mapsto \begin{cases} x^k \ & \text{ if } k \ne n \\
(-1)^{n-1}e_n +(-1)^{n-2}e_{n-1} x + \dots + e_1 x^{n-1} \  & \text{ if } k = n
\end{cases}
$$
which matches multiplication by $ x $ acting on $ \oR_n$.

Combining all this, we see that we have an isomorphism
$$ \End_{\OgS}(\cO(\gl_n) \otimes (\C^n)^{\otimes m} ) \cong  \End_{\Sym(\ux)}(\oR_n^m) $$
as desired.
\end{proof}

As a consequence of Theorems \ref{th:AllSurjective} and \ref{th:ApplyKostant} we deduce the following.

\begin{Corollary}\label{cor:technical}
There is a surjective $ \oE$-algebra map
$$ \ochi: \oE[AS_m] \rightarrow \End_{\Sym(\ux)}(\oR_n^m) $$
such that $ \ochi(X_i) $ is multiplication by $ x_i $ and $\ochi(T_i) $ permutes the variables $x_i $ and $ x_{i+1} $.
\end{Corollary}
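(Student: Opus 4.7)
The plan is to obtain $\ochi$ by composing the surjection from Theorem~\ref{th:AllSurjective} with the isomorphism from Theorem~\ref{th:ApplyKostant}. Theorem~\ref{th:AllSurjective} provides a surjective $\oE$-algebra map
$$\oE[AS_m^+] \twoheadrightarrow \Maps_{GL_n}(\gl_n, \End((\C^n)^{\otimes m})) = \End_{\cO(\frgS)\mod}(\cO(\gl_n) \otimes (\C^n)^{\otimes m}),$$
and Theorem~\ref{th:ApplyKostant} identifies the target with $\End_{\Sym(\ux)}(\oR_n^m)$. The composition is the desired $\ochi$, and surjectivity is immediate from the surjectivity of the first arrow.

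The remaining task is to verify the images of the generators. For the transposition $T_i$, the map of Theorem~\ref{th:AllSurjective} sends $T_i$ to the constant endomorphism that swaps the $i$th and $(i+1)$th tensor factors of $(\C^n)^{\otimes m}$. Under the $\oE$-linear isomorphism
$$\oE \otimes_{\cO(\gl_n)} \cO(\gl_n) \otimes (\C^n)^{\otimes m} \xrightarrow{\sim} \oR_n^m, \qquad 1 \otimes 1 \otimes v_{k_1} \otimes \cdots \otimes v_{k_m} \mapsto x_1^{k_1-1} \cdots x_m^{k_m-1}$$
used in the proof of Theorem~\ref{th:ApplyKostant}, this swap is manifestly the transposition of $x_i$ with $x_{i+1}$. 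For $X_i$, Theorem~\ref{th:AllSurjective} sends $X_i$ to the operator $A \mapsto I \otimes \cdots \otimes A \otimes \cdots \otimes I$ inserting $A \in \gl_n$ in the $i$th factor. The $m = 1$ calculation already carried out in the proof of Theorem~\ref{th:ApplyKostant} (specializing $g$ to $s(e_1,\dots,e_n)$ and reading off the action on the basis $v_1, \dots, v_n$) shows that this operator is identified with multiplication by $x$ on $\oR_n$, and the general-$m$ case follows verbatim because only the $i$th tensor factor is touched.

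There is essentially no obstacle: all the substance lies in the two theorems just proved, and this corollary is a straightforward compilation of the composite's values on generators. The only small wrinkle worth flagging is that the natural domain of the composite is the monoid algebra $\oE[AS_m^+]$, which is how we interpret the statement $\oE[AS_m]$ here; with this reading, the generators $X_i$ and $T_i$ together with the scalars from $\oE$ suffice to present the domain, and the verification above is complete.
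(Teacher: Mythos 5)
Your proof is correct and is the argument the paper intends: compose the top arrow of Theorem \ref{th:AllSurjective} with the isomorphism of Theorem \ref{th:ApplyKostant}, then read off the generators via the explicit basis identification $1 \otimes 1 \otimes v_{k_1} \otimes \cdots \otimes v_{k_m} \mapsto x_1^{k_1-1} \cdots x_m^{k_m-1}$. Your ``small wrinkle'' is in fact a genuine slip in the paper's statement: multiplication by $x_i$ on $\oR_n^m$ is not invertible, since the constant term $\pm e_n$ of the monic relation satisfied by $x_i$ is not a unit in $\oE = \C[e_1,\dots,e_n]$, so $\ochi$ can only be defined on the monoid algebra $\oE[AS_m^+]$ --- which is exactly what is invoked later in the proof of Proposition \ref{pr:ABsurjective} --- and only after setting $e_n = 1$, i.e.\ passing to the induced map $E[AS_m] \to \End_{\Sym(\ux)}(R_n^m)$, do the $X_i$ become invertible so that the full group algebra appears.
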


\subsection{A $q$-deformation} \label{sec:qDefR}
We now introduce a $ q$-deformation of the ring $ \oR_n^m $.

We define
$$ \oR_{n,[q^\pm]}^m := \oE[q^\pm][x_1, \dots, x_m]/ (x_i^n - e_1^{(i)} x_i^{n-1} + e_2^{(i)} x_i^{n-2} - \dots + (-1)^n e_n^{(i)}) $$
where the elements $ e_j^{(i)} $ are defined recursively by $ e_j^{(1)} = e_j $ and
$$ e_j^{(i)} := e_j^{(i-1)} + (q^{2}-1)[x_{i-1} e_{j-1}^{(i-1)} - x_{i-1}^{2} e_{j-2}^{(i-1)} + x_{i-1}^{3} e_{j-3}^{(i-1)} - \dots]. $$
This definition is motivated by computations in $KConv^{\A}(Gr)$ as we will explain in the next section. Again, we consider the $ \oE $ subalgebra $\Sym(\ux)$ of $\oR_{n,[q^\pm]}^m$ generated by multiplication by symmetric functions in the $x_i$. Similarly, we can define $ \oR_{n,(q)}^m $ by tensoring with $ \C(q)$, and we define $ R_{n,[q^\pm]}^m $ and $ R_{n,(q)}^m $ by setting $ e_n = 1$.

\begin{Remark}
It will follow from our later results that there is an isomorphism
$$ \End_{\OqSS}(\OqS \otimes (\C_q^n)^{\otimes m}) \cong \End_{\Sym(\ux)}(R_{n,(q)}^m) $$
but we do not know how to prove this directly.  We also suspect that
$$\End_{\cO_q(\frgS)}(\cO_q(\gl_n) \otimes (\C_q^n)^{\otimes m}) \cong \End_{\Sym(\ux)}(\oR_{n,(q)}^m)$$
where $ \cO_q(\gl_n) $ is the quantum matrix algebra as defined in, for example, Appendix A of \cite{BZ}.
\end{Remark}

Let $ AB_m^+ $ be the submonoid of $ AB_m $ generated by $X_i$, $ i = 1, \dots, m $ and $T_i, T_i^{-1} $ for $ i = 1, \dots, m-1 $.  This is useful for us since it allows us to consider ``representations'' of the annular braid group such that the $ X_i $ do not act invertibly.

We define the $\oE[q^\pm]$-linear map
\begin{equation}\label{eq:ochi}
\ochi: \oE[q^\pm][AB_m^+] \rightarrow \End_{\Sym(\ux)}(\oR_{n,[q^\pm]}^m)
\end{equation}
by taking $X_i$ to multiplication by $x_i$ and $T_i$ to the unique map which is
\begin{enumerate}
\item linear over $\oE[q^\pm][x_1, \dots, x_n]^{s_i}$ where $s_i$ acts by exchanging $x_i$ and $x_{i+1}$,
\item satisfies $T_i(1) = q $ and $T_i(x_i) = q^{-1} x_{i+1}$.
\end{enumerate}

\begin{Remark} \label{re:ABq1}
At $q=1$, the map $\ochi$ from (\ref{eq:ochi}) specializes to $\ochi$ from Corollary \ref{cor:technical}.
\end{Remark}

\begin{Proposition}
The map $\ochi$ from (\ref{eq:ochi}) is well defined.
\end{Proposition}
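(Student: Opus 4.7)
The plan is to first define the operators on the polynomial ring $P_m := \oE[q^\pm][x_1, \ldots, x_m]$, verify the relations of $AB_m^+$ and the commutation with $\Sym(\ux)$ there, and then check that everything descends to the quotient $\oR_{n,[q^\pm]}^m = P_m / I$, where $I$ is the cyclotomic ideal generated by $f_j := x_j^n - e_1^{(j)} x_j^{n-1} + \cdots + (-1)^n e_n^{(j)}$. On $P_m$ I take $T_i$ to be the usual Demazure--Lusztig operator
\[
T_i(f) = \frac{(q - q^{-1}) x_{i+1} f - (q x_i - q^{-1} x_{i+1}) s_i(f)}{x_{i+1} - x_i},
\]
which one checks sends a polynomial to a polynomial (the numerator vanishes on $x_i = x_{i+1}$), sends $1 \mapsto q$ and $x_i \mapsto q^{-1} x_{i+1}$, and is linear over the $s_i$-invariants of $P_m$ (hence over $\Sym(\ux)$). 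Standard calculations on polynomials then give the braid and far-commutation relations among the $T_i$, the relation $T_i X_j = X_j T_i$ for $j \ne i, i+1$ (since such an $x_j$ is $s_i$-invariant), and the affine relation $T_i X_i T_i = X_{i+1}$ (directly from the decomposition $f = f_0 + x_i f_1$ with $f_0, f_1 \in P_m^{s_i}$).

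The substantive task is then to show $T_i(I) \subseteq I$, so that $T_i$ descends. An easy manipulation of the recursion defining $e_k^{(j)}$ shows that it is equivalent to the identity
\[
e_k^{(j+1)} + x_j e_{k-1}^{(j+1)} = e_k^{(j)} + q^2 x_j e_{k-1}^{(j)} \qquad (k \ge 0),
\]
from which one deduces by induction that $e_k^{(j)}$ is symmetric in $x_1, \ldots, x_{j-1}$ (so that $f_j$ is $s_i$-invariant whenever $j \ne i+1$), together with the polynomial identity
\[
(x_{i+1} - x_i) f_{i+1} = s_i(f_i)(x_{i+1} - q^2 x_i) + (q^2 - 1) x_i f_i,
\]
obtained by comparing coefficients of $y^{n+1-k}$ in $p^{(i+1)}(y)(y - x_i)$ and $p^{(i)}(y)(y - q^2 x_i)$, where $p^{(j)}(y) := y^n - e_1^{(j)} y^{n-1} + \cdots + (-1)^n e_n^{(j)}$, and then specializing $y = x_{i+1}$.

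Given this identity, the inclusion $T_i(f_j) \in I$ is immediate for $j \ne i+1$ since $T_i(f_j) = q f_j$. For $j = i+1$, one multiplies the Demazure--Lusztig formula for $(x_{i+1} - x_i) T_i(f_{i+1})$ by another factor of $(x_{i+1} - x_i)$ and substitutes the polynomial identity above (together with its $s_i$-image for $s_i(f_{i+1})$); the $s_i(f_i)$-contributions cancel exactly and the coefficient of $f_i$ collects to $q(x_{i+1} - x_i)^2$. Since $P_m$ is an integral domain we then cancel $(x_{i+1} - x_i)^2$ to reach the clean formula $T_i(f_{i+1}) = q f_i \in I$. The remaining properties ($\Sym(\ux)$-linearity and the $AB_m^+$-relations) descend automatically from $P_m$. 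The main obstacle in this plan is precisely the $j = i+1$ case, and its success hinges on the specific shape of the recursion defining $e_k^{(j)}$, which is tailored to produce exactly the cancellation needed in the Demazure--Lusztig formula.
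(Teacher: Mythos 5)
Your proof proceeds by a genuinely different route from the paper's. The paper reduces well-definedness to the geometric facts that $R^m_{n,[q^\pm]}$ is the $\A$-equivariant $K$-theory of $Y(1^m)$ and that the geometric annular braid action of section~6 commutes with $\Sym(\ux)$; it notes that ``one can check this directly by hand'' but does not write the hand-check out. Your argument --- Demazure--Lusztig operators on $P_m$, the symmetry of $e_k^{(j)}$ in $x_1,\dots,x_{j-1}$, the polynomial identity $(x_{i+1}-x_i)f_{i+1} = s_i(f_i)(x_{i+1}-q^2 x_i) + (q^2-1)x_i f_i$, and the verification that $T_i(I)\subseteq I$ --- supplies exactly that hand-check. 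The key identity is correct, and so is the computation giving $T_i(f_{i+1}) = q f_i$ after cancelling $(x_{i+1}-x_i)^2$; moreover, the reduction of ``$T_i(gf_j)\in I$ for all $g$'' to ``$T_i(f_j)\in I$'' (which you leave implicit) is legitimate, since $T_i$ is $P_m^{s_i}$-linear and $T_i X_i = X_{i+1}T_i^{-1}$ with $T_i^{-1} = T_i - (q-q^{-1})$.

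However, there is a misstep at $j=i$: the parenthetical claim that $f_j$ is $s_i$-invariant whenever $j \ne i+1$ is false for $j=i$. The coefficients $e_k^{(i)}$ are indeed $s_i$-invariant (they involve only $x_1,\dots,x_{i-1}$), but $f_i$ depends on its argument $x_i$, which $s_i$ moves; explicitly $s_i(f_i) = x_{i+1}^n - e_1^{(i)}x_{i+1}^{n-1}+\cdots \ne f_i$. Consequently $T_i(f_i) \ne q f_i$, so the claimed ``immediate'' conclusion does not hold for $j=i$. The correct statement follows from the same identity you proved: write $f_i = A + x_i B$ with $A,B \in P_m^{s_i}$, so $T_i(f_i) = qA + q^{-1}x_{i+1}B = qf_i + q^{-1}(x_{i+1}-q^2x_i)B$; solving your polynomial identity for $s_i(f_i)$ and computing $B = (f_i - s_i(f_i))/(x_i - x_{i+1})$ yields $(x_{i+1}-q^2x_i)B = f_{i+1}-f_i$, whence $T_i(f_i) = (q-q^{-1})f_i + q^{-1}f_{i+1}\in I$. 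So the final conclusion survives and the overall strategy is sound, but the reasoning as written for $j=i$ is wrong and needs this repair.
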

\begin{proof}
Although not trivial, one can check this directly by hand. Alternatively the result follows from Lemmas \ref{lem:ktheory} and \ref{lem:commutes}. More precisely, in Lemma \ref{lem:ktheory} we identify $R^m_{n,[q^\pm]}$ with $K^{\A}(Y(1^m))$ while Lemma \ref{lem:commutes} shows that there is an action of $E[q^\pm][AB_m]$ on $K^{\A}(Y(1^m))$ which satisfies properties (1) and (2) from above. It then follows that this action induces an action of $\oE[q^\pm][AB_m^+]$ on $\oR_{n,[q^\pm]}^m$ satisfying these same properties.
\end{proof}

\subsection{Surjectivity}

The reason for introducing $ \oE $ and $\oR_n^m$ (rather than working with $E$ and $R_n^m$) is because they carry an $\N$-grading. This means that they can be used to extend surjectivity from $ q = 1 $ to generic $ q $. This grading is defined by setting $ deg(e_j) = j, deg(q) = 0, \deg(x_i)=1$.

\begin{Proposition} \label{pr:ABsurjective}
The map $ \ochi: \oE(q)[AB_m^+] \rightarrow \End_{\Sym(\ux)}(\oR_{n,(q)}^m) $ is surjective. This means that the induced map $\chi: E(q)[AB_m] \rightarrow \End_{\Sym(\ux)}(R_{n,(q)}^m) $ is surjective.
\end{Proposition}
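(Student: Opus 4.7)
The plan is to deduce surjectivity at generic $q$ from surjectivity at $q = 1$ (Corollary \ref{cor:technical}) by a graded Nakayama-type argument. First I observe that $\oR_{n,[q^\pm]}^m$ is free of rank $n^m$ over $\oE[q^\pm]$ with basis $\{x_1^{a_1}\cdots x_m^{a_m} : 0\le a_i \le n-1\}$, since the defining relations successively reduce each $x_i^n$ using coefficients $e_j^{(i)}$ involving only $x_1, \dots, x_{i-1}$. In particular $\oR_{n,[q^\pm]}^m$ is $\C[q^\pm]$-flat and specializes at $q = 1$ to $\oR_n^m$. Consequently $A := \End_{\Sym(\ux)}(\oR_{n,[q^\pm]}^m)$ embeds in the $\oE[q^\pm]$-free module $\End_{\oE[q^\pm]}(\oR_{n,[q^\pm]}^m)$ of rank $n^{2m}$. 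All modules in sight carry the natural $\N$-grading with $\deg(e_j) = j$, $\deg(x_i) = \deg(X_i) = 1$, and $\deg(T_i) = \deg(q) = 0$, and $\ochi$ is degree-preserving. A direct inspection shows that each graded piece $\End_{\oE[q^\pm]}(\oR_{n,[q^\pm]}^m)^d$ is $\C[q^\pm]$-free of finite rank; hence every graded piece of $A$, of the image $M$ of $\ochi$, and of the cokernel $C := A/M$, is finitely generated over the PID $\C[q^\pm]$.

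Next I will show $C/(q-1)C = 0$. Flatness of $\oR_{n,[q^\pm]}^m$ over $\C[q^\pm]$ makes the specialization map $A/(q-1)A \hookrightarrow \End_{\Sym(\ux)}(\oR_n^m)$ injective: any $\phi \in A$ vanishing at $q=1$ sends each $v$ into $(q-1)\oR_{n,[q^\pm]}^m$, and the unique quotients $\phi(v)/(q-1)$ assemble into a $\Sym(\ux)$-linear $\psi$ with $\phi = (q-1)\psi$. By Remark \ref{re:ABq1}, $\ochi|_{q=1}$ coincides with the map of Corollary \ref{cor:technical}, which is surjective onto $\End_{\Sym(\ux)}(\oR_n^m)$. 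Composing with the above injection shows that the image of $\oE[q^\pm][AB_m^+]$ fills all of $A/(q-1)A$, so $M + (q-1)A = A$ and hence $C/(q-1)C = 0$.

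Finally, in each degree $d$ the module $C^d$ is finitely generated over the PID $\C[q^\pm]$ and satisfies $(q-1)C^d = C^d$. The structure theorem forces the free part to vanish (since $q-1$ is not a unit in $\C[q^\pm]$) and each elementary divisor to be coprime to $q-1$; thus $C^d$ is $\C[q^\pm]$-torsion and $C^d \otimes_{\C[q^\pm]} \C(q) = 0$. Summing over $d$ yields $C \otimes_{\C[q^\pm]} \C(q) = 0$, which is equivalent to surjectivity of $\ochi$ at generic $q$. The induced statement for $\chi$ follows immediately from the inclusion $AB_m^+ \subset AB_m$ and base change $\oE(q) \to E(q)$ along $e_n \mapsto 1$. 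The main obstacle is precisely this last step: for a general finitely generated $\oE[q^\pm]$-module $C$, the vanishing $C/(q-1)C = 0$ does \emph{not} imply $\C[q^\pm]$-torsion (witness $\oE[q^\pm]/(e_1(q-1) - 1)$), so exploiting the $\N$-grading to reduce to finitely generated $\C[q^\pm]$-pieces is essential.
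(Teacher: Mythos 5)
Your proof is correct and follows the same overall strategy as the paper: reduce to surjectivity at $q=1$ (Corollary \ref{cor:technical} via Remark \ref{re:ABq1}) and then use the $\N$-grading to propagate to generic $q$. The routes diverge at the graded Nakayama step. The paper isolates it as Lemma \ref{lem:technical}, whose proof forms the sheaf associated to the cokernel on $\Spec(\C[q^\pm]) \times \Proj(\oE)$ and invokes properness of the projection to $\Spec(\C[q^\pm])$ to conclude that the support lies over finitely many points, hence vanishes after tensoring with $\C(q)$. You instead work degree by degree: since each $\oE^d$ is finite-dimensional, each graded piece $C^d$ of the cokernel is finitely generated over the PID $\C[q^\pm]$, and $(q-1)C^d = C^d$ forces $C^d$ to be torsion by the structure theorem, hence $C^d \otimes_{\C[q^\pm]} \C(q) = 0$. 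This is a more elementary route to the same conclusion and avoids passing from a graded module to its associated sheaf on $\Proj$, with the attendant subtleties about parts supported on the irrelevant ideal. You also fill in a step the paper leaves implicit, namely the injectivity of $\End_{\Sym(\ux)}(\oR_{n,[q^\pm]}^m)/(q-1) \hookrightarrow \End_{\Sym(\ux)}(\oR_n^m)$ (via freeness of $\oR_{n,[q^\pm]}^m$ over $\C[q^\pm]$), which is what lets Corollary \ref{cor:technical} imply that the image of $\ochi$ fills the $q=1$ specialization of the target. Your closing remark, that $C/(q-1)C = 0$ does not by itself give $C \otimes_{\C[q^\pm]} \C(q) = 0$ for a non-graded finitely generated $\oE[q^\pm]$-module, correctly identifies why the grading is indispensable to either argument.
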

\begin{proof}
Note that $ \End_{\Sym(\ux)}(\oR_{n,[q^\pm]}^m)$ is a finitely generated graded $ \oE[q^\pm] $-module. Define an $\N$-grading on $\oE[AB_m^+]$ with $ deg(X_i) = 1 $ and $deg(T_i) = 0$. Then using Corollary \ref{cor:technical}, the map $\ochi: \oE[q^\pm][AB_m^+] \rightarrow \End_{\Sym(\ux)}(\oR_{n,[q^\pm]}^m) $ fits into the framework of Lemma \ref{lem:technical}. The result follows.
\end{proof}

\begin{Lemma}\label{lem:technical}
Suppose $f: A \rightarrow B$ is a map of graded $\oE[q^\pm]$-modules, and that $B$ is finitely generated over $\oE[q^\pm]$. If $f|_{q=1}$ is surjective then the induced map
$$f \otimes_{\C[q^\pm]} \C(q): A \otimes_{\C[q^\pm]} \C(q) \rightarrow B \otimes_{\C[q^\pm]} \C(q)$$
is surjective.
\end{Lemma}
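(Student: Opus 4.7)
The plan is to study the cokernel $C := \operatorname{coker}(f)$ degree-by-degree, using the grading to reduce the problem to a statement about finitely generated modules over the PID $\C[q^{\pm}]$. Since $-\otimes_{\C[q^\pm]} \C(q)$ is right exact, it is enough to show that $C \otimes_{\C[q^\pm]} \C(q) = 0$.

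First, I would observe that $C$ is a finitely generated graded $\oE[q^{\pm}]$-module, being a quotient of $B$. The grading on $\oE[q^{\pm}]$ has $\deg(e_j) = j$ and $\deg(q) = 0$, so each graded piece $\oE[q^{\pm}]_d = \oE_d \otimes_{\C} \C[q^{\pm}]$ is a free $\C[q^{\pm}]$-module of finite rank (since $\oE = \C[e_1,\ldots,e_n]$ has finite-dimensional graded components). Consequently, if $B$ is generated over $\oE[q^{\pm}]$ by finitely many homogeneous elements $b_1,\ldots,b_k$ of degrees $d_1,\ldots,d_k$, then each graded piece $B_d = \sum_i \oE[q^{\pm}]_{d-d_i}\cdot b_i$ is finitely generated over $\C[q^{\pm}]$, and hence so is $C_d$.

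Next I would use the hypothesis. Surjectivity of $f|_{q=1}$ together with right-exactness of $-\otimes_{\C[q^\pm]} \C[q^{\pm}]/(q-1)$ gives $C/(q-1)C = 0$, which in each degree reads $C_d = (q-1)C_d$. By the structure theorem for finitely generated modules over the PID $\C[q^{\pm}]$, we can write
\[
C_d \;\cong\; \C[q^{\pm}]^{r_d} \,\oplus\, T_d,
\]
where $T_d$ is a torsion $\C[q^{\pm}]$-module. The identity $C_d = (q-1)C_d$ forces $\C[q^{\pm}]^{r_d}/(q-1) = \C^{r_d} = 0$, so $r_d = 0$ and $C_d = T_d$ is torsion. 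Therefore $C_d \otimes_{\C[q^{\pm}]} \C(q) = 0$ for every $d$, and summing over $d$ yields $C \otimes_{\C[q^\pm]} \C(q) = 0$, as desired.

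The only real subtlety is making sure that each graded piece of $C$ is actually finitely generated over $\C[q^{\pm}]$; this is the key structural point, and it is where the assumption that the grading lives on $\oE$ (rather than on $\C[q^\pm]$) is essential. Once that is in place, the argument reduces to an elementary application of the PID structure theorem, with Nakayama-style reasoning in the form ``$M = (q-1)M$ plus finite generation implies $M$ is torsion.'' There is no serious obstacle beyond bookkeeping with the grading.
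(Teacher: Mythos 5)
Your proof is correct, but it takes a genuinely different route from the one in the paper. The paper's argument is geometric: it notes that $C := \coker(f)$ is a finitely generated graded $\oE[q^\pm]$-module, passes to its support inside $\Spec(\C[q^\pm]) \times \Proj(\oE)$, and uses properness of the projection to $\Spec(\C[q^\pm])$ to conclude that the image of the support is a closed subvariety; since this image misses $q=1$ it must be finite, so $C$ vanishes at the generic point. Your argument avoids $\Proj$ and properness entirely: you exploit the grading to decompose $C$ into graded pieces $C_d$, observe that each $C_d$ is a finitely generated $\C[q^\pm]$-module (because $\oE$ has finite-dimensional graded components), and then apply the structure theorem for modules over the PID $\C[q^\pm]$ together with the Nakayama-style fact that $C_d = (q-1)C_d$ forces the free rank to vanish. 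This is more elementary and, in one small respect, slightly cleaner than the paper's proof: the $\Proj$ construction discards behavior supported in degree $0$ (and more generally in the irrelevant ideal), so the paper's argument implicitly requires the same degree-by-degree reasoning you make explicit to handle the degree-$0$ part of $C$. Both proofs hinge on the same two inputs --- finite generation and vanishing at $q=1$ --- but yours trades geometric machinery (properness of a projective morphism) for the PID structure theorem applied fiberwise in each degree.
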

\begin{proof}
Consider the cokernel $C := \coker(f)$. Since $f|_{q=1}$ is surjective this means $C \otimes_{\C[q^\pm]} \C_1 = 0$. On the other hand, $C$ is a finitely generated, graded $\oE[q^\pm]$-module so we can consider its support
$$\supp(C) \subset \Spec (\C[q^\pm]) \times \Proj(\oE) \xrightarrow{\pi} \Spec(\C[q^\pm]).$$
Since $\pi$ is proper this means that $\pi(\supp(C)) \subset \Spec (\C[q^\pm])$ is a closed subvariety. On the other hand, since $C \otimes_{\C[q^\pm]} \C_1 = 0$ this means that $\pi(\supp(C))$ does not contain the point $q=1$. Thus $\pi(\supp(C))$ must be the union of a finite number of points and thus $C \otimes_{\C[q^\pm]} \C(q) = 0$. This implies that $f \otimes_{\C[q^\pm]} \C(q)$ is surjective.
\end{proof}

\section{Geometric study}

We now study the right hand side of the isomorphism
$$ \End_{\ASp(q)}(1^m) \rightarrow \End_{KConv^{\A}(Gr)}(1^m) \otimes_{\C[q^\pm]} \C(q) $$
which means investigating the K-theory of the variety $Z(1^m,1^m)$.

\subsection{Property $T_A$}

The first step is to prove that $K^\A(Z(1^m,1^m))$ is a free $E[q^\pm]$-module (Corollary \ref{cor:free}). To do this we will prove that this space satisfies a certain property, called property $T_A$, as discussed below.

Let $A$ be an algebraic group and let $ R_A = K^A(\pt) $ be the representation ring of $ A $. Suppose that $X$ is an algebraic variety equipped with an action of $A$. We write $K_{i,top}^A(X)$ for the (higher) $A$-equivariant topological K-theory of $X$. Following \cite[Section 7]{Na1} we say that $X$ has property $T_A$ if
\begin{enumerate}
\item $K_{1,top}^A(X) = 0$ and $K_{top}^A(X) = K_{0,top}^A(X)$ is a free $R_A$-module,
\item the map $K^A(X) \rightarrow K^A_{top}(X)$ is an isomorphism,
\item for any closed algebraic subgroup $A' \subset A$ the $A'$-equivariant K-theories satisfy the two properties above and moreover the map $K^A(X) \otimes_{R_A} R_{A'} \rightarrow K^{A'}(X)$ is an isomorphism.
\end{enumerate}

Now consider a finite collection of $A$-invariant, locally closed subvarieties $\{X_i\}_{i \in I}$ of $X$. Suppose $\cup_{i \in I} X_i = X$ and that $I$ is equipped with a partial order $\le$ so that for each $i \in I$ the union $\cup_{j \le i} X_j$ is closed in $X$. Then we say $\{X_i\}_{i \in I}$ forms an $\alpha$-partition of $X$.

\begin{Lemma}\label{lem:TA1}\cite[Lem. 7.1.3]{Na1}
If $X$ has an $\alpha$-partition $X_1,\dots,X_k$ where each $X_i$ satisfies property $T_A$ then $X$ satisfies property $T_A$.
\end{Lemma}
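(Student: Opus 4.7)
The plan is to proceed by induction on the number $k$ of strata in the $\alpha$-partition, reducing everything to the case of a pair consisting of a closed stratum and its open complement.

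The base case $k=1$ is trivial since then $X = X_1$. For the inductive step, set $Z := \bigcup_{j < k} X_j$ and $U := X_k$, where the indexing is chosen compatibly with the partial order so that $Z$ is closed in $X$ and $U$ is open. The collection $\{X_j\}_{j<k}$ is an $\alpha$-partition of $Z$ with strictly fewer terms, so by induction $Z$ satisfies property $T_A$, while $U$ does by hypothesis. The task is then to transfer property $T_A$ from the pair $(Z,U)$ to $X$.

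For part (1), I would use the six-term long exact sequence in equivariant topological $K$-theory associated to the closed/open decomposition $Z \hookrightarrow X \hookleftarrow U$:
\begin{equation*}
\cdots \to K^A_{1,top}(X) \to K^A_{1,top}(U) \to K^A_{top}(Z) \to K^A_{top}(X) \to K^A_{top}(U) \to 0.
\end{equation*}
By the inductive hypothesis $K^A_{1,top}(U) = K^A_{1,top}(Z) = 0$, which forces $K^A_{1,top}(X) = 0$ and yields a short exact sequence
\begin{equation*}
0 \to K^A_{top}(Z) \to K^A_{top}(X) \to K^A_{top}(U) \to 0.
\end{equation*}
Since $K^A_{top}(U)$ is a free $R_A$-module, this sequence splits, so $K^A_{top}(X)$ is free of rank equal to the sum of the ranks for $Z$ and $U$. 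For part (2), the analogous localization sequence in algebraic equivariant $K$-theory gives a right-exact sequence $K^A(Z) \to K^A(X) \to K^A(U) \to 0$, and the natural comparison maps to topological $K$-theory fit into a commutative diagram with this sequence and the short exact sequence above. The five lemma (together with the fact that the maps $K^A(Z)\to K^A_{top}(Z)$ and $K^A(U) \to K^A_{top}(U)$ are isomorphisms by the inductive hypothesis, and that $K^A(Z) \hookrightarrow K^A(X)$ since the latter surjects onto the free module $K^A(U)$) gives the isomorphism $K^A(X) \xrightarrow{\sim} K^A_{top}(X)$. For part (3), I would run the identical argument with any closed algebraic subgroup $A' \subset A$; the base-change compatibility follows from the fact that $-\otimes_{R_A} R_{A'}$ is exact on the short exact sequence above (since its right-hand term is $R_A$-free) and agrees stratum by stratum with the corresponding sequence for $A'$.

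The main subtlety is not the homological algebra but bookkeeping: one has to know that the appropriate long exact sequences exist and are compatible in both the algebraic and topological settings, and that freeness of the rightmost term in the topological sequence lets one split and preserve $R_A$-freeness under extension. Once these inputs are in hand, the induction is formal.
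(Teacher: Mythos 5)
The paper supplies no proof of this lemma; it simply cites Nakajima's Lemma~7.1.3 and adds a Remark explaining how to pass from Nakajima's totally ordered $\alpha$-partitions to the partially ordered version used here. Your argument is the standard one (essentially Nakajima's), and it is correct: induct on the number of strata, peel off an open stratum $U$ corresponding to a maximal element of the partial order (this choice implicitly contains the order-refinement step that the paper's Remark emphasizes), use the six-term localization sequence in equivariant topological $K$-theory together with $K^A_{1,top}(Z) = K^A_{1,top}(U) = 0$ to obtain $K^A_{1,top}(X) = 0$ and a short exact sequence, split off the free module $K^A_{top}(U)$ to conclude freeness, and compare with the right-exact algebraic localization sequence via a diagram chase. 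The same scheme runs for each closed subgroup $A' \subset A$, with the base-change claim following because the topological short exact sequence splits.

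One inaccuracy worth correcting: you claim $K^A(Z) \hookrightarrow K^A(X)$ ``since the latter surjects onto the free module $K^A(U)$,'' but freeness of $K^A(U)$ only gives a splitting of the surjection $K^A(X) \to K^A(U)$; it says nothing about whether $K^A(Z) \to K^A(X)$ is injective (a priori it could factor through a proper quotient of $K^A(Z)$ before hitting the kernel). The cleanest fix is to note that you do not need that injectivity at all: the relevant four lemma requires only that the top row be right-exact, the bottom row a short exact sequence, and the two outer vertical maps isomorphisms; a direct chase then gives both injectivity and surjectivity of the middle comparison map. Alternatively, if you do want the injectivity of $K^A(Z) \to K^A(X)$, it follows from the factorization of $K^A(Z) \xrightarrow{\ \sim\ } K^A_{top}(Z) \hookrightarrow K^A_{top}(X)$ through $K^A(X)$, not from the splitting of the surjection onto $K^A(U)$.
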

\begin{Remark}
The actual definition of an $\alpha$-partition from \cite{Na1} assumes that $\le$ is a total ordering of $I$. However, the Lemma above holds because we can always refine a partial order to a total ordering. We use the definition above because in our case a particular partial order shows up and it does not seem natural to refine it arbitrarily.
\end{Remark}

\begin{Lemma} \label{lem:TA3}
Suppose that $ V, W $ are two $A$-equivariant vector bundles over $ X $ equipped with an equivariant vector bundle inclusion $ V \rightarrow W $.  If $ X $ has property $ T_A $, then so does $\P(W) \smallsetminus \P(V) $.
\end{Lemma}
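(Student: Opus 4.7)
The plan is to realize $\P(W) \smallsetminus \P(V)$ as an $A$-equivariant affine bundle over an $A$-equivariant projective bundle over $X$, and then deduce property $T_A$ one step at a time, since both constructions are well-behaved with respect to this property.

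Set $Q := W/V$, giving an $A$-equivariant short exact sequence $0 \to V \to W \to Q \to 0$ of vector bundles on $X$. Any $w \in W$ with $w \notin V$ projects to a nonzero vector in $Q$, so the surjection $W \to Q$ induces an $A$-equivariant morphism $\pi : \P(W) \smallsetminus \P(V) \to \P(Q)$. Pulling our short exact sequence back to $\P(Q)$ and taking the preimage of the tautological inclusion $\cO_{\P(Q)}(-1) \hookrightarrow \pi^* Q$ gives an $A$-equivariant short exact sequence
$$0 \to V|_{\P(Q)} \to W' \to \cO_{\P(Q)}(-1) \to 0$$
on $\P(Q)$. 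The first thing I would check directly is that $\P(W) \smallsetminus \P(V)$ is naturally identified with the scheme of splittings of this sequence. This scheme is a Zariski-locally-trivial torsor under the vector bundle $\Hom(\cO_{\P(Q)}(-1), V|_{\P(Q)})$, and in particular is an $A$-equivariant affine bundle over $\P(Q)$.

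Next, I would invoke equivariant homotopy invariance of both algebraic and topological $K$-theory for affine bundles (with respect to $A$ and to any closed subgroup $A' \subset A$) to conclude that $\pi^*$ induces isomorphisms $K^A(\P(Q)) \xrightarrow{\sim} K^A(\P(W) \smallsetminus \P(V))$ and $K^A_{i,top}(\P(Q)) \xrightarrow{\sim} K^A_{i,top}(\P(W) \smallsetminus \P(V))$. This reduces property $T_A$ for $\P(W) \smallsetminus \P(V)$ to property $T_A$ for $\P(Q)$.

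Finally, $\P(Q) \to X$ is an $A$-equivariant projective bundle, and the projective bundle formula in both algebraic and topological $K$-theory presents $K^A(\P(Q))$ (resp.\ $K^A_{top}(\P(Q))$) as a free module over $K^A(X)$ (resp.\ over $K^A_{top}(X)$) of rank $\rk(Q)$, with the same basis $1, [\cO(1)], \dots, [\cO(\rk(Q)-1)]$ on both sides; this is compatible with the comparison map and with restriction to any closed subgroup $A' \subset A$. Feeding property $T_A$ for $X$ through this formula gives property $T_A$ for $\P(Q)$, and hence for $\P(W) \smallsetminus \P(V)$. I expect that the only step requiring genuine verification is the identification of $\P(W) \smallsetminus \P(V)$ as the space of splittings (equivalently, as an affine bundle over $\P(Q)$); the reductions by homotopy invariance and the projective bundle formula are formal once this identification is in hand.
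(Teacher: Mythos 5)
Your argument is correct, and it takes a genuinely different route from the paper's. The paper works directly with the open-closed decomposition $\P(V) \hookrightarrow \P(W) \hookleftarrow \P(W) \smallsetminus \P(V)$: it uses the Koszul resolution of $\cO_{\P(V)}$ in $\P(W)$ together with the projective bundle basis to show that $i_*:K^A_0(\P(V)) \to K^A_0(\P(W))$ is injective (in both algebraic and topological equivariant K-theory), then feeds this into the localization long exact sequence and a comparison square to extract each clause of property $T_A$ for the open complement, ending with an explicit free $K^A_{0,top}(X)$-basis $[\cO_{\P(W)}(-k)]$, $k = \rk(V),\dots,\rk(W)-1$. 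You instead exhibit $\P(W) \smallsetminus \P(V)$ as an $A$-equivariant torsor under $\Hom(\cO_{\P(Q)}(-1), V|_{\P(Q)})$ over $\P(Q)$ with $Q = W/V$, and reduce the lemma, via equivariant homotopy invariance for affine bundles, to Corollary \ref{cor:TA4} for $\P(Q)\to X$. Your key identification is correct: the fibre over a line $\ell \subset Q_x$ is exactly the set of sections of $p^{-1}(\ell) \to \ell$, i.e.\ splittings of the pulled-back extension, and such a torsor is Zariski-locally trivial since $H^1$ of a coherent sheaf vanishes on affines. What each route buys: yours is shorter, more geometric, and transports the same free basis along $\pi^*$; the paper's is longer but relies only on the localization sequence and the projective bundle formula, so it does not need homotopy invariance for torsors under vector bundles (as opposed to vector bundles with a zero section, where $\pi^*$ is split). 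Both inputs are classical (Quillen, Thomason), but if you were to write this up you would want to cite affine-bundle homotopy invariance explicitly in both algebraic equivariant $G$-theory and equivariant topological K-theory, together with its compatibility with the comparison map $K^A \to K^A_{top}$ and with restriction to closed subgroups $A' \subset A$, since those compatibilities are the ones property $T_A$ actually tests.
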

\begin{Remark}
Our stratification of $Z(1^m,1^m)$ has pieces of the form $ \P(W) \smallsetminus \P(V)$ which is why we need this Lemma. Unfortunately this result does not appear in \cite{Na1} so we include a proof based on \cite{CG}.
\end{Remark}
\begin{proof}
Let us denote the inclusion $i: \P(V) \rightarrow \P(W)$ and the projections $\pi_1: \P(V) \rightarrow X$ and $\pi_2: \P(W) \rightarrow X$. First we show that the map $i_*: K_0^A(\P(V)) \rightarrow K_0^A(\P(W))$ is injective. By Theorem \ref{thm:TA} a general element $b \in K_0^A(\P(V))$ is of the form
$$b = \sum_{j=0}^{\rk(V)-1} [\pi_1^*(a_j)] \cdot [\cO_{\P(V)}(-j)]$$
for some $a_j \in K_0^A(X)$. Now suppose $i_*(b)=0$. Then by the projection formula together with the fact that $\pi_1^*(a_j) = i^* \pi_2^*(a_j)$ we get
$$\sum_{j=0}^{\rk(V)-1} [\pi_2^*(a_j)] \cdot [i_* \cO_{\P(V)}(-j)] = 0.$$
On the other hand, we have the Koszul resolution
\begin{equation}\label{eq:koszul}
\dots \rightarrow \cO_{\P(W)}(-2) \otimes \pi_2^* {\wedge^2 Q} \rightarrow \cO_{\P(W)}(-1) \otimes \pi_2^* Q \rightarrow \cO_{\P(W)} \rightarrow \cO_{\P(V)}
\end{equation}
where $Q = (W/V)^\vee$. Substituting this into the relation above we get
$$\sum_{j=0}^{\rk(V)-1} [\pi_2^*(a_j)] \cdot \left( \sum_{k=0}^{\rk(W)-\rk(V)} (-1)^k [\cO_{\P(W)}(-j-k)] \cdot [\pi_2^* {\wedge^k Q}] \right) = 0$$
or equivalently
$$\sum_{l=0}^{\rk(W)-1} [\cO_{\P(W)}(-l)] \cdot \left( \sum_{j+k=l} (-1)^k [\pi_2^*(a_j \otimes {\wedge^k Q})] \right) = 0.$$
Since $ [\cO_{\P(W)}(-l)] $ are linearly independent for $ l= 0, \dots, \rk(W) -1 $, we see that $ \sum_{j+k=l} (-1)^k [\pi_2^*(a_j \otimes {\wedge^k Q})] = 0 $ for all $ l $.  From this one can show inductively that $\pi_2^*(a_j)=0$ for all $j$ (starting with $j=0$). Hence $b=0$ which shows that $i_*$ is injective.

The same argument shows that $i_*$ is also injective on topological K-theory. Now, by Corollary \ref{cor:TA4} we know that $K_{1,top}^A(\P(V)) = K_{1,top}^A(\P(W)) = 0$. It follows from the long exact sequence
$$K_{1,top}^A(\P(W)) \rightarrow K_{1,top}^A(\P(W) \smallsetminus \P(V)) \rightarrow K_{0,top}^A(\P(V)) \xrightarrow{i_*} K_{0,top}^A(\P(W))$$
that $K_{1,top}^A(\P(W) \smallsetminus \P(V)) = 0$.

Next, consider the commutative diagram
\begin{equation*}
\xymatrix{
K_0^A(\P(V)) \ar[r]^{i_*} \ar[d] & K_0^A(\P(W)) \ar[r] \ar[d] & K_0^A(\P(W) \smallsetminus \P(V)) \ar[r] \ar[d] & 0 \\
K_{0,top}^A(\P(V)) \ar[r]^{i_*} & K_{0,top}^A(\P(W)) \ar[r] & K_{0,top}^A(\P(W) \smallsetminus \P(V)) \ar[r] & 0
}
\end{equation*}
The first two vertical maps are isomorphisms by Corollary \ref{cor:TA4}. It follows that the right most vertical map is also an isomorphism.

Finally, it remains to show that $K_{0,top}^A(\P(W) \smallsetminus \P(V))$ is a free $R_A$-module.  Consider the short exact sequence
$$0 \rightarrow K_{0,top}^A(\P(V)) \xrightarrow{i_*} K_{0,top}^A(\P(W)) \rightarrow K_{0,top}^A(\P(W) \smallsetminus \P(V)) \rightarrow 0.$$
The image of $i_*$ are elements of the form $b \cdot [i_* \cO_{\P(V)}]$ for $b \in K_{0,top}^A(\P(W))$. Using the Koszul resolution (\ref{eq:koszul}) for $\cO_{\P(V)}$ it is not hard to see that this means $K_{0,top}^A(\P(W))$ is generated, as a $K_{0,top}^A(X)$-module, by the image of $i_*$ and by $\cO_{\P(W)}(-k)$ for $k= \rk(V), \dots, \rk(W)-1$. Since $K_{0,top}^A(\P(V))$ and $K_{0,top}^A(\P(W))$ are free $K_{0,top}^A(X)$-modules of rank $\rk(V)$ and $\rk(W)$ respectively this means that $K_{0,top}^A(\P(W) \smallsetminus \P(V))$ is freely generated over $K_{0,top}^A(X)$ by $\cO_{\P(W)}(-k)$ for $k= \rk(V), \dots, \rk(W)-1$. This completes the proof.
\end{proof}

\begin{Theorem}\cite[Theorem 5.2.31]{CG}\label{thm:TA}
Consider an $A$-equivariant vector bundle $E \rightarrow X$ and let $\pi: \P(E) \rightarrow X$ be the associated vector bundle. Then for $j \ge 0$ the groups $K_j^A(\P(E))$ are freely generated over $K_j^A(X)$ by the classes $[\cO_{\P(E)}(-k)]$, $k=0,1, \dots, \rk(E)-1$.
\end{Theorem}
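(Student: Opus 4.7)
Let $r = \rk(E)$. The plan is to establish that the $K_0^A(X)$-linear map
$$\Phi_j : \bigoplus_{k=0}^{r-1} K_j^A(X) \to K_j^A(\P(E)), \quad (a_0, \dots, a_{r-1}) \mapsto \sum_{k=0}^{r-1} \pi^*(a_k) \cdot [\cO_{\P(E)}(-k)]$$
is an isomorphism, by treating injectivity and surjectivity separately.

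For injectivity, I would use the projection formula combined with the standard computation of the pushforwards of line bundles on a projective bundle: $\pi_*[\cO_{\P(E)}(l)]$ equals $[\Sym^l E^\vee]$ for $l \ge 0$, vanishes for $-r < l < 0$, and is computable through Serre duality for $l \le -r$ in terms of $\det E$ and symmetric powers. Starting from a relation $\sum_{k=0}^{r-1} \pi^*(a_k) \cdot [\cO_{\P(E)}(-k)] = 0$, I would tensor with $[\cO_{\P(E)}(l)]$ successively for $l = 0, 1, \dots, r-1$ and push forward via $\pi$. The projection formula then yields a triangular system of equations in the $a_k$ whose diagonal entries are units of $K_0^A(X)$, forcing each $a_k$ to vanish.

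For surjectivity, the key tool is a Koszul resolution of the diagonal $\Delta \subset \P(E) \times_X \P(E)$. Starting from the tautological quotient sequence
$$0 \to \cO_{\P(E)}(-1) \to \pi^* E \to Q \to 0$$
on $\P(E)$, the composition $p_2^* \cO_{\P(E)}(-1) \hookrightarrow p_2^* \pi^* E = p_1^* \pi^* E \twoheadrightarrow p_1^* Q$ produces a canonical section of $p_1^* Q \otimes p_2^* \cO_{\P(E)}(1)$ whose vanishing locus is precisely $\Delta$. The corresponding Koszul complex gives an $A$-equivariant locally free resolution of $\cO_\Delta$ whose terms are of the form $p_1^*(\Alt{k} Q) \otimes p_2^* \cO_{\P(E)}(-k)$. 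For any class $\sF \in K_j^A(\P(E))$, the identity $\sF = (p_1)_*(p_2^* \sF \otimes \cO_\Delta)$, expanded via this resolution and using the projection formula, expresses $\sF$ as $\sum_k \pi^*(b_k) \cdot [\cO_{\P(E)}(-k)]$ for suitable $b_k \in K_j^A(X)$; the determinantal relation $\Alt{r} Q = \det E \otimes \cO_{\P(E)}(1)$ then allows one to truncate the summation to $0 \le k \le r - 1$.

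The main obstacle, both conceptually and technically, is verifying that the Koszul resolution of the diagonal is genuinely $A$-equivariant (so that all short exact sequences, duality isomorphisms, and projection formulas work in the equivariant derived category) and that the argument extends from $K_0^A$ to all of higher K-theory $K_j^A$ with $j > 0$. The cleanest way around this is to lift the whole discussion to the derived category of $A$-equivariant perfect complexes and invoke Thomason's framework, whereby the pushforward along the smooth proper morphism $\pi$ and the multiplicative action of $K_0^A(\P(E))$ on $K_*^A(\P(E))$ automatically respect the higher K-groups, reducing the assertion for all $j$ to the $j=0$ case handled above.
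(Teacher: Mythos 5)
The paper does not prove this statement; it is quoted directly from Chriss--Ginzburg \cite[Theorem 5.2.31]{CG}, with the paper only adding the remark that $[\cO_{\P(E)}(-k)]$ may be used in place of $[\cO_{\P(E)}(k)]$. So there is no proof in the paper against which to compare; evaluating your proposal on its own terms, the overall architecture is the standard Beilinson-style argument and is sound. The injectivity step (tensor with $\cO_{\P(E)}(l)$ for $l=0,\dots,r-1$, push forward, read off a triangular system whose diagonal entries are $[\cO_X]$) is correct: the key inputs are $\pi_*[\cO_{\P(E)}]=[\cO_X]$ and $\pi_*[\cO_{\P(E)}(-k)]=0$ for $1\le k\le r-1$. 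The surjectivity step via the Koszul resolution of the diagonal $\Delta\subset\P(E)\times_X\P(E)$ is also the right idea.

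One slip in the surjectivity discussion: with the tautological sequence $0\to\cO_{\P(E)}(-1)\to\pi^*E\to Q\to 0$ and $\rk E = r$, the quotient $Q$ has rank $r-1$, so $\Alt{r}Q = 0$. The determinantal relation you state should read $\Alt{r-1}Q\cong\det E\otimes\cO_{\P(E)}(1)$, and in any case no ``truncation'' is required: the Koszul complex of the canonical regular section of $p_1^*Q\otimes p_2^*\cO_{\P(E)}(1)$ (a bundle of rank $r-1$) has terms $p_1^*\Alt{k}Q^\vee\otimes p_2^*\cO_{\P(E)}(-k)$ for $k=0,\dots,r-1$ only. Using $\sF = p_{2*}(p_1^*\sF\otimes\cO_\Delta)$ together with the projection formula and flat base change $p_{2*}p_1^* = \pi^*\pi_*$ gives directly $\sF = \sum_{k=0}^{r-1}\pi^*\bigl((-1)^k\pi_*(\sF\cdot\Alt{k}Q^\vee)\bigr)\cdot[\cO_{\P(E)}(-k)]$.

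You correctly flag that the real content lies in doing all of this $A$-equivariantly and for $j>0$. The route you indicate --- pass to $A$-equivariant perfect complexes and invoke Thomason's framework so that pushforward along $\pi$, projection formula, flat base change, and the Koszul filtration of $\cO_\Delta$ all lift to the level of K-theory spectra --- is the right one; it would strengthen the write-up to say explicitly that the Koszul filtration of $\cO_\Delta$ induces a filtration of the identity endofunctor of the derived category of $\P(E)$ with graded pieces $\pi^*\pi_*((-)\otimes\Alt{k}Q^\vee)\otimes\cO_{\P(E)}(-k)$, which is what gives the spectrum-level splitting via additivity.
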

\begin{Remark} The actual statement in \cite{CG} uses $[\cO_{\P(E)}(k)]$ but one could equally well use $[\cO_{\P(E)}(-k)]$.
\end{Remark}

The argument in \cite{CG} can also be used to prove the same result for topological K-theory. The following is then immediate.

\begin{Corollary}\label{cor:TA4}
Suppose $E \rightarrow X$ is an $A$-equivariant vector bundle on $X$. If $X$ satisfies property $T_A$ then $\P(E)$ satisfies property $T_A$.
\end{Corollary}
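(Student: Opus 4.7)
The plan is to verify each of the three conditions in the definition of property $T_A$ for $\P(E)$, using Theorem \ref{thm:TA} (and its topological analog mentioned in the remark) as the main input. The strategy is that Theorem \ref{thm:TA} tells us $K_j^A(\P(E))$ is free of rank $\rk(E)$ over $K_j^A(X)$ on the tautological basis $\{[\cO_{\P(E)}(-k)]\}_{k=0}^{\rk(E)-1}$, and the same holds for topological K-theory; so all the desired properties for $\P(E)$ reduce to the corresponding statements for $X$.

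First I would check condition (1). Applying the topological version of Theorem \ref{thm:TA} with $j=1$, we get $K_{1,top}^A(\P(E)) = K_{1,top}^A(X) \otimes_{R_A} \bigl(\oplus_{k} R_A\bigr) = 0$, since $X$ satisfies property $T_A$. For $j=0$, freeness of $K_{0,top}^A(\P(E))$ over $R_A$ follows because it is free over $K_{0,top}^A(X)$ (by Theorem \ref{thm:TA}) and the latter is free over $R_A$ by hypothesis.

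Next, for condition (2), consider the commutative diagram
\begin{equation*}
\xymatrix{
\bigoplus_{k=0}^{\rk(E)-1} K^A(X) \cdot [\cO_{\P(E)}(-k)] \ar[r]^>>>>>{\sim} \ar[d] & K^A(\P(E)) \ar[d] \\
\bigoplus_{k=0}^{\rk(E)-1} K^A_{top}(X) \cdot [\cO_{\P(E)}(-k)] \ar[r]^>>>>>{\sim} & K^A_{top}(\P(E))
}
\end{equation*}
where the horizontal maps are isomorphisms by Theorem \ref{thm:TA} (algebraic and topological versions). The left vertical map is an isomorphism since $K^A(X) \to K^A_{top}(X)$ is, so the right vertical map is too.

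Finally, condition (3) is handled similarly: for any closed subgroup $A' \subset A$, applying Theorem \ref{thm:TA} with $A$ replaced by $A'$ shows that $\P(E)$ satisfies (1) and (2) equivariantly for $A'$ (using that $X$ does). The compatibility with change of group follows from the diagram
\begin{equation*}
\xymatrix{
K^A(\P(E)) \otimes_{R_A} R_{A'} \ar[r] \ar@{=}[d] & K^{A'}(\P(E)) \ar@{=}[d] \\
\bigoplus_k (K^A(X) \otimes_{R_A} R_{A'}) \cdot [\cO_{\P(E)}(-k)] \ar[r]^>>>>{\sim} & \bigoplus_k K^{A'}(X) \cdot [\cO_{\P(E)}(-k)]
}
\end{equation*}
where the lower isomorphism is the base-change property for $X$. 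There is no real obstacle here; the only subtle point is that the tautological generators $[\cO_{\P(E)}(-k)]$ are defined independently of the group acting, so they serve simultaneously as an $R_A$-basis of $K^A(\P(E))$ over $K^A(X)$ and an $R_{A'}$-basis of $K^{A'}(\P(E))$ over $K^{A'}(X)$, which makes the base-change isomorphism automatic.
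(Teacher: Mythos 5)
Your proof is correct and follows exactly the approach the paper has in mind — the paper states the corollary is "immediate" from Theorem \ref{thm:TA} and its topological analog, and your argument simply spells out that verification of conditions (1)–(3) of property $T_A$ for $\P(E)$.
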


\subsection{Property $T_A$ for $Z(1^m,1^m)$}\label{sec:TA}

Our current goal is to establish the following result.

\begin{Theorem}\label{prop:TA}
The varieties $Y(1^m)$ and $Z(1^m,1^m)$ satisfy property $T_\A$.
\end{Theorem}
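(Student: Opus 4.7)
For $Y(1^m)$, I would use induction on $m$. The forgetful map $Y(1^m) \to Y(1^{m-1})$ dropping $L_m$ is a Zariski-locally trivial $\P^{n-1}$-bundle: globally $Y(1^m) = \P(E)$, where $E$ is the $\A$-equivariant rank-$n$ vector bundle on $Y(1^{m-1})$ whose fibre over $(L_0, \dots, L_{m-1})$ is $z^{-1}L_{m-1}/L_{m-1}$. Starting from $Y(\emptyset) = \pt$ and iteratively applying Corollary \ref{cor:TA4}, we obtain property $T_\A$ for $Y(1^m)$.

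For $Z(1^m, 1^m) = Y(1^m) \times_{Gr} Y(1^m)$, the plan is to produce an $\A$-equivariant $\alpha$-partition whose strata are bundles of the form $\P(W) \smallsetminus \P(V)$ (or iterated such) over bases already known to satisfy $T_\A$, and then to invoke Lemmas \ref{lem:TA1} and \ref{lem:TA3}. Concretely, I would stratify using the relative positions of the two flags $L_\bullet$ and $L'_\bullet$ inside their common top $L_m = L'_m$. One efficient realization is to use the first projection $\pi_1 : Z(1^m, 1^m) \to Y(1^m)$ and appeal to Spaltenstein's theorem, which paves the fibres by affine cells indexed by standard Young tableaux of shape equal to the Jordan type of $z$ acting on $L_m/L_0$. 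Combined with a compatible stratification of $Y(1^m)$ pulled back from the $Gr^\lambda$-stratification of $Gr$, this yields an $\A$-equivariant $\alpha$-partition with affine-bundle strata over pieces of $Y(1^m)$.

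The main technical difficulty will be verifying that the strata of $Y(1^m)$ lying over each $Gr^\lambda$ (which are not themselves iterated projective bundles) satisfy property $T_\A$. This can be handled by further refining via the semi-infinite (Mirkovi\'c--Vilonen) cell decomposition of each $Gr^\lambda$, ultimately expressing every piece as an iterated affine bundle over a point so that Lemma \ref{lem:TA3} applies. Alternatively, as noted in the introduction, in type $A$ the varieties $Z(\ulam, \umu)$ are closely related to Steinberg varieties of Nakajima quiver varieties, where property $T_\A$ has been established in Section 7 of \cite{Na1}; this gives a slicker route that avoids the explicit combinatorial stratification.
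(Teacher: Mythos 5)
Your argument for $Y(1^m)$ is correct and matches the paper's: it is an iterated $\P^{n-1}$-bundle (with $\P(z^{-1}L_{m-1}/L_{m-1}) = Y(1^m) \to Y(1^{m-1})$), and Corollary~\ref{cor:TA4} applies inductively.

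For $Z(1^m,1^m)$ your general strategy — stratify and build each stratum up as an iterated bundle over a point — is the right one, and your observation that the Spaltenstein-type fibers of $\pi_1$ are governed by standard Young tableaux of the Jordan shape of $z|_{L_m/L_0}$ is exactly what the paper does. But there is a real gap in your proposed fix for the hard step, the pieces of $Y(1^m)$ lying over a fixed $Gr^\lambda$: the Mirkovi\'c--Vilonen cells are orbits of $N^\vee(\K)$, which is not normalized by $\Gv$ or even $\Gc = SL_n$, so the semi-infinite stratification of $Gr^\lambda$ is \emph{not} $\A$-invariant. Lemma~\ref{lem:TA1} requires the $\alpha$-partition to consist of $A$-invariant locally closed subvarieties, so the MV refinement cannot be invoked to prove property $T_\A$. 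The paper avoids this by stratifying $Z(1^m,1^m)$ directly by pairs $(T,T')$ of standard Young tableaux of the same shape: the stratum $Z(T,T')$ records the full sequence of Jordan types $\lambda^{(i)}(T)$ of $z|_{L_i/L_0}$ and $\lambda^{(i)}(T')$ of $z|_{L'_i/L_0}$, which is an $\A$-invariant condition. Forgetting one lattice at a time (first $L_m, \dots, L_1$ and then $L'_{m-1}, \dots, L'_1$), each step is a fiber bundle with fibre $\P(W) \smallsetminus \P(V)$ — not an affine bundle in general, which is why Lemma~\ref{lem:TA3} (rather than a vector/affine bundle lemma) is needed — and property $T_\A$ follows inductively. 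Your alternative appeal to Nakajima's quiver Steinberg varieties would also require substantiation: $Z(\ulam,\umu)$ is a fibre product over $Gr$, not over an affinization, and identifying it with a quiver Steinberg variety (rather than merely a transverse slice thereof) is not immediate.
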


For $Y(1^m)$ this is immediate from Lemma \ref{lem:TA3}, since it is an iterated bundle of projective spaces. For $Z(1^m, 1^m)$ the proof is more involved and will occupy the rest of this section.

The main idea is to partition the variety $ Z = Z(1^m, 1^m)$ into pieces and then apply Lemma \ref{lem:TA3}. This partition is a special case of the one considered in section 4.1 of \cite{FKK} which in turn was inspired by the proof of Theorem 3.1 from \cite{H}.

Let $ T $ be a standard Young tableau with $ m $ boxes and at most $ n $ rows.  Then we write $ \lambda^{(i)}(T) = T|_{1,\dots, i} $ for the shape made by using only the boxes filled with $ 1, \dots, i $.  We regard $ \lambda^{(i)}(T) $ as an element of $ \Lambda_+ $.

Let $ \sP $ be the set of pairs $ (T,T') $ of standard Young tableaux (SYT) of the same shape, each with $m $ boxes and at most $ n $ rows.  We define a partial order on $ \sP $ by
$$ (U, U') \le (T, T') \text{ if, for all $ i$, } \lambda^{(i)}(U) \le \lambda^{(i)}(T) \text{ and } \lambda^{(i)}(U') \le \lambda^{(i)}(T')
$$
With this partial order, the pair $ (T_0, T_0) $ where $ T_0 $ is the unique SYT with one row and $ m $ boxes, is the maximal element. We define
$$ Z(T, T') = \{ (L_\bullet, L'_\bullet) \in Z : L_i \in Gr^{\lambda^{(i)}(T)}, L'_i \in Gr^{\lambda^{(i)}(T')}, \text{ for } i = 1, \dots, m \} $$
where we recall (from section \ref{sec:varieties}) that for $ \lambda = (\lambda_1, \dots, \lambda_n) \in \Lambda_+$,  $ Gr^\lambda $ consists of those lattices such that $ z|_{L/L_0} $ has Jordan type $ \lambda $.

\begin{Proposition}
The closures $\{ \overline{Z(T,T')} \}_{(T,T') \in \sP} $ are the irreducible components of $ Z $. Moreover, the collection $\{Z(T,T') \}_{(T,T') \in \sP} $ forms an $\alpha$-partition of $ Z $.
\end{Proposition}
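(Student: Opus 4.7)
The plan is to extract, from each point of $Z$, a pair of standard Young tableaux recording the Jordan type of the intermediate lattices, and then verify each required property of the resulting stratification in turn. First I would check the set-theoretic partition: given $(L_\bullet, L'_\bullet) \in Z$, the operator $z$ acts nilpotently on $L_i/L_0$ with Jordan type a partition $\lambda^{(i)}$ of $i$, and the constraints $L_{i-1} \subset L_i$ with $\dim(L_i/L_{i-1}) = 1$ and $zL_i \subset L_{i-1}$ force $\lambda^{(i-1)} \subset \lambda^{(i)}$ to differ by a single box. Hence $\lambda^{(1)} \subset \cdots \subset \lambda^{(m)}$ is a standard Young tableau $T$; similarly $L'_\bullet$ produces $T'$, and $L_m = L'_m$ forces $\lambda^{(m)}(T) = \lambda^{(m)}(T')$, so $(T,T') \in \sP$. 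The strata $Z(T,T')$ are $\A$-invariant and locally closed in $Z$, being intersections of preimages of the locally closed strata $Gr^{\lambda^{(i)}(T)}$ and $Gr^{\lambda^{(i)}(T')}$ under the projections $(L_\bullet,L'_\bullet) \mapsto L_i$ and $\mapsto L'_i$.

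The $\alpha$-partition property then follows from the standard closure relation $\overline{Gr^\lambda} = \bigsqcup_{\mu \le \lambda} Gr^\mu$ in the affine Grassmannian. Applied to all $2m$ projections it gives
\[
\bigcup_{(U,U') \le (T,T')} Z(U,U') = \bigl\{(L_\bullet,L'_\bullet) \in Z : L_i \in \overline{Gr^{\lambda^{(i)}(T)}},\ L'_i \in \overline{Gr^{\lambda^{(i)}(T')}} \text{ for all } i \bigr\},
\]
which is manifestly closed in $Z$.

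For the irreducible-components statement, I would introduce the auxiliary variety $Y_T := \{L_\bullet \in Y(1^m) : L_i \in Gr^{\lambda^{(i)}(T)} \text{ for all } i\}$, so that $Z(T,T') = Y_T \times_{Gr^{\lambda^{(m)}(T)}} Y_{T'}$ via the common endpoint $L_m = L'_m$. I would then prove $Y_T$ is irreducible by induction on $m$: the forgetful map $Y_T \to Y_{T|_{1,\ldots,m-1}}$ has fibre over a fixed tail an irreducible locally closed subvariety of $\mathbb{P}(z^{-1}(L_{m-1})/L_{m-1})$ cut out by the Jordan-type condition on $L_m/L_0$. Projecting $Y_T$ instead to $Gr^{\lambda^{(m)}(T)}$ exhibits it as a fibration whose fibre above $L_m$ is the component, indexed by $T$, of the Spaltenstein variety of $z$-compatible complete flags in $L_m/L_0$; these components are irreducible of dimension $d_T$ with $2 d_T + \dim Gr^{\lambda^{(m)}(T)} = m(n-1) = \dim Y(1^m)$, which is the content of semismallness of $p_\ulam$ for $\ulam = 1^m$. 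Consequently $Z(T,T')$ is irreducible of dimension $2d_T + \dim Gr^{\lambda^{(m)}(T)} = m(n-1) = \dim Z$, so each $\overline{Z(T,T')}$ is an irreducible component of top dimension; distinctness and exhaustiveness are immediate from the set-theoretic partition of the first paragraph.

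The main obstacle is the irreducibility and precise dimension count for $Y_T$, which rests on the classical identification of components of Spaltenstein fibres with standard Young tableaux together with the semismallness of $p_\ulam$. One could alternatively invoke section 4.1 of \cite{FKK}, where the analogous partition is treated in greater generality, or proceed inductively via the Schubert-stratum description above; in the latter approach the delicate point is to check at each step that the Jordan-type condition really defines an \emph{irreducible} (not merely locally closed) stratum of the ambient projective bundle.
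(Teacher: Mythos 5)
Your argument is correct, and your treatment of the $\alpha$-partition claim coincides with the paper's: both reduce to the closure relations $\overline{Gr^\lambda} = \bigsqcup_{\mu \le \lambda} Gr^\mu$ applied to each of the $2m$ projections to $Gr$. Where you diverge is the irreducible-components claim: the paper simply cites Theorem 4.1 of the Fontaine--Kamnitzer--Kuperberg paper, while you unpack the geometric content underneath that citation, writing $Z(T,T') = Y_T \times_{Gr^{\lambda^{(m)}(T)}} Y_{T'}$ and identifying $Y_T$ as a fibration over $Gr^{\lambda^{(m)}(T)}$ whose fibres are the Spaltenstein components indexed by $T$, then using a dimension count to conclude that every $\overline{Z(T,T')}$ has the full dimension $m(n-1)=\dim Z$. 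This buys transparency but quietly invokes several classical facts that deserve to be named: (i) if $W\subset V$ is a codimension-one subspace with $zV\subset W$, the Jordan types of $z|_V$ and $z|_W$ differ by exactly one box (this is what makes $\lambda^{(1)}(T)\subset\cdots\subset\lambda^{(m)}(T)$ a standard Young tableau); (ii) Spaltenstein's theorem that the irreducible components of the Springer fibre of a type-$A$ nilpotent are equidimensional and indexed by standard Young tableaux of the Jordan type; and (iii) the equality $2d_T + \dim Gr^{\lambda^{(m)}(T)} = m(n-1)$. On the last point you should be careful: semismallness of $p_\ulam$ gives only the inequality $\le$, whereas you need equality, i.e.\ that every stratum is relevant. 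That does hold for minuscule convolution morphisms in type $A$, but its justification is the explicit Springer-fibre dimension formula $d_\mu = \sum_j \binom{\mu^T_j}{2}$ matched against $\dim Gr^\mu$, not semismallness itself. None of this is a gap --- these are standard facts --- but a reader would want them flagged, which is presumably why the paper defers to \cite{FKK} rather than reproving them.
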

\begin{proof}
The first claim is a special case of Theorem 4.1 of \cite{FKK}. To see the second claim it is clear that each $ Z(T,T') $ is $\A$-invariant and that they give a partition of $ Z $. From the closure relation among the strata of the affine Grassmannian, we see that, for each $ (T,T') $, we have
$$\overline{Z(T,T')} \subset \bigcup_{(U,U') \le (T,T')} Z(U,U').$$
This implies the necessary closedness property for having an $\alpha$-partition.
\end{proof}

Theorem \ref{prop:TA} now follows from the Proposition above and the following result.

\begin{Lemma}\label{lem:Z(T)}
For each $ (T,T') \in \sP$, $ Z(T, T') $ satisfies property $T_\A $.
\end{Lemma}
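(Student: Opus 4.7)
My plan is to exhibit $Z(T,T')$ as an iterated bundle whose successive fibres are of the form $\P(W) \smallsetminus \P(V)$ for $\A$-equivariant vector subbundles $V \subset W$. Since a point trivially has property $T_\A$, Lemma \ref{lem:TA3} applied iteratively then yields property $T_\A$ for $Z(T,T')$.

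As a warm-up for the structure I would first verify this for the ``half'' variety $Y(T) = \{L_\bullet : L_i \in Gr^{\lambda^{(i)}(T)}\}$. Building up $Y(T)$ one lattice at a time, the choice of $L_i$ given $L_{<i}$ amounts to selecting a line $L_i/L_{i-1}$ inside $z^{-1}L_{i-1}/L_{i-1} \cong \C^n$ such that $z$ acting on $L_i/L_0$ has Jordan type $\lambda^{(i)}(T)$. Because $z|_{L_{i-1}/L_0}$ already has Jordan type $\lambda^{(i-1)}(T)$, this Jordan-type condition cuts out precisely a $\P(W_i) \smallsetminus \P(V_i)$-piece of $\P(z^{-1}L_{i-1}/L_{i-1})$, where $V_i \subset W_i$ are equivariant subbundles encoded by the row of $T$ in which box $i$ is placed (essentially, $W_i$ is the kernel of $z$ on the appropriate quotient and $V_i$ is the locus forbidden by the tableau data). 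Thus $Y(T)$ is built from a point by iterating Lemma \ref{lem:TA3}, and in particular satisfies $T_\A$.

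For $Z(T,T')$ the difficulty is that the final identification $L_m = L'_m$ couples the two chains at the last step, while $L_{m-1}$ and $L'_{m-1}$ are in general distinct. The naive forgetful map $Z(T,T') \to Y(T^-) \times Y(T'^-)$ (deleting the last box of each tableau) has fibres whose dimension jumps: when $L_{m-1} = L'_{m-1}$ the fibre is a $\P(W) \smallsetminus \P(V)$-piece of $\P(z^{-1}L_{m-1}/L_{m-1})$, while when $L_{m-1} \ne L'_{m-1}$ it is either empty or a single point (since then $L_m = L_{m-1} + L'_{m-1}$ is forced by codimension considerations). To correct this I would further stratify $Z(T,T')$ by the ``coincidence pattern'' $J = \{i : L_i = L'_i\} \subset \{1, \dots, m\}$, which necessarily contains $m$. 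On each such stratum the variety admits an iterated $\P(W) \smallsetminus \P(V)$-bundle structure over a smaller variety of the same type $Z(U,U')$ (or $Y(U)$), to which an inductive hypothesis on $m$ applies, and the strata fit together into an $\alpha$-partition of $Z(T,T')$. Lemma \ref{lem:TA3} on each bundle step and Lemma \ref{lem:TA1} to reassemble the strata then complete the argument.

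The main obstacle I anticipate is the combinatorial bookkeeping in this stratification: given $(T,T')$, one must identify exactly which coincidence patterns $J$ can occur, what the Jordan-type conditions on both $(L_\bullet)$ and $(L'_\bullet)$ become after the forced identifications, and verify that on each resulting stratum these conditions really do combine into a clean iterated $\P(W) \smallsetminus \P(V)$-bundle description in a $\A$-equivariant way. Once this combinatorial analysis is carried out, the property $T_\A$ conclusion follows mechanically from the two cited lemmas together with the inductive hypothesis on $m$.
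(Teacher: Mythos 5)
Your warm-up analysis for $Y(T)$ matches the paper exactly: adding lattices $L_1, L_2, \dots$ one at a time, with the Jordan-type condition translating into membership in $\P(W_i)\smallsetminus\P(V_i)$ for explicit equivariant subbundles, so that Lemma \ref{lem:TA3} applies at each step. Where your proposal departs — and where it has a genuine gap — is in handling $Z(T,T')$. You correctly observe that forgetting both final lattices at once produces fibres of jumping dimension, but the remedy you sketch (stratify by the coincidence pattern $J=\{i : L_i=L'_i\}$, induct on $m$, and reassemble with Lemma \ref{lem:TA1}) is left entirely at the level of a plan: you would still need to show these coincidence strata are locally closed and form an $\alpha$-partition, that each stratum really is an iterated $\P(W)\smallsetminus\P(V)$ bundle over something of the same shape, and that the combinatorics closes up under the induction. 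You flag this yourself as the ``main obstacle,'' and it is not resolved; as written, the argument does not yet establish the lemma.

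The paper sidesteps the coincidence problem entirely with a simpler ordering trick that you did not consider: it builds the data of $Z(T,T')$ in $2m-1$ steps, first adding $L_1,\dots,L_m$ one at a time (exactly as in your $Y(T)$ warm-up), and then, since $L'_m = L_m$ is now fixed, adding $L'_{m-1}, L'_{m-2}, \dots, L'_1$ going \emph{downward}. At each of the $2m-1$ steps there is only one new lattice to choose, and the constraints on it (containment, $z$-stability, Jordan type) cut out a $\P(W)\smallsetminus\P(V)$ fibre over the previous stage, so Lemma \ref{lem:TA3} applies directly at every step with no stratification by coincidence patterns, no partial order beyond the linear one $1<\dots<2m-1$, and no separate induction on $m$. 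The forward-then-backward traversal ensures the only identification $L_m=L'_m$ is the pivot between the two halves, rather than a condition that couples free choices. If you want to repair your proposal along your own lines you would have to carry out the coincidence-pattern bookkeeping in full, but it is substantially cleaner to adopt the paper's ordering.
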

\begin{proof}
For each $k = 1, \dots, 2m-1$, we define varieties $Z(T,T')_k$ which contain only part of the data of $Z(T,T')$. Increasing $k$ remembers more data about the flags $(L_\bullet,L_\bullet')$. Roughly speaking, we remember the lattices $L_i$ one by one starting with $L_1$ and working up until $L_m$ ($k=m$). We then remember the $L_i'$ starting with $L'_{m-1}$ and working back down to $L'_1$.

More precisely, for $k = 1, \dots, m$ we define
$$ Z(T, T')_k = \{ (L_1, \dots, L_k) : L_0 \subset L_1 \subset \cdots \subset L_k, zL_i \subset L_{i-1}, L_i \in Gr^{\lambda^{(i)}(T)} \text{ for }  i \}.$$
On the other hand, for $k = m, \dots, 2m-1$, we define $Z(T,T')_k$ as
\begin{align*}
\{ &(L_1, \dots, L_m), (L'_{2m-k}, \dots, L'_m): L_m = L_m' \text{ and } \\
& L_{i-1} \subset L_i, \dim L_i/L_{i-1} = 1, zL_i \subset L_{i-1}, L_i \in Gr^{\lambda^{(i)}(T)} \text{ for } i = 1, \dots ,m \\
& L_{i-1}' \subset L'_i, \dim L'_i/L'_{i-1}=1, zL'_i \subset L'_{i-1}, L'_i \in Gr^{\lambda^{(i)}(T')} \text{ for } i = 2m-k+1, \dots, m \}
\end{align*}
Notice that $Z(T,T')=Z(T,T')_{2m-1}$.

We will now prove that the Lemma by induction on $k$. Let us suppose the conclusions of the Lemma are true for $ Z(T,T')_{k-1}$. We have a map $\pi_k: Z(T,T')_k \rightarrow Z(T,T')_{k-1} $ given by forgetting the last piece of data. We claim that this map satisfies the hypotheses of Lemma \ref{lem:TA3}.

To see this, we examine the fibres of $\pi_k$. If $ k \le m $ the the fibre over of point $ L_\bullet \in Z(T, T')_{k-1} $ is
$$\pi_k^{-1}(L_\bullet) = \{ L_{k-1} \subset L_k \subset z^{-1} L_{k-1}, z|_{L_i/L_0} \text{ has Jordan type } \lambda^{(k)}(T) \}.$$
Now the condition on the Jordan type of $ z|_{L_i/L_0} $ is equivalent to a condition on the intersections of $ L_k $ with the subspaces $ z^{-j}L_0 $ for various $ j$.  More precisely, if the box labelled $k$ is on the $ j$th column of $ T $, then the condition on the Jordan type is equivalent to the condition that
$$ \dim (L_k \cap z^{-j}L_0) = \dim (L_{k-1} \cap z^{-j} L_0) + 1, \text{ and } L_k \cap z^{-(j-1)}L_0 = L_{k-1} \cap z^{-(j-1)} L_0. $$
Thus we can identify the fibre with the space
$$ \P( (z^{-1} L_{k-1} \cap z^{-j}L_0) / (L_{k-1} \cap z^{-j} L_0) ) \smallsetminus \P( (z^{-1} L_{k-1} \cap z^{-(j-1)}L_0) / (L_{k-1} \cap z^{-(j-1)} L_0))
$$
and thus the hypotheses of Lemma \ref{lem:TA3} are satisfied.  The case $ k > m $ is similar. This proves that $ Z(T,T')_k $ satisfies property $ T_\A $.
\end{proof}

\begin{Corollary}\label{cor:free}
$K^\A(Z(1^m,1^m))$ is a free $E[q^\pm]$-module.
\end{Corollary}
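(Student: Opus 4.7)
The plan is very short: the corollary is essentially an immediate consequence of the theorem that was just proved.

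By Theorem \ref{prop:TA}, the variety $Z(1^m, 1^m)$ satisfies property $T_\A$. The first two clauses in the definition of property $T_A$ (applied with $A = \A$) give that the natural map
\[
K^\A(Z(1^m,1^m)) \longrightarrow K^\A_{top}(Z(1^m,1^m))
\]
is an isomorphism, and that the target is a free $R_\A$-module, where $R_\A = K^\A(\mathrm{pt})$.

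It therefore remains only to identify the coefficient ring. Since $\A = SL_n \times \Cx$, we have
\[
R_\A = K^{SL_n}(\mathrm{pt}) \otimes_\C K^{\Cx}(\mathrm{pt}) = R(SL_n) \otimes_\C \C[q^{\pm}] = E[q^{\pm}],
\]
using the identification $R(SL_n) \cong E$ recalled at the beginning of Section 4. Combining the two isomorphisms displays $K^\A(Z(1^m, 1^m))$ as a free $E[q^{\pm}]$-module, which is the desired statement.

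There is no real obstacle here: all of the substantive work — constructing the $\alpha$-partition of $Z(1^m,1^m)$ into strata of the form $\mathbb{P}(W) \smallsetminus \mathbb{P}(V)$, and propagating property $T_\A$ up the stratification via Lemmas \ref{lem:TA1} and \ref{lem:TA3} — has already been carried out in the proofs of Lemma \ref{lem:Z(T)} and Theorem \ref{prop:TA}. The corollary simply records the consequence of these results that will be used in the sequel.
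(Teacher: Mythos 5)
Your proof is correct and is precisely the intended (immediate) deduction from Theorem \ref{prop:TA}: property $T_\A$ gives $K^\A(Z(1^m,1^m)) \cong K^\A_{top}(Z(1^m,1^m))$ free over $R_\A$, and $R_\A = R(SL_n)\otimes_\C\C[q^\pm] = E[q^\pm]$. The paper offers no separate argument here, so your write-out simply makes explicit what is implicit.
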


\subsection{Localization}

Suppose $T$ is an abelian reductive group and $Y$ an algebraic variety equipped with an action of $T$. As before, we denote $R_T = K^T(\pt)$ and by $F_T$ the fraction field of $R_T$. For $t \in T$ we denote by $Y^t$ the fixed point locus of $t$.

It was proved by Thomason \cite{Th1,Th2} that the inclusion $i^t: Y^t \rightarrow Y$ induces an isomorphism
$$i^t_*: K^T(Y^t) \otimes_{R_T} F_T \xrightarrow{\sim} K^T(Y) \otimes_{R_T} F_T.$$

\begin{Lemma}\label{lem:ZYY}
If $i: Z(1^m,1^m) \rightarrow Y(1^m) \times Y(1^m)$ is the natural inclusion then $i_*: K^\A(Z(1^m,1^m)) \rightarrow K^\A(Y(1^m) \times Y(1^m))$ is injective.
\end{Lemma}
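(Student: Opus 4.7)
The plan is to reduce injectivity of $i_*$ to a manifestly injective map on torus-fixed loci via Thomason localization, using the freeness result of Corollary \ref{cor:free} to descend from the localized setting back to $\A$-equivariant K-theory. Let $T \subset SL_n$ denote a maximal torus, set $T_0 = T \times \Cx \subset \A$, and write $F_{T_0}$ for the fraction field of $R_{T_0} = K^{T_0}(\pt)$.

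First I would verify that the natural map
$$ K^\A(Z(1^m,1^m)) \to K^{T_0}(Z(1^m,1^m)) \otimes_{R_{T_0}} F_{T_0} $$
is injective. By property $T_\A$ for $Z(1^m,1^m)$ (Theorem \ref{prop:TA}, part (3)), the arrow $K^\A(Z(1^m,1^m)) \to K^{T_0}(Z(1^m,1^m))$ is obtained by base change along $R_\A \hookrightarrow R_{T_0}$; since $K^\A(Z(1^m,1^m))$ is free over $R_\A = E[q^\pm]$ by Corollary \ref{cor:free}, base change along the injective map $R_\A = R_{T_0}^W \hookrightarrow R_{T_0}$ stays injective, and further tensoring along $R_{T_0} \hookrightarrow F_{T_0}$ preserves injectivity for the same reason. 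By the obvious commutative square relating $i_*$ at the $\A$-equivariant and localized $T_0$-equivariant levels, it then suffices to prove injectivity of the bottom arrow
$$ i_* \colon K^{T_0}(Z(1^m,1^m)) \otimes_{R_{T_0}} F_{T_0} \to K^{T_0}(Y(1^m) \times Y(1^m)) \otimes_{R_{T_0}} F_{T_0}. $$

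At this point I would choose a generic element $t \in T_0$ so that each of the fixed loci $Y(1^m)^t$, $Z(1^m,1^m)^t$ and $(Y(1^m) \times Y(1^m))^t$ coincides with the corresponding $T_0$-fixed locus. Thomason's localization theorem identifies each side above with $F_{T_0}$-extended equivariant K-theory of the fixed locus via push-forward, so the map $i_*$ is identified with push-forward along the inclusion $Z(1^m,1^m)^{T_0} \hookrightarrow (Y(1^m) \times Y(1^m))^{T_0}$. In the lattice model of section \ref{sec:varieties}, a $T_0$-fixed lattice is an $\cO$-lattice spanned by vectors of the form $z^k \otimes v_i$ with $v_i$ a standard basis vector of $\C^n$; the chain conditions in $Y(1^m)$ cut out only finitely many such fixed points, and similarly for the ambient and fibre-product varieties. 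For finite sets of $T_0$-fixed points, equivariant K-theory is canonically a direct sum $\bigoplus_p R_{T_0}$ indexed by the points, and push-forward along an inclusion of fixed sets is simply inclusion of the corresponding direct summands, hence obviously injective. The only substantive step is the first descent paragraph, for which everything hinges on the freeness provided by Corollary \ref{cor:free} together with property $T_\A$.
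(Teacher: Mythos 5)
Your proof is correct and follows essentially the same strategy as the paper's: use Thomason localization to reduce to a generic (hence finite, isolated) fixed-point locus where the push-forward is visibly injective, and use freeness (property $T_\A$ together with Corollary \ref{cor:free}) to descend from the localized torus-equivariant level back to the $\A$-equivariant level. The only difference is presentational — you work downward from $K^\A$ to the localization while the paper works upward through two commutative squares — but the ingredients and the logic are identical.
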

\begin{proof}
We write $Y \times Y$ for $Y(1^m) \times Y(1^m)$ and $Z$ for $Z(1^m,1^m)$. Choose $T \subset A := \A$ to be the maximal torus in $SL_n$ cross $\C^\times$ and $t$ a generic element in $T$. This means that $(Y \times Y)^t$ consists of a finite number of isolated points.

Next, consider the commutative diagram
$$\xymatrix{
K^T(Z^t) \otimes_{R_T} F_T \ar[d] \ar[rr]^{i^t_*} & & K^T((Y \times Y)^t) \otimes_{R_T} F_T \ar[d] \\
K^T(Z) \otimes_{R_T} F_T \ar[rr]^{i_*} & & K^T(Y \times Y) \otimes_{R_T} F_T }$$
where the vertical maps are the natural inclusions. The vertical maps are isomorphisms by Thomason's result while $i^t_*$ is induced by an inclusion of isolated fixed points and thus injective. This means that the map
\begin{equation}\label{eq:5}
i_*: K^T(Z) \otimes_{R_T} F_T \rightarrow K^T(Y \times Y) \otimes_{R_T} F_T
\end{equation}
is injective.

On the other hand, since $Z$ and $Y \times Y$ have property $T_A$ both $K^T(Z)$ and $K^T(Y \times Y)$ are free $R_T$-modules. Thus we have the commutative diagram
$$\xymatrix{
K^T(Z) \ar[r] \ar[d] & K^T(Y \times Y) \ar[d] \\
K^T(Z) \otimes_{R_T} F_T \ar[r] & K^T(Y \times Y) \otimes_{R_T} F_T }$$
where the vertical maps are injective since $R_T$ embeds into $F_T$ and the bottom horizontal map is injective by (\ref{eq:5}). This means that the map
\begin{equation}\label{eq:7}
i_*: K^T(Z) \rightarrow  K^T(Y \times Y)
\end{equation}
is injective. But $K^T(Z) \cong K^A(Z) \otimes_{R_A} R_T$ and likewise for $K^T(Y \times Y)$. Since $R_T$ is a free $R_A$ module this implies that $i_*: K^A(Z) \rightarrow K^A(Y \times Y)$ is injective as a consequence of the injectivity of (\ref{eq:7}).
\end{proof}

\subsection{K-theory of $Y(1^m)$}
Recall that $K^\A(\pt) \cong E[q^\pm]$ where $e_j \in E$ denotes $\wedge^{n-j}(\C^n)$ and $-q^{-1}$ keeps track of the shift $\{1\}$ (c.f. Remark \ref{rem:q}). Note that $e_n=1$. Since $Y(1^m)$ is an iterated $\P^{n-1}$-bundle $K^\A(Y(1^m))$ is generated, over $E[q^\pm]$, by $x_1, \dots, x_m$ where $x_i$ is the line bundle $[(L_i/L_{i-1})^\vee]$. We will also denote $e_j^{(i)} := [\Alt{j}(z^{-1}L_{i-1}/L_{i-1})^\vee]$. Note that $e_j^{(1)} = e_j$ and, by convention, $e_0^{(i)} = 1$ and $e_j^{(i)}=0$ if $j < 0$.

\begin{Lemma}\label{lem:5}
The $e_j^{(i)}$ are related to each other via the relations
\begin{align}
\label{eq:e1} & e_j^{(i+1)} = e_j^{(i)} + (q^{2}-1)[x_{i} e_{j-1}^{(i)} - x_{i}^{2} e_{j-2}^{(i)} + x_{i}^{3} e_{j-3}^{(i)} - \dots] \\
\label{eq:e2} & e_j^{(i+1)} + x_i e_{j-1}^{(i+1)} = e_j^{(i)} + q^{2} x_i e_{j-1}^{(i)}.
\end{align}
\end{Lemma}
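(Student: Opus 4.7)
The plan is to derive both identities from two short exact sequences of $\A$-equivariant vector bundles on $Y(1^m)$, combined with the observation, used already in the proof of Lemma \ref{lem:imek}, that multiplication by $z$ gives a $\Cx$-equivariant isomorphism $z^{-1}L_i/z^{-1}L_{i-1} \cong (L_i/L_{i-1})\{2\}$.

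Concretely, the chain $L_{i-1} \subset L_i \subset z^{-1}L_{i-1} \subset z^{-1}L_i$ (the middle inclusion being the defining condition $zL_i \subset L_{i-1}$ on $Y(1^m)$) yields two short exact sequences
\begin{gather*}
0 \to L_i/L_{i-1} \to z^{-1}L_{i-1}/L_{i-1} \to z^{-1}L_{i-1}/L_i \to 0, \\
0 \to z^{-1}L_{i-1}/L_i \to z^{-1}L_i/L_i \to z^{-1}L_i/z^{-1}L_{i-1} \to 0.
\end{gather*}
Dualizing and applying the total exterior power $\Lambda_t(V) := \sum_j t^j [\wedge^j V]$, which is multiplicative on short exact sequences, and writing $M := (z^{-1}L_{i-1}/L_i)^\vee$, one obtains
\begin{align*}
\Lambda_t((z^{-1}L_{i-1}/L_{i-1})^\vee) &= (1+t x_i)\,\Lambda_t(M), \\
\Lambda_t((z^{-1}L_i/L_i)^\vee) &= (1+t q^2 x_i)\,\Lambda_t(M),
\end{align*}
where the $q^2$ comes from $[(z^{-1}L_i/z^{-1}L_{i-1})^\vee] = q^2 x_i$, obtained by tracking the $\{2\}$-shift on duals (recall $\{1\}$ equals multiplication by $-q^{-1}$, so the dual shift $\{-2\}$ contributes $(-q^{-1})^{-2} = q^2$).

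Eliminating $\Lambda_t(M)$ produces the single identity $(1+tx_i)\,\Lambda_t((z^{-1}L_i/L_i)^\vee) = (1+tq^2x_i)\,\Lambda_t((z^{-1}L_{i-1}/L_{i-1})^\vee)$, and extracting the coefficient of $t^j$ is precisely (\ref{eq:e2}). Equation (\ref{eq:e1}) then follows by induction on $j$: substituting the inductive formula for $e_{j-1}^{(i+1)}$ back into (\ref{eq:e2}) telescopes to the stated alternating sum, which terminates because $e_k^{(i)} = 0$ for $k<0$.

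The only subtle point is getting the direction and magnitude of the $\Cx$-weight shift in $z^{-1}L_i/z^{-1}L_{i-1} \cong (L_i/L_{i-1})\{2\}$ correct, as this is what brings the loop-rotation parameter $q$ into the recursion and produces the factor $q^2-1$. All remaining steps are routine multiplicativity of $\Lambda_t$ on short exact sequences and a short geometric-series manipulation.
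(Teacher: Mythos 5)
Your proof is correct and follows essentially the same path as the paper's: the same two short exact sequences on $Y(1^m)$, the same $\{2\}$-shifted isomorphism $z^{-1}L_i/z^{-1}L_{i-1}\cong (L_i/L_{i-1})\{2\}$ to produce the factor $q^2$, and the same derivation of (\ref{eq:e1}) from (\ref{eq:e2}) by induction on $j$. The only cosmetic difference is that you package the passage to exterior powers as multiplicativity of $\Lambda_t$ applied directly to the exact sequences, whereas the paper first writes the rank-one K-theory relation $[V_{i+1}]+x_i=[V_i]+q^2x_i$ and then applies $\Alt{j}$; these are the same computation.
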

\begin{proof}
We will prove this by induction on $j$. The base case $j=0$ is obvious.

Now, let us denote $V_i = (z^{-1}L_{i-1}/L_{i-1})^\vee$. From the following two standard exact sequences
$$0 \rightarrow L_i/L_{i-1} \rightarrow z^{-1}L_{i-1}/L_{i-1} \rightarrow z^{-1}L_{i-1}/L_i \rightarrow 0$$
$$0 \rightarrow z^{-1}L_{i-1}/L_i \rightarrow z^{-1}L_i/L_i \rightarrow z^{-1}L_i/z^{-1}L_{i-1} \rightarrow 0$$
we get that
$$[V_i] = x_i + [V_{i+1}] - [(z^{-1}L_i/z^{-1}L_{i-1})^\vee].$$
On the other hand, we have the isomorphism
$$z^{-1}L_i/z^{-1}L_{i-1} \xrightarrow{\sim} L_i/L_{i-1} \{2\}$$
induced by multiplication by $z$ (the $\{2\}$ shift is for the same reason as in (\ref{eq:9})). This implies that $[(z^{-1}L_i/z^{-1}L_{i-1})^\vee] = q^{2} x_i$ and so we get
$$[V_{i+1}] + x_i = [V_i] + q^{2} x_i.$$

Applying $\Alt{j}$ we get
$$[\Alt{j} V_{i+1}] + x_i [\Alt{j-1} V_{i+1}] = [\Alt{j} V_i] + q^{2} x_i [\Alt{j-1} V_i]$$
which is relation (\ref{eq:e2}).

Now, we can rewrite (\ref{eq:e1}) as
$$e_j^{(i+1)} = e_j^{(i)} + (q^2-1)x_{i}e_{j-1}^{(i)} - x_{i}(q^2-1)(x_{i}e_{j-2}^{(i)} - x_{i}^2e_{j-3}^{(i-1)} + \dots).$$
Assuming (\ref{eq:e1}) is true for $j-1$ then we know
$$(q^2-1)(x_{i}e_{j-2}^{(i)} - x_{i}^2e_{j-3}^{(i-1)} + \dots) = e_{j-1}^{(i+1)} - e_{j-1}^{(i)}.$$
So it remains to show that
$$e_j^{(i+1)} = e_j^{(i)} + (q^2-1)x_{i}e_{j-1}^{(i)} - x_{i}(e_{j-1}^{(i+1)} - e_{j-1}^{(i)}).$$
This easily simplifies to equation (\ref{eq:e2}) which we proved above.
\end{proof}

Recall the definition of the $E[q^\pm]$-algebra $ R^m_{n,[q^\pm]}$ from section \ref{sec:qDefR}.

\begin{Lemma}\label{lem:ktheory}
We have $K^\A(Y(1^m)) \cong R^m_{n,[q^\pm]}$ as $E[q^\pm]$-modules.
\end{Lemma}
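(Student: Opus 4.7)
The plan is to proceed by induction on $m$, exhibiting $Y(1^m)$ as an iterated $\P^{n-1}$-bundle. The base case $m = 0$ is trivial. For the inductive step, the projection $Y(1^m) \to Y(1^{m-1})$ given by forgetting $L_m$ realizes $Y(1^m)$ as the projective bundle $\P(z^{-1}L_{m-1}/L_{m-1}) \to Y(1^{m-1})$, with tautological line subbundle $L_m/L_{m-1}$. Since $Y(1^{m-1})$ satisfies property $T_\A$ (by Theorem \ref{prop:TA}), Theorem \ref{thm:TA} applies and tells us that $K^\A(Y(1^m))$ is a free $K^\A(Y(1^{m-1}))$-module on the basis $1, x_m, x_m^2, \ldots, x_m^{n-1}$.

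Next, I will impose the Grothendieck relation coming from the projective bundle. Since $z^{-1}L_{m-1}/L_{m-1}$ is a rank-$n$ bundle whose $j$th exterior power on the dual side is $e_j^{(m)} = [\Alt{j}(z^{-1}L_{m-1}/L_{m-1})^\vee]$, the class $x_m = [(L_m/L_{m-1})^\vee]$ satisfies
$$\sum_{j=0}^{n} (-1)^j e_j^{(m)} x_m^{n-j} = 0,$$
which is exactly the defining relation of $R^m_{n,[q^\pm]}$ at index $m$. Combined with the inductive hypothesis $K^\A(Y(1^{m-1})) \cong R^{m-1}_{n,[q^\pm]}$, this gives $K^\A(Y(1^m))$ the same presentation as $R^m_{n,[q^\pm]}$, provided the geometric classes $e_j^{(i)}$ match the recursively-defined symbols in $R^m_{n,[q^\pm]}$.

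This final compatibility is precisely Lemma \ref{lem:5}: equation \eqref{eq:e1} shows that the K-theory classes $e_j^{(i)}$ obey the same recursion used to define $e_j^{(i)}$ in $R^m_{n,[q^\pm]}$. The initial condition $e_j^{(1)} = e_j$ holds since $z^{-1}L_0/L_0 \cong \C^n$ as $SL_n$-representations, so $[\Alt{j}(\C^n)^\vee] = [\Alt{n-j}(\C^n)] = e_j$ (and in particular $e_n = 1$, matching the quotient imposed in passing from $\oR^m_{n,[q^\pm]}$ to $R^m_{n,[q^\pm]}$). The main step where something could go wrong is the availability of the projective bundle formula in equivariant algebraic K-theory with the expected relation; this is already secured by Theorem \ref{thm:TA} combined with property $T_\A$, so the argument reduces to bookkeeping of classes and an appeal to Lemma \ref{lem:5}.
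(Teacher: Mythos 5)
Your proof is correct and essentially the same argument as the paper's: both extract the defining relation of $R^m_{n,[q^\pm]}$ from the tautological exact sequence (equivalently, the Grothendieck relation) and then appeal to the iterated $\P^{n-1}$-bundle structure for freeness and the absence of further relations, with Lemma \ref{lem:5} ensuring the K-theoretic $e_j^{(i)}$ match the recursive symbols. The only cosmetic difference is that you spell out the induction and cite Theorem \ref{thm:TA} explicitly (note that Theorem \ref{thm:TA} does not actually require property $T_\A$, so the appeal to Theorem \ref{prop:TA} is superfluous though harmless), whereas the paper compresses this into "there are no further relations since $Y(1^m)$ is an iterated $\P^{n-1}$-bundle."
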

\begin{proof}
Inside $K^{\A}(Y(1^m))$ we have the standard exact sequence
$$0 \rightarrow L_i/L_{i-1} \rightarrow z^{-1}L_{i-1}/L_{i-1} \rightarrow z^{-1}L_{i-1}/L_i \rightarrow 0$$
which implies that $\Alt{n} [(z^{-1}L_{i-1}/L_{i-1})^\vee - (L_i/L_{i-1})^\vee] = 0$. The defining relation of $R^m_{n,[q^\pm]}$ now follows using that $e_j^{(i)} = [\Alt{j}(z^{-1}L_{i-1}/L_{i-1})^\vee]$, $x_i = [\det(L_i/L_{i-1})^\vee]$ and the general identity
$$\Alt{r}(A-B) = \Alt{r}(A) - \Alt{r-1}(A) \cdot B + \Alt{r-2}(A) \cdot \Sym^2(B) - \Alt{r-3}(A) \cdot \Sym^3(B) + \dots$$
in K-theory. There are no further relations since $Y(1^m)$ is an iterated $\P^{n-1}$-bundle and
$$K^\A(Y(1)) = K^\A(\P^{n-1}) \cong E[q^\pm][x]/(x^n - e_1x^{n-1} + e_2 x^{n-2} - \dots + (-1)^n)$$
where $x = \cO_{\P^{n-1}}(1)$.
\end{proof}

Recall the $ E[q^\pm]$-algebra $\Sym(\ux)$ consisting of symmetric functions in the $x_i$ which acts on $ R^m_{n,[q^\pm]} $ as discussed in section \ref{sec:qDefR}.

\begin{Corollary} \label{cor:ZYY}
The natural inclusion $i: Z(1^m,1^m) \rightarrow Y(1^m) \times Y(1^m)$ induces an injective map
$$ S: K^\A(Z(1^m,1^m)) \rightarrow \End_{\Sym(\ux)}(R^m_{n,[q^\pm]}).$$
\end{Corollary}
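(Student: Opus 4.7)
The plan is to construct $S$ directly as the convolution action of $K^\A(Z(1^m,1^m))$ on $K^\A(Y(1^m))$ and then verify separately that it takes values in $\End_{\Sym(\ux)}(R^m_{n,[q^\pm]})$ and that it is injective. I would define $S$ as follows: for $\alpha \in K^\A(Z(1^m,1^m))$, let $S(\alpha)$ be the operator on $K^\A(Y(1^m)) \cong R^m_{n,[q^\pm]}$ given by $\beta \mapsto (\pi_2)_*(i_*(\alpha) \cdot \pi_1^*(\beta))$, where $\pi_1,\pi_2 : Y(1^m) \times Y(1^m) \to Y(1^m)$ are the two projections. Since $Z(1^m,1^m)$ is closed in the proper variety $Y(1^m) \times Y(1^m)$, the pushforward is well defined and this a priori gives a map into $\End_{E[q^\pm]}(R^m_{n,[q^\pm]})$, which is nothing but the convolution action of section \ref{sec:kconv} written out in the special case of Steinberg-type correspondences.

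Next, I would show that $S(\alpha)$ commutes with every $\sigma \in \Sym(\ux)$. Each symmetric function in $x_1, \ldots, x_m$ represents a Chern class of $L_m/L_0$, so it depends only on the terminal lattice $L_m$ of a point of $Y(1^m)$. Since $Z(1^m,1^m)$ enforces $L_m = L'_m$, one has $(\pi_1 \circ i)^*\sigma = (\pi_2 \circ i)^*\sigma$; combined with the projection formula, this gives $i_*(\alpha) \cdot \pi_1^*(\sigma) = i_*(\alpha) \cdot \pi_2^*(\sigma)$ in $K^\A(Y(1^m)\times Y(1^m))$, from which commutation of $S(\alpha)$ with multiplication by $\sigma$ follows immediately.

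For injectivity, I would factor $S$ as
$$ K^\A(Z(1^m,1^m)) \xrightarrow{i_*} K^\A(Y(1^m) \times Y(1^m)) \xrightarrow{\mathrm{conv}} \End_{E[q^\pm]}(K^\A(Y(1^m))), $$
where $i_*$ is injective by Lemma \ref{lem:ZYY}. It therefore suffices to prove that the convolution map on the right is injective. Since $Y(1^m)$ is smooth projective and $K^\A(Y(1^m))$ is a finite free $E[q^\pm]$-module by Theorem \ref{prop:TA}, the K\"unneth formula identifies $K^\A(Y \times Y)$ with $K^\A(Y) \otimes_{E[q^\pm]} K^\A(Y)$, under which convolution becomes $\alpha \otimes \beta \mapsto (\gamma \mapsto \chi(Y, \alpha\gamma) \cdot \beta)$ via the projection formula. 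Injectivity of this map is equivalent to non-degeneracy of the equivariant Euler pairing on $K^\A(Y(1^m))$, and this is the step I expect to be the main obstacle. I would prove it by a direct computation on a single $\P^{n-1}$ (where the Gram matrix in the basis $1, x, \ldots, x^{n-1}$ is easily seen to be invertible over $E[q^\pm]$) followed by induction on $m$ using the iterated $\P^{n-1}$-bundle structure of $Y(1^m)$. As a cleaner fallback, one can mimic the localization argument in the proof of Lemma \ref{lem:ZYY}: pass to a generic $T$-fixed locus of the maximal torus $T \subset \A$, where $(Y(1^m)\times Y(1^m))^T$ is finite, freeness and property $T_\A$ reduce the problem to a transparent matrix computation over the localized ring $F_T$, and torsion-freeness transports the conclusion back to $K^\A$.
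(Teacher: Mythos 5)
Your proposal is correct and follows the same route as the paper: factor $S$ through the pushforward $i_*$ of Lemma \ref{lem:ZYY} followed by the convolution map $K^\A(Y(1^m) \times Y(1^m)) \to \End_{E[q^\pm]}(K^\A(Y(1^m)))$, identify $K^\A(Y(1^m))$ with $R^m_{n,[q^\pm]}$ via Lemma \ref{lem:ktheory}, and land in $\End_{\Sym(\ux)}$ by observing that symmetric functions in the $x_i$ depend only on $L_m=L'_m$ on the Steinberg-type locus. The one place you are more careful than the paper is the injectivity of the convolution map: the paper asserts $K^\A(Y\times Y)\cong\End_{E[q^\pm]}(K^\A(Y))$ through a short chain of isomorphisms invoking property $T_\A$ and the topological K\"unneth theorem, leaving the nondegeneracy of the equivariant Euler pairing implicit, whereas you correctly single out that nondegeneracy as the actual content of the step and offer two workable proofs of it (induction on the iterated $\P^{n-1}$-bundle structure, or localization to a generic $T$-fixed locus exactly as in the proof of Lemma \ref{lem:ZYY}). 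Since only injectivity of the convolution map is needed, the localization fallback already suffices: nondegeneracy after inverting $R_T$ gives injectivity on the free $R_T$-module $K^T(Y\times Y)$, and freeness over $R_\A$ transports this back. Your write-up is arguably a more transparent account of what must actually be checked than the paper's.
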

\begin{proof}
Since $Y(1^m) \times Y(1^m)$ is an iterated projective bundle it has property $T_A$ where $A=\A$. It follows that
\begin{align*}
K^\A(Y(1^m) \times Y(1^m))
&\cong K^\A_{0,top}(Y(1^m) \times Y(1^m)) \\
&\cong \End_{K^\A_{0,top}(pt)}(K^\A_{0,top}(Y(1^m)) \\
&\cong \End_{K^\A(pt)}(K^\A(Y(1^m)).
\end{align*}
Subsequently, by Lemma \ref{lem:ZYY}, we have an inclusion
$$ K^\A(Z(1^m,1^m)) \rightarrow \End(K^\A(Y(1^m)))$$
while, by the previous Lemma, $K^\A(Y(1^m)) \cong R^m_{n,[q^\pm]}$. On the other hand, since $x_i = [\det(L_i/L_{i-1})^\vee]$, we see that the symmetric functions in the $x_i$ are generated by $\wedge^j(L_m/L_0)^\vee$. These clearly commute with any kernel supported on $Z(1^m,1^m)$ since $Z(1^m,1^m)$ is defined by the condition $L_m=L_m'$. The result follows.
\end{proof}

\section{Isomorphism for $1^m$}\label{sec:iso1m}

We are now in a position to prove the isomorphism for $ 1^m $.
\begin{Theorem}\label{thm:Isoq1}
The functor $\Phi$ gives an isomorphism
\begin{equation}\label{eq:ASpZ}
\End_{\ASp}(1^m) \xrightarrow{\sim} K^{SL_n}(Z(1^m,1^m))
\end{equation}
making the following diagram of isomorphisms commute
\begin{equation}
\xymatrix{
\End_{\ASp}(1^m) \ar^{\Phi}[r] \ar^{A\Gamma}[d] & K^{SL_n}(Z(1^m, 1^m)) \ar^S[d] \\
\End_{\OSS}(\OS \otimes (\C^n)^{\otimes m}) \ar[r]^>>>>>>{\gamma} & \End_{\Sym(\ux)}(R^m_n)
}
\end{equation}
where $\gamma$ comes from setting $e_n = 1$ in Theorem \ref{th:ApplyKostant}.
\end{Theorem}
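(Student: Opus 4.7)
The plan is to use commutativity of the square to upgrade what we already know about three of the arrows into the desired result. By Corollary \ref{cor:GamEquiv} the map $A\Gamma$ is an equivalence, by Theorem \ref{th:ApplyKostant} (after setting $e_n=1$) the map $\gamma$ is an isomorphism, and by Corollary \ref{cor:ZYY} specialized to $q=1$ the map $S$ is injective. Granted that the square commutes, $S \circ \Phi = \gamma \circ A\Gamma$ is an isomorphism; combined with injectivity of $S$ this forces $S$ to be surjective, hence an isomorphism, and $\Phi$ is then an isomorphism as well. So the entire problem reduces to verifying commutativity.

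To verify commutativity, I would exhibit a common surjective generating map into both $\End_{\ASp}(1^m)$ and $\End_{\Sym(\ux)}(R^m_n)$. At $q=1$ the braiding in $\RS$ is symmetric, so the annular braid action of Proposition \ref{pr:ABfactor} factors through the affine symmetric group $AS_m$; combined with the $E$-action from Section \ref{sec:actionE} this gives a natural $E$-algebra map $\alpha: E[AS_m] \to \End_{\ASp}(1^m)$. By construction, and using Propositions \ref{pr:ABaction} and \ref{pr:ABfactor}, the composition $\gamma \circ A\Gamma \circ \alpha$ coincides with the surjection $\chi: E[AS_m] \to \End_{\Sym(\ux)}(R^m_n)$ of Corollary \ref{cor:technical}. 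Since $\gamma \circ A\Gamma$ is an isomorphism, $\alpha$ itself is surjective, so the problem further reduces to checking the identity $S \circ \Phi \circ \alpha = \chi$ on the generators $X_i$, $T_i$, and $e_j$ of $E[AS_m]$.

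For $X_i$ and $e_j$ the check is immediate. Proposition \ref{prop:affine} gives $\Phi(X_i) = \Delta_*\det(L_i/L_{i-1})^\vee$, whose image under $S$ (viewed as the action on $K^\A(Y(1^m))$ via Corollary \ref{cor:ZYY}) is multiplication by $x_i$, matching $\chi(X_i)$. Lemma \ref{lem:imek} gives $\Phi(e_j) = \Delta_*\wedge^{n-j}(\C^n)$, whose image under $S$ is multiplication by $[\wedge^{n-j}(\C^n)]$, matching $\chi(e_j)$ under the identification $E \cong R(SL_n)$. The delicate case is $T_i$: for $\uk = 1^m$ Remark \ref{rem:T} at $q=1$ gives $\Phi(T_i)\1_\uk = \1_\uk - \Phi(E_iF_i)\1_\uk$, so one must verify that $E_iF_i$ acts on $K^\A(Y(1^m)) \cong R^m_n$ as $\mathrm{id} - s_i$, where $s_i$ is the transposition swapping $x_i$ and $x_{i+1}$. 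I would carry this out by a push-pull computation along the $\P^1$-bundle $Y(1^m) \to Y(1^{i-1},2,1^{m-i-1})$ that forgets $L_i$: unpacking the explicit kernels (\ref{eq:B}) and (\ref{eq:A}) at $s=1$ and applying the standard Euler-sequence identity for $\P^1$-bundles in equivariant K-theory produces exactly the difference operator $\mathrm{id} - s_i$.

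The principal obstacle is this last computation for $T_i$. One must carefully track the line-bundle twists $\det(L_i'/L_i)^{k_{i+1}-k_i+s}$ and $\det(L_{i+1}/L_i')^{-s} \otimes \det(L_i/L_{i-1})^s$ appearing in (\ref{eq:B}) and (\ref{eq:A}), and verify that at $q=1$ with $k_i = k_{i+1} = 1$ they conspire to produce precisely a simple transposition. The calculation is in the spirit of those in \cite{CKL,Ca,CK3}, but must be done here directly in K-theory rather than derived categories. Once it is in hand, commutativity on the generating set extends by $E$-linearity and the surjectivity of $\alpha$ to all of $\End_{\ASp}(1^m)$, and Theorem \ref{thm:Isoq1} follows from the formal argument of the first paragraph.
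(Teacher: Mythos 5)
Your proposal follows essentially the same route as the paper: reduce the isomorphism to commutativity of the square, establish commutativity via the common surjection from $E[AS_m]$ (the paper's map $\beta$ from $E[q^\pm][AB_m]$, specialized at $q=1$), and verify on generators, with the $T_i$ case handled by the push-pull/Koszul computation of $E_iF_i$ on $K^{SL_n}(Y(1^m))$ — this is precisely the content of the paper's Lemma \ref{lem:commutes} and the surrounding argument. Two small imprecisions worth flagging: (i) injectivity of $S$ at $q=1$ does not follow by "specializing" Corollary \ref{cor:ZYY} (tensoring an injection with $\C_1$ need not preserve injectivity), but rather by re-running Lemma \ref{lem:ZYY} for the subgroup $SL_n \subset SL_n\times\C^\times$, which works since property $T_A$ is hereditary to closed subgroups; (ii) the key computation of $[\cO_W]$ for the locus $zL_{i+1}\subset L_{i-1}$ is a Koszul/Thom-class computation for a section of a line bundle, not an Euler-sequence identity — together with the $\P^1$-bundle pushforward (which kills $F_i(1)$) it produces $E_iF_i = \mathrm{id}-s_i$ at $q=1$ exactly as you assert.
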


\begin{Theorem} \label{thm:2}
The functor $ \Phi $ also gives an isomophism
$$ \End_{\ASp(q)}(1^m) \xrightarrow{\sim} K^\A(Z(1^m, 1^m)) \otimes_{\C[q^\pm]} \C(q) $$
\end{Theorem}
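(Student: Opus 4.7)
The plan is to deduce Theorem \ref{thm:2} from the $q=1$ statement (Theorem \ref{thm:Isoq1}) by bootstrapping through the explicit algebraic model $\End_{\Sym(\ux)}(R^m_{n,(q)})$. Concretely, I would set up the commutative triangle
$$\xymatrix{
E(q)[AB_m] \ar[d]_{\pi} \ar[dr]^{\chi} & \\
\End_{\ASp(q)}(1^m) \ar[r]^{S \circ \Phi \quad} & \End_{\Sym(\ux)}(R^m_{n,(q)})
}$$
where $\pi$ sends $X_i, T_i$ to the annular braid elements of $\End_{\ASp(q)}(1^m)$ and $e_j \in E(q)$ to a counterclockwise loop labelled $j$.

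Verifying commutativity of this triangle is the first step. Agreement on each $e_j$ is Lemma \ref{lem:imek} combined with the definition of $\chi$. For $X_i$, Proposition \ref{prop:affine} sends it to $\Delta_*[\det(L_i/L_{i-1})^\vee]$, which acts on $K^\A(Y(1^m)) \cong R^m_{n,(q)}$ by multiplication by $x_i$ via Lemma \ref{lem:ktheory}, matching $\chi(X_i)$. The $T_i$ case is the main obstacle: using the expansion (\ref{eq:Ti}), one must compute the convolution on $K^\A(Y(1^m))$ of the kernel built from the formulas (\ref{eq:B}) and (\ref{eq:A}), and check that it simplifies, after applying the relation (\ref{eq:e1}) defining $R^m_{n,(q)}$, to the operator characterising $\chi(T_i)$ (linearity over symmetric polynomials and the values $T_i(1) = q$, $T_i(x_i) = q^{-1}x_{i+1}$).

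Once the triangle commutes, $\chi = S \circ \Phi \circ \pi$ is surjective by Proposition \ref{pr:ABsurjective}, and since $S$ is injective by Corollary \ref{cor:ZYY}, $S$ must also be surjective. Hence $S$ is an isomorphism and $\Phi$ is surjective.

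For injectivity of $\Phi$ (equivalently, of $S \circ \Phi$), one compares $\C(q)$-dimensions. By Corollary \ref{cor:free}, $K^\A(Z(1^m, 1^m))$ is free over $E[q^\pm]$, so its $\C(q)$-dimension equals $\dim_\C K^{SL_n}(Z(1^m, 1^m)) = \dim_\C \End_{\ASp}(1^m)$, where the last equality is Theorem \ref{thm:Isoq1}. It remains to establish the matching upper bound $\dim_{\C(q)} \End_{\ASp(q)}(1^m) \leq \dim_\C \End_{\ASp}(1^m)$, which I would deduce by realising $\End_{\ASp[q^\pm]}(1^m)$ as a finitely generated $E[q^\pm]$-module (using Theorem \ref{thm:1} together with Lemma \ref{lem:free1} to bound it in terms of a Hom space in $(\dULq)^n$) and applying upper semicontinuity of fibre dimensions. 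Combined with the already-established surjectivity of $\Phi$, this dimension count completes the proof.
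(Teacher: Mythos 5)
Your surjectivity argument is essentially the paper's own: the commutative triangle you set up is the $\C(q)$-specialization of the paper's diagram (\ref{eq:BigDiagram}), and the conclusion $\Phi$ surjective from $\chi$ surjective and $S$ injective is identical. (The commutativity checks you list for $e_j$, $X_i$, $T_i$ are the contents of Lemma~\ref{lem:imek} and Lemma~\ref{lem:commutes}.)

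The injectivity step has a real gap. First, a minor point: the quantities you compare as ``$\dim_\C$'' and ``$\dim_{\C(q)}$'' are infinite-dimensional vector spaces; what you actually need is a comparison of ranks over $E$ and over $E(q)$ (the Hom spaces carry $E$- and $E(q)$-module structures, and Corollary~\ref{cor:free} provides a free $E[q^\pm]$-module on the target side). More seriously, the rank-semicontinuity argument requires knowing that $\End_{\ASp[q^\pm]}(1^m)$ is a \emph{finitely generated} $E[q^\pm]$-module, and the tools you cite do not deliver this. Theorem~\ref{thm:1} only says that each individual morphism lies in the image of some $\APsi_{m'}$ for some $m'$ depending on the morphism, so $\End_{\ASp[q^\pm]}(1^m)$ is a \emph{directed union} of such images and could a priori require unboundedly large $m'$. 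Moreover, Lemma~\ref{lem:free1} gives freeness of $\Hom_{(\dULq)^n}(\ul,\ul')$ over $\C[q^\pm]$, not finite rank over $E[q^\pm]$; these Hom spaces are in fact of infinite $\C[q^\pm]$-rank (the loop algebra is infinite-dimensional), so they do not furnish a bound on the rank of $\End_{\ASp[q^\pm]}(1^m)$.

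The paper sidesteps this entirely by arguing element by element: given $a \in \ker\Phi$, lift it via Theorem~\ref{thm:1} to $b \in \Hom_{(\dU_{[q^\pm]}L\gl_{m'})^n}(\ul,\ul')$, and show that the composite map $\Phi_{m'} = \Phi \circ \APsi_{m'}$ is injective. This works because \emph{both} the domain and the target of $\Phi_{m'}$ are free $\C[q^\pm]$-modules (Lemma~\ref{lem:free1} and Corollary~\ref{cor:free}), so the elementary Lemma~\ref{le:FreeInjective} (injectivity at $q=1$ implies injectivity for a map of free $\C[q^\pm]$-modules) can be applied to $\Phi_{m'}$ directly, using the $q=1$ injectivity from Theorem~\ref{thm:Isoq1}. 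That approach never needs any finiteness statement about $\End_{\ASp[q^\pm]}(1^m)$ itself. If you want to rescue your rank-counting strategy, you would have to supply an independent proof of finite generation of $\End_{\ASp[q^\pm]}(1^m)$ over $E[q^\pm]$ (say via the equivalence $A\Gamma$ and a quantum analogue of the Kostant slice functor, which the paper explicitly notes is not available); as it stands, the claimed upper bound is unsupported.
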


\subsection{The big diagram}
The proof relies on the following diagram of $ E[q^\pm]$-algebras
\begin{equation} \label{eq:BigDiagram}
\xymatrix{
E[q^\pm][AB_m] \ar^\beta[r] \ar^{\chi}[d]& \End_{ASp_n[q^\pm]}(1^m) \ar^{\Phi}[d]  \\
\End_{\Sym(\ux)}(R_{n,[q^\pm]}^m) & \ar_{S}[l]  K^\A(Z(1^m,1^m))
}
\end{equation}
where $\chi$ is obtained (by setting $e_n=1$) from $\ochi$ which was defined in section \ref{sec:qDefR}.

\begin{Lemma}\label{lem:commutes}
This diagram commutes.
\end{Lemma}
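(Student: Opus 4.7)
The plan is to verify commutativity of (\ref{eq:BigDiagram}) by evaluation on generators. Both $\chi$ and $S \circ \Phi \circ \beta$ are $E[q^\pm]$-linear, so it suffices to check agreement on $e_j \in E$ and on the annular braid generators $X_i$, $T_i$. The cases of $e_j$ and $X_i$ are direct: Lemma~\ref{lem:imek} identifies $\Phi(\beta(e_j))$ with $\Delta_*[\wedge^{n-j}(\C^n)]$, whose action via $S$ on $R^m_{n,[q^\pm]} \cong K^\A(Y(1^m))$ is multiplication by $\wedge^{n-j}(\C^n)$, i.e.\ the $E[q^\pm]$-module action of $e_j$ coming from $K^\A(\mathrm{pt})$; and Proposition~\ref{prop:affine} gives $\Phi(\beta(X_i)) = \Delta_*[\det(L_i/L_{i-1})^\vee]$, whose action via $S$ is multiplication by $x_i$.

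The essential case is $T_i$. The endomorphism $\chi(T_i)$ is uniquely determined by its three defining properties in (\ref{eq:ochi}) (being linear over symmetric polynomials in $x_i, x_{i+1}$, sending $1 \mapsto q$, and sending $x_i \mapsto q^{-1} x_{i+1}$), since $R^m_{n,[q^\pm]}$ is generated over its $s_i$-invariant subring by $1$ and $x_i$. So it suffices to verify these three conditions for $S \circ \Phi \circ \beta(T_i)$. By Proposition~\ref{pr:ABfactor}, Theorem~\ref{thm:1}, and Remark~\ref{rem:T}, $\beta(T_i)$ lifts to $q \cdot \mathbf{1}_{1^m} - E_i F_i$ in $(\dULq)^n$ (the higher $E_i^{(a)} F_i^{(a)}$ vanish on the weight $1^m$ since $F_i^{(a)} \mathbf{1}_{1^m} = 0$ for $a \ge 2$), hence
\[
\Phi(\beta(T_i)) = q \cdot [\cO_\Delta] - \Phi_m(E_i F_i) \in K^\A(Z(1^m,1^m)).
\]
The kernel $\Phi_m(E_i F_i)$ is supported on the locus where $L_j = L_j'$ for $j \ne i$ and both $L_i, L_i'$ lie between $L_{i-1}$ and $L_{i+1}$, so it commutes with multiplication by Chern classes of the rank-two bundle $L_{i+1}/L_{i-1}$, whose K-theoretic Chern roots are precisely $x_i$ and $x_{i+1}$; this yields the required $\Sym(\ux)$-linearity.

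For the two remaining scalars, at $\uk = 1^m$ the correspondence $W_i^1(\uk)$ is canonically isomorphic to $Y(1^m)$, with the projection to $Y(1^m - \alpha_i)$ realized as a relative $\P^1$-bundle parameterizing choices of the intermediate lattice $L_{i-1} \subset L_i \subset L_{i+1}$. The convolution $\Phi_m(E_i F_i)$ acting on $K^\A(Y(1^m))$ thus reduces to a push-pull along this $\P^1$-bundle, with the line-bundle twists read off from (\ref{eq:A}) and (\ref{eq:B}); evaluating on $1$ and on $x_i$ becomes a routine computation of $\pi_*$ applied to twists of $\cO_{\P^1}$. The main obstacle will be carefully tracking the signs, powers of $-q$, and the $\{1\}$-shift convention (Remark~\ref{rem:q}) so that the resulting values of $S \circ \Phi \circ \beta(T_i)(1)$ and $S \circ \Phi \circ \beta(T_i)(x_i)$ come out to $q$ and $q^{-1} x_{i+1}$ respectively. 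However, this is precisely the $k_i = k_{i+1} = 1$ case of the K-theoretic verifications of the $(\dU L\gl_m)^n$-relations carried out in~\cite{CKL,Ca}.
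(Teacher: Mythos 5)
Your overall strategy matches the paper's: check the generators, use that $\chi(T_i)$ is determined by its value on $1$ and $x_i$ together with $\Sym(\ux)$-linearity, identify $\beta(T_i)$ with $q\cdot\mathrm{id} - E_iF_i$ via Remark~\ref{rem:T}, and reduce to a push-pull along a $\P^1$-bundle. The support-of-kernel argument you give for $\Sym(\ux)$-linearity is a slight rephrasing of the paper's but is sound.

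The gap is in the last step: you never compute the values of $(S\circ\Phi\circ\beta)(T_i)$ on $1$ and $x_i$, instead asserting that this is ``the $k_i = k_{i+1}=1$ case of the K-theoretic verifications of the $(\dU_{[q^\pm]}L\gl_m)^n$-relations carried out in \cite{CKL,Ca}.'' That attribution does not hold up. What those references establish is that the kernels $\Phi_m(E_i^{(s)}),\Phi_m(F_i^{(s)})$ satisfy the defining relations of $\dU_{[q^\pm]}\gl_m$ as convolution kernels --- for instance $[E_i,F_i]\1_{\uk} = [\langle \uk,\alpha_i\rangle]\1_{\uk}$, which at weight $(1,1)$ just says $E_iF_i = F_iE_i$. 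This says nothing about the specific values $E_iF_i(1)$ and $E_iF_i(x_i)$ inside $K^{\A}(Y(1^m))\cong R^m_{n,[q^\pm]}$, which is what commutativity with $\chi$ actually requires. The paper fills this by a short but essential computation: $F_i(1)=0$ directly, and $E_iF_i(x_i) = -q^{-1}x_{i+1}[\cO_W]$, where the class of the locus $W=\{zL_{i+1}\subset L_{i-1}\}$ is $[\cO_W]=1-q^2x_ix_{i+1}^{-1}$ because $W$ is cut out by the vanishing of the section $z:L_{i+1}/L_i\to L_i/L_{i-1}\{2\}$, giving a length-two Koszul resolution. Plugging in yields $q x_i + q^{-1}x_{i+1}(1-q^2x_ix_{i+1}^{-1})=q^{-1}x_{i+1}$. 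This Koszul computation is the content your sketch promises to outsource but that is not actually contained in the cited sources.
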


\begin{proof}
It suffices to check this on the generators $ X_i, T_i $ of $ AB_m $.

By definition $\Phi(\Beta(X_i)) = [\Delta_* \det(L_i/L_{i-1})^\vee] $ and, following the isomorphism from Lemma \ref{lem:ktheory}, $S(\Delta_* \det(L_i/L_{i-1})^\vee)$ acts on $R_{n, [q^\pm]}^m$ as multiplication by $x_i$. On the other hand, $\chi(X_i)$ is by definition multiplication by $x_i$.

Now consider the correspondence
$$Y(1^{i-1},2,1^{m-i-1}) \xleftarrow{\pi} W = \{(L_0 \subset \dots \subset L_m: zL_{i+1} \subset L_{i-1}\} \xrightarrow{i} Y(1^m)$$
where $\pi$ is the projection which forgets $L_i$ and $i$ is an inclusion. Recall that $(\Phi \circ \beta)(T_i) = q \cdot id - E_i F_i$ (c.f. Remark \ref{rem:T}) where $E_i$ and $F_i$ act on $K^{\A}(Y(1^m)) \cong R^m_{n,[q^\pm]}$ as follows
\begin{align*}
E_i(\cdot) &= -q^{-1} [i_* \pi^* (\cdot) \otimes \det(L_i/L_{i-1})] \\
F_i(\cdot) &= \pi_* i^* ((\cdot) \otimes \det(L_{i+1}/L_i)^{-1}).
\end{align*}
From this description it is clear that for any $y \in R^m_{n,[q^\pm]}$ we have
\begin{align*}
(\Phi \circ \beta) (T_i)(y x_j) &= x_j (\Phi \circ \beta) (T_i) (y) \ \text{ if } j \ne i,i+1, \\
(\Phi \circ \beta) (T_i)((x_i+x_{i+1})y) &=  (x_i+x_{i+1}) (\Phi \circ \beta) (T_i)(y) \\
(\Phi \circ \beta) (T_i)(x_ix_{i+1}y) &= x_i x_{i+1} (\Phi \circ \beta)(T_i)(y).
\end{align*}
Since the same is true for $\chi$ in place of $(\Phi \circ \beta)$ it suffices to show that the actions of $(\Phi \circ \beta)(T_i)$ and $\chi(T_i)$ agree on $1$ and $x_i$.

Now $F_i(1) = 0$ which means that $(\Phi \circ \beta)(T_i)(1) = q = \chi(T_i)(1)$.

On the other hand, $E_i F_i(x_i) = -q^{-1} x_{i+1} [\cO_W]$ so we need to determine the class of $\cO_W$ inside $K^\A(Y(1^m))$. To do this consider the map of bundles on $Y(1^m)$
$$z: L_{i+1}/L_i \rightarrow L_i/L_{i-1} \{2\}$$
where the shift by $\{2\}$ is because of the $\C^\times$ action on $z$ (c.f. the map in (\ref{eq:9})). This map vanishes exactly along $W$ and hence $[\cO_W] = 1 - q^2 x_i x_{i+1}^{-1}$. Putting this together we find that
\begin{align*}
(\Phi \circ \beta)(T_i)(x_i)
&= q x_i - E_i F_i (x_i) \\
&= qx_i + q^{-1} x_{i+1} (1 - q^{2} x_i x_{i+1}^{-1}) \\
&= q^{-1} x_{i+1} = \chi(T_i)(x_i).
\end{align*}
This concludes the proof.
\end{proof}

\subsection{The proofs}

\begin{proof}[Proof of Theorem \ref{thm:Isoq1}]
We take the diagram (\ref{eq:BigDiagram}) and tensor over $\C[q^\pm]$ with $ \C $ (at $ q=1$) to obtain
\begin{equation*}
\xymatrix{
E[AB_m] \ar^{\beta}[r] \ar^{\chi}[d]& \End_{ASp_n}(1^m) \ar^{\Phi}[d]  \\
\End_{\Sym(\ux)}(R_{n}^m) & \ar[l]^>>>>>{S} K^{SL_n}(Z(1^m,1^m))
}
\end{equation*}
Now by Theorem \ref{th:ApplyKostant} and Theorem \ref{cor:GamEquiv}, we obtain isomorphisms
$$  \End_{ASp_n}(1^m) \xrightarrow{A\Gamma}  \End_{\OSS}(\OS \otimes (\C^n)^{\otimes m}) \xrightarrow{\sim} \End_{\Sym(\ux)}(R_{n}^m) $$
so that our diagram becomes
\begin{equation}
\xymatrix{
E[AB_m] \ar^\beta[r] \ar^{\chi}[d]& \End_{ASp_n}(1^m) \ar^{\Phi}[d] \ar[dl]^{A\Gamma}  \\
\End_{\Sym(\ux)}(R_{n}^m) & \ar[l]^>>>>>{S}  K^{SL_n}(Z(1^m,1^m))
}
\end{equation}
Now the top left triangle commutes by Remark \ref{re:ABq1}.  Then the commutativity of the bottom right triangle follows from the commutativity of the whole square and the surjectivity of $ \beta $. Note that the bottom right triangle is rewritten as a square in the statement of the theorem.

Since $ A\Gamma $ is an isomorphism and $ S $ is injective, we see that $\Phi $ is an isomorphism.
\end{proof}

\begin{proof}[Proof of Theorem \ref{thm:2}]
We take the diagram (\ref{eq:BigDiagram}) and tensor over $\C[q^\pm]$ with $ \C(q) $ to obtain
\begin{equation*}
\xymatrix{
E(q)[AB_m] \ar^\beta[r] \ar^{\chi}[d]& \End_{ASp_n(q)}(1^m) \ar^{\Phi}[d]  \\
\End_{\Sym(\ux)}(R_{n,(q)}^m) & \ar[l]^>>>>>>{S} K^\A(Z(1^m,1^m)) \otimes_{\C[q^\pm]} \C(q)
}
\end{equation*}

From Proposition \ref{pr:ABsurjective} we see that $\chi$ is surjective.  On the other hand, from Corollary \ref{cor:ZYY} we see that $S$ is injective. Thus $\Phi$ is surjective.

Now we will prove that $ \Phi $ is injective. Let $ \ul, \ul' $ be two sequences of length $ m' $ which are each obtained from $ 1^m $ by adding some $0$s and $n$s. Consider the map
\begin{equation}\label{eq:temp}
\Phi_{m'} : \Hom_{(\dU_{[q^\pm]} L \gl_{m'})^n}(\ul, \ul') \rightarrow K^{\A}(Z(1^m, 1^m))
\end{equation}
and recall that this map factors as
$$\Hom_{(\dU_{[q^\pm]} L \gl_{m'})^n}(\ul, \ul') \xrightarrow{\Psi_{m'}} \End_{\ASp[q^\pm]}(1^m) \xrightarrow{\Phi} K^{\A}(Z(1^m, 1^m))$$
where the first map is injective by Theorem \ref{thm:1}.

Now $\Phi$ is injective at $ q = 1$ by Theorem \ref{thm:Isoq1} and hence $ \Phi_{m'}$ is injective at $ q = 1 $.  Since both sides of (\ref{eq:temp}) are free $\C[q^\pm]$-modules (the left hand side by Lemma \ref{lem:free1} and the right hand side by Corollary \ref{cor:free}), Lemma \ref{le:FreeInjective} implies that $ \Phi_{m'} $ is injective.

Since, by Theorem \ref{thm:1}, every element of $ \End_{\ASp[q^\pm]}(1^m) $ is in the image of $ \Psi_{m'} $ for some $ m', \ul, \ul' $ this implies that $ \Phi$ is injective.
\end{proof}

\begin{Lemma} \label{le:FreeInjective}
Let $ f : M \rightarrow N $ be a map between free $ \C[q^\pm] $-modules such that $ f|_{q=1} $ is injective.  Then $ f $ is injective.
\end{Lemma}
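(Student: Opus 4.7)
The plan is to exploit that $\C[q^\pm]$ is a PID in which $q-1$ is a prime element. Indeed, $\C[q^\pm]$ is the localization of $\C[q]$ at powers of $q$, and since $q-1$ is coprime to $q$ it remains irreducible, and hence prime, in the localization. Moreover, specialization at $q=1$ is precisely reduction modulo the ideal $(q-1)$, so $M|_{q=1} = M/(q-1)M$ and likewise for $N$.

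Given a nonzero $x \in M$ with $f(x)=0$, I would first factor out the maximal possible power of $q-1$. Since $M$ is free over the PID $\C[q^\pm]$, one can fix a basis $\{e_i\}$ and write $x = \sum a_i(q) e_i$; letting $k$ be the minimum order of vanishing at $q=1$ of the nonzero $a_i$, set $y := (q-1)^{-k} x \in M$, which satisfies $x = (q-1)^k y$ and whose reduction $\bar y \in M/(q-1)M = M|_{q=1}$ is nonzero by construction.

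Now the freeness of $N$ enters in an essential way: since $N$ is free over $\C[q^\pm]$ it is torsion-free, so the identity $(q-1)^k f(y) = f(x) = 0$ forces $f(y) = 0$. Reducing modulo $(q-1)$ yields $f|_{q=1}(\bar y) = 0$ with $\bar y \neq 0$, contradicting the hypothesis that $f|_{q=1}$ is injective. Hence no such $x$ exists and $f$ is injective.

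The argument is quite short; there is essentially no obstacle beyond recognizing $q-1$ as a prime of $\C[q^\pm]$. The one point worth flagging is that the hypothesis that $N$ (and not only $M$) is free is genuinely used, as it is exactly torsion-freeness of $N$ that lets one cancel the factor $(q-1)^k$ to pass from $f(x)=0$ to $f(y)=0$.
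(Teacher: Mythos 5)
Your proof is correct, and it takes a genuinely different route from the one in the paper. The paper replaces $N$ by the image of $f$ (which is again free, since submodules of free modules over the PID $\C[q^\pm]$ are free), forms the short exact sequence $0 \to K \to M \to N \to 0$ with $K = \ker f$, tensors with $\C_1$ to get a Tor long exact sequence, uses flatness of $N$ to kill $\mathrm{Tor}_1(N,\C_1)$ and injectivity of $f|_{q=1}$ to conclude $K \otimes \C_1 = 0$, and finally uses freeness of $K$ (again via the PID structure applied to $K \subset M$) to deduce $K = 0$. You instead argue elementwise: take a putative nonzero kernel element, strip off the maximal power of the prime $q-1$ dividing all coordinates in a fixed basis of $M$, and use torsion-freeness of $N$ to push the vanishing down to $q=1$ and derive a contradiction. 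Both approaches hinge on the fact that $\C[q^\pm]$ is a PID, but in different guises: the paper invokes it for the structure of submodules of free modules, while you invoke it for the primeness of $q-1$ and torsion-freeness. Your argument is more elementary (no homological algebra) and avoids the detour through the image of $f$; it is a perfectly good alternative, and arguably more transparent.
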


\begin{proof}
Since the submodule of a free $\C[q^\pm] $-module is free, we can assume that $ f $ is surjective (by replacing $ N $ with the image of $ f $).  Thus we have a short exact sequence
$$
0 \rightarrow K \rightarrow M \xrightarrow{f} N \rightarrow 0
$$
where $ K $ is the kernel of $ f $.  Applying $ \otimes_{\C[q^\pm]} \C_1 $, we obtain the long exact sequence
$$
Tor_1^{\C[q^\pm]}(N, \C_1) \rightarrow K \otimes_{\C[q^\pm]} \C_1 \rightarrow M \otimes_{\C[q^\pm]} \C_1 \xrightarrow{f|_{q=1}} N \otimes_{\C[q^\pm]} \rightarrow 0
$$
Since $ N $ is free, $ Tor_1^{\C[q^\pm]}(N, \C_1) = 0 $.  On the other hand, since $ f|_{q=1} $ is injective, the map $Tor_1^{\C[q^\pm]}(N, \C_1) \rightarrow K \otimes_{\C[q^\pm]} \C_1 $ is surjective.  Thus $ K \otimes_{\C[q^\pm]} \C_1 = 0 $.  Since $ K $ is a submodule of a free module, it is free, and thus $ K = 0 $.
\end{proof}

\section{The structure of $\End_{\ASp[q^\pm]}(1^m)$ and cyclotomic Hecke algebras} \label{se:cyclotomic}

This section stands somewhat separate from the main results in the paper but may be of independent interest. As we noted earlier, the weight $1^m$ plays an important role in the categories we have studied. We will now discuss in a little more detail the structure of the $E[q^\pm]$-algebra $\End_{\ASp[q^\pm]}(1^m)$. 

\subsection{Affine Hecke algebras}

The annular braid group $AB_m$ has a natural quotient which is the affine Hecke algebra. We will consider a slight variant, denoted $\hH_m[q^\pm]$, which is the quotient of $E[q^\pm][AB_m]$ by the relation
$$T_i^2 = (q-q^{-1})T_i + 1 \ \ \text{ for } i=1,\dots,m-1.$$
As usual, we denote by $\hH_m(q)$ the same algebra over $\C(q)$ and by $\hH_m$ the specialization to $q=1$. Note that $\hH_m \cong E[AS_m]$ where $AS_m$ is the affine symmetric group.

\begin{Remark}
The usual generators of the affine Hecke algebra use $qT_i$ instead of $T_i$ which means that the Hecke relation becomes the more familiar $T_i^2 = (q^2-1)T_i + q^2$ while the relation $T_iX_iT_i = X_{i+1}$ becomes $T_iX_iT_i = q^2X_{i+1}$.
\end{Remark}

Now consider the composition
$$\hH_m[q^\pm] \rightarrow \End_{\ASp[q^\pm]}(1^m) \rightarrow \End_{\Sym(\ux)}(R^m_{n,[q^\pm]}).$$
By Proposition \ref{pr:ABsurjective} this composition, denoted $\chi$, is surjective if we tensor over $\C(q)$. Moreover, the second map is an isomorphism over $\C(q)$. We conjecture that both these results also hold over $\C[q^\pm]$. So to understand $\End_{\ASp[q^\pm]}(1^m)$ we need some description of the kernel of $\chi$.

\subsection{When $q=1$}

\begin{Conjecture}\label{conj:2}
The kernel of $\chi: E[AS_m] \rightarrow \End_{\Sym(\ux)}(R^m_n)$ is generated by
\begin{equation}\label{eq:conj2}
e_{sgn}(T_1, \dots, T_k)(h_{n-k} - e_1 h_{n-k-1} + e_2 h_{n-k-2} - \dots + (-1)^{n-k} e_{n-k})
\end{equation}
for $k=0, \dots, \min(n,m-1)$. Here $e_{sgn}(T_1, \dots, T_k) \in \C[S_k] \subset \C[S_m]$ is the sign idempotent and $h_j=h_j(X_1, \dots, X_{k+1})$ denotes the homogeneous symmetric function of degree $j$.
\end{Conjecture}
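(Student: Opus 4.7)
The plan is to prove Conjecture \ref{conj:2} in two steps: first check that each proposed generator lies in $\ker \chi$, and then show that the two-sided ideal $I$ they generate exhausts the kernel by matching ranks.

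For containment, the $k=0$ generator is $h_n(X_1) - e_1 h_{n-1}(X_1) + \cdots + (-1)^n e_n$, which maps to the defining relation of $R_n^m$. For $k \geq 1$, I would base-change to the ``splitting ring'' $\tilde{E} = E[y_1, \ldots, y_n]/(e_i - e_i(\underline{y}))$, which is faithfully flat of rank $n!$ over $E$, and decompose
\[ R_n^m \otimes_E \tilde{E} \cong \prod_{f \colon [m] \to [n]} \tilde{E}, \]
with $\Sym(\ux) \otimes \tilde{E}$ acting through its image $\prod_\mu \tilde{E}$ indexed by multisets $\mu$ of size $m$ in $[n]$. On the $f$-component, multiplication by $g_k(x_1,\ldots,x_{k+1}) := \sum_{j=0}^{n-k}(-1)^j e_j h_{n-k-j}$ extracts the coefficient of $t^{n-k}$ in
\[ \prod_{i=1}^{k+1}(1-y_{f(i)}t)^{-1} \cdot \prod_{j=1}^{n}(1-y_j t). \]
If $f|_{[k+1]}$ is injective, these factors cancel and the remaining polynomial has degree $n-k-1$, so the coefficient vanishes. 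Otherwise some $f(i) = f(j)$ with $i < j \leq k+1$, and the $f$-component is invariant under the transposition $(i\,j) \in S_{k+1}$, so the antisymmetrizer $e_{\mathrm{sgn}}(T_1,\ldots,T_k)$ annihilates it. Faithful flatness of $\tilde{E}$ over $E$ then gives that the element lies in $\ker\chi$.

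For the reverse containment, the same splitting identifies $R_n^m \otimes \tilde{E}$ as a free $\Sym(\ux)\otimes \tilde{E}$-module of rank $\binom{m}{m_1,\ldots,m_n}$ on each $\mu$-component, so
\[ \End_{\Sym(\ux)}(R_n^m) \otimes_E \tilde{E} \cong \prod_\mu M_{\binom{m}{m_1,\ldots,m_n}}(\tilde{E}), \]
and $\End_{\Sym(\ux)}(R_n^m)$ is free over $E$ of rank $D_{m,n} := \sum_\mu \binom{m}{m_1,\ldots,m_n}^2$. Since $\chi$ is surjective by Corollary \ref{cor:technical}, it suffices to exhibit an $E$-spanning set of $E[AS_m]/I$ of size $D_{m,n}$.

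The main obstacle is constructing this spanning set. After the $k=0$ relation (and its $S_m$-conjugates, automatic from two-sidedness via $T_i X_j T_i = X_{s_i(j)}$ at $q=1$) the ambient algebra collapses to $E[S_m] \ltimes R_n^m$ of rank $m! \cdot n^m$. I would then stratify monomials $X^{\underline{a}} w$ by the multiset $\mu$ of $\underline{a}$ and use the $k\geq 1$ relations, together with the commutation rules in $E[AS_m]$, to rewrite every monomial in terms of distinguished representatives indexed by pairs of standard tableaux (equivalently, pairs of functions $[m]\to[n]$ sharing the multiset $\mu$). The generating-function identity above is the guide: the injective case---where $g_k$ acts by zero on $R_n^m\otimes\tilde{E}$---supplies the honest relations that collapse ``rank--$(k+1)$-independent'' data, while the non-injective case records the antisymmetrizer symmetries that identify monomials in pairs. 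Once the spanning set is shown to have cardinality $D_{m,n}$, surjectivity of $\chi$ combined with the freeness of $\End_{\Sym(\ux)}(R_n^m)$ forces it to be an $E$-basis of the quotient, proving the conjecture. The delicate combinatorial step is this final tableau bookkeeping; here the cyclotomic affine Hecke/$q$-Schur algebra perspective suggested by the section title should provide the cleanest organizing framework.
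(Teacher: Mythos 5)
Note first that the statement you are attempting is labeled a \emph{Conjecture} in the paper (Conjecture~\ref{conj:2}); the authors do not prove it and only verify the two boundary cases, $k=0$ (where the generator reduces to the Cayley--Hamilton relation defining $R_n^m$) and $k=n$ with $m>n$ (where it reduces to the Schur--Weyl relation $e_{\mathrm{sgn}}(T_1,\dots,T_n)=0$). So there is no proof in the paper to compare against, and your proposal should be judged as an attempt at an open problem. Your containment direction $I\subseteq\ker\chi$ is a genuine improvement over the paper's evidence: the base change to the splitting ring $\tilde E$ together with the generating-function dichotomy (cancellation when $f|_{[k+1]}$ is injective versus annihilation by the antisymmetrizer when it is not) handles all $k$ uniformly, which the paper does not do. One small point to tighten: the decomposition $R_n^m\otimes_E\tilde E\cong\prod_f\tilde E$ only holds after localizing away from the discriminant locus, since the $y_\ell$ coincide over the coinvariant fibre; but because $R_n^m$ and $\End_{\Sym(\ux)}(R_n^m)$ are free over $E$, it suffices to verify the vanishing after passing to the fraction field of $E$, where the splitting is honest.

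The genuine gap is in the reverse containment. You correctly compute the target rank $D_{m,n}=\sum_\mu\binom{m}{m_1,\dots,m_n}^2$ and observe that, given surjectivity of $\chi$ from Corollary~\ref{cor:technical}, the problem reduces to exhibiting an $E$-spanning set of $E[AS_m]/I$ of cardinality $D_{m,n}$. But your proposal stops precisely there: the ``tableau bookkeeping'' that would construct such a spanning set and control its size is the actual content of the conjecture, and you offer no argument for why the listed relations and their two-sided consequences collapse $E[AS_m]$ to the required rank. Without that step this is a plausible strategy, not a proof---which is consistent with the statement remaining open in the paper.
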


This conjecture interpolates between the Cayley-Hamilton theorem and Schur-Weyl duality. This is because when $k=0$ the relation in (\ref{eq:conj2}) becomes
$$X_1^{n} - e_1 X_1^{n-1} + \dots \pm e_{n-1} X_1 \mp 1 = 0.$$
However, under the isomorphism
$$R^m_n \cong \Maps_{SL_n}(SL_n, \End((\C^n)^{\otimes m}))$$
the image of $\chi(X_1^{n} - e_1 X_1^{n-1} + \dots \pm e_{n-1} X_1 \mp 1)$ is the map $g \mapsto p(g) \otimes I \otimes \cdots \otimes I$, where $p(y)$ is the characteristic polynomial of $ g $. Thus, this relation holds by Cayley-Hamilton theorem.

On the other hand, when $k=n$ (assuming $m > n$), then the relation in (\ref{eq:conj2}) becomes just $e_{sgn}(T_1, \dots, T_n)$ which, by Schur-Weyl duality, generates the kernel of $\C[S_m] \rightarrow \End((\C^n)^{\otimes m})$ and thus also holds in $Maps_{SL_n}(SL_n, \End((\C^n)^{\otimes m})$.

\subsection{Over $\C(q)$}

Define $e_j^{(i)} \in \hH_m[q^\pm]$ recursively using
\begin{equation}\label{eq:e'}
e_j^{(i)} := e_j^{(i-1)} + (q^{2}-1)[X_{i-1} e_{j-1}^{(i-1)} - X_{i-1}^{2} e_{j-2}^{(i-1)} + X_{i-1}^{3} e_{j-3}^{(i-1)} - \dots].
\end{equation}
This follows the definition of $e_j^{(i)} \in R^m_{n,[q^\pm]}$ from section \ref{sec:qDefR}.

\begin{Conjecture}\label{conj:1}
The kernel of $\chi: \hH_m(q) \rightarrow \End_{\Sym(\ux)}(R^m_{n,(q)})$ is generated by
\begin{equation}\label{eq:1}
X_i^{n} - e_1^{(i)} X_i^{n-1} + e_2^{(i)} X_i^{n-2} - \dots + (-1)^n e_n^{(i)} = 0
\end{equation}
for $i=1, \dots, m$.
\end{Conjecture}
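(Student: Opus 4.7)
The plan is to upgrade the surjection $\chi$ of Proposition~\ref{pr:ABsurjective} to an isomorphism by matching $E(q)$-ranks, where $J$ denotes the two-sided ideal generated by the elements of~(\ref{eq:1}). The inclusion $J \subseteq \ker(\chi)$ is immediate: under $\chi$, $X_i$ becomes multiplication by $x_i$ and the elements $e_j^{(i)} \in \hH_m(q)$ defined by the recursion in Section~9 map to their counterparts $e_j^{(i)} \in R^m_{n,(q)}$ via the same recursion, so each generator of $J$ hits a defining relation of the target. Thus $\chi$ descends to a surjection $\bar\chi \colon \hH_m(q)/J \twoheadrightarrow \End_{\Sym(\ux)}(R^m_{n,(q)})$, and the task reduces to showing that the two sides have equal $E(q)$-rank.

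For the right-hand rank, a generic-fiber analysis of $\Spec R^m_{n,(q)} \to \Spec E(q)$ shows that the $n^m$ closed points organize (via the quantum twist built into the recursive $e_j^{(i)}$) into $\Sym(\ux)$-orbits parametrized by compositions $\underline{k}=(k_1,\dots,k_n)$ with $\sum k_i = m$, the orbit of type $\underline{k}$ having cardinality $\binom{m}{k_1,\dots,k_n}$. For $n=m=2$ this can be verified explicitly: the four points $(\zeta_1,q^2\zeta_1)$, $(\zeta_1,\zeta_2)$, $(\zeta_2,q^2\zeta_2)$, $(\zeta_2,\zeta_1)$ of $\Spec R^2_{2,(q)}$ have $\Sym(\ux)$-orbits of sizes $1,1,2$. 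In general this yields
\[
\dim_{E(q)} \End_{\Sym(\ux)}(R^m_{n,(q)}) \;=\; \sum_{k_1+\cdots+k_n=m} \binom{m}{k_1,\dots,k_n}^2.
\]

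For the upper bound on $\hH_m(q)/J$, I begin from the Bernstein PBW basis $\{X_1^{a_1}\cdots X_m^{a_m} w : a_i \in \mathbb{Z},\, w \in H_m(q)\}$. A short induction on $i$ (in the spirit of Lemma~\ref{lem:5}) shows $e_n^{(i)} \equiv q^{2(i-1)} \pmod{J}$, a nonzero scalar, so the cyclotomic relations reduce each $X_i$ to degree $<n$ and $X_i^{-1}$ becomes expressible in nonnegative powers; this yields the naive upper bound $n^m \cdot m!$. The decisive step is to derive further relations of the form $y\,(T_i-q) = 0$ in $\hH_m(q)/J$, where $y \in \Sym(\ux)$ vanishes on non-generic orbits. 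In the $n=m=2$ toy case one verifies the identity
\[
T_1\, p_1(X_1) \;=\; p_1(X_2)\, T_1 - (q-q^{-1}) X_2 (X_1+X_2-e_1)
\]
in $\hH_m(q)$ using $T_1 X_1 = X_2 T_1 - (q-q^{-1}) X_2$, and combines it with the congruence $p_1(X_2) \equiv (q^2-1)(X_1 X_2 - 1) \pmod{J}$ (from $p_1(X_2) - p_2(X_2) \in J$) to first produce $(X_1 X_2 - 1)(T_1 - q) = 0$, and then, via $X_2 T_1 = (e_1 - X_1) T_1 + q(X_1+X_2-e_1)$, deduces $(X_1+X_2-e_1)(T_1-q) = 0$. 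These two extra relations cut the $8$-element PBW basis down to $\{1, X_1, X_2, X_1 X_2, T_1, X_1 T_1\}$, matching the image dimension $6 = 1+1+4$.

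The main obstacle is to carry out this reduction systematically for general $(m,n)$: for each non-top-dimensional $\Sym(\ux)$-orbit one must produce a family of relations in $J$ that together cut the naive $n^m \cdot m!$ basis down to $\sum_{\underline k}\binom{m}{\underline k}^2$. My approach would be induction on $m$, introducing the new generators $T_{m-1}$ and $X_m$ with the cyclotomic $p_m(X_m)=0$, and propagating the required extra relations by iterating the Bernstein-Lusztig commutation $T_i X_i = X_{i+1} T_i - (q-q^{-1}) X_{i+1}$ against the inductive hypothesis. An attractive alternative is to identify $\hH_m(q)/J$ with a known truncation of an affine $q$-Schur algebra of type $(n,m)$, importing its dimension formula from the Beck-Nakajima/Varagnolo-Vasserot literature. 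Once the matching upper bound is in hand, the surjection $\bar\chi$ is forced to be an isomorphism by a rank count, proving Conjecture~\ref{conj:1}.
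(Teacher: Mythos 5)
The statement you have attempted to prove is labelled as a \textbf{Conjecture} in the paper (Conjecture~\ref{conj:1}), and the authors provide no proof. The surrounding text only records partial evidence: that conjugating the $i=1$ relation in~(\ref{eq:1}) by $T_1$ yields the $i=2$ relation plus an extra term divisible by $q-1$, that one cannot impose only the $i=1$ relation (unlike ordinary cyclotomic Hecke algebras), and that the statement fails over $\C[q^\pm]$ and at $q=1$. So there is no ``paper's own proof'' to compare against; what you have written should be assessed as an independent attempt.

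Viewed on those terms, your overall strategy --- establish $J\subseteq\ker(\chi)$, invoke Proposition~\ref{pr:ABsurjective} for surjectivity, and then force injectivity by an $E(q)$-rank count --- is a natural one, and the inclusion $J\subseteq\ker(\chi)$ is indeed immediate. But there are two genuine gaps, one of which you identify yourself. First, the rank of the target: your claim that over the algebraic closure of the generic point of $\Spec E(q)$ the fiber of $\Spec R^m_{n,(q)}\to\Spec\Sym(\ux)$ has orbits of size $\binom{m}{k_1,\dots,k_n}$ is only verified for $n=m=2$, and for general $(m,n)$ the combinatorics of the quantum-twisted recursion defining $e_j^{(i)}$ is not obviously this simple; you would need to show that each geometric fiber of $R^m_{n,(q)}$ is reduced with exactly $n^m$ points, and then trace through how $\Sym(\ux)$ separates them. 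Second, and more seriously, the matching upper bound on $\dim_{E(q)}\hH_m(q)/J$: you derive the naive $n^m\cdot m!$ bound from PBW and the cyclotomic relations (using $e_n^{(i)}\equiv q^{2(i-1)}\pmod J$ to invert $X_i$), and then manufacture extra relations of the form $y(T_i-q)=0$ ad~hoc for $n=m=2$; but the inductive scheme you sketch for general $(m,n)$ --- propagating relations through Bernstein--Lusztig commutations --- is precisely the hard part, and nothing in the proposal shows that the resulting relations cut the rank down to $\sum_{\underline{k}}\binom{m}{\underline{k}}^2$ rather than overshooting or undershooting. Until either that combinatorial reduction is carried out, or your alternative route via an identification of $\hH_m(q)/J$ with an affine $q$-Schur algebra truncation (for which a dimension formula is available in the Beck--Nakajima or Varagnolo--Vasserot literature) is made precise, this remains an outline of a plausible attack rather than a proof. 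That said, it is a reasonable outline, and the affine $q$-Schur identification in particular seems like the most promising way to close the gap.
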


Some remarks are in order. If one conjugates, for instance, the relation in (\ref{eq:1}) when $i=1$ by $T_1$ then one obtains the relation in (\ref{eq:1}) when $i=2$ plus an extra term. This extra term contains a factor of $q-1$ but after dividing by $q-1$ gives us another relation. One can repeat this argument to get new relations, such as the ones in (\ref{eq:conj2}), but it seems only as long as $q$ is not a root of unity.

Thus, Conjecture \ref{conj:1} is not true over $\C[q^\pm]$ and, in particular, when $q=1$. Moreover, although it might be tempting to only impose relation (\ref{eq:1}) when $i=1$ (like for the usual cyclotomic affine Hecke algebras), this is also incorrect.

\subsection{Over $\C[q^\pm]$}

\begin{Conjecture}\label{conj:4}
The kernel of $\chi: \hH_m[q^\pm] \rightarrow \End_{\Sym(\ux)}(R^m_{n,[q^\pm]})$ is generated by
\begin{equation}\label{eq:conj4}
[k+1]! \cdot e_{sgn}(T_i, \dots, T_{i+k-1}) \left(h_{n-k} - e_1^{(i)} h_{n-k-1} + \dots + (-1)^{n-k} e_{n-k}^{(i)} \right)
\end{equation}
for $k=0, \dots, \min(n,m-1)$ and $ i = 1, \dots, m-k$.
\end{Conjecture}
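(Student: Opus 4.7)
The plan is to establish Conjecture \ref{conj:4} in two stages: first, to verify that the listed elements lie in $\ker(\chi)$; second, to show that they generate this kernel as a two-sided ideal of $\hH_m[q^\pm]$.

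\textbf{Stage 1 (Membership).} For $k=0$, the relation (\ref{eq:conj4}) reads $X_i^n - e_1^{(i)}X_i^{n-1} + \dots + (-1)^n e_n^{(i)}$, which is exactly the defining relation of $R^m_{n,[q^\pm]}$ at position $i$ under the identification $\chi(X_i) = x_i$ and $\chi(e_j^{(i)}) = e_j^{(i)}$, so it vanishes. For $k \ge 1$, I would analyze the action of the sign idempotent $e_{sgn}(T_i, \dots, T_{i+k-1})$ on $R^m_{n,[q^\pm]}$: since $T_j$ acts $\Sym(\ux)$-linearly with $T_j(1)=q$ and $T_j(x_j) = q^{-1}x_{j+1}$, the idempotent acts as a $q$-antisymmetrizer in the variables $x_i, \dots, x_{i+k}$. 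The key observation is that $h_{n-k} - e_1^{(i)}h_{n-k-1} + \dots + (-1)^{n-k}e_{n-k}^{(i)}$ can be obtained from the $k=0$ relations at positions $i, i+1, \dots, i+k$ by a polynomial manipulation that, after antisymmetrization in $x_i, \dots, x_{i+k}$, telescopes to zero. The factor $[k+1]!$ emerges as the quantum multiplicity coming from the $[k+1]!$ distinct orderings of these $k+1$ instances.

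\textbf{Stage 2 (Generation).} Let $J \subseteq \hH_m[q^\pm]$ denote the two-sided ideal generated by the elements in (\ref{eq:conj4}); Stage 1 gives $J \subseteq \ker(\chi)$. To prove equality, I would mimic the freeness/specialization argument from Lemma \ref{le:FreeInjective}: produce an explicit $E[q^\pm]$-spanning set for $\hH_m[q^\pm]/J$ (ideally indexed by pairs $(T,T')$ of same-shape standard Young tableaux with $m$ boxes and at most $n$ rows, matching the $\alpha$-partition of $Z(1^m,1^m)$ in Section \ref{sec:TA}), show that these span, and show independence after specialization at $q=1$. At $q=1$, $[k+1]!$ becomes $(k+1)!$, and $(k+1)! \cdot e_{sgn}(T_i,\dots,T_{i+k-1})$ reduces to the unnormalized antisymmetrizer, so $J|_{q=1}$ coincides with the ideal of Conjecture \ref{conj:2}. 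Combined with Theorem \ref{thm:Isoq1} and Theorem \ref{th:ApplyKostant}, which together identify $\End_{\Sym(\ux)}(R^m_n)$ with $\End_{\ASp}(1^m)$ and certify surjectivity of $\chi|_{q=1}$, the $q=1$ case of Conjecture \ref{conj:4} then follows from Conjecture \ref{conj:2}. A freeness-plus-specialization argument propagates the result to all of $\C[q^\pm]$.

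\textbf{Main obstacle.} The principal difficulty is freeness of $\hH_m[q^\pm]/J$ over $\C[q^\pm]$ together with the matching rank count. Unlike on the geometric side, where freeness of $K^\A(Z(1^m,1^m))$ follows from property $T_\A$ and the $\alpha$-partition of Section \ref{sec:TA}, there is no direct structural source of freeness for the algebraic quotient, so one must exhibit an explicit $\C[q^\pm]$-basis and verify both spanning and independence. A secondary (but genuine) obstacle is that the argument above is only conditional on Conjecture \ref{conj:2}; an unconditional proof at $q=1$ seems to require organizing the full tower of ``Cayley--Hamilton meets Schur--Weyl'' identities uniformly across $k = 0, \dots, \min(n, m-1)$, which the paper explicitly leaves open. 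A third technical hurdle is the precise divisibility by $[k+1]!$ in Stage 1: one must verify, perhaps by induction on $k$, that the naive $q$-antisymmetrization of the $k=0$ relation produces exactly $[k+1]!$ copies of the conjectured element rather than a linear combination involving lower-$k$ terms.
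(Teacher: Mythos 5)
This is one of the paper's explicitly open statements (it is labelled a Conjecture), so there is no paper proof for your proposal to match; what the paper does supply is an argument for one direction only, namely that the elements in (\ref{eq:conj4}) lie in $\ker\chi$, and that argument is different in kind from yours. The paper factors $\chi$ as $\hH_m[q^\pm]\xrightarrow{\delta} K^{\A}(Z(1^m,1^m))\xrightarrow{S}\End_{\Sym(\ux)}(R^m_{n,[q^\pm]})$ and shows the image under $\delta$ is already zero. The two geometric ingredients are: (a) an identification of $\delta\bigl([k+1]!\, e_{sgn}(T_i,\dots,T_{i+k-1})\bigr)$ with the convolution $[\sP^L]*[\sP]$ of a kernel supported on the subvariety $X_i^k(1^m)\subset Y(1^m)$ cut out by $zL_{i+k}\subset L_{i-1}$ (this follows from the categorical $\sl_m$ action and the main result of \cite{CKM}), and (b) the observation that on $X_i^k(1^m)$ the bundle $z^{-1}L_{i-1}/L_{i+k}$ has rank $n-k-1$, so its $(n-k)$-th exterior power vanishes; expanding $\Alt{n-k}(A-B)$ in K-theory then produces precisely the polynomial $h_{n-k}-e_1^{(i)}h_{n-k-1}+\dots+(-1)^{n-k}e_{n-k}^{(i)}$. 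This yields the $[k+1]!$ factor for free and avoids any antisymmetrization bookkeeping.

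Your Stage 1, by contrast, is a purely algebraic ``antisymmetrize the $k=0$ relations and watch it telescope'' sketch, and as you yourself flag, it is not clear that it works: the $T_j$ act as $q$-deformed Demazure--Lusztig-type operators, not permutations, so the interaction of $e_{sgn}(T_i,\dots,T_{i+k-1})$ with the $k=0$ relations at neighbouring positions is genuinely non-trivial, and the claim that one recovers exactly $[k+1]!$ times the conjectured element rather than a combination with lower-$k$ corrections is not established. If you want an algebraic Stage 1, the natural route is to transport the paper's geometric argument across Lemma \ref{lem:ktheory}, but the vanishing input (rank $n-k-1$) has no obvious intrinsic reformulation in $\End_{\Sym(\ux)}(R^m_{n,[q^\pm]})$ without reintroducing the geometry.

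Your Stage 2 (generation) is honest about being conditional, but it is worth stressing that this makes the proposal a roadmap rather than a proof: Conjecture \ref{conj:2} is itself unproven in the paper, so your reduction trades one open statement for another. The other missing ingredient you correctly identify — freeness of $\hH_m[q^\pm]/J$ over $\C[q^\pm]$ with a matching rank count — is exactly the place where the geometric side has a structural advantage (property $T_\A$ and the $\alpha$-partition of $Z(1^m,1^m)$), and no algebraic substitute is offered here or in the paper. Even granting both, the specialization argument would need $\hH_m[q^\pm]/J$ to have no $q$-torsion, which is again a freeness-type hypothesis that is not automatic.
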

\begin{Remark}
Here $[k+1]! \cdot e_{sgn}(T_i, \dots, T_{i+k-1}) \in \H_m[q^\pm]$ is the sign quasi-idempotent inside the finite Hecke algebra where the sign representation corresponds to $T_i$ acting by $-q^{-1}$. Moreover, $h_j=h_j(X_i, \dots, X_{i+k})$ is the homogeneous symmetric function of degree $j$.
\end{Remark}

Let us explain why the term in (\ref{eq:conj4}) belongs to the kernel of
$$ \chi :  \hH_m[q^\pm] \rightarrow \End_{\Sym(\ux)}(R^m_{n,[q^\pm]}).$$
The map $\chi$ can be factored as
$$\hH_m[q^\pm] \xrightarrow{\delta} K^{\A}(Z(1^m,1^m)) \xrightarrow{S} \End_{\Sym(\ux)}(R^m_{n,[q^\pm]}).$$
Thus it is enough to show that the image of (\ref{eq:conj4}) under $\delta$ is zero.

In \cite{CKL,Ca} we constructed a categorical $\sl_m$ action on $\oplus_\uk Y(\uk)$ where $\uk = (k_1, \dots, k_m)$ with $\sum_i k_i = m$. Consider the composition
\begin{equation}\label{eq:6}
E_{i+k-1}^{(k)} \dots E_{i+1}^{(2)} E_i: K^\A(Y(1^m)) \rightarrow K^\A(Y(1^{i-1},k+1,1^{m-k})).
\end{equation}
On the one hand, it is fairly straight-forward to compute this composition. More precisely, consider the subvariety $X_i^k(1^m) \subset Y(1^m)$ given by the locus where $zL_{i+k} \subset L_{i-1}$ and the associated diagram
\begin{equation}\label{eq:4}
\begin{CD}
X_i^k(1^m) @>i>> Y(1^m)  \\
@VqVV \\
Y(1^{i-1},k+1,1^{m-k})
\end{CD}
\end{equation}
Here $i$ is inclusion and $q$ the projection which forgets $L_i,L_{i+1}, \dots, L_{i+k-1}$. Thus we can consider $X_i^k(1^m) \subset Y(1^m) \times Y(1^{i-1},k+1,1^{m-k})$ via the embedding  $i \times q$. Then the functor in (\ref{eq:6}) is induced by the kernel
$$[\sP] := [\cO_{X_i^k(1^m)} \otimes \sL] \in K^\A(Y(1^m) \times Y(1^{i-1},k+1,1^{m-k}))$$
where $\sL$ is some line bundle (this line bundle can be explicitly identified but we do not need to know it precisely).

On the other hand, we have
\begin{align*}
K^\A(Y(1^m)) & \cong (\C^n)^{\otimes(i-1)} \otimes (\C^n)^{\otimes k} \otimes (\C^n)^{\otimes(m-k)} \\
K^\A(Y(1^{i-1},k+1,1^{m-k})) &\cong (\C^n)^{\otimes(i-1)} \otimes \Alt{k}(\C^n) \otimes (\C^n)^{\otimes(m-k)}
\end{align*}
and the main result of \cite{CKM} implies that the composition in (\ref{eq:6}) is induced by the (quasi)projection $(\C^n)^{\otimes k} \rightarrow \Alt{k}(\C^n)$ corresponding to the sign representation. It then follows that (up to a power of $q$)
$$[\sP^L] * [\sP] \cong \delta([k+1]! \cdot e_{sgn}(T_i, \dots, T_{i+k-1})) \in K^\A(Y(1^m) \times Y(1^m))$$
where $\sP^L$ denotes the left adjoint of $\sP$.

This gives a geometric interpretation of $\delta([k+1]! \cdot e_{sgn})$. Thus relation (\ref{eq:conj4}) follows if we can show that
\begin{equation}\label{eq:3}
i^* \delta \left(h_{n-k} - e_1^{(i)} h_{n-k-1} + \dots + (-1)^{n-k} e_{n-k}^{(i)} \right) = 0
\end{equation}
inside $K^\A(X_i^k(1^m))$. Using the exact sequence
$$0 \rightarrow L_{i+k}/L_{i-1} \rightarrow z^{-1} L_{i-1}/L_{i-1} \rightarrow z^{-1} L_{i-1}/L_{i+k} \rightarrow 0$$
we see that $[(z^{-1} L_{i-1}/L_{i+k})^\vee] = e_1^{(i)} - [(L_{i+k}/L_{i-1})^\vee]$. On the other hand, $$\bigwedge^{n-k}(z^{-1} L_{i-1}/L_{i+k})^\vee = 0,$$ since $\dim (z^{-1} L_{i-1}/L_{i+k}) = n-k-1$. Moreover, $[(L_{i+k}/L_{i-1})^\vee] = X_i + \dots + X_{i+k}$. Then (\ref{eq:3}) follows from the general fact that
$$\Alt{r}(A-B) = \Alt{r}(A) - \Alt{r-1}(A) \cdot B + \Alt{r-2}(A) \cdot \Sym^2(B) - \Alt{r-3}(A) \cdot \Sym^3(B) + \dots$$
and that $\Sym^j(X_i + \dots + X_{i+k}) = h_j$.

\section{Comparison with the Steinberg variety}

Denote by $\B$ the full flag variety of $SL_n$ and $Z$ the associated Steinberg variety. Since the Steinberg variety is the fibre product of $T^* \B$ with itself over its affinization its equivariant K-theory $K^{\A}(Z)$ is equipped with a convolution product. The following result is due to Ginzburg \cite[Theorem 7.2.5]{CG} and Kazhdan-Lusztig \cite{KL}.

\begin{Proposition}\label{prop:CG}
There exists an isomorphism of algebras between $K^\A(Z)$ and the affine Hecke algebra $\bhH_n[q^\pm]$.
\end{Proposition}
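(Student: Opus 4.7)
The plan is to follow the standard Ginzburg/Kazhdan-Lusztig approach, adapted to the $SL_n$ setting at hand. First I would construct the algebra map $\bhH_n[q^\pm] \to K^{\A}(Z)$ by specifying the images of the Bernstein generators. For a weight $\lambda$ of $SL_n$, let $\sL_\lambda$ denote the associated line bundle on $\B$; then I define the image of a lattice generator $X_\lambda$ to be $\Delta_*\sL_\lambda$, where $\Delta : T^*\B \hookrightarrow Z$ is the diagonal. For the finite Hecke generator $T_i$ I would use the correspondence $Z_i := T^*\B \times_{T^*\sP_i} T^*\B$, where $\sP_i$ is the partial flag variety obtained by forgetting the $i$-th flag, and define the image of $T_i$ as a structure sheaf on $Z_i$ twisted by an appropriate line bundle chosen so that the Hecke quadratic relation $T_i^2 = (q-q^{-1}) T_i + 1$ holds.

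Next I would verify the defining relations of the affine Hecke algebra. The braid relations among the $T_i$ and the commutation relations $X_\lambda X_\mu = X_\mu X_\lambda$ are essentially formal from the geometry. The quadratic Hecke relation reduces to an explicit calculation on a single $\P^1$-fibration, analogous to the computation in the proof of Lemma 9.2 above: $Z_i$ is a $\P^1 \times \P^1$ bundle over $T^*\sP_i$, and the self-convolution $[\cO_{Z_i}] * [\cO_{Z_i}]$ can be computed using a Koszul resolution. The key nontrivial relation is the Bernstein-Lusztig relation
\[
T_i \cdot X_\lambda - X_{s_i \lambda} \cdot T_i = (q-q^{-1}) \tfrac{X_\lambda - X_{s_i\lambda}}{1 - X_{-\alpha_i}},
\]
which I would check by reducing to a rank-one calculation on the $\P^1$-fiber, similar to the argument used in the proof of Lemma 9.2 for $T_i$ in $\End_{\ASp[q^\pm]}(1^m)$.

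Finally, to show that the map is an isomorphism I would use a cellular fibration argument. The Steinberg variety $Z$ admits an $\alpha$-partition by the conormal bundles $\Lambda_w := \overline{T^*_{Y_w} (\B \times \B)}$ to the $G$-orbits $Y_w \subset \B \times \B$ indexed by $w \in W$. Each $\Lambda_w$ is a vector bundle over the Schubert cell $Y_w$, so by repeated application of the results of section 9.2 (essentially Thomason's theorem together with the projective bundle formula), $K^{\A}(Z)$ is free over $K^{\A}(\pt) = E[q^\pm]$ with one generator per $w \in W$ times the $E^\vee$-rank of $K^{\A}(T^*\B) = K^{\A}(\B)$. Comparing this with the Bernstein basis $\{ T_w X_\lambda : w \in W, \lambda \in \Lambda \}$ of $\bhH_n[q^\pm]$ gives the isomorphism.

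The main obstacle will be verifying the Bernstein-Lusztig relation cleanly in the convolution formalism; this requires keeping careful track of line bundle twists and $\C^\times$-shifts arising from the symplectic form on $T^*\B$ (in our conventions, the weight $\{2\}$ appearing in Lemma 9.2). Once this relation is established and the cellular decomposition is in place, the comparison of free bases over $E[q^\pm]$ is formal.
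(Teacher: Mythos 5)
The paper does not actually prove this proposition; it is stated with a citation to Ginzburg \cite[Theorem 7.2.5]{CG} and Kazhdan--Lusztig \cite{KL}, and your sketch is a faithful outline of the standard argument in those references (Bernstein generators, quadratic and Bernstein--Lusztig relations, cellular fibration by conormal varieties). The one point to shore up is the final step: ``comparing with the Bernstein basis gives the isomorphism'' is not quite a proof, because an $E[q^\pm]$-linear map between free modules of equal finite rank need not be an isomorphism. In the Chriss--Ginzburg treatment one shows that the map $\bhH_n[q^\pm]\to K^{\A}(Z)$ is \emph{surjective} by descending inductively over the $\alpha$-partition of $Z$ by the open conormal strata $T^*_{Y_w}(\B\times\B)$ to the $G$-orbits $Y_w$, verifying at each stage that the classes $\Delta_*\sL_\lambda$ together with convolutions by $[\cO_{Z_{s_i}}]$ generate the successive quotients; injectivity then follows from the rank count (or from a localization to torus fixed points). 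With that filled in, your argument is the same one the paper delegates to \cite{CG}. (Two small typos: the $\alpha$-partition pieces should be the open conormal strata $T^*_{Y_w}(\B\times\B)$, not their closures $\Lambda_w$, and the rank over $K^{\A}(\pt)=E[q^\pm]$, not ``$E^\vee$-rank,'' is the one that matters.)
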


In the result above $\bhH_n[q^\pm]$ denotes the affine Hecke algebra generated over $\C[q^\pm]$ by the $T_i$'s and $X_i$'s with the usual relations but without the $e_i$'s. In other words, $\bhH_n[q^\pm] = \hH_n[q^\pm] \otimes_{E} \C$. We would like to explain now the relation between Proposition \ref{prop:CG} and our results.

We will need to consider only the special case $m=n$. In this case we have varieties $Y(1^n)$ and $Z(1^n,1^n)$.

\begin{Lemma}
We have natural $\A$-equivariant open embeddings $i: T^* \B \rightarrow Y(1^n)$ and $j: Z \rightarrow Z(1^n,1^n)$.
\end{Lemma}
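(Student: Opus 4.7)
The plan is to construct the embedding $i$ explicitly, realize it as the inclusion of a naturally defined open subvariety of $Y(1^n)$, and then derive $j$ by pullback.

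First I would define $i$ as follows. Given a pair $(F_\bullet, x) \in T^*\B$, where $F_\bullet$ is a complete flag in $\C^n$ and $x \in \sl_n$ is nilpotent with $xF_i \subset F_{i-1}$, form the $\C$-linear map
$$\psi_x : \C^n \to \K^n, \qquad \psi_x(v) = (z-x)^{-1}v = \sum_{k=0}^{n-1} z^{-k-1} x^k(v),$$
which is a finite sum because $x$ is nilpotent. Then set $L_i := \cO^n + \cO \cdot \psi_x(F_i)$. The key identity $z\psi_x(v) = v + \psi_x(xv)$, combined with $xF_i \subset F_{i-1}$, immediately gives $zL_i \subset L_{i-1}$. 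The $\cO$-module generated by $\psi_x(v_1),\dots,\psi_x(v_i)$ modulo $L_{i-1}$ is generated by $[\psi_x(v_i)]$ and killed by $z$, so it is 1-dimensional over $\C$. Hence $(L_0 \subset L_1 \subset \dots \subset L_n)$ lies in $Y(1^n)$. The construction is manifestly $SL_n$-equivariant (via $\psi_{gxg^{-1}} \circ g = g \circ \psi_x$) and intertwines the natural $\C^\times$-actions (since loop rotation on $z$ matches the scaling weight on the cotangent fibers).

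Next I would characterize the image. For any $(L_\bullet) \in Y(1^n)$ the condition $zL_i \subset L_{i-1}$ forces $z^n L_n \subset L_0$, so $L_n/\cO^n$ embeds into $z^{-n}\cO^n/\cO^n = \bigoplus_{k=1}^n z^{-k}\C^n$ as a $z$-stable $n$-dimensional subspace. Define $Y^{\circ}(1^n) \subset Y(1^n)$ as the open locus where the composition
$$L_n/\cO^n \hookrightarrow z^{-n}\cO^n/\cO^n \twoheadrightarrow z^{-1}\C^n$$
(the projection onto the leading $z^{-1}$ summand) is an isomorphism. This is open since it is the non-vanishing locus of a determinant of a map of vector bundles. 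For $(L_\bullet)$ coming from $(F_\bullet, x)$, the element $[\psi_x(v)]$ has leading term $v \in z^{-1}\C^n$, so this projection is an iso, i.e.\ $i$ factors through $Y^{\circ}(1^n)$. Conversely, given $(L_\bullet) \in Y^{\circ}(1^n)$, the inverse of the projection gives a $\C$-linear splitting $\bar\psi : \C^n \xrightarrow{\sim} L_n/\cO^n$; transporting the $z$-action on $L_n/\cO^n$ along $\bar\psi$ defines a nilpotent operator $x : \C^n \to \C^n$, and setting $F_i := \bar\psi^{-1}(L_i/\cO^n)$ produces a flag with $xF_i \subset F_{i-1}$. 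A direct verification shows this is inverse to $i$: the $z$-invariance of $L_n/\cO^n$ forces the $z^{-k}$-component of $\bar\psi(v)$ to equal $z^{-k}x^{k-1}v$ for $k \geq 1$, recovering $\bar\psi = \psi_x$ modulo $\cO^n$. Thus $i$ is an isomorphism $T^*\B \xrightarrow{\sim} Y^{\circ}(1^n)$, giving an open embedding into $Y(1^n)$.

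For the embedding $j$, observe that the construction of $L_n$ depends only on $x$, not on $F_\bullet$: $L_n = \cO^n + \cO \cdot \psi_x(\C^n)$. Therefore the map $i \times i : T^*\B \times T^*\B \to Y(1^n) \times Y(1^n)$, which is an open embedding since $i$ is, pulls $Z(1^n,1^n) = \{L_n = L_n'\}$ back to the locus $\{x = x'\}$—using that the recovery step above shows $L_n$ determines $x$ in $Y^{\circ}(1^n)$. This pullback is precisely the Steinberg variety $Z = T^*\B \times_{\mathcal N} T^*\B$, so $j : Z \hookrightarrow Z(1^n,1^n)$ is an open embedding; $\A$-equivariance is inherited from $i$.

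The main potential obstacle is Step 2/3, verifying that the projection condition exactly characterizes the image and that the reconstruction of $x$ is canonical. This reduces to the following linear-algebra fact: any $z$-stable $n$-dimensional subspace $M \subset \bigoplus_{k=1}^n z^{-k}\C^n$ that projects isomorphically onto $z^{-1}\C^n$ is necessarily of the form $\{(v, xv, x^2v, \dots, x^{n-1}v) : v \in \C^n\}$ for a unique nilpotent $x$, which is a straightforward induction on the grading.
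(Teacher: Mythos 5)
Your proposal is correct and takes essentially the same route as the paper: both identify the image as the open locus of $Y(1^n)$ where $L_n/L_0$ projects isomorphically onto the $z^{-1}$-graded piece $\C^n$, and both obtain $j$ by matching fibre-product conditions ($L_m=L_m'$ with $x=x'$) under $i\times i$. The only (cosmetic) difference is the direction of construction: the paper builds the inverse map $U\to T^*\B$ by transferring $z$ and the flag across the projection isomorphism (deferring the verification to Ngo and Mirkovic--Vybornov), while you build $i$ explicitly via $\psi_x=(z-x)^{-1}$ and then invert it.
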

\begin{proof}
Let $\C^n = z^{-1}L_0/L_0$ and consider the space $\C^{n^2} = \C^n \otimes {\rm span}\{z^{-1}, \dots, z^{-n}\}$. Then
$$Y(1^n) = \{L_0 \subset \dots \subset L_n \subset \C^{n^2}: \dim(L_i/L_{i-1})=1, zL_i \subset L_{i-1}\}.$$
Now denote by $P: \C^{n^2} \rightarrow \C^n$ the projection which takes $\C^n \otimes z^{-i}$ to zero if $i > 1$ and onto $\C^n$ if $i=1$. Consider inside $Y(1^n)$ the open locus $U$ consisting of $L_\bullet$ such that $P(L_n) = \C^n$.

On $U$ we have an isomorphism $P: L_n \xrightarrow{\sim} \C^n$. Using this isomorphism we can transfer the map $z$ to a nilpotent endomorphism of $\C^n$ and we can transfer $ L_1, \dots, L_{n-1} $ to a flag in $ \C^n$. Thus we get a map $U \rightarrow T^* \B$. One can check that this map is actually an isomorphism.  This idea is originally due to Ngo \cite{Ngo} and it is also a special case of Theorem 3.2 in \cite{MVy}.

Under this isomorphism the map $Y(1^m) \rightarrow Gr$ which takes $L_\bullet \mapsto L_n$ corresponds to the affinization map of $T^* \B \rightarrow \sN$ to the nilpotent cone. It follows that $Z = T^* \B \times_\sN T^* \B$ naturally embeds into $Z(1^n,1^n) = Y(1^n) \times_{Gr} Y(1^n)$.
\end{proof}

Thus we have the following maps
$$\hH_n[q^\pm] \rightarrow K^\A(Z(1^n,1^n)) = \End_{\ASp[q^\pm]}(1^n) \xrightarrow{j^*} K^\A(Z) \cong \bhH_n[q^\pm]$$
where $j^*$ denotes the restriction on K-theory using $j$.

\begin{Proposition}
The composition map $\hH_n[q^\pm] \rightarrow \bhH_n[q^\pm]$ sends $T_i \mapsto T_i, X_i \mapsto X_i$ and
$$e_i^{(j)} \mapsto e_i(q^{2} X_1, q^{2} X_2, \dots, q^{2} X_{j-1}, X_j, \dots, X_n)$$
where the latter are elementary symmetric functions in the variables indicated.
\end{Proposition}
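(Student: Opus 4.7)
The plan is to verify the formula on generators of $\hH_n[q^\pm]$ as an $E[q^\pm]$-algebra, namely the $T_i$, $X_i$, and the central elements $e_k$. First, the composition is a ring homomorphism: $j: Z \hookrightarrow Z(1^n,1^n)$ is an open embedding, so $j^*$ commutes with the flat pullbacks and proper pushforwards used in convolution (by base change along the corresponding open embeddings of triple fibre products). Hence it suffices to trace each generator.

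For $X_i$ and $T_i$: the Ngo/Mirkovic--Vybornov open embedding $T^*\B \hookrightarrow Y(1^n)$ pulls the line bundle $(L_i/L_{i-1})^\vee$ back to $(F_i/F_{i-1})^\vee$, the dual tautological line on the flag variety. Hence $\Delta_*(L_i/L_{i-1})^\vee \in K^\A(Z(1^n,1^n))$ restricts to $\Delta_*(F_i/F_{i-1})^\vee \in K^\A(Z)$, which is exactly the image of $X_i$ under the Kazhdan--Lusztig--Ginzburg isomorphism. Similarly, the two-step correspondence on $Y(1^n)$ used in Lemma \ref{lem:commutes} to define $T_i$ in the convolution algebra restricts, under $j^*$, to the standard two-step correspondence on $T^*\B$ for the minimal parabolic of type $i$, yielding the Hecke generator $T_i$ in $\bhH_n[q^\pm]$.

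For the central element $e_k = e_k^{(1)}$: Lemma \ref{lem:imek} identifies its image in $K^\A(Z(1^n,1^n))$ with $\Delta_*[\wedge^{n-k}(\C^n)]$. Restricting to $T^*\B$ and applying the splitting principle for the tautological flag gives
$$[\wedge^{n-k}(\C^n)] = e_{n-k}\bigl([F_1/F_0],\ldots,[F_n/F_{n-1}]\bigr) = e_{n-k}(X_1^{-1},\ldots,X_n^{-1}).$$
Since $\det(\C^n)$ is trivial for $SL_n$ we have $\prod_i X_i = 1$, and the identity $e_{n-k}(y_1^{-1},\ldots,y_n^{-1}) = e_k(y_1,\ldots,y_n)/(y_1\cdots y_n)$ then yields $e_k \mapsto e_k(X_1,\ldots,X_n)$, establishing the $j=1$ case of the proposition.

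The general case follows by induction on $j$ using the recursion (\ref{eq:e1}) of Lemma \ref{lem:5}. Writing $f_i^{(j)}$ for the image of $e_i^{(j)}$, the ring homomorphism property forces the same recursion with $X_k$ in place of $x_k$. Assuming inductively that $f_i^{(j)} = e_i(q^2 X_1,\ldots,q^2 X_{j-1},X_j,\ldots,X_n)$, the inductive step reduces to the polynomial identity
$$e_i(\ldots,q^2 X_j,\ldots) - e_i(\ldots,X_j,\ldots) = (q^2-1)\,X_j\,e_{i-1}(\text{same list with } X_j \text{ omitted}),$$
combined with the iterated expansion $e_{i-1}(\text{list with } X_j \text{ omitted}) = \sum_{\ell \ge 0}(-X_j)^\ell f_{i-1-\ell}^{(j)}$, which matches the right-hand side of the recursion term by term. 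The main technical obstacle is convention matching in the Kazhdan--Lusztig--Ginzburg isomorphism --- one must verify that the specific line-bundle twists and correspondence orientations used in this paper agree with those in the standard presentation of $\bhH_n[q^\pm]$ --- but once that bookkeeping is done, the rest is a direct symbolic induction.
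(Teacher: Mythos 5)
Your proof is correct and follows essentially the same route as the paper: restrict along the open embedding $j$, identify $X_i$, $T_i$, and $e_k$ geometrically on $T^*\B$ (via the tautological flag and the correspondence for the minimal parabolic), and then propagate to $e_i^{(j)}$ by induction using the recursion from Lemma \ref{lem:5}. The only cosmetic difference is that you verify the inductive step with relation (\ref{eq:e1}) (requiring the iterated expansion of $e_{i-1}$), whereas the paper cites the equivalent but shorter two-term relation (\ref{eq:e2}).
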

\begin{proof}
Suppose $\B = \{0=V_0 \subset V_1 \subset \dots \subset V_n = \C^n\}$. Then it is clear $L_i/L_{i-1}$ is mapped to the natural bundle $V_i/V_{i-1}$ on $T^* \B$ which explains why $X_i \mapsto X_i$.

Next, notice that $T_i$ can be identified on the one hand with the structure sheaf of the fibre product $T^* \B \times_{(T^* \B)_i} T^* \B$ where $(T^* \B)_i$ is obtained from $T^* \B$ by forgetting $V_i$. On the other hand the image of $T_i$ in $K(Z(1^n,1^n))$ is identified with the structure sheaf of $Y(1^n) \times_{Y(1^n)_i} Y(1^n)$ where $Y(1^n)_i$ is obtained from $Y(1^n)$ by forgetting $L_i$. It is then easy to check that
$$j^{-1}((1^n) \times_{Y(1^n)_i} Y(1^n)) = T^* \B \times_{(T^* \B)_i} T^* \B.$$
This explains why $T_i \mapsto T_i$.

Finally, on $T^* \B$ we have $\sum_i X_i = [\C^n]^\vee$ and more generally $e_i(X_1, \dots, X_n) = [\Alt{i}(\C^n)^\vee]$. This shows that $e_i \mapsto e_i(X_1, \dots, X_n)$. The general expression for the image of $e_i^{(j)}$ is then a nontrivial but elementary calculation using the recursive relations
$$e_j^{(i+1)} + X_i e_{j-1}^{(i+1)} = e_j^{(i)} + q^{2} X_i e_{j-1}^{(i)}$$
which follow from (\ref{eq:e'}) (c.f. relations (\ref{eq:e1}) and (\ref{eq:e2})).
\end{proof}

It is worth noting what happens to all the conjectured relations (\ref{eq:conj4}). They are all mapped to zero since
\begin{equation}\label{eq:ehrel}
h_{n-k} - e_1^{(i)} h_{n-k-1} + e_2^{(i)} h_{n-k-2} - \dots + (-1)^{n-k} e_{n-k}^{(i)} = 0
\end{equation}
by the standard relations between homogeneous and elementary symmetric functions. Thus Conjecture \ref{conj:4} recovers Proposition \ref{prop:CG}.

\begin{Remark}
Note that in relation (\ref{eq:ehrel}) the $e$'s are functions of $X_1, \dots, X_n$ while the $h$'s are homogeneous symmetric functions in $X_i, \dots, X_{k+i}$.
\end{Remark}

\end{document}